%
%
%
%
%
\documentclass{birkmult}
\usepackage{amsthm,amsmath,amsfonts,latexsym,amssymb,mathrsfs}

\renewcommand{\thefootnote}{\fnsymbol{footnote}}
\topmargin 0.5cm
\textwidth 15cm
\textheight 22cm
\topskip 1cm
\headheight 0cm
\oddsidemargin 0.5cm
\evensidemargin 0.5cm
\footskip 1cm

\includeonly{preface,chap1,biblio,index}

\newtheorem{theorem}{Theorem}[section]
\newtheorem{lemma}[theorem]{Lemma}
\newtheorem{proposition}[theorem]{Proposition}
\newtheorem{cor}[theorem]{Corollary}

\theoremstyle{definition}
\newtheorem{definition}[theorem]{Definition}
\newtheorem{example}[theorem]{Example}

\theoremstyle{remark}
\newtheorem{remark}[theorem]{Remark}

\numberwithin{subsection}{section} \numberwithin{equation}{section}

\begin{document}

\title[Block Toeplitz Operators]
  {\LARGE Hyponormality and Subnormality of Block Toeplitz Operators}

\author{Ra{\'u}l\ E.\ Curto}
\address{Department of Mathematics, University of Iowa, Iowa City, IA 52242, U.S.A.}
\email{raul-curto@uiowa.edu}

\author{In Sung Hwang}
\address{Department of Mathematics, Sungkyunkwan University, Suwon 440-746, Korea}
\email{ihwang@skku.edu}

\author{Woo Young Lee}
\address{Department of Mathematics, Seoul National University, Seoul 151-742, Korea}
\email{wylee@snu.ac.kr}

\renewcommand{\thefootnote}{}
\footnote{2010 \textit{Mathematics Subject Classification.} Primary
47B20, 47B35, 47A13; Secondary 30H10, 47A20, 47A57}

\thanks{The work of the first named author was partially supported
by NSF Grant DMS-0801168. \ The work of the second author was
supported by NRF grant funded by the Korea
government(MEST)(2011-0003273). \ The work of the third author was
supported by Basic Science Research Program through the NRF grant
funded by the Korea government(MEST)(2011-0001250)¡±.}

\keywords{Block Toeplitz operators, hyponormal, square-hyponormal,
subnormal, bounded type functions, rational functions, trigonometric
polynomials, subnormal completion problem.}

\begin{abstract}
In this paper we are concerned with hyponormality and subnormality
of block Toeplitz operators acting on the vector-valued Hardy space
$H^2_{\mathbb{C}^n}$ of the unit circle.

Firstly, we establish a tractable and explicit criterion on the
hyponormality of block Toeplitz operators having bounded type
symbols via the triangularization theorem for compressions of the
shift operator.

Secondly, we consider the gap between hyponormality and subnormality
for block Toeplitz operators. \ This is closely related to Halmos's
Problem 5: Is every subnormal Toeplitz operator either normal or
analytic\,? \ We show that if $\Phi$ is a matrix-valued rational
function whose co-analytic part has a coprime factorization then
every hyponormal Toeplitz operator $T_{\Phi}$ whose square is also
hyponormal must be either normal or analytic.

Thirdly, using the subnormal theory of block Toeplitz operators, we
give an answer to the following ``Toeplitz completion" problem: Find
the unspecified Toeplitz entries of the partial block Toeplitz
matrix
$$
A:=\left[\begin{matrix} U^*& ?\\
?&U^* \end{matrix}\right]
$$
so that $A$ becomes subnormal, where $U$ is the unilateral shift on
$H^2$.
\end{abstract}

\maketitle



%
%
%
%


\section{Introduction}

Toeplitz operators,
block Toeplitz operators and (block) Toeplitz determinants (i.e.,
determinants of sections of (block) Toeplitz operators)
arise naturally in several fields of mathematics
and in a variety of problems in physics, especially, in quantum mechanics. \
For example, the spectral theory of Toeplitz operators plays an important role
in the study of solvable models in quantum mechanics (\cite{Pr})
and in the study the one-dimensional Heisenberg Hamiltonian of
ferromagnetism (\cite{DMA}); \
the theory of block Toeplitz determinants is used in the study
of the classical dimer model (\cite{BE}) and in the study
of the vicious walker model (\cite{HI}); \
the theory of block Toeplitz operators is also used in the study
of Gelfand-Dickey Hierarchies (cf. \cite{Ca}). \
On the other hand, the theory of hyponormal
and subnormal operators is an extensive and highly developed area,
which has made important contributions to a number of problems in
functional analysis, operator theory, and mathematical physics
(see, for example, \cite{If}, \cite{HS}, and \cite{Sz} for
applications to related mathematical physics problems). \
Thus, it becomes of central significance to describe in detail
hypormality and subnormality for Toeplitz operators. \  This paper
focuses primarily on hyponormality and subnormality of
block Toeplitz operators with rational symbols.
\ For the general theory of subnormal and hyponormal
operators, we refer to \cite{Con} and \cite{MP}.\

To describe our results, we first need to review a few essential
facts about (block) Toeplitz operators, and for that we will use
\cite{BS}, \cite{Do1}, \cite{Do2}, \cite{GGK}, \cite{MAR},
\cite{Ni}, and \cite{Pe}. \  Let $\mathcal{H}$ and $\mathcal{K}$ be
complex Hilbert spaces, let $\mathcal{B(H,K)}\label{B(H,K)}$ be the
set of bounded linear operators from $\mathcal{H}$ to $\mathcal{K}$,
and write $\mathcal{B(H)}\label{B(H)}:=\mathcal{B(H,H)}$. \  For
$A,B\in\mathcal{B(H)}$, we let $[A,B]\label{[A,B]}:=AB-BA$. \  An
operator $T\in\mathcal{B(H)}$ is said to be normal if
$[T^*,T]\label{[T^*,T]}=0$, hyponormal if $[T^*,T]\ge 0$, and
subnormal if $T$ has a normal extension, i.e.,
$T=N\vert_{\mathcal{H}}$, where $N$ is a normal operator on some
Hilbert space $\mathcal{K}\supseteq \mathcal{H}$ such that $\mathcal
H$ is invariant for $N$. \  For an operator $T\in \mathcal{B(H)}$,
we write $\hbox{ker}\,T\label{ker}$ and $\hbox{ran}\, T\label{ran}$
for the kernel and the range of $T$, respectively. \  For a set
$\mathcal M$, $\hbox{cl}\, \mathcal M\label{clM}$ and $\mathcal
M^\perp\label{Mperp}$ denote the closure and the orthogonal
complement of $\mathcal M$, respectively. \  Also, let
$\mathbb{T}\label{T}=\mathbb{R}/2\pi\mathbb{Z}$ be the unit circle.
\ Recall that the Hilbert space $L^2\equiv L^2({\mathbb
T})\label{L^2T}$ has a canonical orthonormal basis given by the
trigonometric functions $e_n(z)=z^n$, for all $n\in {\mathbb Z}$,
and that the Hardy space $H^2\equiv H^2({\mathbb T})\label{H^2T}$ is
the closed linear span of $\{e_n: n=0,1,\hdots \}$. \  An element
$f\in L^2$ is said to be analytic if $f\in H^2$. \ Let
$H^\infty\label{H^infty}\equiv H^\infty({\mathbb T}):=L^\infty \cap
H^2$, i.e., $H^\infty$ is the set of bounded analytic functions on
the open unit disk $\mathbb{D}$. \

Given a bounded measurable
function $\varphi\in L^\infty\label{L^infty}$, the {\it Toeplitz operator}
$T_\varphi\label{Tvarphi}$ and
 the {\it Hankel operator} $H_\varphi\label{Hvarphi}$ with {\it symbol} $\varphi$
on $H^2$ are defined by
\begin{equation}\label{1.1}
T_\varphi g:=P(\varphi g) \quad\hbox{and}\quad H_\varphi
g:=JP^\perp(\varphi g) \qquad (g\in H^2),
\end{equation}
where $P\label{P}$ and $P^\perp\label{Pperp}$ denote the orthogonal projections that map
from $L^2$ onto $H^2$ and $(H^2)^\perp$, respectively, and $J$
denotes the unitary operator from $L^2$ onto $L^2$ defined by
$J\label{J}(f)(z)=\overline z f(\overline z)$ for $f\in L^2$. \

To study hyponormality (resp. normality and subnormality) of the
Toeplitz operator $T_\varphi$ with symbol $\varphi$ we may, without
loss of generality, assume that $\varphi(0)=0$; this is because hyponormality
(resp. normality and subnormality) is invariant under translations by scalars. \
Normal Toeplitz
operators were characterized by a property of their symbols in the
early 1960's by A. Brown and P.R. Halmos \cite{BH} and the exact
nature of the relationship between the symbol $\varphi\in L^\infty$
and the hyponormality of $T_{\varphi}$ was understood via Cowen's
Theorem \cite{Co4} in 1988. \

\medskip

\noindent {\bf Cowen's Theorem.} (\cite{Co4},
\cite{NT})\label{cowen} {\it For each $\varphi\in L^\infty$, let
$$
\mathcal{E}(\varphi)\label{Ephi}\equiv \{k\in H^\infty:\ ||k||_\infty\le 1\
\hbox{and}\ \varphi-k\overline\varphi\in H^\infty\}. \
$$
Then a Toeplitz operator $T_\varphi$ is hyponormal if and only if
$\mathcal{E}(\varphi)$ is nonempty. \  }

\medskip

This elegant and useful theorem has been used in the
works \cite{CuL1}, \cite{CuL2}, \cite{FL}, \cite{Gu1}, \cite{Gu2},
\cite{GS}, \cite{HKL1}, \cite{HKL2}, \cite{HL1}, \cite{HL2},
\cite{HL3}, \cite{Le}, \cite{NT} and \cite{Zhu}, which have been
devoted to the study of hyponormality for Toeplitz operators on
$H^2$. \  Particular attention has been paid to Toeplitz operators
with polynomial symbols or rational symbols \cite{HL2}, \cite{HL3}.
\ However, the case of arbitrary symbol $\varphi$, though solved in
principle by Cowen's theorem, is in practice very complicated. \
Indeed, it may not even be possible to find tractable necessary and
sufficient condition for the hyponormality of $T_\varphi$ in terms
of the Fourier coefficients of the symbol $\varphi$ unless certain
assumptions are made about $\varphi$. \  To date, tractable criteria
for the cases of trigonometric polynomial symbols and rational
symbols were derived from a Carath\' eodory-Schur interpolation
problem (\cite{Zhu}) and a tangential Hermite-Fej\' er interpolation
problem (\cite{Gu1}) or the classical Hermite-Fej\' er interpolation
problem (\cite{HL3}), respectively. \

Recall that a function
$\varphi\in L^\infty$ is said to be of {\it bounded type} (or in the
Nevanlinna class) if there are analytic functions $\psi_1,\psi_2\in
H^\infty (\mathbb D)\label{H^inftyD}$ such that $\varphi=\psi_1/\psi_2$ almost
everywhere on $\mathbb T$. \  To date, no tractable criterion to
determine the hyponormality of $T_{\varphi}$ when the symbol $\varphi$ is of bounded type has been
found. \

We now introduce the notion of block Toeplitz operators. \  Let
$M_{n\times r}\label{M_nr}$ denote the set of all $n\times r$ complex matrices
and write $M_n\label{M_n}:=M_{n\times n}$. \  For $\mathcal X$ a Hilbert space,
let $L^2_{\mathcal X}\label{L2X}\equiv L^2_{\mathcal X}(\mathbb T)$ be the
Hilbert space of $\mathcal X$-valued norm square-integrable
measurable functions on $\mathbb{T}$ and let $H^2_{\mathcal X}\label{H2X}\equiv
H^2_{\mathcal X}(\mathbb T)$ be the corresponding Hardy space. \  We
observe that $L^2_{\mathbb{C}^n}= L^2\otimes \mathbb{C}^n$ and
$H^2_{\mathbb{C}^n}= H^2\otimes \mathbb{C}^n$. \  If $\Phi$ is a
matrix-valued function in $L^\infty_{M_n}\equiv
L^\infty_{M_n}(\mathbb T)$ ($=L^\infty\otimes M_n$) then $T_\Phi:
H^2_{\mathbb{C}^n}\to H^2_{\mathbb{C}^n}$ denotes the {\it block Toeplitz
operator} with {\it symbol} $\Phi$ defined by
$$
T_\Phi\label{T_Phi} f:=P_n\label{P_n}(\Phi f)\quad \hbox{for}\ f\in H^2_{\mathbb{C}^n},
$$
where $P_n$ is the orthogonal projection of $L^2_{\mathbb{C}^n}$
onto $H^2_{\mathbb{C}^n}$. \  A {\it block Hankel operator} with {\it symbol}
$\Phi\in L^\infty_{M_n}$ is the operator $H_\Phi:
H^2_{\mathbb{C}^n}\to H^2_{\mathbb{C}^n}$ defined by
$$
H_\Phi\label{H_Phi} f := J_n P_n^\perp (\Phi f)\quad \hbox{for}\ f\in
H^2_{\mathbb{C}^n},
$$
where $J_n$ denotes the unitary operator from $L^2_{\mathbb{C}^n}$
onto $L^2_{\mathbb{C}^n}$ given by $J_n(f)(z):=\overline{z} I_n
f(\overline{z})$ for $f \in L^2_{\mathbb{C}^n}$, with $I_n$ the
$n\times n$ identity matrix. \  If we set
$H^2_{\mathbb{C}^n}=H^2\oplus\cdots\oplus H^2$ then we see that
$$
T_\Phi=\begin{bmatrix} T_{\varphi_{11}}&\hdots&T_{\varphi_{1n}}\\
&\vdots\\
T_{\varphi_{n1}}&\hdots&T_{\varphi_{nn}}\end{bmatrix} \quad
\hbox{and}\quad
H_\Phi=\begin{bmatrix} H_{\varphi_{11}}&\hdots&H_{\varphi_{1n}}\\
&\vdots\\
H_{\varphi_{n1}}&\hdots&H_{\varphi_{nn}}\end{bmatrix},
$$
where
$$
\Phi=\begin{bmatrix} \varphi_{11}&\hdots&\varphi_{1n}\\
&\vdots\\
\varphi_{n1}&\hdots&\varphi_{nn}\end{bmatrix}\in L^\infty_{M_n}. \
$$
For $\Phi\in L^\infty_{M_{n\times m}}$, write
$$
\widetilde\Phi\label{widetildePhi} (z):=\Phi^*(\overline z). \
$$
A matrix-valued function $\Theta\label{Theta}\in H^\infty_{M_{n\times m}}$
($=H^\infty\otimes M_{n\times m}$) is called {\it inner} if
$\Theta^*\Theta=I_m$ almost everywhere on $\mathbb{T}$. \  The
following basic relations can be easily derived:
\begin{align}
&T_\Phi^*=T_{\Phi^*},\ \  H_\Phi^*= H_{\widetilde \Phi} \quad (\Phi\in
L^\infty_{M_n});\label{1.2}\\
&T_{\Phi\Psi}-T_\Phi T_\Psi = H_{\Phi^*}^*H_\Psi \quad
(\Phi,\Psi\in L^\infty_{M_n});\label{1.3}\\
&H_\Phi T_\Psi = H_{\Phi\Psi},\ \
H_{\Psi\Phi}=T_{\widetilde{\Psi}}^*H_\Phi \quad (\Phi\in
L^\infty_{M_n}, \Psi\in H^\infty_{M_n});\label{1.4}\\
&H_\Phi^* H_\Phi - H_{\Theta \Phi}^* H_{\Theta\Phi} =H_\Phi^*
H_{\Theta^*}H_{\Theta^*}^*H_\Phi \quad (\Theta\in H^\infty_{M_n}
 \ \hbox{inner,}  \ \Phi\in L^\infty_{M_n}).\label{1.5}
\end{align}

\medskip

For a matrix-valued function $\Phi \equiv [\varphi_{ij}]\in
L^\infty_{M_n}$, we say that $\Phi$ is of {\it bounded type} if each
entry $\varphi_{ij}$ is of bounded type, and we say that $\Phi$ is {\it
rational} if each entry $\varphi_{ij}$ is a rational function. \ A matrix-valued trigonometric
polynomial $\Phi \in L^{\infty}_{M_n}$ is of the form

\begin{equation*}
\Phi(z)=\sum_{j=-m}^{N} A_j z^j \; (A_j \in M_n),
\label{outer}
\end{equation*}
where $A_N$ and $A_{-m}$ are called the \textit{outer} coefficients of $\Phi$.

We recall that for matrix-valued functions
$A:=\sum_{j=-\infty}^\infty A_j z^j\in L^2_{M_n}$ and
$B:=\sum_{j=-\infty}^\infty B_j z^j\in L^2_{M_n}$, we define the inner
product of $A$ and $B$ by
$$
(A,B)\label{AB}:=\int_{\mathbb T} \hbox{tr}\,(B^*A)\,
d\mu=\sum_{j=-\infty}^\infty \hbox{tr}\,(B_j^*A_j)\,,
$$
where $\hbox{tr}\,(\cdot)\label{tr}$ denotes the trace of a matrix and define
$||A||_2\label{A_2}:=(A,A)^{\frac{1}{2}}$. \  We also define, for $A\in
L^\infty_{M_n}$,
$$
||A||_\infty\label{A_infty}:=\hbox{ess sup}_{t\in\mathbb T} ||A(t)||\quad \hbox{($||\cdot||$
denotes the spectral norm of a matrix)}. \
$$
Finally, the {\it shift} operator $S$ on $H^2_{\mathbb C^n}$ is defined by
$$
S\label{S}:=T_{zI_n}. \
$$

The following fundamental result will be useful in the sequel. \
\medskip

\noindent {\bf The Beurling-Lax-Halmos Theorem.}\label{beur} {\it A
nonzero subspace $M$ of $H^2_{\mathbb C^n}$ is invariant for the
shift operator $S$ on $H^2_{\mathbb C^n}$ if and only if $M=\Theta
H^2_{\mathbb C^m}$, where $\Theta$ is an inner matrix function in
$H^{\infty}_{M_{n\times m}}\label{H^inftyMnm}$ \hbox{\rm ($m\le n$)}. \
Furthermore,
$\Theta$ is unique up to a unitary constant right factor; that is,
if $M=\Delta H^2_{\mathbb{C}^r}$ \hbox{\rm (}for $\Delta$ an inner
function in $H^{\infty}_{M_{n\times r}}$\hbox{\rm )}, then $m=r$ and $\Theta=\Delta W$,
where $W$ is a \hbox{\rm (}constant in z\hbox{\rm )} unitary matrix mapping $\mathbb C^m$
onto $\mathbb C^m$. \  }

\bigskip

As customarily done, we say that two matrix-valued functions $A$ and $B$
are {\it equal} if they are equal up to a unitary constant right
factor. \  Observe by (\ref{1.4}) that for $\Phi\in L^\infty_{M_n}$,
$H_\Phi S=H_\Phi T_{zI_n}=H_{\Phi\cdot zI_n}
=H_{zI_n\cdot\Phi}=T^*_{zI_n}H_\Phi$, which implies that the kernel
of a block Hankel operator $H_\Phi$ is an invariant subspace of the
shift operator on $H^2_{\mathbb C^n}$. \  Thus, if
$\hbox{ker}\,{H_{\Phi}} \ne \{0\}$, then by the Beurling-Lax-Halmos
Theorem,
$$
\hbox{ker}\, H_\Phi=\Theta H^2_{\mathbb{C}^m}
$$
for some inner matrix function $\Theta$. \  We note that $\Theta$
need not be a square matrix. \

On the other hand, recently C. Gu,
J. Hendricks and D. Rutherford \cite{GHR} considered the hyponormality of
block Toeplitz operators and characterized it in terms of their symbols. \  In particular
they showed that if $T_\Phi$ is a hyponormal block Toeplitz operator
on $H^2_{\mathbb{C}^n}$, then its symbol $\Phi$ is normal, i.e.,
$\Phi^*\Phi=\Phi\Phi^*$. \  Their characterization for hyponormality
of block Toeplitz operators resembles Cowen's Theorem except for
an additional condition -- the normality condition of the symbol. \

\bigskip

\noindent {\bf Hyponormality of Block Toeplitz Operators}
(\cite{GHR})\label{gu} {\it For each
$\Phi\in L^\infty_{M_n}$, let
$$
\mathcal{E}(\Phi)\label{Ebigphi}:=\Bigl\{K\in H^\infty_{M_n}:\ ||K||_\infty \le 1\
\ \hbox{and}\ \ \Phi-K \Phi^*\in H^\infty_{M_n}\Bigr\}. \
$$
Then $T_\Phi$ is hyponormal if and only if $\Phi$ is normal and
$\mathcal{E}(\Phi)$ is nonempty. \  }

\bigskip

The hyponormality of the Toeplitz operator $T_\Phi$ with arbitrary
matrix-valued symbol $\Phi$, though solved in principle by Cowen's
Theorem \cite{Co4} and the criterion due to Gu, Hendricks and
Rutherford \cite{GHR}, is in practice very complicated. \  Until
now, explicit criteria for the hyponormality of block Toeplitz
operators $T_\Phi$ with matrix-valued trigonometric polynomials or
rational functions $\Phi$ were established via interpolation
problems (\cite{GHR}, \cite{HL4}, \cite{HL5}). \

In Section 3, we obtain a tractable criterion for the hyponormality
of block Toeplitz operators with bounded type symbols. \ To do this
we employ a continuous analogue of the elementary theorem of Schur
on triangularization of finite matrices: If $T$ is a finite matrix
then it can be represented as $T=D+N$, where $D$ is a diagonal
matrix and $N$ is a nilpotent matrix. \  The continuous analogue is
the so-called triangularization theorem for compressions of the
shift operator: in this case, $D$ and $N$ are replaced by a certain
(normal) multiplication operator and a Volterra operator of
Hilbert-Schmidt class, respectively. \

Section 4 deals with the gap between hyponormality and subnormality
of block Toeplitz operators. \  The Bram-Halmos criterion for
subnormality (\cite{Br}, \cite{Con}) states that an operator $T\in
\mathcal{B(H)}$ is subnormal if and only if $\sum_{i,j}(T^ix_j, T^j
x_i)\ge 0$ for all finite collections
$x_0,x_1,\cdots,x_k\in\mathcal{H}$. \  It is easy to see that this
is equivalent to the following positivity test:
\begin{equation}\label{1.6}
\begin{bmatrix}
\hbox{$[T^*,T]$}& \hbox{$[T^{*2},T]$}& \hdots & \hbox{$[T^{*k},T]$}\\
\hbox{$[T^{*}, T^2]$}& \hbox{$[T^{*2},T^2]$} & \hdots & \hbox{$[T^{*k},T^2]$}\\
\vdots & \vdots & \ddots & \vdots\\
\hbox{$[T^*, T^k]$} & \hbox{$[T^{*2}, T^k]$} & \hdots &
\hbox{$[T^{*k},T^k]$}
\end{bmatrix}\ge 0
\qquad\text{(all $k\ge 1$)}. \
\end{equation}
The positivity condition (\ref{1.6}) for $k=1$ is equivalent to the
hyponormality of $T$, while subnormality requires the validity of
(\ref{1.6}) for all $k\in \mathbb{Z}_+$. \  The Bram-Halmos
criterion indicates that hyponormality is generally
far from subnormality. \  But there are special classes of operators
for which the positivity of (\ref{1.6}) for some $k$ and subnormaity
are equivalent. \  For example, it was shown in (\cite{CuL1}) that
if $W_{\sqrt{x},(\sqrt{a},\sqrt{b},\sqrt{c})^{\wedge}}$ is the
weighted shift whose weight sequence consists of the initial weight
$x$ followed by the weight sequence of the recursively generated
subnormal weighted shift $W_{(\sqrt{a},\sqrt{b},\sqrt{c})^{\wedge}}$
with an initial segment of positive weights
$\sqrt{a},\sqrt{b},\sqrt{c}$ (cf. \cite{CuF1}, \cite{CuF2},
\cite{CuF3}), then $W_\alpha$ is subnormal if and only if the
positivity condition (\ref{1.6}) is satisfied with $k=2$. \  On the
other hand, in \cite [Problem 209]{Hal3}, it was shown that there
exists a hyponormal operator whose square is not hyponormal, e.g.,
$U^*+2U$ ($U$ is the unilateral shift on $\ell^2$), which is a {\it
trigonometric} Toeplitz operator, i.e., $U^*+2U \equiv T_{\bar z+2z}$. \
This example addresses the gap
between hyponormality and subnormality for Toeplitz operators. \
This matter is closely related to Halmos's Problem 5 \cite{Hal1},
\cite{Hal2}: Is every subnormal Toeplitz operator either normal or
analytic\,?

In \cite{CuL1}, as a partial answer, it was shown that every
hyponormal Toeplitz operator $T_\varphi$ with trigonometric
polynomial symbol $\varphi$ whose square is hyponormal must be
either normal or analytic. \  In \cite{Gu3}, C. Gu showed that this
result still holds for Toeplitz operators $T_\varphi$ with rational
symbol $\varphi$ (more generally, in the cases where $\varphi$
is of bounded type). \  In Section 4 we prove the
following theorem: If $\Phi$ is a matrix-valued rational function
whose co-analytic part has a coprime factorization then every
hyponormal Toeplitz operator $T_{\Phi}$ whose square is hyponormal
must be either normal or analytic. \  This result generalizes the
results in \cite{CuL1} and \cite{Gu3}.

In Section 5, we consider a completion problem involving Toeplitz
operators. \  Given a partially specified operator matrix, the
problem of finding suitable operators to complete the given partial
operator matrix so that the resulting matrix satisfies certain given
properties is called a {\it completion problem}. \ The dilation
problem is a special case of the completion problem: in other words,
a dilation of $T$ is a completion of the partial operator matrix
$\left[\begin{smallmatrix} T&?\\?&?\end{smallmatrix}\right]$. \  In
recent years, operator theorists have been interested in the
subnormal completion problem for
$$
\begin{bmatrix} U^*&?\\?&U^*\end{bmatrix},
$$
where $U$ is the unilateral shift on $H^2$. \  If the unspecified
entries ? are Toeplitz operators this is called the {\it Toeplitz}
subnormal completion problem. \  Thus this problem is related to the
subnormality of block Toeplitz operators. \  In Section 5, we solve
this Toeplitz subnormal completion problem. \

Finally, in Section 6 we list some open problems. \

\medskip \textit{Acknowledgment}. \ The authors are indebted to the referee for suggestions that improved the presentation. 
\

\vskip 1cm

%
%
%
%

\section{Basic Theory and Preliminaries}

\bigskip

%
%




We first recall \cite[Lemma 3]{Ab} that if $\varphi\in L^\infty$
then
\begin{equation}\label{2.1}
\hbox{$\varphi$ is of bounded type}\ \Longleftrightarrow\
\hbox{ker}\, H_\varphi\ne \{0\}\,.
\end{equation}
If $\varphi\in L^\infty$, we write
$$
\varphi_+\equiv P \varphi\in H^2\quad\text{and}\quad \varphi_-\equiv
\overline{P^\perp \varphi}\in zH^2.
$$
Assume now that both $\varphi$ and $\overline\varphi$ are of bounded
type. \  Then from Beurling's Theorem, $\text{ker}\,
H_{\overline{\varphi_-}}=\theta_0 H^2$ and $\text{ker}\,
H_{\overline{\varphi_+}}=\theta_+ H^2$ for some inner functions
$\theta_0, \theta_+$. \ We thus have
$b:={\overline{\varphi_-}}\theta_0 \in H^2$, and hence we can write
\begin{equation}\label{2.2}
\varphi_-=\theta_0\overline{b} \text{~and similarly~} \varphi_+
=\theta_+\overline{a} \text{~for some~} a \in H^2.
\end{equation}
In particular, if $T_\varphi$ is hyponormal then since
\begin{equation}\label{2.3}
[T_\varphi^*, T_\varphi]=H_{\overline\varphi}^*
H_{\overline\varphi}-H_\varphi^* H_\varphi=
H_{\overline{\varphi_+}}^*
H_{\overline{\varphi_+}}-H_{\overline{\varphi_-}}^*
H_{\overline{\varphi_-}},
\end{equation}
it follows that $||H_{\overline{\varphi_+}} f||\ge
||H_{\overline{\varphi_-}} f||$ for all $f\in H^2$, and hence
$$
\theta_+ H^2= \text{ker}\, H_{\overline{\varphi_+}}\subseteq
\text{ker}\, H_{\overline{\varphi_-}}=\theta_0 H^2,
$$
which implies that $\theta_0$ divides $\theta_+$, i.e.,
$\theta_+=\theta_0\theta_1$ for some inner function $\theta_1$. \
We write, for an inner function $\theta$,
$$
\mathcal H(\theta):=H^2\ominus \theta\,H^2.
$$
Note that if $f=\theta \overline a \in L^2$, then $f\in H^2$ if and
only if $a\in \mathcal H(z\theta)$; in particular, if $f(0)=0$ then
$a\in \mathcal H(\theta)$. \  Thus, if
$\varphi=\overline{\varphi_-}+\varphi_+\in L^\infty$ is such that
$\varphi$ and $\overline\varphi$ are of bounded type such that
$\varphi_+(0)=0$ and $T_\varphi$ is hyponormal, then we can write
$$
\varphi_+=\theta_0\theta_1\bar a\quad\text{and}\quad
\varphi_-=\theta_0 \bar b, \qquad\text{where $a\in
\mathcal{H}(\theta_0\theta_1)$ and $b\in\mathcal{H}(\theta_0)$.}
$$
By Kronecker's Lemma \cite[p. 183]{Ni}, if $f\in H^\infty$ then
$\overline f$ is a rational function if and only if $\hbox{rank}\,
H_{\overline f}<\infty$, which implies that
\begin{equation}\label{2.4}
\hbox{$\overline{f}$ is rational} \ \Longleftrightarrow\
f=\theta\overline b\ \ \hbox{with a finite Blaschke product
$\theta$}.
\end{equation}
Also, from the scalar-valued case of (\ref{1.4}), we can see that if
$k\in \mathcal{E}(\varphi)$ then
\begin{equation}\label{2.5}
[T_\varphi^*, T_\varphi]=H_{\overline\varphi}^*
H_{\overline\varphi}-H_\varphi^* H_\varphi= H_{\overline\varphi}^*
H_{\overline\varphi}-H_{k\,\overline\varphi}^*
H_{k\,\overline\varphi}= H_{\overline\varphi}^*(1-T_{\widetilde{k}}
T_{\widetilde{k}}^*) H_{\overline\varphi}.
\end{equation}

\medskip

On the other hand, M. Abrahamse \cite[Lemma 6]{Ab} showed that if
$T_\varphi$ is hyponormal, if $\varphi\notin H^\infty$, and if
$\varphi$ or $\overline{\varphi}$ is of bounded type then both
$\varphi$ and $\overline{\varphi}$ are of bounded type. \  However,
by contrast to the scalar case, $\Phi^*$ may not be of bounded type
even though $T_\Phi$ is hyponormal, $\Phi\notin H^\infty_{M_n}$ and
$\Phi$ is of bounded type. \  But we have a one-way implication: if
$T_\Phi$ is hyponormal and $\Phi^*$ is of bounded type then $\Phi$
is also of bounded type (see \cite [Corollary 3.5 and Remark
3.6]{GHR}). \  Thus whenever we deal with hyponormal Toeplitz
operators $T_\Phi$ with symbols $\Phi$ satisfying that both $\Phi$
and $\Phi^*$ are of bounded type (e.g.,
$\Phi$ is a matrix-valued rational function),
it suffices to assume that only
$\Phi^*$ is of bounded type. \  In spite of this, for convenience,
we will assume that $\Phi$ and $\Phi^*$ are of bounded type whenever
we deal with bounded type symbols. \

\smallskip

For a matrix-valued function $\Phi\in H^2_{M_{n\times r}}$, we say
that $\Delta\in H^2_{M_{n\times m}}$ is a {\it left inner divisor}
of $\Phi$ if $\Delta$ is an inner matrix function such that
$\Phi=\Delta A$ for some $A \in H^{2}_{M_{m\times r}}$ ($m\le n$).
We also say that two matrix functions $\Phi\in H^2_{M_{n\times r}}$
and $\Psi\in H^2_{M_{n\times m}}$ are {\it left coprime} if the only
common left inner divisor of both $\Phi$ and $\Psi$ is a unitary
constant and that $\Phi\in H^2_{M_{n\times r}}$ and $\Psi\in
H^2_{M_{m\times r}}$ are {\it right coprime} if $\widetilde\Phi$ and
 $\widetilde\Psi$ are left coprime.
Two matrix functions $\Phi$ and $\Psi$ in $H^2_{M_n}$ are said to be
{\it coprime} if they are both left and right coprime. We note
that if $\Phi\in H^2_{M_n}$ is such that $\hbox{det}\,\Phi$ is not
identically zero then any left inner divisor $\Delta$ of $\Phi$ is
square, i.e., $\Delta\in H^2_{M_n}$: indeed, if $\Phi=\Delta A$ with $\Delta\in
H^2_{M_{n\times r}}$ ($r<n$) then for almost all $z\in \mathbb T$,
$\hbox{rank}\,\Phi(z)\le \hbox{rank}\,\Delta(z)\le r <n$, so that
$\hbox{det}\,\Phi(z)=0$ for almost all $z\in\mathbb T$.
If $\Phi\in H^2_{M_n}$ is such
that $\hbox{det}\,\Phi$ is not identically zero then we say that
$\Delta\in H^2_{M_{n}}$ is a {\it right inner divisor} of $\Phi$ if
$\widetilde{\Delta}$ is a left inner divisor of $\widetilde{\Phi}$.

\smallskip

On the
other hand, we have (in the Introduction) remarked that $\Theta$
need not be square in the equality $\hbox{ker}\, H_\Phi=\Theta
H^2_{\mathbb{C}^n}$, which comes from the Beurling-Lax-Halmos
Theorem. \  But it was known \cite [Theorem 2.2] {GHR}  that for
$\Phi\in L^\infty_{M_n}$, $\Phi$ is of bounded type if and only if
$\hbox{ker}\, H_\Phi=\Theta H^2_{\mathbb{C}^n}$ for some square
inner matrix function $\Theta$. \

Let $\{\Theta_i\in H^\infty_{M_n}: i\in J\}$ be a family of inner
matrix functions. \  Then the greatest common left inner divisor
$\Theta_d\label{thetad}$ and the least common left inner multiple
$\Theta_m\label{thetam}$ of
the family $\{\Theta_i\in H^\infty_{M_n}: i\in J\}$ are the inner
functions defined by
$$
\Theta_d H^2_{\mathbb C^p}=\bigvee_{i \in J}\Theta_{i}H^2_{\mathbb
C^n} \quad \hbox{and} \quad \Theta_m H^2_{\mathbb C^q}=\bigcap_{i\in
J}\Theta_{i}H^2_{\mathbb C^n}.
$$
The greatest common right inner divisor $\Theta_d^{\prime}$ and the
least common right inner multiple $\Theta_m^{\prime}$ of the family
$\{\Theta_i\in H^\infty_{M_n}: i\in J\}$ are the inner functions
defined by
$$
\widetilde{\Theta}_d^{\prime} H^2_{\mathbb C^r}=\bigvee_{i \in
J}\widetilde{\Theta}_{i} H^2_{\mathbb C^n} \quad \hbox{and} \quad
\widetilde{\Theta}_m^{\prime} H^2_{\mathbb C^s}=\bigcap_{i \in J}
\widetilde{\Theta}_{i}H^2_{\mathbb C^n}.
$$
The Beurling-Lax-Halmos Theorem guarantees that $\Theta_d$ and
$\Theta_m$ are unique up to a unitary constant right factor, and
$\Theta_d^{\prime}$ and $\Theta_m^{\prime}$ are unique up to a
unitary constant left factor. \  We write
$$
\begin{aligned}
&\Theta_d =\hbox{GCD}_{\ell}\,\{\Theta_i: i\in J\},\quad
\Theta_m=\hbox{LCM}_{\ell}\,\{\Theta_i: i\in J\},\\
&\Theta_d^\prime=\hbox{GCD}_r\,\{\Theta_i: i\in J\},\quad
\Theta_m^\prime=\hbox{LCM}_r\,\{\Theta_i: i\in J\}.
\end{aligned}
$$
If $n=1$, then $\hbox{GCD}_{\ell}\,\{\cdot\}=\hbox{GCD}_r\,\{\cdot\}$
(simply denoted $\hbox{GCD}\label{GCD}\,\{\cdot\}$) and
$\hbox{LCM}_{\ell}\,\{\cdot\}=\hbox{LCM}_r\,\{\cdot\}$ (simply denoted
$\hbox{LCM}\label{LCM}\,\{\cdot\}$). \  In general, it is not true
that $\hbox{GCD}_{\ell}\,\{\cdot\}=\hbox{GCD}_r\,\{\cdot\}$ and
$\hbox{LCM}_{\ell}\,\{\cdot\}=\hbox{LCM}_r\,\{\cdot\}$.

\medskip

However, we have:

\begin{lemma}\label{lem2.1}
Let $\Theta_i:={\theta_i}I_n$ for an inner function $\theta_i \ (i
\in J)$. \
\begin{enumerate}
\item[(a)] $\hbox{\rm GCD}_{\ell}\,\{\Theta_i: i\in J\}=\hbox{\rm GCD}_r\,\{\Theta_i: i\in J\}
={\theta_d}I_n$, where $\theta_d=\text{\rm GCD}\,\{\theta_i : i \in
J \}$.
\item[(b)] $\hbox{\rm LCM}_{\ell}\,\{\Theta_i: i\in J\}=\hbox{\rm LCM}_r\,\{\Theta_i: i\in J\}
={\theta_m}I_n$, where $\theta_m=\text{\rm LCM}\,\{\theta_i : i \in
J \}$.
\end{enumerate}
\end{lemma}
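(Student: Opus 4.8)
The plan is to exploit the fact that each $\Theta_i = \theta_i I_n$ is a scalar inner function times the identity, so that the invariant subspace $\Theta_i H^2_{\mathbb C^n} = \theta_i H^2_{\mathbb C^n}$ splits as an $n$-fold orthogonal sum $\theta_i H^2 \oplus \cdots \oplus \theta_i H^2$ under the decomposition $H^2_{\mathbb C^n} = H^2 \oplus \cdots \oplus H^2$. First I would treat part (a). Since the span and intersection of subspaces that are themselves $n$-fold direct sums along a fixed orthogonal decomposition are computed coordinatewise, we get
$$
\bigvee_{i\in J}\theta_i H^2_{\mathbb C^n} = \Bigl(\bigvee_{i\in J}\theta_i H^2\Bigr)\oplus\cdots\oplus\Bigl(\bigvee_{i\in J}\theta_i H^2\Bigr).
$$
By the scalar Beurling–Lax–Halmos theorem, $\bigvee_{i\in J}\theta_i H^2 = \theta_d H^2$ where $\theta_d = \mathrm{GCD}\{\theta_i : i\in J\}$; hence the right-hand side equals $\theta_d H^2_{\mathbb C^n} = (\theta_d I_n) H^2_{\mathbb C^n}$. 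By the uniqueness clause of the Beurling–Lax–Halmos theorem, the inner function realizing this subspace is unique up to a unitary constant right factor, so $\mathrm{GCD}_\ell\{\Theta_i\} = \theta_d I_n$ in the sense of equality adopted in the paper. For $\mathrm{GCD}_r$ I would apply the same argument to $\widetilde\Theta_i = \widetilde{\theta_i}\,I_n$ (note $\widetilde{\theta_i I_n} = \overline{\theta_i(\bar z)}I_n$ is again a scalar inner function times $I_n$), obtaining $\widetilde{\Theta_d'} H^2_{\mathbb C^r} = \bigvee_i \widetilde\Theta_i H^2_{\mathbb C^n} = (\,\mathrm{GCD}\{\widetilde{\theta_i}\}\,)I_n \, H^2_{\mathbb C^n}$; since $\mathrm{GCD}\{\widetilde{\theta_i}\} = \widetilde{\theta_d}$ (the tilde operation is an order-reversing bijection on scalar inner functions preserving divisibility, so it commutes with GCD), we get $\Theta_d' = \theta_d I_n$ as well, proving (a).

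For part (b), the identical coordinatewise argument applies to intersections: $\bigcap_{i\in J}\theta_i H^2_{\mathbb C^n}$ splits as an $n$-fold sum of $\bigcap_{i\in J}\theta_i H^2 = \theta_m H^2$ where $\theta_m = \mathrm{LCM}\{\theta_i\}$, giving $\mathrm{LCM}_\ell\{\Theta_i\} = \theta_m I_n$; and the tilde trick handles $\mathrm{LCM}_r$ exactly as before, using that the tilde operation commutes with LCM of scalar inner functions.

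The only genuinely delicate point is verifying that the span (resp. intersection) of a family of subspaces of the form $N_i \oplus \cdots \oplus N_i$ inside a fixed orthogonal sum $H^2\oplus\cdots\oplus H^2$ is again coordinatewise, i.e. equals $(\bigvee_i N_i)\oplus\cdots\oplus(\bigvee_i N_i)$ (resp. $(\bigcap_i N_i)\oplus\cdots\oplus(\bigcap_i N_i)$). The intersection case is immediate. For the span, the inclusion $\supseteq$ needs the observation that each $N_j\oplus 0\oplus\cdots\oplus 0$, and more generally each ``single-coordinate'' copy, is already contained in $N_j\oplus\cdots\oplus N_j$, so $\bigvee_i N_i$ in any one coordinate slot is dominated by the full span; the inclusion $\subseteq$ is clear since each generator lies in the right-hand side. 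I expect this bookkeeping to be the main (and essentially only) obstacle, and it is routine. One should also remark that GCD and LCM of an arbitrary family of scalar inner functions exist and are realized by Beurling's theorem applied to the span/intersection of the corresponding shift-invariant subspaces of $H^2$, which is standard; with that in hand the proof is complete.
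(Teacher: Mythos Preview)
Your proposal is correct and follows essentially the same approach as the paper: split $\Theta_i H^2_{\mathbb C^n}$ coordinatewise as $\bigoplus_{j=1}^n \theta_i H^2$, compute the span (resp.\ intersection) coordinatewise to obtain $\bigoplus_{j=1}^n \theta_d H^2$ (resp.\ $\bigoplus_{j=1}^n \theta_m H^2$), and handle the right versions via the tilde. One minor slip: the tilde on scalar inner functions is order-\emph{preserving}, not order-reversing (if $\theta_1 \mid \theta_2$ then $\widetilde{\theta_2} = \widetilde{\theta_1}\,\widetilde{\theta_3}$), though your conclusion that it commutes with $\mathrm{GCD}$ and $\mathrm{LCM}$ is correct for that very reason.
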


\begin{proof}
(a) If $\Theta_d=\hbox{\rm GCD}_{\ell}\{\Theta_i: i \in J\}$, then
$$
\Theta_d H^2_{\mathbb C^n}=\bigvee_{i \in J} \Theta_i H^2_{\mathbb
C^n} =\bigoplus_{j=1}^n \bigvee_{i \in J} \theta_i H^2
=\bigoplus_{j=1}^n \theta_d H^2,
$$
which implies that $\Theta_d= {\theta_d} I_n$ with
$\theta_d=\text{\rm GCD}\,\,\{\theta_i : i \in J \}$. \  If instead
$\Theta_d=\hbox{\rm GCD}_{r}\{\Theta_i: i \in J\}$ then
$\widetilde{\Theta}_d=\hbox{\rm GCD}_{\ell}\{\widetilde{\Theta}_i: i
\in J\}$. \  Thus we have
$\widetilde{\Theta}_d={\widetilde{\theta}_d} I_n$ and hence,
$\Theta_d={\theta_d} I_n$. \

(b) If $\Theta_m=\hbox{\rm LCM}_{\ell}\{\Theta_i: i \in J\}$, then
$$
\Theta_m H^2_{\mathbb C^n}=\bigcap_{i \in J} \Theta_i H^2_{\mathbb
C^n} =\bigoplus_{j=1}^n \bigcap_{i \in J} \theta_i
H^2=\bigoplus_{j=1}^n \theta_m H^2,
$$
which implies that $\Theta_m={\theta_m} I_n$ with
$\theta_m=\text{\rm LCM}\,\,\{\theta_i : i \in J \}$. \  If instead
$\Theta_m=\hbox{\rm LCM}_{r}\{\Theta_i: i \in J\}$, then the same
argument as in (a) gives the result. \
\end{proof}

\medskip

In view of Lemma \ref{lem2.1}, if $\Theta_i=\theta_i I_n$ for an
inner function $\theta_i$ ($i\in J)$, we can define the greatest
common inner divisor $\Theta_d$ and the least common inner multiple
$\Theta_m$ of the $\Theta_i$ by
$$
\begin{aligned}
&\Theta_d\equiv\hbox{GCD}\,\{\Theta_i:i\in
J\}:=\hbox{GCD}_{\ell}\,\{\Theta_i:i\in J\}
=\hbox{GCD}_r\,\{\Theta_i:i\in J\};\\
&\Theta_m\equiv\hbox{LCM}\,\{\Theta_i:i\in
J\}:=\hbox{LCM}_{\ell}\,\{\Theta_i:i\in J\}
=\hbox{LCM}_r\,\{\Theta_i:i\in J\}\,:
\end{aligned}
$$
they are both diagonal matrices. \

\medskip
For $\Phi\in L^\infty_{M_n}$ we write
$$
\Phi_+\label{Phi_+}:=P_n(\Phi)\in H^2_{M_n} \quad\hbox{and}\quad
\Phi_-\label{Phi_-}:=\bigl[P_n^\perp (\Phi)\bigr]^* \in H^2_{M_n}.
$$
Thus we can write $\Phi=\Phi_-^*+\Phi_+\,$. \  Suppose
$\Phi=[\varphi_{ij}] \in L^\infty_{M_n}$ is such that $\Phi^*$ is of
bounded type. \  Then we may write
$\varphi_{ij}=\theta_{ij}\overline{b}_{ij}$, where $\theta_{ij}$ is
an inner function and $\theta_{ij}$ and $b_{ij}$ are coprime. \ Thus
if $\theta$ is the least common inner multiple of $\theta_{ij}$'s
then we can write
\begin{equation}\label{2.6}
\Phi=[\varphi_{ij}]=[\theta_{ij}\overline{b}_{ij}]=[\theta
\overline{a}_{ij}]= \Theta A^* \quad (\Theta\equiv\theta I_n,\
A\equiv[a_{ij}] \in H^{2}_{M_n}).
\end{equation}
We note that in the factorization (\ref{2.6}), $A(\alpha)$ is
nonzero whenever $\theta(\alpha)=0$. \  Let $\Phi=\Phi_-^*+\Phi_+\in
L^\infty_{M_n}$ be such that $\Phi$ and $\Phi^*$ are of bounded
type. \  Then in view of (\ref{2.6}) we can write
$$
\Phi_+= \Theta_1 A^* \quad\hbox{and}\quad \Phi_-= \Theta_2 B^*,
$$
where $\Theta_i =\theta_i I_n$ with an inner function $\theta_i$ for
$i=1,2$ and $A,B\in H^{2}_{M_n}$. \  In particular, if $\Phi\in
L^\infty_{M_n}$ is rational then the $\theta_i$ can be chosen as
finite Blaschke products, as we observed in (\ref{2.4}). \

By contrast with scalar-valued functions, in (\ref{2.6}) $\Theta$
and $A$ need not be (right) coprime: for instance, if
$\Phi:=\left[\begin{smallmatrix} z&z\\ z&z\end{smallmatrix}\right]$
then we can write
$$
\Phi=\Theta A^* = \begin{bmatrix} z&0\\ 0&z\end{bmatrix}
\begin{bmatrix} 1&1\\ 1&1\end{bmatrix},
$$
but $\Theta:=\left[\begin{smallmatrix} z&0\\
0&z\end{smallmatrix}\right]$ and $A:=\left[\begin{smallmatrix} 1&1\\
1&1\end{smallmatrix}\right]$ are not right coprime because
$\frac{1}{\sqrt{2}} \left[\begin{smallmatrix} z&-z\\
1&1\end{smallmatrix}\right]$ is a common right inner divisor, i.e.,
\begin{equation}\label{2.7}
\Theta=\frac{1}{\sqrt{2}} \begin{bmatrix} 1&z\\
-1&z\end{bmatrix}
\,\cdot\, \frac{1}{\sqrt{2}} \begin{bmatrix} z&-z\\
1&1\end{bmatrix}\quad\hbox{and}\quad A=\sqrt{2} \begin{bmatrix}
0&1\\ 0&1\end{bmatrix} \,\cdot\, \frac{1}{\sqrt{2}} \begin{bmatrix}
z&-z\\ 1&1\end{bmatrix}.
\end{equation}

If $\Omega=\hbox{GCD}_{\ell}\,\{A,\Theta\}$ in the representation
(\ref{2.6}):
$$
\Phi=\Theta A^*=A^*\Theta \quad\hbox{($\Theta\equiv \theta I_n$ for
an inner function $\theta$)},
$$
then $\Theta=\Omega \Omega_{\ell}$ and $A=\Omega A_{\ell}$ for some inner
matrix $\Omega_{\ell}$ (where $\Omega_{\ell}\in H^2_{M_n}$ because
$\hbox{det}\,\Theta$ is not identically zero) and some $A_l \in
H^{2}_{M_n}$. \  Therefore if $\Phi^*\in L^\infty_{M_n}$ is of
bounded type then we can write
\begin{equation}\label{2.8}
\Phi={A_{\ell}}^*\Omega_{\ell},\quad\hbox{where $A_{\ell}$ and $\Omega_{\ell}$ are
left coprime.}
\end{equation}
$A_{\ell}^*\Omega_{\ell}$ is called the {\it left coprime factorization} of
$\Phi$; similarly, we can write
\begin{equation}\label{2.9}
\Phi=\Omega_r A_r^*, \quad\hbox{where $A_r$ and $\Omega_r$ are right
coprime.}
\end{equation}
In this case, $\Omega_r A_r^*$ is called the {\it right coprime
factorization} of $\Phi$. \

\begin{remark} \ (\cite[Corollary 2.5]{GHR}) \
As a consequence of the Beurling-Lax-Halmos Theorem, we can see that
\begin{equation}\label{RCD}
\Phi=\Omega_r A_r^*\ \hbox{(right coprime factorization)}\
\Longleftrightarrow \hbox{ker}\,H_{\Phi^*}=\Omega_r H_{\mathbb
C^n}^2.
\end{equation}
In fact, if $\Phi=\Omega_r A_r^*$ (right coprime factorization)
then it is evident that
\begin{equation*}
\hbox{ker}\, H_{\Phi^*} \supseteq \Omega_r H_{\mathbb C^n}^2.
\end{equation*}
From the Beurling-Lax-Halmos Theorem,
\begin{equation*}
\hbox{ker}\, H_{\Phi^*}=\Theta H_{\mathbb C^n}^2,
\end{equation*}
for some inner function $\Theta$, and hence $(I-P)( \Phi^* \Theta )=0$,
i.e., $\Phi^* = D \Theta^*$, for some $D \in H_{\mathbb C^n}^2$. \
We want to show that $\Omega_r=\Theta$ up to a unitary constant right factor. \
Since $\Theta H_{\mathbb C^n}^2 \supseteq \Omega_r H_{\mathbb C^n}^2$,
we have (cf. \cite[p.240]{FF})
that $\Omega_r = \Theta \Delta$ for some square inner function $\Delta$. \
Thus,
\begin{equation*}
D \Theta^*=\Phi^*=A_r \Omega_r^*=A_r \Delta^* \Theta^*,
\end{equation*}
which implies $A_r=D \Delta$, so that $\Delta$ is a common right
inner divisor of both $A_r$ and $\Omega_r$. \ But since $A_r$ and
$\Omega_r$ are right coprime, $\Delta$ must be a unitary constant. \
The proof of the converse implication is entirely similar. \qed
\end{remark}
\medskip

From now on, for notational convenience we write
$$
I_\omega\label{Iomega}:= \omega\, I_n\ \ \hbox{($\omega\in H^2$)}
\quad\hbox{and}\quad H_0^2\label{H_0^2} := I_z\, H^2_{M_n}.
$$
It is not easy to check the condition ``$B$ and $\Theta$ are
coprime" in the decomposition $F=B^*\Theta$ ($\Theta\equiv I_\theta$
is inner and $B\in H^2_{M_n}$). \  But if $F$ is rational (and hence
$\Theta$ is given in a form $\Theta\equiv I_\theta$ with a finite
Blaschke product $\theta$) then we can obtain a more tractable
criterion. \  To see this, we need to recall the notion of finite
Blaschke-Potapov product. \

Let $\lambda\in\mathbb D$ and write
$$
b_\lambda(z)\label{blambdaz}:=\frac{z-\lambda}{1-\overline \lambda
z},
$$
which is called a {\it Blaschke factor}. \  If $M$ is a closed
subspace of $\mathbb C^n$ then the matrix function of the form
$$
b_\lambda P_M+(I-P_M)\quad \hbox{($P_M:=$the orthogonal projection
of $\mathbb C^n$ onto $M$)}
$$
is called a {\it Blaschke-Potapov factor}\,; an $n\times n$ matrix
function $D$ is called {\it a finite Blaschke-Potapov product} if
$D$ is of the form
$$
D \label{Dz}=\nu \prod_{m=1}^M \Bigl(b_m P_m + (I-P_m)\Bigr),
$$
where $\nu$ is an $n\times n$ unitary constant matrix, $b_m$ is a
Blaschke factor, and $P_m$ is an orthogonal projection in $\mathbb
C^n$ for each $m=1,\cdots, M$. \  In particular, a scalar-valued
function $D$ reduces to a finite Blaschke product $D=\nu
\prod_{m=1}^M b_m$, where $\nu=e^{i\omega}$. \  It is also known
(cf. \cite{Po}) that an $n\times n$ matrix function $D$ is rational
and inner if and only if it can be represented as a finite
Blaschke-Potapov product. \

\medskip

Write $\mathcal{Z}(\theta)\label{Ztheta}$ for the set of zeros of an inner
function $\theta$. \  We then have:

\begin{lemma}\label{lem2.2}
Let $B\in H^\infty_{M_n}$ be rational and $\Theta=I_\theta$ with a
finite Blaschke product $\theta$. \  Then the following statements
are equivalent:
\begin{itemize}
\item[(a)] $B(\alpha)$ is invertible for each $\alpha \in \mathcal{Z}(\theta)$;
\item[(b)] $B$ and  $\Theta$ are right coprime;
\item[(c)] $B$ and  $\Theta$ are left coprime.
\end{itemize}
\end{lemma}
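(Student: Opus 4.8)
The plan is to prove the cycle of implications $(a)\Rightarrow(b)\Rightarrow(c)\Rightarrow(a)$, exploiting the fact that, because $\theta$ is a finite Blaschke product, $\Theta=I_\theta$ is a rational inner matrix function and hence all the common inner divisors in sight are themselves rational, so that they may be described pointwise via their zero sets and their ranges, using the Blaschke-Potapov product representation recalled just above. Throughout I would write $\mathcal{Z}(\theta)=\{\alpha_1,\dots,\alpha_k\}$ (with multiplicity) for the zeros of $\theta$ in $\mathbb{D}$.

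For $(a)\Rightarrow(b)$: suppose $B$ and $\Theta$ are \emph{not} right coprime, so there is a common right inner divisor $\Delta$ that is not a unitary constant; since $\Theta=I_\theta$ has $\det\Theta=\theta^n\not\equiv 0$, the divisor $\Delta$ is square and rational, hence a finite Blaschke-Potapov product of positive degree. Thus there is a point $\alpha\in\mathbb{D}$ and a nonzero subspace on which $\Delta(\alpha)$ is singular, so $\Delta(\alpha)$ has nontrivial kernel. Writing $\Theta=\Theta'\Delta$ and $B=B'\Delta$ with $\Theta',B'\in H^2_{M_n}$, we get that $\ker\Delta(\alpha)\subseteq\ker\Theta(\alpha)\cap\ker B(\alpha)$. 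But $\ker\Theta(\alpha)=\ker(\theta(\alpha)I_n)$ is nonzero only when $\theta(\alpha)=0$, i.e. $\alpha\in\mathcal{Z}(\theta)$, and in that case $\ker\Theta(\alpha)=\mathbb{C}^n$; so we would need $\ker B(\alpha)\neq\{0\}$ for some $\alpha\in\mathcal{Z}(\theta)$, i.e. $B(\alpha)$ not invertible, contradicting (a). (Here I would be a little careful about the case where a zero $\alpha$ of $\Delta$ is not a zero of $\theta$: then $\ker\Theta(\alpha)=\{0\}$, forcing $\ker\Delta(\alpha)=\{0\}$, so $\Delta$ can only pick up Blaschke-Potapov factors at points of $\mathcal{Z}(\theta)$ — this is the one spot that needs the pointwise divisibility argument spelled out, and I expect it to be the main technical obstacle.) The implication $(b)\Rightarrow(c)$, and likewise $(c)\Rightarrow$ the analogue of (a), can then be obtained by symmetry: passing to tildes, $\widetilde{B}$ is again rational, $\widetilde{\Theta}=\widetilde{I_\theta}=I_{\widetilde\theta}$ with $\widetilde\theta$ again a finite Blaschke product, and $\mathcal{Z}(\widetilde\theta)=\{\overline{1/\overline\alpha}\}$ is in bijection with $\mathcal{Z}(\theta)$ via $\alpha\mapsto\overline\alpha$ (on $\mathbb{T}$ one has $\widetilde\Theta^*\widetilde\Theta=I$); invertibility of $B(\alpha)$ for all $\alpha\in\mathcal{Z}(\theta)$ is equivalent to invertibility of $\widetilde{B}(\beta)=B(\overline\beta)^*$ for all $\beta\in\mathcal{Z}(\widetilde\theta)$, so condition (a) is tilde-invariant and left/right coprimeness are interchanged under tilde by definition.

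For the remaining implication $(c)\Rightarrow(a)$ — equivalently, $\neg(a)\Rightarrow\neg(c)$ — I would argue constructively: assume $B(\alpha)$ is singular for some $\alpha\in\mathcal{Z}(\theta)$, pick a unit vector $v\in\ker B(\alpha)^*$ (working on the left now), and let $P$ be the rank-one orthogonal projection onto $\mathbb{C}v$. Then $\Delta:=b_\alpha P+(I-P)$ is a Blaschke-Potapov factor, hence a nonconstant square rational inner function. One checks that $\Delta$ is a left inner divisor of $\Theta=I_\theta$: indeed $\theta=b_\alpha\,\theta_1$ for some finite Blaschke product $\theta_1$ (since $\alpha\in\mathcal{Z}(\theta)$), and $I_\theta=\Delta\cdot\bigl(\theta_1 P+b_\alpha\theta_1(I-P)\bigr)$, whose second factor is again rational inner. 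And $\Delta$ is a left inner divisor of $B$: since $v^*B(\alpha)=0$, the entry $v^*B$ vanishes at $\alpha$, so $b_\alpha$ divides $v^*B$ in $H^\infty_{M_{1\times n}}$, and a short computation shows $\Delta^*B=b_\alpha Pv v^*B+(I-P)B\in H^2_{M_n}$ wait — more carefully, $\Delta^{-1}B=\overline{b_\alpha}P B+(I-P)B$ and $\overline{b_\alpha}PB=\overline{b_\alpha}vv^*B$ is analytic because $v^*B$ carries the zero $b_\alpha$; hence $B=\Delta\cdot(\Delta^{-1}B)$ with analytic cofactor. Thus $\Delta$ is a nonconstant common left inner divisor of $B$ and $\Theta$, so they are not left coprime, proving $\neg(c)$. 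The key facts I rely on from the excerpt are the Beurling-Lax-Halmos Theorem, the Blaschke-Potapov representation of rational inner matrix functions, and the observation (from the paragraph before Lemma 2.2) that a left inner divisor of an $H^2_{M_n}$-function with nonvanishing determinant is automatically square.
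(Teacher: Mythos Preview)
The paper does not actually supply a proof of this lemma; it simply cites \cite[Lemma~3.10]{CHL}. So there is no argument in the paper to compare yours against, and I can only assess your proposal on its own merits.

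Your argument is essentially correct. The two substantive steps---$(a)\Rightarrow(b)$ via a pointwise kernel inclusion forced by a nontrivial common right inner divisor, and $\lnot(a)\Rightarrow\lnot(c)$ via an explicit Blaschke--Potapov factor built from a vector in $\ker B(\alpha)^*$---both go through as you describe. For the first step you correctly observe that any common right inner divisor $\Delta$ of $\Theta=I_\theta$ must be a finite Blaschke--Potapov product (this is exactly the content of Lemma~\ref{lem2.8} and the remark following it), and that any $\alpha$ at which $\Delta(\alpha)$ is singular must lie in $\mathcal{Z}(\theta)$ because $\ker\Delta(\alpha)\subseteq\ker\Theta(\alpha)$. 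For the second step, your computation that $\Delta^*B\in H^\infty_{M_n}$ is valid: $v^*B\in H^\infty_{M_{1\times n}}$ vanishes at $\alpha$, so each entry is divisible by $b_\alpha$ in $H^\infty$ (this holds for arbitrary $H^\infty$ functions, so the rationality of $B$ is not even needed here).

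Two small points of presentation. First, the organization as a ``cycle $(a)\Rightarrow(b)\Rightarrow(c)\Rightarrow(a)$'' does not quite match what you do: you prove $(a)\Rightarrow(b)$ and $(c)\Rightarrow(a)$ directly, and then the tilde symmetry gives $(a)\Rightarrow(c)$ and $(b)\Rightarrow(a)$, so you really obtain $(a)\Leftrightarrow(b)$ and $(a)\Leftrightarrow(c)$ separately rather than a cycle through $(b)\Rightarrow(c)$. This is fine logically but worth stating cleanly. Second, there is a slip in your description of $\mathcal{Z}(\widetilde\theta)$: since $\widetilde\theta(z)=\overline{\theta(\overline z)}$, one has $\mathcal{Z}(\widetilde\theta)=\{\overline\alpha:\alpha\in\mathcal{Z}(\theta)\}$, not $\{\overline{1/\overline\alpha}\}$; you do use the correct bijection $\alpha\mapsto\overline\alpha$ immediately afterward, so this appears to be a typo rather than a genuine error.
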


\begin{proof} See \cite [Lemma 3.10] {CHL}. \  \end{proof}

\bigskip

If $\Theta\in H^\infty_{M_n}$ is an inner matrix function, we write
$$
\begin{aligned}
\mathcal H(\Theta)\label{HTheta}&:=(\Theta H^2_{\mathbb C_n})^{\perp};\\
\mathcal H_{\Theta}\label{H_Theta}&:=(\Theta H^2_{M_n})^{\perp};\\
\mathcal K_{\Theta}\label{K_Theta}&:=(H^2_{M_n}\Theta)^{\perp}.
\end{aligned}
$$
If $\Theta=I_\theta$ for an inner function $\theta$ then
$\mathcal{H}_\Theta=\mathcal{K}_\Theta$ and if $n=1$, then
$\mathcal{H}(\Theta)=\mathcal{H}_{\Theta}= \mathcal{K}_{\Theta}$\,.
\

\bigskip

The following lemma is useful in the sequel.

\smallskip

\begin{lemma}\label{lem2.8}
If $\Theta \in H_{M_n}^2$ is an inner matrix function then
$$
\hbox{\rm dim}\, \mathcal H(\Theta)< \infty \ \Longleftrightarrow \
\Theta\ \hbox{is a finite Blaschke-Potapov product}.
$$
\end{lemma}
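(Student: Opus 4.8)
The statement is an equivalence; the direction ($\Leftarrow$) is the routine one and the direction ($\Rightarrow$) is where the real work lies. For ($\Leftarrow$), suppose $\Theta$ is a finite Blaschke--Potapov product, say $\Theta=\nu\prod_{m=1}^M(b_mP_m+(I-P_m))$. Each factor $b_mP_m+(I-P_m)$ is inner with a finite-dimensional model space (indeed $\mathcal H(b_mP_m+(I-P_m))$ is contained in $b_m^{-1}\mathbb C^n\ominus\mathbb C^n$, essentially $(\operatorname{ran}P_m)\otimes\mathcal H(b_m)$, which is finite-dimensional). Then I would use the standard telescoping identity for model spaces of a product of inner functions: if $\Theta=\Theta_1\Theta_2$, then $\mathcal H(\Theta)=\mathcal H(\Theta_1)\oplus\Theta_1\mathcal H(\Theta_2)$, so $\dim\mathcal H(\Theta_1\Theta_2)=\dim\mathcal H(\Theta_1)+\dim\mathcal H(\Theta_2)$. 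Iterating over the $M$ factors (the unitary constant $\nu$ contributes nothing) gives $\dim\mathcal H(\Theta)=\sum_m\dim\mathcal H(b_mP_m+(I-P_m))<\infty$.

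For ($\Rightarrow$), assume $\dim\mathcal H(\Theta)<\infty$; I want to produce a finite Blaschke--Potapov factorization. The key structural fact is that $\mathcal H(\Theta)$ is invariant under the backward shift $S^*=T_{\bar z I_n}$ (equivalently, $\Theta H^2_{\mathbb C^n}$ is invariant under $S$), and on a finite-dimensional space $S^*|_{\mathcal H(\Theta)}$ is a matrix with eigenvalues $\lambda_1,\dots$ lying in $\mathbb D$ (a forward-shift-invariant subspace has no point spectrum issues; compressions of the shift have spectrum in the closed disk, and finite rank forces it into the open disk together with the fact that $\Theta$ nonconstant inner cannot have a constant divisor). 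I would then argue by induction on $d:=\dim\mathcal H(\Theta)$. Pick an eigenvector: there exist $\lambda\in\mathbb D$ and a nonzero constant vector $e\in\mathbb C^n$ together with $g\in\mathcal H(\Theta)$ with $S^*g=\lambda g$; standard model theory identifies such $g$ with $k_\lambda(z)e=\frac{e}{1-\bar\lambda z}$ composed appropriately, and the condition $k_\lambda\otimes e\in\mathcal H(\Theta)$ is equivalent to $\Theta(\lambda)^*e=0$, i.e. $e\perp\operatorname{ran}\Theta(\lambda)$. Then I would peel off one Blaschke--Potapov factor: with $M=(\mathbb C e)^\perp$ wait---more precisely, let $M$ be the one-dimensional span handled so that $\Theta=\Theta'\,(b_\lambda P+(I-P))$ for an appropriate rank-one projection $P$ and a smaller inner $\Theta'$; by the telescoping identity $\dim\mathcal H(\Theta')=\dim\mathcal H(\Theta)-\dim\mathcal H(b_\lambda P+(I-P))<d$, and the inductive hypothesis finishes it. This peeling-off step is exactly the content of Potapov's factorization in the rational/finite-dimensional case; alternatively I would invoke \cite{Po} directly to say $\Theta$ rational inner $\iff$ finite Blaschke--Potapov product, and separately prove $\dim\mathcal H(\Theta)<\infty\iff\Theta$ rational.

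That last reformulation is probably the cleanest route: it suffices to show $\dim\mathcal H(\Theta)<\infty\iff\Theta$ is rational, and then quote the already-recalled fact (cf. \cite{Po}) that an $n\times n$ inner matrix function is rational if and only if it is a finite Blaschke--Potapov product. For $\dim\mathcal H(\Theta)<\infty\Rightarrow\Theta$ rational: the reproducing-kernel space $\mathcal H(\Theta)$ has finite dimension, so its reproducing kernel $K_w(z)=\frac{I_n-\Theta(z)\Theta(w)^*}{1-\bar w z}$ is a finite-rank kernel; evaluating and using that a finite-dimensional space of $\mathbb C^n$-valued functions with rational reproducing kernel consists of rational functions forces each column of $I_n-\Theta(z)\Theta(w)^*$, hence $\Theta$ itself, to be rational in $z$. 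Conversely if $\Theta$ is rational inner, the kernel $K_w$ is a rational function of $(z,\bar w)$ of bounded McMillan degree, so $\mathcal H(\Theta)$ is finite-dimensional; this is the same telescoping computation as in ($\Leftarrow$) once one has the Blaschke--Potapov form.

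\textbf{Main obstacle.} The crux is the peeling-off / factorization step in ($\Rightarrow$): showing that a nonconstant inner $\Theta$ with $\dim\mathcal H(\Theta)<\infty$ admits a splitting $\Theta=\Theta'\,\Pi$ with $\Pi$ a single Blaschke--Potapov factor and $\dim\mathcal H(\Theta')$ strictly smaller. The delicate points are locating a genuine eigenvalue of $S^*|_{\mathcal H(\Theta)}$ inside $\mathbb D$ (ruling out the boundary, which uses that $\Theta$ has no singular inner or infinite Blaschke divisor precisely because $\mathcal H(\Theta)$ is finite-dimensional) and identifying the correct projection $P$ so that the residual $\Theta'$ is again inner with integer-dropped dimension; here I would rely on \cite{Po} or on the reformulation through rationality to avoid reproving Potapov's theorem from scratch.
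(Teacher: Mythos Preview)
Your proposal is correct, and your ``cleanest route'' (reduce to showing $\Theta$ is rational, then invoke Potapov) is exactly the strategy the paper adopts. The difference lies in how rationality is extracted from $\dim\mathcal H(\Theta)<\infty$. You go through the reproducing kernel $K_w(z)=(I_n-\Theta(z)\Theta(w)^*)/(1-\bar w z)$ and argue it must be rational; the paper instead observes that $\dim\mathcal H(\Theta)=\operatorname{rank}H_{\Theta^*}^*$, and since each scalar entry $\theta_{ij}$ of $\Theta$ satisfies $\operatorname{rank}H_{\overline{\theta_{ij}}}^*\le\operatorname{rank}H_{\Theta^*}^*<\infty$, Kronecker's Lemma forces every $\theta_{ij}$ to be rational. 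This entry-wise Hankel-rank argument is shorter and avoids the somewhat imprecise step in your kernel sketch (``a finite-dimensional space with rational reproducing kernel consists of rational functions''). Your alternative inductive route---peeling off a Blaschke--Potapov factor via an eigenvector of $S^*|_{\mathcal H(\Theta)}$---is a genuinely different and self-contained approach that would also succeed, at the cost of reproving the finite-dimensional case of Potapov's theorem rather than citing it; the paper simply quotes \cite{Po} once rationality is in hand. For the ($\Leftarrow$) direction, both you and the paper use the telescoping identity $\mathcal H(\Theta_1\Theta_2)=\mathcal H(\Theta_1)\oplus\Theta_1\mathcal H(\Theta_2)$; you apply it directly to the Blaschke--Potapov factors, while the paper first embeds $\Theta$ as a left divisor of a diagonal $I_\delta$ with $\delta$ a finite scalar Blaschke product and then telescopes---either works.
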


\begin{proof}
Let
$$
\delta:=\hbox{GCD}\left\{\omega: \omega \ \hbox{is inner}, \Theta  \
\hbox{is a left inner divisor of} \ \Omega=I_{\omega}\right\} \quad
\hbox{and} \quad \Delta:=I_{\delta}\,,
$$
in other words, $\delta$ is a `minimal' inner function such that
$\Delta\equiv I_\delta=\Theta\Theta_1$ for some inner matrix
function $\Theta_1$. \  Note that
$$
\aligned \Theta  \ \hbox{is a finite Blaschke-Potapov product}\
&\Longrightarrow\ \delta \
\hbox{is a finite Blaschke product}\\
&\Longrightarrow\ \hbox{dim}\,\mathcal H (\Delta)< \infty.
\endaligned
$$
Observe that
$$
\mathcal H (\Delta)=\mathcal H (\Theta \Theta_1)=\mathcal H (\Theta)
\bigoplus \Theta \mathcal H(\Theta_1)\,.
$$
Thus if $\Theta$ is a finite Blaschke-Potapov product, then
$\hbox{dim}\,\mathcal H (\Theta)< \infty$. \  Conversely, we suppose
$\hbox{dim}\,\mathcal H (\Theta)< \infty$. \ Write
$\Theta:=[\theta_{ij}]_{ij=1}^n$. \  Since
$$
\hbox{rank}\,H_{\overline{\theta}_{ij}}^* \leq
\hbox{rank}\,H_{\Theta^*}^* =\hbox{dim}\,\mathcal H(\Theta)<
\infty\,,
$$
it follows that $\theta_{ij}$'s are rational functions. \  Thus
$\Theta$ is a rational inner matrix function and hence a finite
Blaschke-Potapov product. \
\end{proof}

\medskip

Lemma \ref{lem2.8} implies that every inner divisor of a rational inner
function (i.e., a finite Blaschke-Potapov product) is also a finite
Blaschke-Potapov product: indeed, if $\Theta$ is a finite
Blaschke-Potapov product and $\Theta_1$ is
an inner divisor of $\Theta$, then
$\hbox{dim}\, \mathcal H(\Theta_1) \leq \hbox{dim}\, \mathcal
H(\Theta)<\infty$, and hence by Lemma \ref{lem2.8},
$\Theta_1$ is a finite Blaschke-Potapov product.

\bigskip

From Lemma \ref{lem2.8}, we know that every inner divisor of
$B_{\lambda}:=I_{b_{\lambda}} \in H_{M_n}^\infty$ is a finite
Blaschke-Potapov product. \  However we can say more:

\smallskip

\begin{lemma} \label{lemma2.3}
Every inner divisor of $B_{\lambda}:=I_{b_{\lambda}} \in
H_{M_n}^\infty$ is a Blaschke-Potapov factor. \
\end{lemma}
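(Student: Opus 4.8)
The plan is to show that an inner divisor $\Delta$ of $B_\lambda := I_{b_\lambda}$ must itself be of the form $b_\lambda P_M + (I - P_M)$ for some subspace $M \subseteq \mathbb{C}^n$, which is exactly the definition of a Blaschke-Potapov factor. By Lemma~\ref{lem2.8} we already know $\Delta$ is a finite Blaschke-Potapov product, say $\Delta = \nu \prod_{m=1}^{M}(b_m P_m + (I - P_m))$; the point is to force the product to collapse to a single factor with $b_m = b_\lambda$. First I would use the cofactor/determinant computation: $\det B_\lambda = b_\lambda^n$, and since $B_\lambda = \Delta \Delta_1$ for some inner $\Delta_1 \in H^2_{M_n}$ (note $\Delta$ is square because $\det B_\lambda \not\equiv 0$), we get $\det\Delta \cdot \det\Delta_1 = b_\lambda^n$. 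Since $\det\Delta$ and $\det\Delta_1$ are scalar inner functions dividing $b_\lambda^n$, each is a scalar multiple of a power of $b_\lambda$; in particular $\mathcal{Z}(\det\Delta) \subseteq \{\lambda\}$, so the only zero of $\Delta$ occurs at $\lambda$.

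Next I would pin down the local structure of $\Delta$ at the point $\lambda$. Writing $\Delta(z) = (b_\lambda(z))^{?}(\cdots)$ near $\lambda$ via the Smith–McMillan / local factorization of a rational inner matrix function, the elementary divisors of $\Delta$ at $\lambda$ are powers $b_\lambda^{k_1}, \dots, b_\lambda^{k_n}$ with $k_i \ge 0$. The divisibility $\Delta \,|\, B_\lambda$ means $\Delta^{-1} B_\lambda = \Delta_1 \in H^2_{M_n}$ is bounded (inner), which forces each local elementary divisor exponent $k_i \le 1$: if some $k_i \ge 2$ then $\Delta_1$ would have a pole at $\lambda$, contradicting $\Delta_1 \in H^\infty_{M_n}$. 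So all $k_i \in \{0,1\}$, i.e. $\Delta$ has a local Smith form $\mathrm{diag}(b_\lambda, \dots, b_\lambda, 1, \dots, 1)$ at $\lambda$ (with, say, $d$ copies of $b_\lambda$) and no zeros elsewhere, which is precisely the statement that $\Delta = V_1\big(b_\lambda I_d \oplus I_{n-d}\big)V_2$ for unitary constants $V_1, V_2$. An inner function of that form — a degree-$d$ rational inner matrix function whose only zero is a single simple zero at $\lambda$ of multiplicity $d$ — is a Blaschke-Potapov factor $b_\lambda P_M + (I - P_M)$ with $M$ a $d$-dimensional subspace; this is the standard normal form for rational inner matrix functions with a single Blaschke factor (cf.\ \cite{Po}).

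Alternatively, and perhaps more cleanly, I would argue directly on $\mathcal{H}(\Delta) \subseteq \mathcal{H}(B_\lambda) = \mathbb{C}^n$ (constants), since $\Delta H^2_{\mathbb{C}^n} \supseteq B_\lambda H^2_{\mathbb{C}^n}$ gives $\mathcal{H}(\Delta) \subseteq \mathcal{H}(B_\lambda)$, and $\mathcal{H}(B_\lambda)$ is exactly the $n$-dimensional space of constant $\mathbb{C}^n$-valued functions. So $\mathcal{H}(\Delta) = M$ for some subspace $M \subseteq \mathbb{C}^n$. Then I would show $\Delta H^2_{\mathbb{C}^n} = M^\perp \oplus b_\lambda(M \otimes H^2)$-type decomposition forces $\Delta = b_\lambda P_M + (I - P_M)$: concretely, $S^*$ restricted to $\mathcal{H}(\Delta)$ must be nilpotent of a form compatible with the model operator, and since $\mathcal{H}(\Delta)$ consists of constants, $S^*|_{\mathcal{H}(\Delta)} = 0$, which is the characterization of a Blaschke-Potapov factor (as opposed to a longer product, whose model space would not consist of constants).

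The main obstacle I expect is the passage from ``$\Delta$ is a finite Blaschke-Potapov product whose only zero is a simple zero at $\lambda$'' to ``$\Delta$ is a single Blaschke-Potapov factor.'' This requires knowing the normal form for rational inner matrix functions supported at one point, or equivalently understanding how Blaschke-Potapov factors with the same $\lambda$ but different projections multiply — in general $(b_\lambda P + (I-P))(b_\lambda Q + (I-Q))$ is \emph{not} a single factor (its determinant is $b_\lambda^{\mathrm{rk}P + \mathrm{rk}Q}$, and it has a zero of higher multiplicity), so the constraint ``$\Delta \,|\, B_\lambda$'' (which caps the determinant degree at $n$ and the local exponents at $1$) is exactly what rules this out. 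I would lean on the determinant/McMillan-degree bookkeeping together with Lemma~\ref{lem2.2}-style coprimeness reasoning to make this rigorous, the key quantitative input being that $\Delta^{-1}B_\lambda$ must remain analytic.
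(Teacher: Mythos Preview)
Your model-space approach (the ``alternatively'' paragraph) is correct and is a genuinely different, arguably cleaner, route than the paper's. The paper starts from the Potapov factorization $D=\nu\prod_{i=1}^m D_i$ with $D_i=b_\lambda P_i+(I-P_i)$ (after first forcing all $\lambda_i=\lambda$), then peels off the last factor: from $B_\lambda D_m^*=P_m+b_\lambda(I-P_m)=E\nu\prod_{i=1}^{m-1}D_i$ one evaluates at $\lambda$ to get $\ker P_m\supseteq\hbox{ran}\,P_{m-1}$, i.e.\ $P_mP_{m-1}=0$, so $D_{m-1}D_m$ collapses to a single factor; induction finishes it. Your argument bypasses this induction entirely by observing $\mathcal H(\Delta)\subseteq\mathcal H(B_\lambda)$, identifying $\mathcal H(\Delta)$ with a subspace $M$, and invoking Beurling--Lax--Halmos uniqueness to pin $\Delta$ down as $b_\lambda P_M+(I-P_M)$. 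This buys you a one-step proof with no Potapov factorization needed as input; the paper's approach, on the other hand, makes the combinatorics of how the factors merge completely explicit, which is useful later (e.g.\ in Lemma~\ref{lem4.3}).

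One small correction: $\mathcal H(B_\lambda)$ is not the space of constants but the $n$-dimensional span $\{c\,k_\lambda:c\in\mathbb C^n\}$ where $k_\lambda(z)=(1-\bar\lambda z)^{-1}$; your identification with constants only holds for $\lambda=0$. Correspondingly $S^*|_{\mathcal H(\Delta)}=\bar\lambda I$, not $0$. This does not damage the argument: with $\mathcal H(\Delta)=\{c\,k_\lambda:c\in M\}$ one computes $(\mathcal H(\Delta))^\perp=\{f\in H^2_{\mathbb C^n}:P_Mf(\lambda)=0\}=(b_\lambda P_M+(I-P_M))H^2_{\mathbb C^n}$, and Beurling--Lax--Halmos gives $\Delta=b_\lambda P_M+(I-P_M)$ up to a unitary constant right factor, as desired.

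Your first approach via Smith--McMillan is less solid as written. The Smith form $\Delta=V_1\,\hbox{diag}(b_\lambda^{k_1},\ldots,b_\lambda^{k_n})\,V_2$ holds with $V_1,V_2$ unimodular (analytic with analytic inverse), not unitary constants, so the jump to ``$\Delta=V_1(b_\lambda I_d\oplus I_{n-d})V_2$ for unitary constants'' is exactly the gap you flag in your last paragraph and do not close there. Your model-space argument closes it, so lead with that.
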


\begin{proof} Suppose $D$ is an inner divisor of $B_{\lambda}$. \
By Lemma \ref{lem2.8}, $D$ is a Blaschke-Potapov product
of the form
$$
D=\nu \prod_{i=1}^m D_i \ \ \hbox{with}\ D_i:=b_{\lambda_i}P_i +
(I-P_i)\quad \hbox{($m \leq n$)}.
$$
We write $B_{\lambda}=ED$ for some $E\in H^2_{M_n}$. \  Observe
that $B_{\lambda}(\lambda_i)$ is not invertible, so that
$\lambda_i=\lambda$ for all $i=1,2\hdots,m$. \  We thus have
$$
P_m + b_{\lambda}(I-P_m)=B_{\lambda}D_m^*= E \cdot \nu
\prod_{i=1}^{m-1} D_i\, .
$$
Then we have
$$
\hbox{ker}\, P_m=\hbox{ker}\, (B_{\lambda}D_m^*)(\lambda)\supseteq
\hbox{ker}\, D_{m-1}(\lambda)=\hbox{ran}\, P_{m-1},
$$
which implies that $P_m P_{m-1}=0$, and hence $P_m$ and $P_{m-1}$
are orthogonal. \  Thus $D_{m-1}D_m$ is a Blaschke-Potapov factor. \
Now an induction shows that $D$ is a Blaschke-Potapov factor. \
\end{proof}

\medskip

By the aid of Lemma \ref{lemma2.3}, we can show that the equivalence
(b)$\Leftrightarrow$(c) in Lemma \ref{2.3} fails if $\Theta$ is not
a {\it constant} diagonal matrix. \  To see this, let
$$
\Theta_1:=\begin{bmatrix}
b_{\alpha}&0\\0&1\end{bmatrix}\quad\hbox{and}\quad
\Theta_2:=\frac{1}{\sqrt{2}}\begin{bmatrix}z&-z\\1&1\end{bmatrix}\,.
$$
Then $\Theta:=\Theta_1\Theta_2$ and $\Theta_1$ are not left coprime.
Observe that
$$
\widetilde{\Theta}_1:=\begin{bmatrix}
b_{\overline{\alpha}}&0\\0&1\end{bmatrix}, \ \
\widetilde{\Theta}=\frac{1}{\sqrt{2}}
\begin{bmatrix}zb_{\overline{\alpha}}&1\\-zb_{\overline{\alpha}}&1\end{bmatrix}.
$$
Since every right inner divisor $\Delta$ of $\widetilde{\Theta}_1$
is an inner divisor of
$B_{\overline{\alpha}}:=I_{b_{\overline{\alpha}}}$, it follows from
Lemma \ref{lemma2.3} that $\Delta=\widetilde{\Theta}_1$ (up to a unitary
constant right factor). Suppose that $\Theta$ and $\Theta_1$ are not
right coprime. Then $\widetilde{\Theta}$ and $\widetilde{\Theta}_1$
are not left coprime and hence $\widetilde{\Theta}_1$ is a left
inner divisor of $\widetilde{\Theta}$. Write
$$
\frac{1}{\sqrt{2}}\begin{bmatrix}zb_{\overline{\alpha}}&1\\
-zb_{\overline{\alpha}}&1\end{bmatrix}=\begin{bmatrix}
b_{\overline{\alpha}}&0\\0&1\end{bmatrix}\begin{bmatrix}
f_{11}&f_{12}\\f_{21}&f_{21}\end{bmatrix} \qquad (f_{ij} \in H^2).
$$
Then we have $\frac{1}{\sqrt{2}}=b_{\overline{\alpha}}f_{12}$, so
that
$f_{12}=\frac{1}{\sqrt{2}}\overline{b_{\overline{\alpha}}}\notin
H^2$, giving a contradiction.

\bigskip

For $\mathcal{X}$ a subspace of $H^2_{M_n}$, we write $P_{\mathcal X}$ for
the orthogonal projection from $H^2_{M_n}$ onto $\mathcal X$.

\medskip

\begin{lemma}\label{lem2.4}
Let $\Theta\in H^\infty_{M_n}$ be an inner matrix function and $A
\in H_{M_n}^2$. \  Then the following hold:
\begin{enumerate}
\item[(a)] $A \in \mathcal K_{\Theta} \Longleftrightarrow \Theta A^* \in H_0^2$;
\item[(b)] $A \in \mathcal H_{\Theta} \Longleftrightarrow A^* \Theta  \in H_0^2$;
\item[(c)] $P_{H_0^2}(\Theta A^*)=\Theta\Bigl(P_{\mathcal K_{\Theta}}A\Bigr)^*$.
\end{enumerate}
\end{lemma}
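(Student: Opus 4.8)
The plan is to prove the three statements of Lemma \ref{lem2.4} by repeatedly using the block structure $H^2_{M_n}=\bigoplus_{k=1}^n H^2_{\mathbb C^n}$ (identifying a matrix function with the tuple of its columns) together with the basic fact that $\Theta H^2_{\mathbb C^n}=\ker H_{\Theta^*}^{\perp\perp}$ — more precisely, that for a vector $f\in H^2_{\mathbb C^n}$ one has $f\in\Theta H^2_{\mathbb C^n}$ if and only if $\Theta^* f\in H^2_{\mathbb C^n}$ (since $\Theta$ is inner, $\|\Theta^* f\|_2=\|f\|_2$ forces the negative Fourier coefficients to vanish exactly when $f$ lies in the range). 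I will also use the elementary identity $(\Theta A^*)\,\widetilde{\ } = \widetilde A\,\widetilde\Theta^*$ converting left-sided statements into right-sided ones, so that (b) follows from (a) by applying the tilde operation, noting $\widetilde{H_0^2}=H_0^2$ and that $\widetilde\Theta$ is again inner.

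For part (a), the key observation is that $\Theta A^*\in H_0^2=I_z H^2_{M_n}$ means every entry of $\Theta A^*$ lies in $zH^2$, equivalently $P_n^\perp(\Theta A^*)=0$ and the constant term vanishes; but since $\Theta, A\in H^2_{M_n}$, the product $\Theta A^*$ is automatically of the form (analytic)$+\overline{(\text{analytic})}$, and the condition $\Theta A^*\in H_0^2$ is equivalent to saying the co-analytic part of each column is zero, i.e. $\Theta$ annihilates $A$ against $H^2_{M_n}$ on the co-analytic side. Concretely, writing $A=[a_1|\cdots|a_n]$ by columns, $\Theta A^*$ has $(i,k)$ entry $\sum_j\theta_{ij}\overline{a_{kj}}$; collecting columns, $\Theta A^*\in H_0^2$ iff for each column index $k$ the vector $\Theta\overline{(\text{row }k\text{ of }A)}$, read appropriately, has vanishing analytic-plus-constant part — this is exactly the statement that each column of $A$ is orthogonal to $H^2_{M_n}\Theta$, i.e. $A\in\mathcal K_\Theta=(H^2_{M_n}\Theta)^\perp$. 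Spelling this out: $(\Theta A^*, C\,\text{times nothing})$... rather, I will use the inner product $(\cdot,\cdot)$ on $L^2_{M_n}$ defined via the trace in the excerpt and compute $(A, G\Theta)=\int\operatorname{tr}(\Theta^* G^* A)=\int\operatorname{tr}(G^*(A\Theta^*))$... wait, I need $(A, G\Theta)= \int \operatorname{tr}((G\Theta)^* A) = \int\operatorname{tr}(\Theta^* G^* A)$; and $\Theta A^*\in H_0^2$ iff $\int\operatorname{tr}(\Theta A^* \overline z^m G^*)=0$ for all $m\ge 0$, $G\in H^2_{M_n}$... the cleanest route is: $\Theta A^*\in H_0^2 \iff \langle \Theta A^*, H\rangle=0$ for all $H\in (H_0^2)^\perp = \mathcal H_{\Theta}$-type complement; I will instead directly verify that $P_{H_0^2}^\perp(\Theta A^*)=\Theta(P_{H^2_{M_n}\Theta} A)^*$ by a change of variables, whence (a) and (c) both drop out.

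Thus the efficient organization is to prove (c) first, from which (a) is immediate (the left side is zero iff the right side is, and $\Theta X^*=0$ forces $X=0$ since $\Theta$ is inner, hence pointwise left-invertible a.e.), and then derive (b) from (a) via the tilde. For (c), I would expand $\Theta A^* = \Theta(P_{\mathcal K_\Theta}A)^* + \Theta(P_{H^2_{M_n}\Theta}A)^*$ using $A = P_{\mathcal K_\Theta}A + P_{H^2_{M_n}\Theta}A$, then show the first summand lies in $H_0^2$ and the second is orthogonal to $H_0^2$. The first: if $X\in\mathcal K_\Theta$ then each column of $X$ is $\perp H^2_{M_n}\Theta$, which after taking adjoints and multiplying by $\Theta$ on the left lands in $H_0^2$ — this is a routine Fourier-coefficient check column by column. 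The second: if $X=G\Theta$ with $G\in H^2_{M_n}$, then $\Theta X^* = \Theta \Theta^* G^* = G^*$, which is purely co-analytic (indeed $G^*\in (H^2_{M_n})^*$), hence lies in $(H_0^2)^\perp$; again one checks the constant term lands on the correct side. The main obstacle I anticipate is purely bookkeeping: correctly matching up the column/row indices when transposing inside the trace inner product, and being careful that $H_0^2 = I_z H^2_{M_n}$ (no constant term) rather than all of $H^2_{M_n}$, so that the "constant term" of $\Theta X^*$ must be tracked separately and shown to cancel correctly — but there is no conceptual difficulty, only the discipline of writing $H^2_{M_n}$ as a direct sum of $n$ copies of $H^2_{\mathbb C^n}$ and applying the scalar/vector inner-inner-function facts blockwise.
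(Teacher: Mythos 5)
Your proposal is correct and rests on the same two ingredients as the paper's proof: the trace-cyclicity computation $\langle A, C\Theta\rangle=\langle A\Theta^*,C\rangle$ identifying $\mathcal K_\Theta$ with $\{A: \Theta A^*\in H_0^2\}$, and the decomposition $A=P_{\mathcal K_\Theta}A+A_3\Theta$ together with $\Theta\Theta^*=I_n$ to collapse the second piece for (c). The only difference is cosmetic reorganization — you prove (c) first and recover (a) by left-cancelling $\Theta$, and you obtain (b) from (a) via the tilde transform where the paper simply says ``similarly'' — so this is essentially the paper's argument.
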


\begin{proof} Let $C \in H_{M_n}^2$ be arbitrary. \  We then have
$$
\begin{aligned}
A \in \mathcal K_{\Theta}
&\Longleftrightarrow \langle A, C\Theta \rangle =0\\
&\Longleftrightarrow \int_{\mathbb T} \hbox{tr}\,\Bigl((C\Theta)^* A\Bigr)\,d\mu=0\\
&\Longleftrightarrow \int_{\mathbb T}
\hbox{tr}\,(C^*A\Theta^*)\,d\mu=0
\quad \left(\hbox{since tr($AB$)}=\hbox{tr($BA$)}\right)\\
&\Longleftrightarrow \langle A \Theta^*, C \rangle =0\\
&\Longleftrightarrow \Theta A^* \in H_0^2,
\end{aligned}
$$
giving (a) and similarly, (b). \  For (c), we write $A=A_1+A_2$,
where $A_1:=P_{\mathcal K_{\Theta}}A$ and $A_2:=A_3\Theta$ for some
$A_3\in H^2_{M_n}$. \  We then have
$$
P_{H_0^2}(\Theta A^*)=P_{H_0^2} (\Theta A_1^*+\Theta A_2^*)
=P_{H_0^2}\Bigl(\Theta (P_{\mathcal
K_{\Theta}}A)^*+\Theta\Theta^*A_3^*\Bigr) =\Theta (P_{\mathcal
K_{\Theta}}A)^*\,,
$$
giving (c). \
\end{proof}

\bigskip

%
%



We next review the classical Hermite-Fej\' er interpolation problem,
following \cite{FF}; this approach will be useful in the sequel. \
Let $\theta$ be a finite Blaschke product of degree $d$:
$$
\theta=e^{i\xi}\prod_{i=1}^N (\widetilde{b}_i)^{m_i} \quad
\left(\widetilde b_i:=\frac{z-\alpha_i}{1-\overline{\alpha}_i z},\
\hbox{where}\ \alpha_i\in\mathbb D\right),
$$
where $d=\sum_{i=1}^N m_i$. \  For  our purposes rewrite $\theta$ in
the form
$$
\theta=e^{i\xi}\prod_{j=1}^d b_j,
$$
where
$$
b_j:=\widetilde{b}_k\quad\hbox{if}\ \sum_{l=0}^{k-1} m_l<j\le
\sum_{l=0}^k m_l
$$
and, for notational convenience, $m_0:=0$. \  Let
\begin{equation}\label{2.10}
\varphi_j:=\frac{q_j}{1-\overline{\alpha}_j z} b_{j-1}b_{j-2}\cdots
b_1\quad (1\le j\le d),
\end{equation}
where $\varphi_1:=q_1 (1-\overline{\alpha}_1 z)^{-1}$ and
$q_j:=(1-|\alpha_j|^2)^{\frac{1}{2}}$ ($1\le j\le d$). \  It is well
known (cf. \cite{Ta}) that $\{\varphi_j\}_{j=1}^d$ is an orthonormal basis for
$\mathcal{H}(\theta)$. \

For our purposes we concentrate on the data given by sequences of
$n\times n$ complex matrices. \  Given the sequence $\{K_{ij}:\ 1\le
i\le N,\ 0\le j<m_i\}$ of $n\times n$ complex matrices and a set of
distinct complex numbers $\alpha_1,\hdots,\alpha_N$ in $\mathbb{D}$,
the classical Hermite-Fej\' er interpolation problem entails finding
necessary and sufficient conditions for the existence of a
contractive analytic matrix function $K$ in $H^{\infty}_{M_n}$
satisfying
\begin{equation} \label{2.11}
\frac{K^{(j)}(\alpha_i)}{j!} = K_{i,j} \qquad (1 \leq i \leq N, \ 0
\leq j < m_i).
\end{equation}
To construct a matrix polynomial $K(z)\equiv P(z)$ satisfying
(\ref{2.11}), let $p_i(z)$ be the polynomial of order $d-m_i$
defined by
$$
p_i(z):=\prod_{k=1,\\ k \neq i}^{N} \Bigl(\frac{z-
\alpha_k}{\alpha_i - \alpha_k}\Bigr)^{m_k}.
$$
Consider the matrix polynomial $P(z)$ of degree $d-1$ defined by
\begin{equation} \label{2.12}
P(z):= \sum_{i=1}^N \Biggl(K_{i,0}^{\prime} +K_{i,1}^{\prime}(z-
\alpha_i)+K_{i,2}^{\prime}(z- \alpha_i)^2+\cdots
+K_{i,m_i-1}^{\prime} (z-\alpha_i)^{m_i-1}\Biggr)\,p_i(z),
\end{equation}
where the $K_{i,j}^{\prime}$ are obtained by the following
equations:
$$
K_{i,j}^\prime=K_{i,j}-\sum_{k=0}^{j-1} \frac{K_{i,k}^\prime\,
p_i^{(j-k)}(\alpha_i)}{(j-k)!} \ \ (1\le i\le N;\ 0\le j<m_i)
$$
and $K_{i,0}^\prime=K_{i,0}$ ($1\le i\le N$). \  Then $P(z)$
satisfies (\ref{2.11}). \

On the other hand, for an inner function $\theta$, let $U_\theta$ be
defined by the compression of the shift operator $U$ : i.e.,
$$
U_\theta\label{U_theta}= P_{\mathcal H (\theta)} U \vert_{\mathcal H (\theta)}.
$$
Let $\Theta=I_{\theta}$ and $W\label{W}$ be the unitary operator from
$\bigoplus_1^d \mathbb C^n $ onto $\mathcal H (\Theta)$ defined by
\begin{equation} \label{2.13}
W:=(I_{\varphi_1},I_{\varphi_2}, \cdots , I_{\varphi_d}),
\end{equation}
where the $\varphi_j$ are the functions in (\ref{2.10}). \  It is
known \cite[Theorem X.1.5]{FF} that if $\theta$ is the finite
Blaschke product of order $d$, then $U_\theta$ is unitarily
equivalent to the lower triangular matrix $M\label{M}$ on $\mathbb{C}^d$
defined by
\bigskip
\begin{equation} \label{2.14}
M:={\small \begin{bmatrix} \alpha_1&0&0&0&\cdots&\cdots\\
q_1 q_2&\alpha_2&0&0&\cdots&\cdots\\
-q_1 \overline{\alpha}_1 q_3&q_2 q_3&\alpha_3&0&\cdots&\cdots\\
q_1 \overline{\alpha}_2 \overline{\alpha}_3q_4&-q_2
\overline{\alpha}_3q_4&q_3 q_4&\alpha_4&\cdots&\cdots\\
-q_1 \overline{\alpha}_2 \overline{\alpha}_3
\overline{\alpha}_4q_5&q_2 \overline{\alpha}_3
\overline{\alpha}_4q_5&-q_3 \overline{\alpha}_4q_5&q_4
q_5&\alpha_5&\ddots\\
\vdots&\vdots&\vdots&\ddots&\ddots&\ddots
\end{bmatrix}.}
\end{equation}

\noindent If $L \in M_n$ and $M=[m_{i,j}]_{d\times d}$, then the
matrix $L \otimes M$ is the matrix on $\mathbb C^{n \times d}$
defined by the block matrix
$$
L \otimes M\label{LotimesM}:=\begin{bmatrix} Lm_{1,1}&Lm_{1,2}&\cdots&Lm_{1,d}\\
Lm_{2,1}&Lm_{2,2}&\cdots&Lm_{2,d}\\
\vdots&\vdots&\vdots&\vdots\\
Lm_{d,1}&Lm_{d,2}&\cdots&Lm_{d,d}\end{bmatrix}.
$$
Now let $P(z) \in H^{\infty}_{M_n}$ be a matrix polynomial of degree
$k$. \  Then the matrix $P(M)$ on $\mathbb C^{n \times d}$ is
defined by
\begin{equation} \label{2.15}
P(M):=\sum_{i=0}^{k} P_i \otimes M^i, \quad \text{where}\ P(z)=
\sum_{i=0}^{k} P_i z^i.
\end{equation}
For $\Phi \in H^\infty_{M_n}$ and $\Theta:=I_\theta$ with an inner
function $\theta$, we write, for brevity,
\begin{equation}\label{2.16}
\left(T_\Phi\right)_\Theta :=P_{\mathcal H (\Theta)}T_\Phi
\vert_{\mathcal H (\Theta)},
\end{equation}
which is called the compression of $T_\Phi$ to
$\mathcal{H}(\Theta)$. \  If $M$ is given by (\ref{2.14}) and $P$ is
the matrix polynomial defined by (\ref{2.12}) then the matrix $P(M)$
is called the {\it Hermite-Fej\' er matrix} determined by
(\ref{2.15}). \  In particular, it is known \cite[Theorem X.5.6]{FF}
that
\begin{equation}\label{2.17}
W^* (T_P)_\Theta W=P(M),
\end{equation}
which says that $P(M)$ is a matrix representation for
$(T_P)_\Theta\label{TPTheta}$. \

\medskip

\begin{lemma}\label{lem2.7} Let $A \in H^{\infty}_{M_n}$ and $\Theta=I_{\theta}$ for a
finite Blaschke product $\theta$. \  If $A(\alpha)$ is invertible
for all $\alpha \in \mathcal Z(\theta)$ then $(T_A)_\Theta\label{TPhiTheta}$ is
invertible. \ \end{lemma}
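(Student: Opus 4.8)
The plan is to reduce the invertibility of $(T_A)_\Theta$ to a statement about the finite-dimensional matrix $P(M)$ via the Hermite--Fej\'er machinery recalled in \eqref{2.12}--\eqref{2.17}. First I would observe that, since $(T_A)_\Theta$ acts on the finite-dimensional space $\mathcal H(\Theta)$ (of dimension $nd$, where $d=\deg\theta$), invertibility is equivalent to injectivity; so it suffices to show $(T_A)_\Theta f=0$ forces $f=0$ for $f\in\mathcal H(\Theta)$. Replacing $A$ by a suitable polynomial interpolant is the key move: by the classical Hermite--Fej\'er construction, choose the matrix polynomial $P(z)\equiv P_A(z)$ of degree $\le d-1$ with $\tfrac{P^{(j)}(\alpha_i)}{j!}=\tfrac{A^{(j)}(\alpha_i)}{j!}$ at every zero $\alpha_i$ of $\theta$ up to its multiplicity $m_i$. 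Then $A-P$ is divisible by $\theta$, i.e. $A-P=\theta G$ for some $G\in H^\infty_{M_n}$, so $T_{A-P}=T_\theta T_G$ maps $\mathcal H(\Theta)$ into $\theta H^2_{M_n}$, whence $P_{\mathcal H(\Theta)}T_{A-P}\vert_{\mathcal H(\Theta)}=0$ and therefore $(T_A)_\Theta=(T_P)_\Theta$. By \eqref{2.17}, $W^*(T_P)_\Theta W=P(M)$, so $(T_A)_\Theta$ is invertible if and only if the $nd\times nd$ matrix $P(M)=\sum_i P_i\otimes M^i$ is invertible.

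Next I would compute the spectrum of $P(M)$ from the structure of $M$. The matrix $M$ in \eqref{2.14} is lower triangular with diagonal entries $\alpha_1,\dots,\alpha_d$ (the zeros of $\theta$ listed with multiplicity), so $M^i$ is lower triangular with diagonal $\alpha_1^i,\dots,\alpha_d^i$, and hence $P(M)=\sum_i P_i\otimes M^i$ is block lower triangular with diagonal blocks $\sum_i \alpha_k^i P_i = P(\alpha_k)=A(\alpha_k)$, $k=1,\dots,d$. Consequently $\det P(M)=\prod_{k=1}^d \det A(\alpha_k)$ (each zero appearing with its multiplicity). By hypothesis $A(\alpha)$ is invertible for every $\alpha\in\mathcal Z(\theta)$, so every factor $\det A(\alpha_k)$ is nonzero, $\det P(M)\neq 0$, and $(T_A)_\Theta$ is invertible. (One could alternatively phrase this eigenvalue computation without an explicit $W$: conjugating by the flag of $U_\theta$-invariant subspaces $\mathcal H(b_1\cdots b_j)\otimes\mathbb C^n$ puts $(T_A)_\Theta$ in block-triangular form with diagonal blocks similar to $A(\alpha_j)$; either route works.)

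The main obstacle is the first reduction step, specifically verifying cleanly that $(T_A)_\Theta=(T_P)_\Theta$ and that $P(M)$ really does have the claimed block-triangular form with diagonal blocks $A(\alpha_k)$. The identity $T_{\theta G}\mathcal H(\Theta)\subseteq\Theta H^2_{M_n}$ is immediate from $T_{\theta G}f=P_n(\theta G f)$ and $\theta G f\in\theta H^2_{M_n}=\Theta H^2_{M_n}$, so that piece is routine; the genuine bookkeeping is in keeping track of multiplicities, i.e. making sure that a zero $\alpha$ of $\theta$ of multiplicity $m$ contributes the factor $\det A(\alpha)$ exactly $m$ times and that the interpolation data at $\alpha$ truly force $\theta\mid(A-P)$ to the correct order — this is exactly what the construction \eqref{2.12} guarantees, and it is why one interpolates derivatives up to order $m_i-1$. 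Everything else (triangularity of $M^i$, the trace/determinant identity $\det(L\otimes N)$ for the block matrix $L\otimes M^i$, and invertible $\Leftrightarrow$ injective in finite dimensions) is standard linear algebra. Hence the hypothesis ``$A(\alpha)$ invertible for all $\alpha\in\mathcal Z(\theta)$'' is precisely the condition that makes $\det P(M)\neq0$, and the lemma follows.
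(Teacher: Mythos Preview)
Your proof is correct but takes a genuinely different route from the paper's. The paper argues directly: if $(T_A)_\Theta f=0$ for $f\in\mathcal H(\Theta)$, then $Af\in\Theta H^2_{\mathbb C^n}=\theta H^2_{\mathbb C^n}$; since $A(\alpha)$ is invertible at each zero $\alpha$ of $\theta$, a short induction on derivatives shows $f$ itself must vanish to the correct order at each zero, hence $f\in\theta H^2_{\mathbb C^n}\cap\mathcal H(\Theta)=\{0\}$; finite-dimensionality then upgrades injectivity to invertibility. Your approach instead passes through the Hermite--Fej\'er interpolant $P$ to obtain $(T_A)_\Theta=(T_P)_\Theta\cong P(M)$ and then exploits the lower-triangularity of $M$ to read off the block-triangular structure of $P(M)$ with diagonal blocks $A(\alpha_k)$. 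The paper's argument is shorter and entirely elementary---no need for the interpolation machinery or the matrix $M$---while yours yields the extra dividend of an explicit determinant formula $\det P(M)=\prod_{k=1}^d\det A(\alpha_k)$ and, more generally, identifies the spectrum of $(T_A)_\Theta$ with $\bigcup_{\alpha\in\mathcal Z(\theta)}\sigma(A(\alpha))$, which is information the direct injectivity argument does not provide.
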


\begin{proof} Suppose $(T_A)_\Theta f=0$ for some $f\in \mathcal H (
\Theta)$, so that $P_{\mathcal H (\Theta)}(Af)=0$ and hence, $Af \in
\Theta H^2_{\mathbb C^n}$. \  Since $A(\alpha)$ is invertible for
all $\alpha \in \mathcal Z(\theta)$, it follows that $f \in \Theta
H^2_{\mathbb C^n}$ and hence, $f\in \Theta H^2_{\mathbb C^n}\cap
\mathcal H (\Theta)=\{0\}$. \  Thus $(T_A)_\Theta$ is one-one. \ But
since $(T_A)_\Theta$ is a finite dimensional operator (because
$\theta$ is a finite Blaschke product), it follows that
$(T_A)_\Theta$ is invertible. \ \end{proof}

\vskip 1cm

%
%
%
%

\section{Hyponormality of Block Toeplitz Operators}

\bigskip
%
%


To get a tractable criterion for the hyponormality of block Toeplitz
operators with bounded type symbols, we need a triangular
representation for compressions of the unilateral shift operator $U
\equiv T_z$. \  We refer to \cite{AC} and \cite{Ni} for details
on this representation. \  For an explicit criterion, we need to
introduce the triangularization theorem concretely. \  To do so,
recall that for an inner function $\theta$, $U_\theta$ is defined by
\begin{equation}\label{3.1}
U_\theta= P_{\mathcal H (\theta)} U \vert_{\mathcal H (\theta)}.
\end{equation}
There are three cases to consider. \

\medskip

\noindent {\it Case 1} : Let $B$ be a Blaschke product and let
$\Lambda:=\{\lambda_n : n \geq 1\}$ be the sequence of zeros of $B$
counted with their multiplicities. \  Write
$$
\beta_1:=1, \quad
\beta_k:=\prod_{n=1}^{k-1}\frac{\lambda_n-z}{1-\overline{\lambda}_n z}\cdot
\frac{|\lambda_n|}{\lambda_n}\qquad (k\geq 2),
$$
and let
$$
\delta_j\label{deltaj}:=\frac{d_j}{1-\overline{\lambda}_j z}\beta_j \qquad (j \geq
1),
$$
where $d_j\label{dj}:=(1-|\lambda_j|^2)^{\frac{1}{2}}$. \  Let $\mu_B\label{mub}$ be a
measure on $\mathbb N$ given by $\mu_B(\{n\}):=\frac{1}{2}d_n^2, (n
\in \mathbb N). \ $ Then the map $V_B\label{VB}: L^2(\mu_B)\to \mathcal H(B)$
defined by
\begin{equation}\label{3.2}
V_B(c):=\frac{1}{\sqrt{2}} \sum_{n \geq 1} c(n)d_n \delta_n, \quad c
\equiv \{c(n)\}_{n \geq 1},
\end{equation}
is unitary and $U_B$ is mapped onto the operator
\begin{equation}\label{3.3}
V_B^*U_B V_B=(I-J_B)M_B,
\end{equation}
where $(M_B\label{MB} c)(n):=\lambda_n c(n)$ ($n\in \mathbb N$) is a
multiplication operator and
$$
(J_B\label{JB} c)(n):=\sum_{k=1}^{n-1}c(k)|\lambda_k|^{-2} \cdot
\frac{\beta_n(0)}{\beta_k(0)}d_k d_n \ \ \hbox{$(n \in \mathbb N$)}
$$
is a lower-triangular Hilbert-Schmidt operator. \

\medskip

\noindent {\it Case 2} : Let $s$ be a singular inner function with
continuous representing measure $\mu\equiv \mu_{s}\label{mus}$. \  Let
$\mu_{\lambda}$ be the projection of $\mu$ onto the arc $\{\zeta:
\zeta \in \mathbb T, \ 0< \text{arg}\zeta \leq \text{arg}\lambda\}$
and let
$$
s_{\lambda}(\zeta)\label{slambdazeta}:=\text{exp}\Bigl(-\int_{\mathbb
T}\frac{t+\zeta}{t-\zeta}d\mu_{\lambda}(t) \Bigr) \ \ \hbox{($\zeta
\in \mathbb D$)}.
$$
Then the map $V_s\label{Vs}: L^2(\mu)\to \mathcal H (s)$ defined by
\begin{equation}\label{3.4}
(V_s c)(\zeta)=\sqrt{2}\int_{\mathbb
T}c(\lambda)s_{\lambda}(\zeta)\frac{\lambda
d\mu(\lambda)}{\lambda-\zeta}\quad\hbox{($\zeta \in \mathbb D$)}
\end{equation}
is unitary and $U_s$ is  mapped onto the operator
\begin{equation}\label{3.5}
V_s^*U_s V_s=(I-J_s)M_s,
\end{equation}
where $(M_s c)(\lambda):=\lambda c(\lambda)$ ($\lambda\in\mathbb T$)
is a multiplication operator and
$$
(J_s\label{Js} c)(\lambda)=2 \int _{\mathbb T} e^{\mu(t)-\mu(\lambda)}
c(t)d_{\mu_{\lambda}}(t)\ \ \hbox{($\lambda \in \mathbb T$)}
$$
is a lower-triangular Hilbert-Schmidt operator. \

\medskip

\noindent {\it Case 3} : Let $\Delta$ be a singular inner function
with pure point representing measure $\mu\equiv \mu_{\Delta}\label{mudelta}$. \  We
enumerate the set $\{t\in \mathbb T: \mu(\{t\})>0 \}$ as a sequence
$\{t_k\}_{k \in \mathbb N}$. \  Write $\mu_k:=\mu(\{t_k\}), \ k \geq
1$. \  Further, let $\mu_{\Delta}$ be a measure on $\mathbb
R_{+}=[0, \infty)$ such that
$d\mu_{\Delta}(\lambda)=\mu_{[\lambda]+1}d \lambda$ and define a
function $\Delta_{\lambda}\label{deltalambda}$ on the unit disk $\mathbb D$ by the
formula
$$
\Delta_{\lambda}(\zeta):=\text{exp}\Biggl\{-\sum_{k=1}^{[\lambda]}\mu_k
\frac{t_k+\zeta}{t_k-\zeta}-(\lambda-[\lambda])\mu_{[\lambda]+1}
\frac{t_{[\lambda]+1}+\zeta}{t_{[\lambda]+1}-\zeta}\Biggr \},
$$
where $[\lambda]$ is the integer part of $\lambda$  ($\lambda \in
\mathbb R_{+}$) and by definition $\Delta_0:=1$. \  Then the map
$V_{\Delta}\label{Vdelta}: L^2(\mu_{\Delta})\to \mathcal H (\Delta)$ defined by
\begin{equation}\label{3.6}
(V_{\Delta}c)(\zeta):=\sqrt{2} \int_{\mathbb
R_{+}}c(\lambda)\Delta_{\lambda}(\zeta)(1-\overline{t}_{[\lambda]+1}
\zeta)^{-1}d\mu_{\Delta}(\lambda) \ \hbox{($\zeta \in \mathbb D$)}
\end{equation}
is unitary and $U_\Delta$ is mapped onto the operator
\begin{equation}\label{3.7}
V_{\Delta}^*U_\Delta V_{\Delta}=(I-J_{\Delta})M_{\Delta},
\end{equation}
where $(M_{\Delta}c)(\lambda):= t_{[\lambda]+1} c(\lambda)$,
($\lambda\in \mathbb R_+$) is a multiplication operator and
$$
(J_{\Delta}\label{Jdelta} c)(\lambda) := 2 \int _{0}^{\lambda}c(t)
\frac{\Delta_{\lambda}(0)}{\Delta_{t}(0)} d\mu_{\Delta}(t) \ \
\hbox{($\lambda \in \mathbb R_+$)}
$$
is a lower-triangular Hilbert-Schmidt operator. \

\medskip

Collecting the above three cases we get:

\medskip

\noindent {\bf Triangularization theorem.} (\cite [p.123] {Ni}) Let
$\theta$ be an inner function with the canonical factorization
$\theta=B\cdot s \cdot \Delta$, where $B$ is a Blaschke product, and
$s$ and $\Delta$ are singular functions with representing measures
$\mu_{s}$ and $\mu_{\Delta}$ respectively, with $\mu_{s}$ continuous
and $\mu_{\Delta}$ a pure point measure. \  Then the map
$V:\,L^2(\mu_B)\times L^2(\mu_s) \times L^2(\mu_{\Delta})\to
\mathcal H(\theta)$ defined by
\begin{equation}\label{3.8}
V:=\begin{bmatrix}V_B&0&0\\0&BV_s&0\\0&0&BsV_{\Delta}\end{bmatrix}
\end{equation}
is unitary, where $V_B, \mu_B, V_S, \mu_S, V_\Delta, \mu_\Delta$ are
defined in (\ref{3.2}) - (\ref{3.7}) and $U_\theta$ is mapped onto
the operator
$$
M:= V^*U_\theta V =
\begin{bmatrix}M_B&0&0\\0&M_{s}&0\\0&0&M_{\Delta}\end{bmatrix}+J,
$$
where $M_B, M_S, M_{\Delta}$ are defined in (\ref{3.3}), (\ref{3.5})
and (\ref{3.7}) and
$$
J:=-\begin{bmatrix}J_BM_B&0&0\\0&J_sM_s&0\\0&0&J_{\Delta}M_\Delta\end{bmatrix}+A
$$
is a lower-triangular Hilbert-Schmidt operator, with $A^3=0$,
$\text{rank}A\leq 3$. \

\bigskip

%
%


If $\Phi \in L^\infty_{M_n}$, then by (\ref{1.3}),
$$
[T_\Phi^*, T_\Phi]= H_{\Phi^*}^* H_{\Phi^*} - H_{\Phi}^*H_\Phi
        +T_{\Phi^*\Phi-\Phi\Phi^*}\,.
$$
Since the normality of $\Phi$ is a necessary condition for the
hyponormality of $T_\Phi$, the positivity of $H_{\Phi^*}^*
H_{\Phi^*} - H_{\Phi}^*H_\Phi$ is an essential condition for the
hyponormality of $T_\Phi$. \  Thus, we isolate this property as a
new notion, weaker than hyponormality. \  The reader will notice at
once that this notion is meaningful for non-scalar symbols. \

\medskip \noindent
\begin{definition} Let $\Phi \in L^\infty_{M_n}$. \  The {\it pseudo-selfcommutator}
of $T_\Phi$ is defined by
$$
[T_\Phi^*, T_\Phi]_p\label{pseudo}:= H_{\Phi^*}^* H_{\Phi^*} - H_{\Phi}^*H_\Phi.
$$
$T_{\Phi}$ is said to be {\it pseudo-hyponormal} if $[T_\Phi^*,
T_\Phi]_p$ is positive semidefinite. \
\end{definition}

As in the case of hyponormality of scalar Toeplitz operators, we can see that the
pseudo-hyponormality of $T_\Phi$ is independent of the constant
matrix term $\Phi(0)$. \  Thus whenever we consider the
pseudo-hyponormality of $T_\Phi$ we may assume that $\Phi(0)=0$. \
Observe that if $\Phi\in L^\infty_{M_n}$ then
$$
[T_\Phi^*, T_\Phi]= [T_\Phi^*, T_\Phi]_p +
T_{\Phi^*\Phi-\Phi\Phi^*}.
$$
We thus have
$$
T_\Phi\ \hbox{is hyponormal}\ \Longleftrightarrow\ T_\Phi\ \hbox{is
pseudo-hyponormal and $\Phi$ is normal;}
$$
and (via \cite[Theorem 3.3]{GHR}) $T_\Phi$ is pseudo-hyponormal if
and only if $\mathcal{E}(\Phi)\ne \emptyset$. \

\medskip

For $\Phi\equiv  \Phi_-^* + \Phi_+ \in L^{\infty}_{M_n}$, we write
$$
\mathcal C(\Phi)\label{Cphi}:=\Bigl\{K \in H^{\infty}_{M_n}:\ \Phi-K\Phi^*\in
H^{\infty}_{M_n}\Bigr\}.
$$
Thus if $\Phi\in L^\infty_{M_n}$ then
$$
K\in \mathcal{E}(\Phi)\ \Longleftrightarrow\ K\in\mathcal{C}(\Phi)\
\hbox{and}\ ||K||_\infty\le 1.
$$
Also if $K\in\mathcal{C}(\Phi)$ then $H_{\Phi_-^*}=H_{K\Phi_+^*} =
T_{\widetilde K}^*H_{\Phi_+^*}$, which gives a necessary condition
for the nonempty-ness of $\mathcal C(\Phi)$ (and hence the
hyponormality of $T_\Phi$): in other words,
\begin{equation}\label{3.9}
K\in\mathcal{C}(\Phi)\ \Longrightarrow\ \text{ker}\, H_{\Phi_+^*}
\subseteq \text{ker}\, H_{\Phi_-^*}.
\end{equation}

\medskip

We begin with:

\begin{proposition}\label{pro3.2}
Let $\Phi\equiv \Phi_-^* + \Phi_+ \in L^{\infty}_{M_n}$ be such that
$\Phi$ and $\Phi^*$ are of bounded type. \  Thus we may write
$$
\Phi_+= \Theta_1 A^*\quad \hbox{and}\quad \Phi_- =\Theta_2 B^*,
$$
where $\Theta_i=I_{\theta_i}$ for an inner function $\theta_i$ {\rm
($i=1,2$)} and $A,B\in H^2_{M_n}$. \  If
$\mathcal{C}(\Phi)\not=\emptyset$, then $\Theta_2$ is an inner
divisor of $\Theta_1$, {\rm i.e.}, $\Theta_1=\Theta_0 \Theta_2$ for
some inner function $\Theta_0$. \
\end{proposition}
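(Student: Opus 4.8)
The plan is to produce, from a single $K\in\mathcal C(\Phi)$, the analyticity of $\Theta_1\Phi_-^*$, and then to read off the divisibility $\theta_2\mid\theta_1$ from the structure of the factorization $\Phi_-=\Theta_2 B^*$ coming from (\ref{2.6}). First I would observe that $K\in\mathcal C(\Phi)$ means $\Phi-K\Phi^*\in H^\infty_{M_n}$, hence $P_n^\perp(\Phi-K\Phi^*)=0$; since $\Phi_-^*=P_n^\perp(\Phi)$ while $P_n^\perp(K\Phi_-)=0$ (because $K\Phi_-\in H^2_{M_n}$), this gives
$$
\Phi_-^*=P_n^\perp(K\Phi^*)=P_n^\perp\bigl(K\Phi_+^*\bigr).
$$

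Next I would evaluate the right-hand side using $\Phi_+=\Theta_1 A^*$ with $\Theta_1=I_{\theta_1}$. Since $\Theta_1$ is a scalar multiple of the identity, $\Phi_+^*=\Theta_1^* A$, so $K\Phi_+^*=\overline{\theta_1}\,(KA)$ with $KA\in H^2_{M_n}$ (as $K\in H^\infty_{M_n}$). Splitting $KA=\Theta_1 G+R$ with $G\in H^2_{M_n}$ and $R\in\mathcal H_{\Theta_1}=(\Theta_1 H^2_{M_n})^\perp$, we get $\overline{\theta_1}(KA)=G+\overline{\theta_1}R$, where $G\in H^2_{M_n}$ and $\overline{\theta_1}R\perp H^2_{M_n}$ (since $\langle\overline{\theta_1}R,\,H\rangle=\langle R,\,\Theta_1 H\rangle=0$ for every $H\in H^2_{M_n}$). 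Hence $P_n^\perp(K\Phi_+^*)=\overline{\theta_1}R$, and multiplying $\Phi_-^*=\overline{\theta_1}R$ by $\theta_1$ yields
$$
\Theta_1\Phi_-^*=R\in H^2_{M_n}.
$$

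It remains to deduce $\Theta_1=\Theta_0\Theta_2$ from $\Theta_1\Phi_-^*\in H^2_{M_n}$, and this is the step I expect to be the main obstacle. Writing $\Phi_-^*=\Theta_2^* B=\overline{\theta_2}B$, the inclusion just obtained reads $\theta_1\overline{\theta_2}B\in H^2_{M_n}$. Now, by the construction in (\ref{2.6}), the scalar inner function $\theta_2$ is the least common inner multiple of the inner parts $\theta_{ij}$ in the canonical representations of the entries of $\Phi_-$, each coprime to the corresponding co-analytic factor; equivalently, a scalar inner function $\omega$ satisfies $\omega\Phi_-^*\in H^2_{M_n}$ if and only if $\theta_{ij}\mid\omega$ for all $i,j$, i.e. if and only if $\theta_2\mid\omega$. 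Taking $\omega=\theta_1$ then gives $\theta_2\mid\theta_1$; writing $\theta_1=\theta_0\theta_2$ and putting $\Theta_0:=I_{\theta_0}$ completes the argument. The ``only if'' in the displayed equivalence is exactly where the extra information in (\ref{2.6}) — that $B$ does not vanish (entrywise) at the zeros of $\theta_2$ — enters: after cancelling $d=\mathrm{GCD}(\theta_1,\theta_2)$ one is reduced to $\eta_1\overline{\eta_2}B_{ij}\in H^2$ with $\eta_1,\eta_2$ coprime, and a multiplicity count at a zero of $\eta_2$ (or a comparison of representing measures on the singular inner parts) forces $\eta_2$ to be a unimodular constant.
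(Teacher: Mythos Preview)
Your argument is correct and follows essentially the same route as the paper's proof: both reduce to showing $\Theta_1\Phi_-^*\in H^2_{M_n}$ and then invoke the entrywise coprime structure coming from (\ref{2.6}) to conclude $\theta_{ij}\mid\theta_1$ for all $i,j$, hence $\theta_2=\hbox{LCM}\{\theta_{ij}\}\mid\theta_1$. The only difference is cosmetic: where you obtain $\Theta_1\Phi_-^*\in H^2_{M_n}$ via the orthogonal decomposition $KA=\Theta_1 G+R$, the paper simply multiplies the relation $B\Theta_2^*-KA\Theta_1^*\in H^2_{M_n}$ on the right by $\Theta_1$ to reach $B\Theta_2^*\Theta_1\in H^2_{M_n}$ directly.
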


\begin{proof} In view of (\ref{2.6}) we may write
$$
\Phi_+\equiv \left[\theta_1\overline{a}_{ij}\right]_{n\times n}\quad
\hbox{and}\quad
\Phi_-=\left[\theta_2\overline{b}_{ij}\right]_{n\times n}=
\left[\theta_{ij}\overline{c}_{ij}\right]_{n\times n},
$$
where each $\theta_{ij}$ is an inner function, $c_{ij}\in H^2$,
$\theta_{ij}$ and $c_{ij}$ are coprime, and $\theta_2$ is the least
common multiple of $\theta_{ij}$'s. \  Suppose
$\mathcal{C}(\Phi)\not=\emptyset$. \  Then there exists a matrix
function $K \in H^{\infty}_{M_n}$ such that $\Phi_-^*-K \Phi_+^* \in
H^{2}_{M_n}$. \  Thus $B\Theta_2^*-KA\Theta_1^* \in H^{2}_{M_n}$,
which implies that
$$
B\Theta_2^*\Theta_1=\left[b_{ji}\overline{\theta}_2\theta_1\right]
\in H^2_{M_n}. \
$$
But since $\theta_2\overline{b}_{ij}=\theta_{ij}\overline{c}_{ij}$,
and hence $b_{ij}=\theta_2\overline{\theta}_{ij}c_{ij}$, it follows
that
$$
b_{ji}\overline{\theta}_2\theta_1
=\left(\theta_2\overline{\theta}_{ji}c_{ji}\right)\overline{\theta}_2\theta_1
=\overline{\theta}_{ji}c_{ji}\theta_1\in H^\infty.
$$
Since $\theta_{ji}$ and $c_{ji}$ are coprime, we have that
$$
\overline{\theta}_{ji}\theta_1\in H^\infty,\ \ \hbox{and hence}\ \
\overline{\theta}_2\theta_1\in H^\infty,
$$
which implies that $\Theta_2$ divides $\Theta_1$. \
\end{proof}

\medskip

Proposition \ref{pro3.2} shows that the hyponormality of $T_\varphi$
with scalar-valued rational symbol $\varphi$ implies
$$
\hbox{deg}\,(\varphi_-)\le \hbox{deg}\,(\varphi_+),
$$
which is a generalization of the well-known result for the cases of
the trigonometric Toeplitz operators, i.e., if
$\varphi=\sum_{n=-m}^N a_n z^n$ is such that $T_\varphi$ is
hyponormal then $m\le N$ (cf. \cite{FL}). \

\medskip

In view of Proposition \ref{3.2}, when we study the hyponormality of
block Toeplitz operators with {\it bounded type symbols} $\Phi$
(i.e., $\Phi$ and $\Phi^*$ are of bounded type) we may assume that
the symbol $\Phi\equiv \Phi_-^* + \Phi_+\in L^{\infty}_{M_n}$ is of
the form
$$
\Phi_+= \Theta_1 \Theta_0 A^*\quad \hbox{and}\quad \Phi_-=\Theta_1
B^*,
$$
where $\Theta_i:=I_{\theta_i}$ for an inner function $\theta_i$
($i=0,1$) and $A,B\in H^2_{M_n}$. \

\medskip

Our criterion is as follows:

\begin{theorem}\label{thm3.3}
Let $\Phi\equiv \Phi_-^* + \Phi_+ \in L^{\infty}_{M_n}$ be
normal such that $\Phi$ and $\Phi^*$ are of bounded type of the form
$$
\Phi_+= \Theta_1 \Theta_0 A^*\quad \hbox{and}\quad \Phi_- =\Theta_1
B^*,
$$
where $\Theta_i=I_{\theta_i}$ for an inner function $\theta_i$
\hbox{\rm ($i=0,1$)} and $A,B\in H^2_{M_n}$. \  Write
\begin{equation}\label{3.10}
\begin{cases}
V: L\equiv L^2(\mu_B) \times L^2(\mu_s)
         \times L^2(\mu_{\Delta})\to \mathcal H(\theta_1 \theta_0)\ \
            \hbox{\rm is unitary as in (3.8)};\\
M:= V^*U_{\theta_1\theta_0}V;\\
\mathcal L :=L \otimes \mathbb C^n\\
\mathcal V := V \otimes I_n.
\end{cases}
\end{equation}
If $K \in \mathcal C(\Phi)$ then
$$
[T_{\Phi}^*, \ T_{\Phi}]=(T_A)_{\Theta_1\Theta_0}^*\mathcal V\label{V}
\Bigl(I|_{\mathcal L\label{L}}-K(M)^* K(M) \Bigr)\mathcal
V^*(T_A)_{\Theta_1\Theta_0}\ \bigoplus\ 0|_{\Theta_1 \Theta_0
H^2_{\mathbb C^n}}\,,
$$
where  $K(M)$ is understood as an $H^\infty$-functional calculus.\
Hence, in particular,
$$
K(M)\label{KM}\ \hbox{is contractive}\ \Longrightarrow\ T_\Phi\ \hbox{is
hyponormal;}
$$
the converse is also true if $(T_A)_{\Theta_1\Theta_0}$ has dense
range, and in this case,
$$
\hbox{\rm rank}\,[T_{\Phi}^*, \ T_{\Phi}]= \hbox{\rm
rank}\,\Bigl(I|_{\mathcal L}-K(M)^* K(M) \Bigr).
$$
\end{theorem}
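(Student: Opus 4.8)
The plan is to reduce the self-commutator to a product of block Hankel operators, apply the block version of Cowen's identity (\ref{2.5}) with the given $K\in\mathcal C(\Phi)$, push everything through the factorization $\Phi_+=\Theta_1\Theta_0A^*$, and finally read off the result on $\mathcal H(\Theta_1\Theta_0)$ via the triangularization data $(V,M)$. Concretely, since $\Phi$ is normal the Toeplitz term in $[T_\Phi^*,T_\Phi]=H_{\Phi^*}^*H_{\Phi^*}-H_\Phi^*H_\Phi+T_{\Phi^*\Phi-\Phi\Phi^*}$ vanishes, and a block Hankel operator depends only on the co-analytic part of its symbol, so $[T_\Phi^*,T_\Phi]=H_{\Phi_+^*}^*H_{\Phi_+^*}-H_{\Phi_-^*}^*H_{\Phi_-^*}$; since $K\in\mathcal C(\Phi)$ yields $H_{\Phi_-^*}=T_{\widetilde K}^*H_{\Phi_+^*}$ (the line preceding (\ref{3.9})), this becomes
$$[T_\Phi^*,T_\Phi]=H_{\Phi_+^*}^*\bigl(I-T_{\widetilde K}T_{\widetilde K}^*\bigr)H_{\Phi_+^*}.$$

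Next, write $\Theta:=\Theta_1\Theta_0=\theta_1\theta_0I_n$. I would record three facts. (i) $A,B\in H^\infty_{M_n}$: on $\mathbb T$ the $\theta_i$ are unimodular, so $\|A\|=\|\Phi_+\|$ and $\|B\|=\|\Phi_-\|$ are essentially bounded, whence $A,B\in L^\infty_{M_n}\cap H^2_{M_n}=H^\infty_{M_n}$. (ii) From $\Phi_+^*=\Theta^*A$, (\ref{1.4}) gives $H_{\Phi_+^*}=H_{\Theta^*}T_A$; and since $\Theta$ is a scalar multiple of $I_n$, $K$ commutes with $\Theta^*$, so the two parts of (\ref{1.4}) combine into the key identity $T_{\widetilde K}^*H_{\Theta^*}=H_{K\Theta^*}=H_{\Theta^*K}=H_{\Theta^*}T_K$. (iii) (\ref{1.3}) with $(\Phi,\Psi)=(\Theta,\Theta^*)$ gives $H_{\Theta^*}^*H_{\Theta^*}=I-T_\Theta T_\Theta^*=P_{\mathcal H(\Theta)}$, so $H_{\Theta^*}$ is a partial isometry with initial space $\mathcal H(\Theta)$. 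Because $T_A$ and $T_K$ leave $\Theta H^2_{\mathbb C^n}$ invariant, $H_{\Phi_+^*}$ annihilates $\Theta H^2_{\mathbb C^n}$, hence so does the self-adjoint operator $[T_\Phi^*,T_\Phi]$, whose range thus lies in $\mathcal H(\Theta)$; this accounts for the summand $0|_{\Theta_1\Theta_0H^2_{\mathbb C^n}}$.

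For the computation on $\mathcal H(\Theta)$: for $f\in\mathcal H(\Theta)$, $H_{\Theta^*}$ kills the $\Theta H^2_{\mathbb C^n}$-component, so $H_{\Phi_+^*}f=H_{\Theta^*}(T_A)_\Theta f$ and, by the key identity, $T_{\widetilde K}^*H_{\Phi_+^*}f=H_{\Theta^*}(T_K)_\Theta(T_A)_\Theta f$; inserting these into the quadratic form and using $H_{\Theta^*}^*H_{\Theta^*}=P_{\mathcal H(\Theta)}$ together with $(T_A)_\Theta f,\,(T_K)_\Theta(T_A)_\Theta f\in\mathcal H(\Theta)$ gives $[T_\Phi^*,T_\Phi]|_{\mathcal H(\Theta)}=(T_A)_\Theta^*\bigl(I-(T_K)_\Theta^*(T_K)_\Theta\bigr)(T_A)_\Theta$. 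Finally, since $M=V^*U_{\theta_1\theta_0}V$ and $\mathcal V=V\otimes I_n$, the $H^\infty$-functional calculus for the model operator $U_{\theta_1\theta_0}$ (cf. (\ref{2.17})) gives $\mathcal V^*(T_K)_\Theta\mathcal V=K(M)$, so $I-(T_K)_\Theta^*(T_K)_\Theta=\mathcal V\bigl(I|_{\mathcal L}-K(M)^*K(M)\bigr)\mathcal V^*$; adjoining $0|_{\Theta_1\Theta_0H^2_{\mathbb C^n}}$ yields the displayed formula.

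For the remaining assertions: if $K(M)$ is contractive then $I|_{\mathcal L}-K(M)^*K(M)\ge0$, so the formula gives $[T_\Phi^*,T_\Phi]\ge0$ and $T_\Phi$ is hyponormal. If $(T_A)_{\Theta_1\Theta_0}$ has dense range then so does $\mathcal V^*(T_A)_{\Theta_1\Theta_0}$ in $\mathcal L$, so the positivity of $[T_\Phi^*,T_\Phi]$, evaluated on this dense range and passing to the closure, forces $I|_{\mathcal L}-K(M)^*K(M)\ge0$ (the converse); and for any bounded self-adjoint $S$ on $\mathcal H(\Theta_1\Theta_0)$, dense range of $(T_A)_{\Theta_1\Theta_0}$ — equivalently, injectivity of $(T_A)_{\Theta_1\Theta_0}^*$ — yields $\hbox{\rm rank}\,\bigl((T_A)_{\Theta_1\Theta_0}^*S(T_A)_{\Theta_1\Theta_0}\bigr)=\hbox{\rm rank}\,S$, and taking $S=\mathcal V(I|_{\mathcal L}-K(M)^*K(M))\mathcal V^*$ (with $\mathcal V$ unitary) gives the rank identity. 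I expect the main obstacle to be the factorization step: pinning down the operator identity $T_{\widetilde K}^*H_{\Theta^*}=H_{\Theta^*}T_K$ — exactly where the hypothesis $\Theta_i=\theta_iI_n$ is indispensable — and then correctly compressing to $\mathcal H(\Theta_1\Theta_0)$ and matching it with the functional calculus $\mathcal V^*(T_K)_\Theta\mathcal V=K(M)$.
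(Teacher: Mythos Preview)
Your approach is essentially the paper's: reduce to $H_{\Phi_+^*}^*(I-T_{\widetilde K}T_{\widetilde K}^*)H_{\Phi_+^*}$, factor through $H_{\Theta^*}$, compress to $\mathcal H(\Theta_1\Theta_0)$, and identify $(T_K)_{\Theta_1\Theta_0}$ with $K(M)$. The key identity $T_{\widetilde K}^*H_{\Theta^*}=H_{\Theta^*}T_K$ you isolate is exactly what drives the computation, and your observation that this is where the scalar-diagonal form $\Theta_i=\theta_iI_n$ enters is correct.

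There is, however, a genuine gap at your claim (i). You assert $A,B\in H^\infty_{M_n}$ because $\|A\|=\|\Phi_+\|$ and $\|B\|=\|\Phi_-\|$ pointwise on $\mathbb T$; but the hypothesis gives only $\Phi\in L^\infty_{M_n}$, not $\Phi_\pm\in L^\infty_{M_n}$. For bounded functions of bounded type the analytic and co-analytic parts need not be separately bounded, so $T_A$ is not a priori a bounded operator, and your appeal to (\ref{1.4}) to write $H_{\Phi_+^*}=H_{\Theta^*}T_A$ is unjustified. The paper flags this explicitly (``even though $T_A$ is possibly unbounded'') and sidesteps it: since $A+\Theta_1\Theta_0\Phi_-=\Theta_1\Theta_0(\Phi_+^*+\Phi_-)=\Theta_1\Theta_0\Phi^*\in L^\infty_{M_n}$, one factors through $T_{A+\Theta_1\Theta_0\Phi_-}$ instead, which \emph{is} bounded, and then observes that its compression to $\mathcal H(\Theta_1\Theta_0)$ coincides with $(T_A)_{\Theta_1\Theta_0}$ — the latter now understood as a bounded operator on the model space even if $T_A$ is not globally defined. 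With this substitution your argument goes through verbatim.

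A smaller point: to pass from the polynomial identity (\ref{2.17}) to $\mathcal V^*(T_K)_{\Theta_1\Theta_0}\mathcal V=K(M)$ for arbitrary $K\in H^\infty_{M_n}$ you need the Sz.-Nagy--Foia\c s functional calculus, which requires $U_{\theta_1\theta_0}$ (equivalently $M$) to be completely non-unitary. The paper verifies this directly; you should at least note it.
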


\begin{proof} Let $E, F\in \mathcal H (\Theta_1 \Theta_0)$ and $K \in
\mathcal C(\Phi)$. \  Since $\text{ker}\,H_{\Theta_1^* \Theta_0^*}=
\Theta_1 \Theta_0 H^2_{\mathbb C^n}$, we have
$$
H_{\Theta_1^* \Theta_0^*K}E = H_{\Theta_1^* \Theta_0^*}(P_{\mathcal
H (\Theta_1 \Theta_0)}(KE)).
$$
Since $H_{\Theta_1^* \Theta_0^*}^*H_{\Theta_1^*\Theta_0^*}$ is the
projection onto $\mathcal H(\Theta_1 \Theta_0)$, it follows that
$$
\begin{aligned}
\Bigl \langle H_{\Theta_1^*\Theta_0^* K}^* H_{\Theta_1^*\Theta_0^*
K}E,\  F \Bigr \rangle &= \Bigl \langle P_{\mathcal H (\Theta_1
\Theta_0)}KE,\
      P_{\mathcal H(\Theta_1 \Theta_0)}KF \Bigr\rangle\\
&= \Bigl \langle (T_K)_{\Theta_1\Theta_0}E,\
(T_K)_{\Theta_1\Theta_0}F \Bigr\rangle,
\end{aligned}
$$
which gives
$$
H_{\Theta_1^*\Theta_0^* K}^* H_{ \Theta_1^* \Theta_0^* K} \vert_{
\mathcal H ( \Theta_1 \Theta_0)} =  (T_K)_{\Theta_1\Theta_0}^*
(T_K)_{\Theta_1\Theta_0}.
$$
Observe that $[T_\Phi^* , T_\Phi]=H_{A\Theta_0^*\Theta_1^*}^*
H_{A\Theta_0^*\Theta_1^*} - H_{B\Theta_1^*}^*H_{B\Theta_1^*}$
because $\Phi$ is normal. \  But since
$$
\text{cl}\,\text{ran}\,(H_{A\Theta_0^*\Theta_1^*}^*
H_{A\Theta_0^*\Theta_1^*}) = \left(\text{ker}\,
H_{A\Theta_0^*\Theta_1^*}\right)^\perp \subseteq (\Theta_1 \Theta_0
H^2_{\mathbb C^n})^{\perp} =\mathcal H(\Theta_1\Theta_0)
$$
and
$$
\text{cl}\,\text{ran}\,(H_{B\Theta_1^*}^*H_{B\Theta_1^*}) =
\left(\text{ker}\,H_{B\Theta_1^*}\right)^\perp \subseteq (\Theta_1
H^2_{\mathbb C^n})^{\perp}=\mathcal H(\Theta_1),
$$
we have $\text{\rm ran}\,[T_\Phi^* , T_\Phi]\subseteq
\mathcal{H}(\Theta_1\Theta_0)$. \ But since $\Phi- K \Phi^* \in
H^{\infty}_{M_n}$, it follows that on $\mathcal H (\Theta_1
\Theta_0)$,
$$
\aligned ~[T_{\Phi}^*, T_{\Phi}]
&=\left.\Bigl(H_{\Phi^*}^*H_{\Phi^*}-H_{\Phi}^*H_{\Phi}\Bigr)\right|_{\mathcal
H
(\Theta_1 \Theta_0)}\\
&= \left.\Bigl(H_{\Phi^*}^*H_{\Phi^*}- H_{K \Phi^*}^*
H_{K\Phi^*}\Bigr)\right|_{\mathcal H (\Theta_1 \Theta_0)}\\
&=\left.\Bigl(H_{\Theta_1^* \Theta_0^* A+\Phi_-}^*H_{\Theta_1^*
\Theta_0^*A+\Phi_-} - H_{K(\Theta_1^* \Theta_0^* A+\Phi_-)}^*H_{K(
\Theta_1^* \Theta_0^* A +\Phi_-)}\Bigr)\right|_{\mathcal H
(\Theta_1 \Theta_0)}\\
&=\left.T_{(A+\Theta_1\Theta_0\Phi_-)}^* \Bigl(H_{\Theta_1^*
\Theta_0^*}^*H_{\Theta_1^* \Theta_0^*} - H_{K \Theta_1^*
\Theta_0^*}^*H_{K \Theta_1^*
\Theta_0^*}\Bigr)T_{(A+\Theta_1\Theta_0\Phi_-)}\right|_{\mathcal H
(\Theta_1 \Theta_0)} \\
&=(T_A)_{\Theta_1\Theta_0}^* \Bigl(I|_{\mathcal H (\Theta_1
\Theta_0)}-(T_K)_{\Theta_1 \Theta_0}^* (T_K)_{\Theta_1
\Theta_0}\Bigr) (T_A)_{\Theta_1\Theta_0} ,
\endaligned
$$
where $(T_A)_{\Theta_1\Theta_0}$ is understood in the sense that the
compression $(T_A)_{\Theta_1\Theta_0}$ is bounded even though $T_A$
is possibly unbounded; in fact,
$$
P_{\mathcal H (\Theta_1 \Theta_0)}T_{(A+\Theta_1\Theta_0 \Phi_-)}
\vert_{\mathcal H (\Theta_1\Theta_0)} =P_{\mathcal H (\Theta_1
\Theta_0)}T_{A} \vert_{\mathcal H (\Theta_1\Theta_0)}.
$$
On the other hand, since $K(z)\equiv \begin{bmatrix}
k_{rs}(z)\end{bmatrix}_{1\le r,s\le n} \in H^{\infty}_{M_n}$, we may
write
$$
K(z)=\sum_{i=0}^{\infty} K_i z^i \qquad (K_i \in M_n).
$$
We also write $k_{rs}(z):=\sum_{0}^\infty c_i^{(rs)} z^i$ and then
$K_i=\begin{bmatrix} c_i^{(rs)}\end{bmatrix}_{1\le r,s\le n}$. \ We
thus have

$$
\aligned (T_K)_{\Theta_1 \Theta_0}
&=P_{\mathcal H (\Theta_1 \Theta_0)}T_K\vert_{\mathcal H (\Theta_1 \Theta_0)}\\
&=\begin{bmatrix} P_{\mathcal H(\theta_1\theta_0)}T_{k_{rs}}
                   \vert_{\mathcal H(\theta_1\theta_0)}\end{bmatrix}_{1\le r,s\le n}\\
&=\left[\sum_{i=0}^{\infty}c^{(rs)}_i P_{\mathcal
H(\theta_1\theta_0)}T_{z^i}
                   \vert_{\mathcal H(\theta_1\theta_0)}\right]_{1\le r,s\le n}\\
&=\left[\sum_{i=0}^{\infty}c^{(rs)}_i \Bigl(P_{\mathcal
H(\theta_1\theta_0)}T_{z}
                \vert_{\mathcal H(\theta_1\theta_0)}\Bigr)^i\right]_{1\le r,s\le n}
                \ \ \hbox{(because $\theta_1\theta_0 H^2\subseteq \hbox{\rm Lat}\,T_z$)}\\
&=\sum_{i=0}^\infty \left(U_{\theta_1\theta_0}^i\, \otimes\,
\begin{bmatrix} c_i^{(rs)}
                          \end{bmatrix}_{1\le r,s\le n}\right)\\
&=\sum_{i=0}^{\infty} \left( U_{\theta_1\theta_0}^i\, \otimes\,
K_i\right),
\endaligned
$$
Let $\{\phi_j\}$ be an orthonormal basis for $\mathcal H (\theta_1
\theta_0)$ and put $e_j:=V^* \phi_j$. \  Then $\{e_j\}$ forms an
orthonormal basis for $L^2(\mu_B) \times L^2(\mu_s) \times
L^2(\mu_{\Delta})$. \  Thus for each $f \in \mathbb C^n$, we have
$\mathcal V (e_j \otimes f)= \phi_j \otimes f$. \  It thus follows
that
$$
\aligned \Bigl \langle (T_K)_{\Theta_1\Theta_0} (\phi_j  \otimes
f)\, , \ \phi_k \otimes g \Bigr \rangle &= \sum_{i=0}^{\infty} \Bigl
\langle (U_{\theta_1 \theta_0}^i \otimes K_i)
 (\phi_j \otimes f), \  \phi_k\otimes g \Bigr \rangle \\
&= \sum_{i=0}^{\infty} \Bigl \langle
(U_{\theta_1 \theta_0}^i \phi_j)\otimes (K_i f) , \ \phi_k \otimes g \Bigr \rangle \\
&=\sum_{i=0}^{\infty} \Bigl \langle U_{\theta_1 \theta_0}^i V e_j ,
\ V e_k \Bigr \rangle
               \Bigl \langle K_i f, g \Bigr \rangle \\
&=\sum_{i=0}^{\infty} \Bigl \langle (M^i \otimes K_i) (e_j \otimes
f),
                           \ e_k \otimes g \Bigr \rangle \\
&=\Bigl \langle K(M) (e_j \otimes f)\, , \ e_k \otimes g \Bigr
\rangle,
\endaligned
$$
which implies that
\begin{equation}\label{3.11}
\mathcal V^*(T_K)_{\Theta_1 \Theta_0}\mathcal V =K(M). \
\end{equation}
Here $K(M)$ is understood as a $H^\infty$-functional calculus (so
called the Sz.-Nagy-Foia\c s functional calculus) because $M$ is an
absolutely continuous contraction: in fact, we claim that
$$
\hbox{every compression of the shift operator is completely
non-unitary.}
$$
To see this, write $P_{\mathcal X}UP_{\mathcal X}$ for the
compression of $U$ to $\mathcal X\equiv P_{\mathcal X}H^2$ with some
projection $P_{\mathcal X}$. \  We assume to the contrary that
$P_{\mathcal X}UP_{\mathcal X}$ has a unitary summand $W$ acting on
a closed subspace $\mathcal Y\subseteq \mathcal X$. \  But since
$||U||=1$, we must have that $P_{\mathcal Y^\perp}U\vert_{\mathcal
Y}=0$. \  Thus we can see that $\mathcal Y$ is an invariant subspace
of $U$. \  Thus, by Beurling's Theorem, $\mathcal Y=\theta H^2$ for
some inner function $\theta$. \  But then $W(\theta H^2)=z\theta
H^2$, and hence $W$ is not surjective, a contradiction. \  Hence
every compression of the shift operator is completely non-unitary. \
We can therefore conclude that
$$
[T_{\Phi}^*, \ T_{\Phi}]=(T_A)_{\Theta_1 \Theta_0}^*\mathcal V
\Bigl(I|_{\mathcal L}-K(M)^* K(M) \Bigr)\mathcal V^* (T_A)_{\Theta_1
\Theta_0}\ \bigoplus\ 0|_{\Theta_1 \Theta_0 H^2_{\mathbb C^n}}.
$$
The remaining assertions follow trivially from the first assertion.
\
\end{proof}

\medskip

If $\Phi$ is a scalar-valued function then Theorem \ref{thm3.3}
reduces to the following corollary. \

\begin{cor}\label{lem3.4}\label{cor3.4}
Let $\varphi\equiv \varphi_-^*+\varphi_+\in L^\infty$ be such that
$\varphi$ and $\overline\varphi$ are of bounded type of the form
$$
\varphi_+=\theta_1\theta_0\overline a\quad\hbox{and}\quad
\varphi_-=\theta_1\overline b,
$$
where $\theta_1$ and $\theta_0$ are inner functions and $a,b\in
H^2$. \  If $k\in\mathcal{C}(\varphi)$ then
$$
T_\varphi\ \hbox{is hyponormal}\ \Longleftrightarrow\ k(M)\ \hbox{is
contractive},
$$
where $M$ is defined as in $(\ref{3.10})$. \
\end{cor}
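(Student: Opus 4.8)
The plan is to obtain Corollary~\ref{cor3.4} as the scalar specialization ($n=1$) of Theorem~\ref{thm3.3}, after verifying that in the scalar case the compression $(T_A)_{\Theta_1\Theta_0}=(T_a)_{\theta_1\theta_0}$ automatically has dense range, so that the ``converse'' clause of the theorem applies and yields the stated equivalence without any extra hypothesis. First I would record that, since $\varphi$ and $\overline\varphi$ are of bounded type, the Proposition~\ref{pro3.2} normalization lets us write $\varphi_+=\theta_1\theta_0\overline a$ and $\varphi_-=\theta_1\overline b$ exactly as in the hypothesis, with $\Theta_i=I_{\theta_i}$ (here $n=1$, so $\Theta_i=\theta_i$). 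A scalar $\varphi$ is trivially normal, so the hypotheses of Theorem~\ref{thm3.3} are met, and that theorem gives
$$
[T_\varphi^*,T_\varphi]=(T_a)_{\theta_1\theta_0}^*\,\mathcal V\bigl(I|_{\mathcal L}-k(M)^*k(M)\bigr)\mathcal V^*(T_a)_{\theta_1\theta_0}\ \bigoplus\ 0|_{\theta_1\theta_0 H^2},
$$
together with the implication ``$k(M)$ contractive $\Rightarrow T_\varphi$ hyponormal'' and its converse under the density-of-range assumption. So the entire content of the corollary reduces to establishing that $(T_a)_{\theta_1\theta_0}$ has dense range.

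The key step is therefore the claim that $(T_a)_{\theta_1\theta_0}=P_{\mathcal H(\theta_1\theta_0)}T_a|_{\mathcal H(\theta_1\theta_0)}$ has dense range on $\mathcal H(\theta_1\theta_0)$. I would argue this via adjoints: density of range is equivalent to injectivity of $(T_a)_{\theta_1\theta_0}^*=P_{\mathcal H(\theta_1\theta_0)}T_{\overline a}|_{\mathcal H(\theta_1\theta_0)}$. Suppose $f\in\mathcal H(\theta_1\theta_0)$ with $P_{\mathcal H(\theta_1\theta_0)}(\overline a f)=0$, i.e.\ $\overline a f\in\theta_1\theta_0 H^2+(H^2)^\perp$. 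Here one uses the structure coming from (\ref{2.6}): in the factorization $\varphi_+=\theta_1\theta_0\overline a$ the function $a$ is chosen so that $a(\alpha)\neq0$ whenever $(\theta_1\theta_0)(\alpha)=0$; more precisely, after passing to the coprime factorization (\ref{2.8})--(\ref{2.9}) one may assume $a$ and $\theta_1\theta_0$ are coprime (a common inner divisor could be absorbed). Then $\overline a$ has no ``cancellation'' against $\theta_1\theta_0$, and a standard Hankel/kernel argument — $\ker H_{\overline{\theta_1\theta_0 a}}=\theta_1\theta_0 H^2$ when $a,\theta_1\theta_0$ are coprime, combined with $H_{\overline a}f\perp$ the relevant subspace — forces $f\in\theta_1\theta_0 H^2\cap\mathcal H(\theta_1\theta_0)=\{0\}$. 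This is exactly the scalar analogue of the computation in the proof of Theorem~\ref{thm3.3} where $\text{cl ran}(H_{A\Theta_0^*\Theta_1^*}^*H_{A\Theta_0^*\Theta_1^*})=(\ker H_{A\Theta_0^*\Theta_1^*})^\perp$, and it is essentially Lemma~\ref{lem2.7}'s idea (invertibility of $a$ at the zeros of the Blaschke product forcing injectivity of the compression) extended from rational to general bounded-type symbols.

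Having established that $(T_a)_{\theta_1\theta_0}$ has dense range, the corollary follows immediately: by Theorem~\ref{thm3.3}, $T_\varphi$ is hyponormal if and only if $I|_{\mathcal L}-k(M)^*k(M)\ge0$, i.e.\ if and only if $k(M)$ is contractive. I would then note that, conceptually, the whole corollary is ``just'' the $n=1$ case with the one extra observation that scalar-symbol compressions of multiplication by an analytic function that is coprime to the inner function are automatically injective/dense-ranged — whereas in the matrix case this can genuinely fail (cf.\ the example $\Phi=\left[\begin{smallmatrix}z&z\\z&z\end{smallmatrix}\right]$ discussed around (\ref{2.7})), which is precisely why Theorem~\ref{thm3.3} must carry the density hypothesis explicitly.

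The step I expect to be the main obstacle is pinning down the correct coprimality normalization of $a$ relative to $\theta_1\theta_0$ in the general (non-rational) bounded-type setting and proving from it that $(T_a)_{\theta_1\theta_0}$ is injective as an operator on the model space — for rational symbols this is Lemma~\ref{lem2.7}, but for an arbitrary bounded-type $\varphi$ one must instead invoke the coprime factorization machinery of (\ref{2.6})--(\ref{2.9}) together with Beurling's theorem applied to $\ker H_{\overline a}$, and be careful that replacing $a$ by its ``coprime part'' does not disturb the identities $\varphi_+=\theta_1\theta_0\overline a$ and $k\in\mathcal C(\varphi)$ used to invoke Theorem~\ref{thm3.3}. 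Everything else is a direct read-off from the theorem just proved.
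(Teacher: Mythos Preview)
Your strategy is the paper's: specialize Theorem~\ref{thm3.3} to $n=1$ and verify that $(T_a)_{\theta_1\theta_0}$ has dense range by showing the adjoint is injective. The paper, however, does this in two lines, with none of the coprime-factorization machinery you invoke. From $(T_a)_{\theta_1\theta_0}^*f=0$ for $f\in\mathcal H(\theta_1\theta_0)$ one has $\overline a f=\theta_1\theta_0 h$ for some $h\in H^2$, hence $\overline a\,\overline{\theta_1}\,\overline{\theta_0}\,f=h\in H^2$; on the other hand $f\in\mathcal H(\theta_1\theta_0)$ together with $a\in H^2$ forces $\overline a\,\overline{\theta_1}\,\overline{\theta_0}\,f\in(H^2)^\perp$. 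Therefore $\overline a\,\overline{\theta_1}\,\overline{\theta_0}\,f=0$, and since $a\theta_1\theta_0\neq0$ a.e.\ this gives $f=0$. No Beurling theorem, no $\ker H_{\overline a}$, no passage to a coprime representative of $a$.

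Your worry about coprimality is not baseless---it is exactly what legitimizes reading $\overline a f\in\theta_1\theta_0 H^2$ rather than merely $P(\overline a f)\in\theta_1\theta_0 H^2$, and it is implicit in the factorization~(\ref{2.2}) from which $\theta_1\theta_0$ and $a$ arise---but the paper does not isolate it, and you should not build the whole proof around normalizing to a coprime factorization. Drop that detour and run the direct computation above; the ``main obstacle'' you anticipate does not appear in the paper's argument.
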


\begin{proof} By Theorem \ref{3.3}, it suffices to show that
$(T_a)_{\theta_1\theta_0}$ has dense range. \  To prove this suppose
$(T_a)_{\theta_1\theta_0}^* f=0$ for some $f\in
\mathcal{H}(\theta_1\theta_0)$. \  Then
$P_{\mathcal{H}(\theta_1\theta_0)}(\overline a f)=0$, i.e.,
$\overline a f=\theta_1\theta_0 h$ for some $h\in H^2$. \  Thus we
have $\overline a \overline {\theta}_1 \overline{\theta}_0 f\in
\left(H^2\right)^\perp \cap H^2=\{0\}$, which implies that $f=0$. \
Therefore $(T_a)_{\theta_1\theta_0}^*$ is 1-1, which gives the
result. \
\end{proof}

\medskip
\begin{remark}\label{remark3.5}We note that in Corollary
\ref{3.4}, if $\varphi$ is a rational function then $M$ is a finite
matrix. \  Indeed, if $\varphi$ is a rational function, and hence
$\theta_1\theta_0$ is a finite Blaschke product of the form
$$
\theta_1\theta_0=\prod_{j=1}^d
\frac{z-\alpha_j}{1-\overline{\alpha_j}z}\,,
$$
then $M$ is obtained by (\ref{2.14}). \qed
\end{remark}
\medskip

\begin{remark}\label{rem3.5} We mention that $K\in\mathcal C(\Phi)$ may not be
contractive, i.e., there might exist a function $K\in \mathcal
C(\Phi)\setminus \mathcal{E}(\Phi)$. \  In spite of this, Theorem
\ref{3.3} guarantees that
$$
I-K(M)^*K(M)
$$
is unchanged regardless of the particular choice of $K$ in
$\mathcal{C}(\Phi)$. \  We will illustrate this phenomenon with a
scalar-valued Toeplitz operator with a trigonometric polynomial
symbol. \  For example, let
$$
\Phi(z)=z^{-2}+2z^{-1}+z+2z^2.
$$
If $K(z)=\frac{1}{2}+\frac{3}{4} z$, then $\Phi_-^*-K\Phi_+^*
=(z^{-2}+2z^{-1})-(\frac{1}{2}+ \frac{3}{4}
z)(z^{-1}+2z^{-2})=-\frac{3}{4}\in H^\infty$, so that $K\in\mathcal
C(\Phi)$, but $||K||_\infty=\frac{5}{4}>1$. \  However by
(\ref{2.14}) we have
$$
M=\begin{bmatrix} 0&0\\ 1&0\end{bmatrix} \quad\hbox{and hence},\quad
K(M)=\begin{bmatrix} \frac{1}{2}&0\\ \frac{3}{4}&\frac{1}{2}
\end{bmatrix},
$$
so that
$$
I-K(M)^*K(M)=\begin{bmatrix} 1&0\\ 0&1\end{bmatrix}
- \begin{bmatrix} \frac{13}{16}&\frac{3}{8}\\
\frac{3}{8}&\frac{1}{4}\end{bmatrix}
= \begin{bmatrix} \frac{3}{16}&-\frac{3}{8}\\
-\frac{3}{8}&\frac{3}{4}\end{bmatrix} \ge 0,
$$
which implies that $T_\Phi$ is hyponormal even though
$||K||_\infty>1$. \  Of course, in view of Cowen's Theorem, there
exists a function $b\in\mathcal{E}(\Phi)$: indeed,
$b(z)=\frac{z+\frac{1}{2}}{1+\frac{1}{2}z}\in\mathcal{E}(\Phi)$. \qed
\end{remark}

\bigskip

We now provide some revealing examples that illustrate Theorem
\ref{thm3.3}. \

\begin{example}\label{ex3.8}
Let $\Delta$ be a singular inner function of the form
$$
\Delta:=\hbox{exp}\Bigl(\frac{z+1}{z-1}\Bigr)
$$
and consider the matrix-valued function
$$
\Phi:=\begin{bmatrix}
\overline{\Delta}&\overline{z\Delta}+z\Delta\\
\overline{z\Delta}+z\Delta&\overline{\Delta}\end{bmatrix}.
$$
We now use Theorem \ref{thm3.3} to determine the hyponormality of
$T_\Phi$. \  Under the notation of Theorem \ref{thm3.3} we have
$$
\Theta_1=I_{z\Delta},\ \ \Theta_0=I_2,\ \ A=\begin{bmatrix} 0&1\\
1&0\end{bmatrix},\ \ B=\begin{bmatrix} z&1\\ 1&z\end{bmatrix}.
$$
If we put
$$
K(z):=\begin{bmatrix}1&z\\z&1\end{bmatrix},
$$
then a straightforward calculation shows that $K\in \mathcal
C(\Phi)$. \  Under the notation of Theorem \ref{thm3.3} we can see
that
$$
(T_A)_{I_{z\Delta}}=\begin{bmatrix}0&I|_{L}\\I|_{L}&0\end{bmatrix}:\
\mathcal L\to \mathcal L\ \ \hbox{is invertible.}
$$
But since
$$
K(M)=\begin{bmatrix}I|_{L}& M\\ M&I|_{L}\end{bmatrix},
$$
it follows that
$$
I\vert_{\mathcal L} - K(M)^*K(M) =- \begin{bmatrix} M^* M& M+ M^*\\
M+ M^*& M^* M\end{bmatrix},
$$
which is not positive (simply by looking at the upper-left entry). \
It therefore follows from Theorem \ref{thm3.3} that $T_{\Phi}$ is
not hyponormal. \qed
\end{example}

\medskip

\begin{example}\label{ex3.9}
Let $\Delta$ be a singular inner function of the form
$$
\Delta:=\hbox{exp}\left(\frac{z+1}{z-1}\right)
$$
and let
$$
\Omega:=\Delta^{\frac{1}{2}}=\hbox{exp}\left(\frac{1}{2}\cdot\frac{z+1}{z-1}\right).
$$
Consider the function
$$
\varphi:=\frac{4}{5}\overline{z}+\frac{8}{5}\overline{\Delta}+\frac{37}{50}
\Bigl(z^2\overline{\Omega}-\frac{1}{\sqrt{e}}z^2+\frac{1}{\sqrt{e}}z\Bigr)+
\frac{29}{25}(z+2\Delta).
$$
Observe that
$$
\overline{\varphi_-}=
\frac{4}{5}\overline{z}+\frac{8}{5}\overline{\Delta}
+\frac{37}{50}(I-P)(z^2\overline{\Omega})+c \quad (c \in \mathbb C).
$$
We thus have
$$
k :=\frac{25}{29}\Bigl(\frac{4}{5}+\frac{37}{100}z^2\Omega \Bigr)\in
\mathcal C(\varphi),
$$
but
$$
||k||_{\infty}=\frac{117}{116} > 1.
$$
Thus by the aid of such a function $k$, we cannot determine the
hyponormality of $T_\varphi$. \  Using the notation in the
triangularization theorem we can show that

\medskip

\begin{itemize}
\item[(i)] $L^2(\mu_{\Delta})=L^2(0,1)$ and $L^2(\mu_{B})=\mathbb C$;
\item[(ii)] $M_{\Delta}=I$ and $M_B=0$;
\item[(iii)] $(J_{\Delta}c)(\lambda)=2\int_0^{\lambda}e^{t-\lambda}c(t)dt$ for
$\lambda \in (0,1)$ and $c \in L^2(0,1)$;
\item[(iv)]
$V_z=\frac{1}{\sqrt{2}}$ and $(V_{\Delta}c)(\zeta)=\sqrt{2}\int_0^1
c(\lambda)\hbox{exp}\bigl(-\lambda\frac{1+\zeta}{1-\zeta}\bigr)(1-\zeta)^{-1}d\lambda,
\quad \zeta \in \mathbb D,  \ \ c\in L^2(0,1)$;
\item[(v)] $M\equiv M_B \times M_{\Delta}+J: L\to L$, where
$L\equiv \mathbb C\oplus L^2(0,1)\cong \mathcal{H}(I_{z\Delta})$.
\end{itemize}
\medskip

\noindent Note that $\mathcal H(z\Delta)=\mathcal H(z) \oplus
z\mathcal H(\Delta)$, so that $P_{\mathcal H (z\Delta)}=P_{\mathcal
H(z)}+ zP_{\mathcal H(\Delta)}\overline{z}$. \  We then have
$$
U_{z\Delta}=\begin{bmatrix}U_{z}&0\\a&zU_{\Delta}\overline
z\end{bmatrix}\ :\ \begin{bmatrix}\mathcal H(z)\\ z\mathcal H
(\Delta)\end{bmatrix} \to
\begin{bmatrix}\mathcal H(z)\\ z\mathcal H (\Delta)\end{bmatrix}. \
$$
Since $U_z=0$, it follows that
$$
M=\begin{bmatrix}V_z&0\\0&zV_{\Delta}\end{bmatrix}^*\begin{bmatrix}0&0\\a&zU_{\Delta}\overline
z\end{bmatrix}\begin{bmatrix}V_z&0\\0&zV_{\Delta}\end{bmatrix}
=\begin{bmatrix}0&0\\
(zV_{\Delta})^*a V_z&I-J_{\Delta}\end{bmatrix}.
$$
By using the fact that $P_{\mathcal H(\theta)}=I-\theta P
\overline\theta$, we can compute:
$$
a=P_{z\mathcal H(\Delta)}(z\cdot1)=zP_{\mathcal H(\Delta)}\overline
z (z\cdot1) =zP_{\mathcal H (\Delta)}(1)=z(I-\Delta
P\overline{\Delta})(1)=z\left(1-\frac{\Delta}{e}\right).
$$
Thus we have
$$
M=\begin{bmatrix}0&0\\
V_{\Delta}^*\left(1-\frac{\Delta}{e}\right)V_z
&I-J_{\Delta}\end{bmatrix}.
$$
Write
$$
A:=V_{\Delta}^*\Bigl(1-\frac{\Delta}{e}\Bigr)V_z.
$$
Then we have
$$
\begin{aligned}
||A||^2 &=\Bigl|\Bigl|1-\frac{\Delta}{e}\Bigr|\Bigr|^2
=\Bigl|\Bigl| 1-\frac{\Delta(0)}{e}-\frac{1}{e}\Bigl(\Delta-\Delta(0)\Bigr)\Bigr|\Bigr|^2\\
&=\Bigl(1-\frac{\Delta(0)}{e}\Bigr)^2 +
\frac{1}{e^2}\left(||\Delta||^2-|\Delta(0)|^2\right)\\
&=\Bigl(1-\frac{1}{e^2}\Bigr)^2+\frac{1}{e^2}\Bigl(1-\frac{1}{e^2}\Bigr)=1-\frac{1}{e^2}.
\end{aligned}
$$
A straightforward calculation shows that
$$
k(M)=\frac{25}{29}
\begin{bmatrix}\frac{4}{5}&0\\S&\Bigl(\frac{4}{5}
+\frac{37}{100}z^2\Omega\Bigr)(I-J_{\Delta})\end{bmatrix},
$$
where
$$
S:=\Biggl(\Bigl(\frac{37}{100}z\Omega\Bigr)(I-J_{\Delta})\Biggr)A.
$$
On the other hand, consider the function
$$
\varphi_1(z):=(I-P)(z\overline{\Omega})+\Delta.
$$
Then $q:=z\Omega\in \mathcal E(\varphi_1)$. \  Since $M_\Delta=1$,
it follows from Corollary \ref{cor3.4} that
$q(M)=(z\Omega)(I-J_{\Delta})$ is a contraction. \  Thus we have
$$
||S||=\frac{37}{100}||(z\Omega)(I-J_{\Delta})A|| \leq
\frac{37}{100}\sqrt{1-\frac{1}{e^2}}
$$
Also we consider the function
$$
\varphi_2(z):=\frac{4}{5}\overline{\Delta}+\frac{9}{25}
(I-P)(z^2\,\overline{\Omega})+\Delta.
$$
Put
$$
B(z):=\frac{z^2\Omega+\frac{4}{5}}{1+\frac{4}{5}z^2\Omega}.
$$
Since $B(z)=\frac{4}{5}+ \frac{9}{25}z^2\Omega +\Delta g$ for some
$g\in H^2$, we have
$$
\begin{aligned}
(\varphi_2)_-\, - \,B\,\overline{(\varphi_2)_+}
&=\frac{4}{5}\overline{\Delta}+\frac{9}{25}
(I-P)(z^2\,\overline{\Omega})
     - \left(\frac{4}{5}+\frac{9}{25}z^2\Omega +\Delta g\right)\overline{\Delta}\\
&=-\frac{9}{25} P(z^2\Omega)-g\in H^2,
\end{aligned}
$$
Since $||B||_\infty =1$, it follows that $T_{\varphi_2}$ is
hyponormal. \ In particular, since
$$
r:= \frac{4}{5}+\frac{9}{25}z^2\Omega \in \mathcal C(\varphi_2),
$$
it follows from Corollary \ref{cor3.4} that
$r(M)=(\frac{4}{5}+\frac{9}{25}z^2\Omega)(I-J_{\Delta})$ is a
contraction. \  Thus we have that
$$
\left|\left|\Bigl(\frac{4}{5}+\frac{37}{100}z^2\Omega\Bigr)(I-J_{\Delta})\right|\right|
\leq
\left|\left|\Bigl(\frac{4}{5}+\frac{9}{25}z^2\Omega\Bigr)(I-J_{\Delta})\right|\right|
+\frac{1}{100} ||(z^2\Omega)(I-J_{\Delta})|| \leq \frac{101}{100}.
$$
Using the observation that if $A,B,C$ and $D$ are operators then
$$
\left|\left|\begin{bmatrix} A&B\\ C&D
\end{bmatrix}\right|\right|\le \left|\left|\begin{bmatrix}
||A||&||B||\\ ||C||&||D|| \end{bmatrix}\right|\right|,
$$
we can see that
$$
\begin{aligned}
||k(M)||
&\leq
\frac{25}{29} \Biggl|\Biggl|\begin{bmatrix} \frac{4}{5}&0\\
||S||&
\left|\left|\Bigl(\frac{4}{5}+\frac{9}{25}z^2\Omega\Bigr)(I-J_{\Delta})
\right|\right|\end{bmatrix}\Biggl|\Biggl|\\
&\leq
\frac{25}{29} \Biggl|\Biggl|\begin{bmatrix} \frac{4}{5}&0\\
\frac{37}{100}\sqrt{1-\frac{1}{e^2}}&\frac{101}{100}\end{bmatrix}\Biggl|\Biggl|
\approx 0.968<1,
\end{aligned}
$$
which, by Corollary \ref{cor3.4}, implies that $T_{\varphi}$ is
hyponormal. \qed
\end{example}

\medskip

We now consider the condition ``$\mathcal C(\Phi)\not=\emptyset$",
i.e., the existence of a function $K\in H_{M_n}^\infty$ such that
$\Phi-K\Phi^*\in H_{M_n}^\infty$. \  In view of (\ref{3.9}), we may
assume that
\begin{equation}\label{3.14}
\hbox{ker}\,H_{\Phi_+^*} \subseteq \hbox{ker}\,H_{\Phi_-^*}
\end{equation}
whenever we study the hyponormality of $T_\Phi$. \ Recall (\cite
[Corollary 2]{Gu1}) that
\begin{equation} \label{3.15}
H_{\Phi^*}^* H_{\Phi^*} - H_{\Phi}^*H_\Phi\ge 0 \
\Longleftrightarrow\ \exists\, K\in H_{M_n}^\infty\ \hbox{with}\
||K||_\infty\le 1\ \ \hbox{such that}\ H_{\Phi_-^*}=T_{\widetilde
K}^*H_{\Phi_+^*}.
\end{equation}
We thus have
\begin{equation}\label{3.16}
\aligned \mathcal C(\Phi) \neq \emptyset
&\Longleftrightarrow \exists\, K\in H_{M_n}^\infty\ \hbox{such that}\ \Phi_-^*-K\Phi_+^*\in H_{M_n}^2\\
&\Longleftrightarrow H_{\Phi_-^*}=T_{\widetilde K}^*H_{\Phi_+^*}\ \ \hbox{for some}\ K\in H_{M_n}^\infty\\
&\Longleftrightarrow H_{\alpha\Phi_+^*}^* H_{\alpha\Phi_+^*} -
H_{\Phi_-^*}^*H_{\Phi_-^*}\ge 0\ \ \hbox{for some}\
               \alpha>0\ \ \hbox{(by (\ref{3.15}))}\\
&\Longleftrightarrow \hbox{ker}\,H_{\Phi_+^*} \subseteq
\hbox{ker}\,H_{\Phi_-^*}\ \
          \hbox{and}\\
          &\quad\qquad \hbox{sup}\Biggl \{\frac{||H_{\Phi_-^*}F||}{||H_{\Phi_+^*}F||} : F\in
                  \text{ker}\,(H_{\Phi_+^*})^{\perp}, \ ||F||=1 \Biggr \}\leq \alpha.
\endaligned
\end{equation}
If $\Phi \in L^{\infty}_{M_n}$ is a rational function then by
Kronecker's lemma (cf. \cite [p.183] {Ni}), $\hbox{ran}\,
H_{\Phi_+^*}$ is finite dimensional. \  Thus by (\ref{3.16}) we can
see that
$$
\mathcal C (\Phi)\ \neq\ \emptyset \Longleftrightarrow
\hbox{ker}\,H_{\Phi_+^*} \subseteq \hbox{ker}\,H_{\Phi_-^*}.
$$
Consequently, if $\Phi\in L_{M_n}^\infty$ is a rational function
then there always exists a function $K\in\mathcal C(\Phi)$ under the
kernel assumption (\ref{3.14}). \  We record this in

\begin{proposition} \label{pro3.10} If $\Phi \in L^{\infty}_{M_n}$ is a rational function
satisfying $\hbox{\rm ker}\,H_{\Phi_+^*} \subseteq \hbox{\rm
ker}\,H_{\Phi_-^*}$, then $\mathcal C(\Phi)\ne \emptyset$. \
\end{proposition}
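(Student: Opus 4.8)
The plan is to deduce the conclusion directly from the chain of equivalences already recorded in (\ref{3.16}). That chain shows that, for any symbol $\Phi$ satisfying the kernel containment $\hbox{ker}\,H_{\Phi_+^*}\subseteq\hbox{ker}\,H_{\Phi_-^*}$, the nonemptiness of $\mathcal C(\Phi)$ is equivalent to the existence of a finite bound
$$
\sup\Biggl\{\frac{\|H_{\Phi_-^*}F\|}{\|H_{\Phi_+^*}F\|}:\ F\in\bigl(\hbox{ker}\,H_{\Phi_+^*}\bigr)^\perp,\ \|F\|=1\Biggr\}\ \leq\ \alpha
$$
for some $\alpha>0$. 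Since our hypothesis supplies the kernel containment, the whole proof reduces to checking that this supremum is finite.

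To that end I would first exploit rationality: as $\Phi$ is rational, $\Phi_+=P_n\Phi$ is rational, hence so is $\Phi_+^*$, and Kronecker's Lemma (cf. \cite[p.~183]{Ni}) then forces $H_{\Phi_+^*}$ to have finite rank. The point to extract is that $\bigl(\hbox{ker}\,H_{\Phi_+^*}\bigr)^\perp$ is finite dimensional, since $H_{\Phi_+^*}$ maps it injectively onto $\hbox{ran}\,H_{\Phi_+^*}$; in particular its unit sphere is compact.

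Finally I would observe that on that compact sphere the numerator $F\mapsto\|H_{\Phi_-^*}F\|$ is continuous, while the denominator $F\mapsto\|H_{\Phi_+^*}F\|$ is continuous and strictly positive --- any unit vector $F$ orthogonal to $\hbox{ker}\,H_{\Phi_+^*}$ is nonzero and not in the kernel, so $H_{\Phi_+^*}F\neq 0$. Hence the quotient is a continuous function on a compact set and attains a finite maximum; taking $\alpha$ at least this large, (\ref{3.16}) yields $\mathcal C(\Phi)\neq\emptyset$. The only substantive point is the finite-dimensionality input: the supremum in (\ref{3.16}) can fail to be finite for general bounded type symbols even under the kernel containment, so the crux is that Kronecker's Lemma collapses the relevant quotient space to finite dimension, after which compactness makes the bound automatic --- this is exactly the reasoning sketched in the paragraph immediately preceding the statement.
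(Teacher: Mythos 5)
Your argument is correct and is essentially the paper's own: the paper proves Proposition 3.10 in the paragraph preceding it by combining the chain of equivalences (3.16) with Kronecker's Lemma, which gives finite rank of $H_{\Phi_+^*}$ for rational $\Phi$ and hence finiteness of the supremum. You merely spell out the compactness detail (finite-dimensionality of $\bigl(\hbox{ker}\,H_{\Phi_+^*}\bigr)^\perp$ and attainment of the maximum), which is a faithful elaboration rather than a different route.
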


\medskip

We remark that there is an explicit way to find a function (in fact,
a matrix-valued polynomial) $K$ in $\mathcal C(\Phi)$ for the
rational symbol case. \  To see this, in view of Proposition
\ref{pro3.2}, suppose $\Phi\in L^{\infty}_{M_n}$ is of the form
$$
\Phi_+= \Theta_1\Theta_0 A^*\quad\hbox{and}\quad \Phi_-= \Theta_1
B^*,
$$
where $\Theta_i=I_{\theta_i}$ for a finite Blaschke product
$\theta_i$ ($i=0,1$). \  We observe first that
\begin{equation}\label{3.17}
K \in \mathcal C (\Phi)\ \Longleftrightarrow\ \Phi-K\Phi^*\in
H^{\infty}_{M_n} \ \Longleftrightarrow\
 \Theta_0B-KA \in \Theta_1\Theta_0H^{\infty}_{M_n}.
\end{equation}
Suppose $\theta_1\theta_0$ is a finite Blaschke product of degree
$d$ of the form
$$
\theta_1\theta_0=\prod_{i=1}^N
\left(\frac{z-\alpha_i}{1-\overline{\alpha}_i z}\right)^{m_i} \qquad
(d:=\sum_{i=1}^N m_i).
$$
Then the last assertion in (\ref{3.17}) holds if and only if the
following equations hold: for each $i=1,\hdots, N$,
\begin{equation}\label{3.18}
\begin{bmatrix} B_{i,0}\\
B_{i,1}\\
B_{i,2}\\
\vdots\\
B_{i,m_i -1}
\end{bmatrix}
= \begin{bmatrix}
K_{i,0}&0&0&\cdots&0\\
K_{i,1}&K_{i,0}&0&\cdots&0\\
K_{i,2}&K_{i,1}&K_{i,0}&\cdots&0\\
\vdots&\ddots&\ddots&\ddots&\vdots \\
K_{i,m_i -1}&K_{i,m_i -2}&\hdots&K_{i,1}&K_{i,0}
\end{bmatrix}
\begin{bmatrix}
A_{i,0}\\
A_{i,1}\\
A_{i,2}\\
\vdots\\
A_{i,m_i -1}
\end{bmatrix},
\end{equation}
where
$$
K_{i,j}:= \frac{K^{(j)}(\alpha_i)}{j!},\quad A_{i,j}:=
\frac{A^{(j)}(\alpha_i)}{j!} \quad \text{and} \quad
B_{i,j}:=\frac{(\theta_0B)^{(j)}(\alpha_i)}{j!}.
$$
Thus $K$ is a function in $H^{\infty}_{M_n}$ for which
\begin{equation}\label{3.19}
\frac{K^{(j)}(\alpha_i)}{j!} = K_{i,j} \qquad (1 \leq i \leq N, \ 0
\leq j < m_i),
\end{equation}
where the $K_{i,j}$ are determined by the equation (\ref{3.18}). \
This is exactly the classical Hermite-Fej\' er interpolation problem
which we have introduced in Section 2. \  Thus the solution
(\ref{2.12}) for the classical Hermite-Fej\' er interpolation
problem provides a polynomial $K\in \mathcal C (\Phi)$. \

\medskip

Therefore we get:

\begin{proposition}\label{pro3.11}
If $\Phi \in L^{\infty}_{M_n}$ is a rational function such that
$\mathcal C(\Phi)\ne \emptyset$, then $\mathcal C(\Phi)$ contains a
polynomial. \
\end{proposition}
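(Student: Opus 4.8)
The plan is to reduce the statement to the (matrix-valued) classical Hermite--Fej\'er interpolation problem, whose explicit polynomial solution is recorded in (\ref{2.12}). First I would invoke Proposition \ref{pro3.2} together with (\ref{2.4}): since $\Phi$ is rational and $\mathcal C(\Phi)\neq\emptyset$, we may write $\Phi_+=\Theta_1\Theta_0A^*$ and $\Phi_-=\Theta_1B^*$ with $\Theta_i=I_{\theta_i}$ for finite Blaschke products $\theta_i$ and $A,B\in H^2_{M_n}$; because $\Phi$ is rational the entries of $A$ and $B$ are rational functions with no poles in the closed disk, so in fact $A,B\in H^\infty_{M_n}$. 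The key observation, already isolated in (\ref{3.17}), is that $K\in\mathcal C(\Phi)$ if and only if $\Theta_0B-KA\in\Theta_1\Theta_0H^\infty_{M_n}$, i.e.\ the scalar finite Blaschke product $\theta_1\theta_0$ divides every entry of $\Theta_0B-KA$.

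Next I would unwind this divisibility into a finite linear condition. Writing $\theta_1\theta_0=\prod_{i=1}^N(b_{\alpha_i})^{m_i}$ of degree $d=\sum_i m_i$, divisibility by $\theta_1\theta_0$ is a purely local jet condition at the points $\alpha_i$: an $H^\infty_{M_n}$ function is divisible by $\theta_1\theta_0$ exactly when its derivatives of orders $0,\dots,m_i-1$ vanish at each $\alpha_i$. Applying this entrywise to $\Theta_0B-KA$ and expanding the product $KA$ by the Leibniz rule, one finds that $\Theta_0B-KA\in\Theta_1\Theta_0H^\infty_{M_n}$ is equivalent to the system (\ref{3.18}), relating the jet matrices $B_{i,j}:=(\theta_0B)^{(j)}(\alpha_i)/j!$, $A_{i,j}:=A^{(j)}(\alpha_i)/j!$ and $K_{i,j}:=K^{(j)}(\alpha_i)/j!$ through a lower-triangular Toeplitz block matrix in the $K_{i,j}$. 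Crucially, (\ref{3.18}) involves only the finitely many matrices $K_{i,j}$ ($1\le i\le N$, $0\le j<m_i$), so the hypothesis that \emph{some} $K\in\mathcal C(\Phi)$ exists tells us precisely that the system (\ref{3.18}) is consistent; I would fix one such solution $\{K_{i,j}\}$.

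Finally I would appeal to the construction of Section 2: form the matrix polynomial $P(z)$ of degree $d-1$ defined by (\ref{2.12}) using these $K_{i,j}$. As recalled there, $P$ satisfies the interpolation identities $P^{(j)}(\alpha_i)/j!=K_{i,j}$ for all $i,j$, hence $P$ satisfies (\ref{3.18}), hence $\Theta_0B-PA\in\Theta_1\Theta_0H^\infty_{M_n}$, which by (\ref{3.17}) means $P\in\mathcal C(\Phi)$. Since $P$ is a matrix polynomial it is automatically in $H^\infty_{M_n}$ (no norm constraint is imposed in the definition of $\mathcal C(\Phi)$, in contrast with $\mathcal E(\Phi)$), so $\mathcal C(\Phi)$ contains a polynomial, completing the proof.

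I expect the only genuine point requiring care — the main obstacle — to be the dictionary $(\ref{3.17})\leftrightarrow(\ref{3.18})$: namely verifying that left-divisibility of $\Theta_0B-KA$ by $\theta_1\theta_0 I_n$ is equivalent to the entrywise vanishing conditions encoded by the lower-triangular system (\ref{3.18}), and in particular that this condition depends on $K$ only through its jets at the $\alpha_i$. Once that bookkeeping is in place, consistency of the system is handed to us for free by the hypothesis $\mathcal C(\Phi)\neq\emptyset$, and the Hermite--Fej\'er polynomial (\ref{2.12}) does the rest.
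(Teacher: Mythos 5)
Your argument is correct and follows essentially the same route as the paper: the paper's proof of Proposition \ref{pro3.11} is precisely the remark preceding it, which uses Proposition \ref{pro3.2} to put $\Phi_+=\Theta_1\Theta_0A^*$, $\Phi_-=\Theta_1B^*$, the equivalence (\ref{3.17}), the reduction to the finite jet system (\ref{3.18}), and the Hermite--Fej\'er polynomial (\ref{2.12}) matching those jets. Your added care about consistency of (\ref{3.18}) (fixing the jets of an existing $K\in\mathcal C(\Phi)$) and about membership depending only on those jets is exactly the bookkeeping the paper leaves implicit, so nothing further is needed.
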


\medskip

However, by comparison with the rational symbol case, there may not
exist a function $K\in\mathcal C(\Phi)$ if $\Phi$ is of bounded
type. \  But we guarantee the existence of a function $K\in\mathcal
C(\Phi)$ if the bounded type symbol $\Phi$ satisfies a certain
determinant property. \  To see this, we recall the notion of the
reduced minimum modulus. \  If $T\in \mathcal{B(H)}$ then the {\it
reduced minimum modulus} of $T$ is defined by
$$
\gamma(T)\label{gammaT}=\begin{cases} \inf \bigl\{||Tx||: \ x \in
\bigl(\hbox{ker}\,T\bigr)^\perp, \ ||x||=1 \bigr\} \quad &\text{if} \ T \neq 0\\
\qquad\qquad  0&\text{if}\ T=0.
\end{cases}
$$
It is well known (\cite{Ap}) that if $T\ne 0$ then $\gamma(T)>0$ if
and only if $T$ has closed range. \  We can easily show that if
$S,T\in\mathcal{B(H)}$ and $S$ is one-one then
\begin{equation}\label{3.20}
 \gamma(ST)\ge \gamma(S)\gamma(T).
\end{equation}

We then have:

\begin{proposition}\label{pro3.12}
Let $\Phi \in L^{\infty}_{M_n}$ be such that $\Phi$ and $\Phi^*$ are
of bounded type satisfying
$$
\hbox{\rm ker}\, H_{\Phi_+^*} \subseteq \hbox{\rm
ker}\,H_{\Phi_-^*}.
$$
If there exists $\delta>0$ such that $\mathcal{M}:=\Bigl\{t:\
|\hbox{\rm det}\,\Phi_+(e^{it})|\ <\delta\Bigr\}$ has measure zero
then $\mathcal C(\Phi)\not=\emptyset$. \
\end{proposition}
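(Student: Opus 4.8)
The plan is to establish the existence of $K\in\mathcal{C}(\Phi)$ by producing a concrete candidate built from the coprime factorization of $\Phi_+$ and the inner-divisibility already forced by the kernel hypothesis, then using the determinant condition together with the reduced minimum modulus to verify that this candidate is genuinely bounded (i.e., lies in $H^\infty_{M_n}$ rather than merely $H^2_{M_n}$). First I would invoke the structure theory from Section 2: since $\Phi$ and $\Phi^*$ are of bounded type, write $\Phi_+=\Theta_1 A^*$ and $\Phi_-=\Theta_2 B^*$ with $\Theta_i=I_{\theta_i}$, and recall from the discussion around $(\ref{2.8})$--$(\ref{2.9})$ that we may pass to the right coprime factorization $\Phi_+=\Omega_r A_r^*$, so that by $(\ref{RCD})$ we have $\ker H_{\Phi_+^*}=\Omega_r H^2_{\mathbb C^n}$. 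The hypothesis $\ker H_{\Phi_+^*}\subseteq \ker H_{\Phi_-^*}$ then forces $\Omega_r$ to be a left inner divisor of the inner part of $\Phi_-^*$'s factorization; combined with Proposition~\ref{pro3.2} this gives $\Theta_1=\Theta_0\Theta_2$ (or the analogous divisibility with the coprime factors), which is exactly the compatibility needed to solve the equation $\Phi_-^* = K\Phi_+^*$ modulo $H^\infty$.

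\textbf{The key steps in order.} Step one: reduce, using $(\ref{3.16})$, to showing $H_{\Phi_-^*}=T_{\widetilde K}^*H_{\Phi_+^*}$ for some bounded $K$; equivalently, by the supremum criterion in the last line of $(\ref{3.16})$, it suffices to produce a finite bound $\alpha$ on $\|H_{\Phi_-^*}F\|/\|H_{\Phi_+^*}F\|$ over unit vectors $F\in(\ker H_{\Phi_+^*})^\perp$. Step two: interpret $\|H_{\Phi_+^*}F\|$ from below. Writing $\Phi_+=\Theta_1 A^*$ and using $(\ref{1.4})$--$(\ref{1.5})$, one relates $H_{\Phi_+^*}^*H_{\Phi_+^*}$ to the compression $(T_A)_{\Theta_1}^*(T_A)_{\Theta_1}$ as in the proof of Theorem~\ref{thm3.3}; hence controlling $\|H_{\Phi_+^*}F\|$ from below amounts to a lower bound $\gamma\bigl((T_A)_{\Theta_1}\bigr)>0$, i.e., that this compression has closed range. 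Step three: use the determinant condition. The hypothesis that $|\det\Phi_+(e^{it})|\ge\delta$ a.e.\ means $\det A$ stays bounded away from $0$ a.e.\ on $\mathbb T$ (since $\Theta_1$ is inner and $\det$ is multiplicative, $|\det\Phi_+|=|\det\Theta_1|\,|\det A|=|\det A|$ a.e.); this in turn says $A^{-1}\in L^\infty_{M_n}$, so that $T_A$ is (modulo the analytic case) bounded below on the relevant subspace, and thus its compression to $\mathcal H(\Theta_1)$ is bounded below — giving $\gamma\bigl((T_A)_{\Theta_1}\bigr)\ge \delta'$ for some $\delta'>0$. Step four: combine. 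Since $H_{\Phi_-^*}$ is a fixed bounded operator and $\|H_{\Phi_+^*}F\|\ge \delta'\|F\|$ on $(\ker H_{\Phi_+^*})^\perp$ (using $\ker H_{\Phi_+^*}\subseteq\ker H_{\Phi_-^*}$ so that the quotient is well-defined there), the supremum in $(\ref{3.16})$ is finite, whence $\mathcal C(\Phi)\ne\emptyset$ by the chain of equivalences there. Inequality $(\ref{3.20})$, $\gamma(ST)\ge\gamma(S)\gamma(T)$ for injective $S$, is the tool that lets us factor the lower bound through the pieces $T_{A}$, $T_{A^{-1}}$, and the compression projection.

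\textbf{The main obstacle} I expect is Step three: passing from the pointwise determinant bound $|\det\Phi_+|\ge\delta$ a.e.\ to a genuine operator lower bound $\gamma\bigl((T_A)_{\Theta_1}\bigr)>0$ on the finite- or infinite-dimensional model space $\mathcal H(\Theta_1)$. The pointwise invertibility of $A(e^{it})$ with uniform lower bound on $|\det A|$ does give $A^{-1}\in L^\infty_{M_n}$, and hence $T_{A^{-1}}$ is bounded with $T_{A^{-1}}T_A - I = H_{(A^{-1})^*}^*H_A$ by $(\ref{1.3})$ — but this Hankel error term need not vanish, so one cannot simply conclude $T_A$ is bounded below on all of $H^2_{\mathbb C^n}$. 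The fix is that on $\mathcal H(\Theta_1)$ the relevant quantity is $P_{\mathcal H(\Theta_1)}T_{A^*}T_A\vert_{\mathcal H(\Theta_1)}$, and one must argue — presumably using that $\Theta_1 H^2_{\mathbb C^n}$ is exactly $\ker H_{\Phi_+^*}$ up to the coprime adjustment, plus boundedness of $A^{-1}$ — that this compression is bounded below. I would organize this as a lemma: if $A\in H^2_{M_n}$ with $A^{-1}\in L^\infty_{M_n}$ and $\Theta$ is inner, then $\gamma\bigl((T_A)_\Theta\bigr)\ge 1/\|A^{-1}\|_\infty$ — proved by noting that for $f\in\mathcal H(\Theta)$, $\|f\| = \|A^{-1}Af\|\le \|A^{-1}\|_\infty\|Af\| = \|A^{-1}\|_\infty\|P_{\mathcal H(\Theta)}(Af) + (\text{piece in }\Theta H^2)\|$, and then showing the $\Theta H^2_{\mathbb C^n}$-component contributes nothing to the lower estimate because it is orthogonal; some care is needed here since $Af$ may leave $H^2_{\mathbb C^n}$ is not an issue ($A\in H^2_{M_n}$ so $Af\in$ a dense domain), but the splitting must be handled via the closed-graph / closable-operator remark used in Theorem~\ref{thm3.3}. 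Once that lemma is in hand, the rest is the bookkeeping of Steps one, two, and four.
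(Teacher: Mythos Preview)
Your overall plan is right and matches the paper's: reduce via $(\ref{3.16})$ to showing the supremum is finite, which amounts to proving $\gamma(H_{\Phi_+^*})>0$, and then extract this from the determinant condition. Your Step~2 link is also valid --- the computation in Theorem~\ref{thm3.3} (with $K=0$, $\Phi_-=0$) does give $H_{\Phi_+^*}^*H_{\Phi_+^*}\vert_{\mathcal H(\Theta)}=(T_A)_\Theta^*(T_A)_\Theta$, so $\gamma(H_{\Phi_+^*})=\gamma\bigl((T_A)_\Theta\bigr)$. The lemma you state is therefore true, and with the constant you give.

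The gap is in your sketched proof of that lemma. The orthogonal decomposition $Af=P_{\mathcal H(\Theta)}(Af)+(\text{piece in }\Theta H^2)$ gives $\|Af\|^2=\|P_{\mathcal H(\Theta)}(Af)\|^2+\|(I-P_{\mathcal H(\Theta)})(Af)\|^2$, so the projection can only \emph{decrease} the norm; from $\|f\|\le\|A^{-1}\|_\infty\|Af\|$ you cannot conclude $\|f\|\le\|A^{-1}\|_\infty\|P_{\mathcal H(\Theta)}(Af)\|$. Your ``orthogonality means the $\Theta H^2$-component contributes nothing'' runs the inequality the wrong way.

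The paper sidesteps this by never introducing the compression. Writing $\Phi_+=\Theta A^*$ as a right coprime factorization (so $\ker H_{\Phi_+^*}=\Theta H^2_{\mathbb C^n}$ exactly), it factors the Hankel operator itself via $(\ref{1.4})$:
\[
H_{\Phi_+^*}=H_{A\Theta^*}=T_{\widetilde A}^*H_{\Theta^*}=T_{\widetilde A}^*\big\vert_{\mathcal H(\widetilde\Theta)}\,H_{\Theta^*}.
\]
Here $H_{\Theta^*}$ is an isometry on $\mathcal H(\Theta)$ (so $\gamma(H_{\Theta^*})=1$), and $T_{\widetilde A}^*\vert_{\mathcal H(\widetilde\Theta)}$ is one-one by the coprimeness. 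Then $(\ref{3.20})$ gives $\gamma(H_{\Phi_+^*})\ge\gamma(T_{\widetilde A}^*\vert_{\mathcal H(\widetilde\Theta)})\ge\gamma(T_{\widetilde A}^*)=\gamma(T_{\widetilde A})=\gamma(T_A)$. The last quantity is positive because $T_A$ is the restriction of the multiplication operator $M_A$ to the \emph{invariant} subspace $H^2_{\mathbb C^n}$ (no projection!), and the determinant hypothesis makes $M_A$ invertible on $L^2_{\mathbb C^n}$ by the cited result from \cite{GGK}, whence $\gamma(T_A)\ge\gamma(M_A)>0$. The point is that restriction to an invariant subspace preserves lower bounds, whereas compression (projection after multiplication) need not --- and that is exactly the obstacle you ran into.
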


\begin{proof} Suppose $\mathcal M$ has measure zero for some $\delta >0$. \
Write
$$
\Phi_+=\Theta A^*\quad\hbox{(right coprime factorization).}
$$
Since $\hbox{det}\,\Theta$ is inner, we have
$|\hbox{det}\,\Phi_+|=|\hbox{det}\,A|$ a.e. on $\mathbb T$. \  Then
by the well-known result \cite[Theorem XXIII.2.4]{GGK}, our
determinant condition shows that the multiplication operator $M_{A}$
is invertible and $\gamma(M_{A})>0$, where $M_A f:=Af$ for $f\in L^2_{\mathbb C^n}$. \
Since $A\in H^\infty_{M_n}$,
the Toeplitz operator $T_{A}$ is a restriction of $M_{A}$. \  Thus
it follows that $\gamma(T_{A}) \geq \gamma(M_{A})>0$. \  Since
$\hbox{ran}\,H_{\Theta^*}=\mathcal H(\widetilde{\Theta})$, it
follows that
$$
H_{\Phi_+^*}=H_{A \Theta^*}=
T_{\widetilde{A}}^*H_{\Theta^*}=T_{\widetilde{A}}^*|_{\mathcal H
(\widetilde{\Theta})}H_{\Theta^*}.
$$
Observe that
$$
\begin{aligned}
\gamma(H_{\Theta^*}) &= \inf\, \Bigl\{ ||H_{\Theta^*}F||: F \in
   \mathcal{H}(\Theta), ||F||=1 \Bigr\} \\
&= \inf\, \Bigl\{ ||\Theta^*F||:
F \in \mathcal{H}(\Theta) , ||F||=1 \Bigr\}\\
&=1.
\end{aligned}
$$
We now claim that
\begin{equation}\label{3.21}
T_{\widetilde{A}}^*|_{\mathcal H (\widetilde{\Theta})}\ \hbox{is
one-one}. \
\end{equation}
Indeed, since
$$
\Theta H^2_{\mathbb C^n}=\hbox{ker}\, H_{A\Theta^*}=\hbox{ker}\,
T_{\widetilde A}^*H_{\Theta^*}\quad\hbox{and}\quad \hbox{ker}\,
H_{\Theta^*}=\Theta H^2_{\mathbb C^n}\,,
$$
it follows that $T_{\widetilde A}^*\vert_{\hbox{ran}\,H_{\Theta^*}}
=T_{\widetilde A}^*\vert_{\mathcal{H}(\widetilde\Theta)}$ is
one-one, which gives (\ref{3.21}). \  Now since $\gamma(T_A)>0$ it
follows from (\ref{3.20}) and (\ref{3.21}) that
$$
\gamma(H_{\Phi^*})=\gamma(H_{\Phi_+^*})=\gamma(T_{\widetilde{A}}^*|_{\mathcal
H (\widetilde{\Theta})}H_{\Theta^*}) \geq
\gamma(T_{\widetilde{A}}^*|_{\mathcal H (\widetilde{\Theta})})\geq
\gamma(T_{\widetilde{A}}^*)= \gamma(T_{\widetilde{A}})=\gamma(T_{A})
>0.
$$
We thus have
\begin{equation}\label{3.22}
\hbox{sup}\Biggl \{ \frac{||H_{\Phi_-^*}F||}{||H_{\Phi_+^*}F||} : \
F\in \bigl(\hbox{ker}\,H_{\Phi_+^*}\bigr)^\perp, \ ||F||=1 \Biggr \}
\le \frac{||H_\Phi||}{\gamma(H_{\Phi^*})}<\alpha\quad\hbox{for
some}\ \alpha>0.
\end{equation}
Therefore, by (\ref{3.16}) we can conclude that $\mathcal C(\Phi)\ne
\emptyset$. \  \end{proof}

\vskip 1cm

%
%
%
%

\section{Subnormality of Block Toeplitz Operators}

%
%
%
%


As we saw in Introduction, the Bram-Halmos criterion on subnormality
(\cite{Br}, \cite{Con}) says that $T\in\mathcal{B(H)}$ is subnormal
if and only if the positive test (\ref{1.6}) holds. \  It is easy to
see that (\ref{1.6}) is equivalent to the following positivity test:
\begin{equation}\label{4.1}
\begin{bmatrix}
I& T^*& \hdots& T^{*k}\\
T& T^*T&  \hdots & T^{*k}T\\
\vdots & \vdots & \ddots & \vdots\\
T^k & T^*T^k & \hdots & T ^{*k}T^k
\end{bmatrix}\ge
0\quad\text{(all $k\ge 1$)}.
\end{equation}
Condition (\ref{4.1}) provides a measure of the gap between
hyponormality and subnormality. \  In fact the positivity condition
(\ref{4.1}) for $k=1$ is equivalent to the hyponormality of $T$,
while subnormality requires the validity of (\ref{4.1}) for all $k$.
\ For $k\ge 1$, an operator $T$ is said to be {\it $k$-hyponormal}\label{khypo}
if $T$ satisfies the positivity condition (\ref{4.1}) for a fixed
$k$. \  Thus the Bram-Halmos criterion can be stated as: $T$ is
subnormal if and only if $T$ is $k$-hyponormal for all $k\ge 1$. \
The $k$-hyponormality has been considered by many authors with an
aim at understanding the gap between hyponormality and subnormality.
\ For instance, the Bram-Halmos criterion on subnormality indicates
that $2$-hyponormality is generally far from subnormality. \  There
are special classes of operators, however, for which these two
notions are equivalent. \  A trivial example is given by the class
of operators whose square is compact (e.g., compact perturbations of
nilpotent operators of nilpotency 2). \  Also in \cite [Example 3.1]
{CuL1}, it was shown that there is no gap between 2-hyponormality
and subnormality for back-step extensions of recursively generated
subnormal weighted shifts. \

On the other hand, in 1970, P.R. Halmos posed the following problem,
listed as Problem 5 in his lectures ``Ten problems in Hilbert space"
\cite{Hal1}, \cite{Hal2}:
$$
\hbox{Is every subnormal Toeplitz operator either normal or
analytic\,?}
$$
A Toeplitz operator $T_\varphi$ is called {\it analytic} if
$\varphi\in H^\infty$. \  Any analytic Toeplitz operator is easily
seen to be subnormal: indeed, $T_\varphi h=P(\varphi h)=\varphi h
=M_\varphi h$ for $h\in H^2$, where $M_\varphi\label{Mphi}$ is the normal
operator of multiplication by $\varphi$ on $L^2$. \  The question is
natural because the two classes, the normal and analytic Toeplitz
operators, are fairly well understood and are subnormal. \  Halmos's
Problem 5 has been partially answered in the affirmative by many
authors (cf. \cite{Ab}, \cite{AIW}, \cite{Co2}, \cite{Co3},
\cite{CuL1}, \cite{CuL2}, \cite{NT}, and etc). \  In 1984, Halmos's
Problem 5 was answered in the negative by C. Cowen and J. Long
\cite{CoL}: they found an analytic function $\psi$ for which
$T_{\psi+\alpha\overline\psi}$ ($0<\alpha<1$) is subnormal - in
fact, this Toeplitz operator is unitarily equivalent to a subnormal
weighted shift $W_\beta$ with weight sequence
$\beta\equiv\{\beta_n\}$, where
$\beta_n=(1-\alpha^{2n+2})^{\frac{1}{2}}$ for $n=0,1,2,\hdots$. \
Unfortunately, Cowen and Long's construction does not provide an
intrinsic connection between subnormality and the theory of Toeplitz
operators. \  Until now researchers have been unable to characterize
subnormal Toeplitz operators in terms of their symbols. \  Thus the
following question is very interesting and challenging:
\begin{equation}\label{4.1*}
\hbox{\rm Which Toeplitz operators are subnormal\,?}
\end{equation}
The most interesting partial answer to Halmos's Problem 5 was given
by M. Abrahamse \cite{Ab}. \  M. Abrahamse gave a general sufficient
condition for the answer to Halmos's Problem 5 to be affirmative. \
Abrahamse's Theorem can be then stated as follows: {\it Let
$\varphi=\overline{g}+f\in L^\infty$ ($f,g\in H^2$) be such that
$\varphi$ or $\overline \varphi$ is of bounded type. \  If
$T_\varphi$ is subnormal then $T_\varphi$ is normal or analytic. \ }
In fact, it was also shown (cf. \cite{CuL2}, \cite{CuL3}) that every
2-hyponormal Toeplitz operator with a bounded type symbol is normal
or analytic, and hence subnormal. \  On the other hand, very
recently, the authors of \cite{CHL} have extended Abrahamse's
Theorem to block Toeplitz operators. \

\begin{theorem}
\rm (Extension of Abrahamse's Theorem) (Curto-Hwang-Lee
\cite{CHL})\label{thm4.1}
{\it Suppose $\Phi=\Phi_-^*+ \Phi_+\in
L^\infty_{M_n}$ is such that $\Phi$ and $\Phi^*$ are of bounded type
of the form
$$
\Phi_- = B^*\Theta\quad (B\in H^{2}_{M_n};\ \Theta=I_\theta\
          \hbox{with an inner function}\ \theta)\,,
$$
where $B$ and $\Theta$ are coprime. \
If $T_\Phi$ is hyponormal and $\hbox{ker}\, [T_\Phi^*, T_\Phi]$ is invariant under
$T_\Phi$ then $T_{\Phi}$ is normal or analytic. \
Hence, in particular, if $T_\Phi$ is subnormal then
$T_{\Phi}$ is normal or analytic.
} \
\end{theorem}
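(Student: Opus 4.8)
The plan is to reduce the statement to the implication ``$T_\Phi$ hyponormal, $\ker[T_\Phi^*,T_\Phi]$ invariant under $T_\Phi$, and $\Phi$ not analytic $\Longrightarrow[T_\Phi^*,T_\Phi]=0$''. The last clause of the theorem is then immediate: if $T_\Phi$ is subnormal with minimal normal extension $N$ on $\mathcal K\supseteq H^2_{\mathbb C^n}$ (so that $H^2_{\mathbb C^n}$ is $N$-invariant and $T_\Phi=N|_{H^2_{\mathbb C^n}}$), then $[T_\Phi^*,T_\Phi]x=0$ gives $\|N^*x\|=\|Nx\|=\|T_\Phi x\|=\|T_\Phi^*x\|$, whence $N^*x=T_\Phi^*x\in H^2_{\mathbb C^n}$; a short computation using $N^*N=NN^*$ and the $N$-invariance of $H^2_{\mathbb C^n}$ then yields $[T_\Phi^*,T_\Phi]T_\Phi x=0$, so $\ker[T_\Phi^*,T_\Phi]$ is automatically $T_\Phi$-invariant for subnormal $T_\Phi$, and the subnormal case follows from the main implication.

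For the main implication, assume $T_\Phi$ is hyponormal and $\Phi$ is not analytic. By the Gu--Hendricks--Rutherford criterion, $\Phi$ is normal and $\mathcal E(\Phi)\neq\emptyset$; fix $K\in\mathcal E(\Phi)$. From $\Phi-K\Phi^*\in H^\infty_{M_n}$ together with (\ref{1.3})--(\ref{1.4}) one gets $H_{\Phi_-^*}=T_{\widetilde K}^*H_{\Phi_+^*}$ and hence the basic identity
$$
[T_\Phi^*,T_\Phi]=H_{\Phi_+^*}^*\bigl(I-T_{\widetilde K}T_{\widetilde K}^*\bigr)H_{\Phi_+^*}.
$$
Applying Proposition~\ref{pro3.2} to the coprime factorization $\Phi_-=B^*\Theta$ (with $\Theta=I_\theta$) gives $\Phi_+=\Theta\Theta_0A^*$ for an inner $\Theta_0$; moreover the coprimality of $B$ and $\Theta$ pins down $\ker H_{\Phi_-^*}=\Theta H^2_{\mathbb C^n}$ (in the rational case this is Lemma~\ref{lem2.2}, and in general it follows from the Beurling--Lax--Halmos description of $\ker H_{\Phi_-^*}$). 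Since $[T_\Phi^*,T_\Phi]\ge0$ forces $\ker H_{\Phi_+^*}\subseteq\ker H_{\Phi_-^*}$, writing $\ker H_{\Phi_+^*}=\Omega H^2_{\mathbb C^n}$ (a square inner $\Omega$, because $\Phi_+$ is of bounded type) we obtain that $\Theta$ is a left inner divisor of $\Omega$, say $\Omega=\theta\Omega'$.

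The heart of the proof is to show that $\mathcal M:=\ker[T_\Phi^*,T_\Phi]$ being $T_\Phi$-invariant forces $\mathcal M^\perp=\{0\}$. From the displayed identity, $\mathcal M^\perp=\overline{\text{ran}}\,[T_\Phi^*,T_\Phi]\subseteq\overline{\text{ran}}\,H_{\Phi_+^*}^*=\mathcal H(\Omega)$, and $\mathcal M^\perp$ is invariant under $T_\Phi^*=T_{\Phi^*}$, hence under the compression $P_{\mathcal H(\Omega)}T_{\Phi^*}|_{\mathcal H(\Omega)}$. In the rational case $\Omega$ is a finite Blaschke--Potapov product, so $\mathcal H(\Omega)$ is finite-dimensional (Lemma~\ref{lem2.8}) and this compression is a finite matrix; if $\mathcal M^\perp\neq\{0\}$ it has an eigenvector $f$, and the relation $T_{\Phi^*}f=\lambda f$ unwinds (using $P_n((\Phi^*-\lambda)f)=0$ and $H_\Phi=H_{\Phi_-^*}$) to $(\Phi^*-\lambda)f=J_nH_{\Phi_+^*}f$, equivalently $(\Phi_--\lambda)f=-P_n(\Phi_+^*f)$. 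Combining this with $\Phi_-=\theta B^*$ and the invertibility of $B$ on $\mathcal Z(\theta)$ (Lemma~\ref{lem2.2}) --- so that multiplication by $B$ cannot absorb a zero of $\theta$ --- one forces $f$, and then each iterate $T_{\Phi^*}^kf$, into $\Theta H^2_{\mathbb C^n}=\ker H_{\Phi_-^*}$; pushing this through shows $f\in\mathcal M$, contradicting $0\neq f\in\mathcal M^\perp$. Hence $\mathcal M^\perp=\{0\}$, i.e.\ $[T_\Phi^*,T_\Phi]=0$ and $T_\Phi$ is normal. For a general bounded-type symbol $\Phi$ one runs the same argument inside the Sz.-Nagy--Foia\c s model of Theorem~\ref{thm3.3}: the (possibly infinite-dimensional) space $\mathcal H(\Theta_1\Theta_0)$, its triangularized form $M=V^*U_{\theta_1\theta_0}V$, and the operators $(T_A)_{\Theta_1\Theta_0}$, $K(M)$ replace $\mathcal H(\Omega)$ and the finite-dimensional linear algebra.

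The step I expect to be the main obstacle is precisely this last one: extracting a contradiction from a nonzero $T_{\Phi^*}$-invariant subspace supported inside $\mathcal H(\Omega)$. This is exactly where the coprime factorization of the \emph{co-analytic} part is indispensable --- it is the invertibility of $B$ at the zeros of $\theta$ that makes $\ker H_{\Phi_-^*}$ equal to $\Theta H^2_{\mathbb C^n}$ and lets divisibility by $\theta$ be propagated through the eigenvalue relation; without it the divisibility bookkeeping breaks down, as the examples after (\ref{2.7}) already suggest at the level of left/right coprimality. A secondary difficulty is the passage from the rational case (finite Blaschke products, finite-dimensional model spaces) to the general bounded-type case, where the triangularization theorem of Section~3 must be substituted for finite-dimensional reasoning; throughout, the hypothesis that $\Theta=I_\theta$ is a \emph{central} inner matrix function is what keeps that bookkeeping well behaved.
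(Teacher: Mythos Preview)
The paper does not actually prove this theorem; Theorem~\ref{thm4.1} is quoted from the companion paper \cite{CHL}, and the only traces of its proof that appear here are the auxiliary results recorded as Lemma~\ref{lem6.3} (attributed to \cite[Lemma 3.2 and Theorem 3.7]{CHL}). So there is no ``paper's own proof'' to compare against line by line, but Lemma~\ref{lem6.3} strongly suggests the mechanism used in \cite{CHL}: under hyponormality plus $T_\Phi$-invariance of $\ker[T_\Phi^*,T_\Phi]$, one obtains the two-sided structural inclusion $\Delta_2\mathcal H(\Delta_0)\subseteq\overline{\operatorname{ran}}\,[T_\Phi^*,T_\Phi]\subseteq\mathcal H(\Omega_1)$; specializing to the coprime hypothesis $\Phi_-=B^*\Theta$ (so $\Omega_2=\Delta_2=\Theta$, $\Omega_1=\Theta_0$) and playing the nonconstant $\theta$ against $\theta_0$ via Lemma~\ref{lem6.2}-type divisibility forces the self-commutator to vanish. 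That is a model-space/divisibility argument, not an eigenvector argument.

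Your proposal is set up correctly through the self-commutator formula, the identification $\ker H_{\Phi_-^*}=\Theta H^2_{\mathbb C^n}$, and the observation $\mathcal M^\perp\subseteq\mathcal H(\Omega)$ with $\mathcal M^\perp$ invariant under $T_{\Phi^*}$. The gap is in the ``heart.'' Even granting an eigenvector $f\in\mathcal M^\perp$ with $T_{\Phi^*}f=\lambda f$ in the rational case, the conclusion you aim for is the wrong target: landing in $\ker H_{\Phi_-^*}=\Theta H^2_{\mathbb C^n}$ does \emph{not} put $f$ in $\mathcal M=\ker[T_\Phi^*,T_\Phi]$, since for $g\in\Theta H^2_{\mathbb C^n}\setminus\Omega H^2_{\mathbb C^n}$ one has $[T_\Phi^*,T_\Phi]g=H_{\Phi_+^*}^*H_{\Phi_+^*}g\neq0$. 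What you would actually need is $f\in\Omega H^2_{\mathbb C^n}=\ker H_{\Phi_+^*}$, impossible for $0\neq f\in\mathcal H(\Omega)$; but nothing in the equation $(\Phi_--\lambda)f=-P_n(\Phi_+^*f)$ together with invertibility of $B$ on $\mathcal Z(\theta)$ forces that, nor even the weaker $f\in\Theta H^2_{\mathbb C^n}$ --- both sides already live in $H^2_{\mathbb C^n}$, so no new divisibility is extracted. The passage to general bounded-type symbols via Theorem~\ref{thm3.3} is also not a substitute: that theorem represents $[T_\Phi^*,T_\Phi]$ through $K(M)$ but gives no spectral handle on $T_{\Phi^*}|_{\mathcal M^\perp}$ of the kind your argument requires. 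The workable route, as Lemma~\ref{lem6.3} indicates, is to bound $\overline{\operatorname{ran}}\,[T_\Phi^*,T_\Phi]$ from above and below by model spaces and then argue with inner-divisor bookkeeping.
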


\medskip

We note that if $n=1$ (i.e., $T_\Phi$ is a scalar-valued Toeplitz
operator), then $\Phi_-=\overline{b}\theta$ with $b\in H^2$. \  Thus, it automatically holds that $b$ and $\theta$ are coprime. \  Consequently,
if $n=1$ then Theorem \ref{thm4.1} reduces to Abrahamse's Theorem.

On the other hand,
the study of square-hyponormality originated in \cite
[Problem 209]{Hal3}. \ It is easy to see that every power of a normal
operator is normal and the same statement is true for every
subnormal operator. \ How about hyponormal operators? \ \cite[Problem 209]{Hal3} shows that there exists a
hyponormal operator whose
square is not hyponormal (e.g., $U^*+2U$ for the unilateral shift
$U$). \ However, as we remarked in the preceding, there exist special
classes of operators for which square-hyponormality and subnormality
coincide. \ For those classes of operators, it suffices to check the
square-hyponormality to show subnormality. \ This certainly gives
a nice answer to question (\ref{4.1*}). \ Indeed, in \cite{CuL1},
it was shown that every hyponormal trigonometric Toeplitz operator
whose square is hyponormal must be either normal or analytic, and
hence subnormal. \ In \cite{Gu3}, C. Gu showed that this
result still holds for Toeplitz operators $T_\varphi$ with rational
symbols $\varphi$ (more generally, the cases where both $\varphi$ and
$\overline\varphi$ are of bounded type). \

The aim of this section is to prove that this result can be extended
to the block Toeplitz operators whose symbols are matrix-valued
rational functions. \

\medskip

We begin with:

\begin{lemma}\label{lem4.2}
Suppose $\Phi=\Phi_-^*+ \Phi_+\in L^\infty_{M_n}$ is a
matrix-valued rational function of the form
$$
\Phi_- = B^*\Theta \quad (\hbox{coprime factorization})\quad
\hbox{and}\quad \Phi_+=\Theta\Theta_0A^*,
$$
where $\Theta =I_{\theta}$ and $\Theta_0=I_{\theta_0}$ with finite
Blaschke products $\theta, \theta_0$ and $A, B\in H^{2}_{M_n}$. \ If
$T_{\Phi}$ is hyponormal then $A(\alpha)$ is invertible for each
$\alpha \in \mathcal Z(\theta)\setminus \mathcal Z(\theta_0)$. \
\end{lemma}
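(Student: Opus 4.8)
The plan is to derive everything from a single consequence of hyponormality, namely the kernel inclusion $\hbox{ker}\,H_{\Phi_+^*}\subseteq \hbox{ker}\,H_{\Phi_-^*}$. Since $T_\Phi$ is hyponormal, the Gu--Hendricks--Rutherford criterion (\cite{GHR}) forces $\Phi$ to be normal, so $[T_\Phi^*,T_\Phi]=[T_\Phi^*,T_\Phi]_p=H_{\Phi^*}^*H_{\Phi^*}-H_\Phi^*H_\Phi\ge 0$; hence $||H_{\Phi^*}f||\ge ||H_\Phi f||$ for every $f\in H^2_{\mathbb C^n}$, so $\hbox{ker}\,H_{\Phi^*}\subseteq \hbox{ker}\,H_\Phi$. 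As $\Phi$ is rational, both $\Phi_+$ and $\Phi_-$ lie in $H^\infty_{M_n}$, whence $H_\Phi=H_{\Phi_-^*}$ and $H_{\Phi^*}=H_{\Phi_+^*}$, and the inclusion becomes $\hbox{ker}\,H_{\Phi_+^*}\subseteq \hbox{ker}\,H_{\Phi_-^*}$.

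Next I would make the two kernels explicit. Because $\Theta=I_\theta$ is a scalar multiple of the identity, $\Phi_-=B^*\Theta=\Theta B^*$ is a right coprime factorization (by hypothesis $B$ and $\Theta$ are coprime), so (\ref{RCD}) gives $\hbox{ker}\,H_{\Phi_-^*}=\Theta H^2_{\mathbb C^n}=\theta H^2_{\mathbb C^n}$. On the other hand $\Phi_+^*=A(\Theta\Theta_0)^*=\overline{\theta\theta_0}\,A$, and since $\Phi$ is rational, $A$ is a rational function in $H^2_{M_n}$ and hence $A\in H^\infty_{M_n}$; a direct computation using ``$\bar u h\in H^2\iff h\in uH^2$ for $u$ inner'' then yields $\hbox{ker}\,H_{\Phi_+^*}=\{f\in H^2_{\mathbb C^n}:\ Af\in\theta\theta_0 H^2_{\mathbb C^n}\}$. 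Thus it suffices to show that the implication ``$Af\in\theta\theta_0 H^2_{\mathbb C^n}\Rightarrow f\in\theta H^2_{\mathbb C^n}$'', valid for all $f\in H^2_{\mathbb C^n}$, forces $A(\alpha)$ to be invertible at each $\alpha\in\mathcal{Z}(\theta)\setminus\mathcal{Z}(\theta_0)$.

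I would prove this by contradiction. Fix $\alpha\in\mathcal{Z}(\theta)\setminus\mathcal{Z}(\theta_0)$ and suppose $A(\alpha)$ is singular; pick $v\in\mathbb C^n\setminus\{0\}$ with $A(\alpha)v=0$. If $A(\cdot)v\equiv 0$, take $f\equiv v$: then $Af\equiv 0\in\theta\theta_0 H^2_{\mathbb C^n}$ while $f(\alpha)=v\ne 0$, so $f\notin\theta H^2_{\mathbb C^n}$, a contradiction. Otherwise let $m$ be the order of $\theta$ at $\alpha$ (equal to the order of $\theta\theta_0$ at $\alpha$, since $\theta_0(\alpha)\ne 0$) and let $r\ge 1$ be the vanishing order at $\alpha$ of the vector function $z\mapsto A(z)v$. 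Choose a scalar finite Blaschke product $g$ vanishing at $\alpha$ to order exactly $\max(0,m-r)$ and at each other zero $\beta$ of $\theta\theta_0$ to the full order of $\theta\theta_0$ there, and put $f:=gv\in H^2_{\mathbb C^n}$. Then $Af=g\cdot(A(\cdot)v)$ vanishes at $\alpha$ to order at least $\max(0,m-r)+r=\max(m,r)\ge m$ and at every other zero of $\theta\theta_0$ to the full order, so $Af\in\theta\theta_0 H^2_{\mathbb C^n}$; meanwhile $g$ vanishes at $\alpha$ to order $\max(0,m-r)<m$ (since $r\ge1$), so $f$ does not vanish at $\alpha$ to order $m$ and hence $f\notin\theta H^2_{\mathbb C^n}=\hbox{ker}\,H_{\Phi_-^*}$, while $f\in\hbox{ker}\,H_{\Phi_+^*}$. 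This contradicts $\hbox{ker}\,H_{\Phi_+^*}\subseteq \hbox{ker}\,H_{\Phi_-^*}$, so $A(\alpha)$ is invertible.

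The step requiring the most care is the multiplicity bookkeeping at $\alpha$: one identifies divisibility by a finite Blaschke product in $H^2_{\mathbb C^n}$ with pointwise vanishing of prescribed order, and must then check that the chosen $g$ simultaneously makes $Af$ divisible by $\theta\theta_0$ and keeps $f=gv$ non-divisible by $\theta$. This is precisely where $\alpha\notin\mathcal{Z}(\theta_0)$ enters: it guarantees $\hbox{ord}_\alpha(\theta\theta_0)=\hbox{ord}_\alpha(\theta)=m$, so that the ``slack'' $r\ge1$ coming from $A(\alpha)v=0$ is enough to push $Af$ across the threshold $m$ while leaving room for $g$ to vanish to order strictly below $m$. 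Everything else — the reduction to the kernel inclusion and the two kernel identifications — is routine given the results recalled in Sections 2--3.
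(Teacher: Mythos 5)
Your proof is correct, and it reaches the conclusion by a genuinely different mechanism than the paper. You and the paper start from the same two inputs: hyponormality (plus normality of the symbol, via the Gu--Hendricks--Rutherford criterion) yields $\hbox{ker}\,H_{\Phi_+^*}\subseteq\hbox{ker}\,H_{\Phi_-^*}$, and the coprimality of $B$ and $\Theta$ identifies $\hbox{ker}\,H_{\Phi_-^*}=\Theta H^2_{\mathbb C^n}$. Where you diverge is in how non-invertibility of $A(\alpha)$ is turned into a contradiction. The paper stays at the level of matrix inner functions: by Lemma \ref{lem2.2}, singularity of $A(\alpha)$ means $A$ and $B_\alpha=I_{b_\alpha}$ are not right coprime, so a nonconstant common inner divisor $\Delta$ can be split off; rewriting $\Phi_+$ and feeding the kernel inclusion through the Beurling--Lax--Halmos divisibility calculus shows $\Delta$ must divide $\Theta_0$, contradicting the coprimality of $b_\alpha$ and $\theta_0$ (this is exactly where $\alpha\notin\mathcal Z(\theta_0)$ enters for them). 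You instead exhibit an explicit test function $f=gv$, with $v$ a null vector of $A(\alpha)$ and $g$ a scalar finite Blaschke product tuned by zero-multiplicity bookkeeping, so that $Af\in\theta\theta_0H^2_{\mathbb C^n}$ while $f\notin\theta H^2_{\mathbb C^n}$, violating the kernel inclusion directly; here $\alpha\notin\mathcal Z(\theta_0)$ enters to keep $\hbox{ord}_\alpha(\theta\theta_0)=\hbox{ord}_\alpha(\theta)$, exactly as you say. Your route is more elementary and self-contained (it needs only the scalar fact that, for a finite Blaschke product $u$, an $H^2$ function lies in $uH^2$ iff it vanishes at the zeros of $u$ with the prescribed multiplicities, applied entrywise), but it is tied to the rational/finite-Blaschke setting; the paper's inner-divisor argument is less hands-on but meshes with the matrix-valued machinery (GCDs, Blaschke--Potapov factors) that the rest of the paper reuses, e.g. in Lemmas \ref{lem4.3} and \ref{lem4.4}.
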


\begin{proof} Assume to the contrary that $A(\alpha)$ is not invertible for
some $\alpha \in \mathcal Z(\theta)\setminus \mathcal Z(\theta_0)$.
\ Then by Lemma \ref{lem2.2}, $A$ and $B_{\alpha}:= I_{b_{\alpha}}$
are not right coprime. \  Thus there exists a nonconstant inner
matrix function $\Delta$ such that
$$
B_{\alpha}=\Delta_1\Delta=\Delta\Delta_1\quad\hbox{and}\quad
A=A_1\Delta.
$$
Write $\Theta:=B_{\alpha} \Theta^{\prime} = \Theta^{\prime}
B_{\alpha}$. \  Then we may write
$\Phi_+=\Theta_0\Theta^{\prime}\Delta_1 A_1^*$. \  Since $T_{\Phi}$
is hyponormal, it follows that
$$
{\Theta}_0{\Theta}^{\prime} {\Delta_1} H_{\mathbb C^n}^2 \subseteq
\hbox{ker}\, H_{\Phi_+^*}\subseteq \hbox{ker}\, H_{\Phi_-^*}
={\Theta}H_{\mathbb C^n}^2,
$$
which implies that ${\Theta}$ is a (left) inner divisor of
${\Theta}_0{\Theta}^{\prime} {\Delta_1}$ (cf. \cite [Corollary
IX.2.2] {FF}). \  Observe that
$$
\aligned {\Theta}\ \ \hbox{is a (left) inner divisor of} \
{\Theta}_0{\Theta}^{\prime} {\Delta_1}
&\Longrightarrow {\Theta}^* {\Theta}_0{\Theta}^{\prime} {\Delta_1}\in H_{M_n}^2\\
&\Longrightarrow \Theta_0\Delta_1\Theta^{\prime} \Theta^* \in H_{M_n}^2\\
&\Longrightarrow \Theta_0\Delta^* \in H_{M_n}^2,
\endaligned
$$
which implies that $\Delta$ is a (right) inner divisor of
$\Theta_0$. \  But since $B_{\alpha}$ and $\Theta_0$ are coprime, it
follows that $\Delta$ and $\Theta_0$ are coprime. \  Thus $\Delta$
is a constant unitary, a contradiction. \  This completes the proof.
\
\end{proof}

\medskip

\begin{lemma}\label{lem4.3}
Suppose $F, B_{\lambda} \in H_{M_n}^{\infty}$ \hbox{\rm
($B_{\lambda}:=I_{b_\lambda}$).} \ If $G=\hbox{\rm GCD}_{\ell}\{F,
B_{\lambda}\}$, then $G$ is a Blaschke-Potapov factor of the form $
G=b_\lambda P_N +(I-P_N)$ with
$$
N:=\bigl(\hbox{\rm ran}\,{F}(\lambda)\bigr)^{\perp}.
$$
\end{lemma}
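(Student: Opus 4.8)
The plan is to pin down the greatest common left inner divisor $G$ by exhibiting the shift-invariant subspace it generates, and then to read off the relevant subspace of $\mathbb C^n$ from the behaviour at the point $\lambda$. Recall that $G:=\hbox{GCD}_\ell\{F,B_\lambda\}$ is the inner matrix function, unique up to a unitary constant right factor, determined by
$$
G\, H^2_{\mathbb C^n}=\hbox{cl}\bigl(FH^2_{\mathbb C^n}\bigr)\vee B_\lambda H^2_{\mathbb C^n};
$$
the right-hand side is shift-invariant because each of its two pieces is, so the Beurling--Lax--Halmos Theorem applies. Since this join contains $B_\lambda H^2_{\mathbb C^n}=b_\lambda H^2_{\mathbb C^n}$, the function $G$ is a left inner divisor of $B_\lambda=I_{b_\lambda}$ (indeed $G^*B_\lambda\in H^2_{M_n}$, so $B_\lambda=G(G^*B_\lambda)$); as $\det B_\lambda\not\equiv 0$ this forces $G$ to be square, and then by Lemma \ref{lemma2.3} it is a Blaschke--Potapov factor. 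Moreover the only Blaschke factor that can occur in $G$ is $b_\lambda$ itself — any other $b_\mu$ with $\mu\neq\lambda$ would make $G^*B_\lambda\notin H^2_{M_n}$. Hence, up to a unitary constant right factor,
$$
G=b_\lambda P_M+(I-P_M)
$$
for some closed subspace $M\subseteq\mathbb C^n$ (with $M=\{0\}$, i.e.\ $G$ constant, allowed); it remains to prove $M=N:=(\hbox{ran}\,F(\lambda))^{\perp}$.

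To identify $M$, I would compute in two ways the set of values at $z=\lambda$ attained by functions in $GH^2_{\mathbb C^n}$. Using $G=b_\lambda P_M+(I-P_M)$ and $b_\lambda(\lambda)=0$, any $g=Gh$ with $h\in H^2_{\mathbb C^n}$ satisfies $g(\lambda)=G(\lambda)h(\lambda)=(I-P_M)h(\lambda)$; letting $h$ run over the constant functions shows this value set is exactly $(I-P_M)\mathbb C^n=M^{\perp}$. On the other hand, evaluation at the interior point $\lambda$ is a bounded linear functional on $H^2_{\mathbb C^n}$, and for any $g=Fh+b_\lambda h'$ in $FH^2_{\mathbb C^n}+b_\lambda H^2_{\mathbb C^n}$ one has $g(\lambda)=F(\lambda)h(\lambda)\in\hbox{ran}\,F(\lambda)$; since $\hbox{ran}\,F(\lambda)$ is a closed subspace of $\mathbb C^n$, passing to the closed linear span keeps all values in $\hbox{ran}\,F(\lambda)$, while the reverse inclusion is immediate because $F(\lambda)c=(Fc)(\lambda)$ for constant $c$ and $Fc\in FH^2_{\mathbb C^n}\subseteq GH^2_{\mathbb C^n}$. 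Thus the value set equals $\hbox{ran}\,F(\lambda)$, and comparing the two computations gives $M^{\perp}=\hbox{ran}\,F(\lambda)$, that is, $M=N$, which completes the proof.

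I expect the two mildly delicate points to be (i) checking that the Blaschke--Potapov factor supplied by Lemma \ref{lemma2.3} has $b_\lambda$ — and not some other Blaschke factor — as its zero, which is precisely where divisibility of $B_\lambda$ is used, and (ii) keeping track of the unitary constant right factor, which is harmless since the asserted conclusion $G=b_\lambda P_N+(I-P_N)$ is itself only claimed up to such a factor. If one prefers to bypass Lemma \ref{lemma2.3}, there is a direct alternative: verify by hand that $\hbox{cl}(FH^2_{\mathbb C^n})\vee b_\lambda H^2_{\mathbb C^n}=\bigl(b_\lambda P_N+(I-P_N)\bigr)H^2_{\mathbb C^n}$ — the only nontrivial inclusion uses that $Fv-F(\lambda)v$ vanishes at $\lambda$ and is hence divisible by $b_\lambda$ inside $H^2_{\mathbb C^n}$ — and then invoke Beurling--Lax--Halmos uniqueness. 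Either way, the computation is routine once the set-up is in place.
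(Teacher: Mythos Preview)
Your proof is correct and takes a genuinely different route from the paper's.

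The paper passes to the tilde side, writing $\widetilde G=\hbox{GCD}_r\{\widetilde F,\widetilde B_\lambda\}$, invokes Lemma~\ref{lemma2.3} to get $\widetilde G=b_{\overline\lambda}P_N+(I-P_N)$ for some $N$, then factors $\widetilde F=\widetilde L\,\widetilde G$ and argues by contradiction that $\ker\widetilde L(\overline\lambda)\cap\hbox{ran}\,(I-P_N)=\{0\}$ using the right coprimeness of $\widetilde L$ and $\widetilde B_\lambda\widetilde G^*$; from this and $\widetilde F(\overline\lambda)=\widetilde L(\overline\lambda)(I-P_N)$ it reads off $N=\ker\widetilde F(\overline\lambda)=(\hbox{ran}\,F(\lambda))^{\perp}$. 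Your approach instead works directly with the Beurling--Lax--Halmos description $GH^2_{\mathbb C^n}=\hbox{cl}(FH^2_{\mathbb C^n})\vee b_\lambda H^2_{\mathbb C^n}$ and identifies the projection by computing the set of point values $\{g(\lambda):g\in GH^2_{\mathbb C^n}\}$ in two ways. This bypasses the tilde bookkeeping and the contradiction argument entirely; it is shorter and more transparent, and it makes explicit why the subspace $N$ is exactly the cokernel of $F(\lambda)$. The paper's factorization argument, on the other hand, is closer in spirit to how the lemma is applied in Lemma~\ref{lem4.4}, where one repeatedly peels off Blaschke--Potapov factors and tracks invertibility at zeros. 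Both approaches rely on Lemma~\ref{lemma2.3} to know that any inner divisor of $B_\lambda$ is a single Blaschke--Potapov factor with zero at $\lambda$; your remark that the zero must be $\lambda$ is already contained in the proof of that lemma, so that step is redundant but harmless.
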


\begin{proof} By assumption, $\widetilde{G}=\hbox{GCD}_{r}\{\widetilde{F},
\widetilde{B}_{\lambda}\}$. \  Then by Lemma \ref{lemma2.3},
$$
\widetilde{G}=b_{\overline\lambda} P_N +(I-P_N)\quad \hbox{for a
closed subspace $N$}\,.
$$
Thus $\widetilde F=\widetilde L\widetilde G$ for some $\widetilde L\in
H^2_{M_n}$, where $\widetilde L$ and $\widetilde B_{\lambda} \widetilde
G^*=P_N+b_{\overline\lambda}(I-P_N)$ are right coprime. \  We argue
that $\hbox{ker}\,\widetilde
L(\overline\lambda)\,\cap\,\hbox{ran}\,(I-P_N)=\{0\}$. \  Indeed, if
$\hbox{ker}\,\widetilde
L(\overline\lambda)\,\cap\,\hbox{ran}\,(I-P_N)=:N_0\ne\{0\}$ then
$P_{N_0^\perp}+b_{\overline\lambda}(I-P_{N_0^\perp})$ would be a
right inner divisor of $\widetilde L$ and
$P_N+b_{\overline\lambda}(I-P_N)$ as follows:
$$
\widetilde L(\overline\lambda)=\begin{bmatrix} \ast&0\\
\ast&0\end{bmatrix} \begin{matrix} N_0^\perp\\ N_0 \end{matrix}
=\begin{bmatrix} \ast&0\\
\ast&0\end{bmatrix} \begin{bmatrix} 1&0\\
0&b_{\overline\lambda}\end{bmatrix} \begin{matrix} N_0^\perp\\
N_0\end{matrix}\,,
$$
and hence,
$$
\widetilde{L}=\widetilde{L}(\overline\lambda)+CB_{\overline\lambda}
=D\begin{bmatrix} 1&0\\
0&b_{\overline\lambda}\end{bmatrix} \begin{matrix} N_0^\perp\\
N_0\end{matrix}\quad\hbox{(some $C,D\in H^2_{M_n}$)}
$$
and
$$
P_N+b_{\overline\lambda}(I-P_N)=
\begin{bmatrix}
1&0&0\\
0&b_{\overline\lambda}&0\\
0&0&b_{\overline\lambda}
\end{bmatrix} \begin{matrix} N\\ N^\prime\\
N_0\end{matrix} =
\begin{bmatrix}
1&0&0\\
0&b_{\overline\lambda}&0\\
0&0&1\end{bmatrix} \begin{bmatrix}
1&0&0\\
0&1&0\\
0&0&b_{\overline\lambda}
\end{bmatrix}\begin{matrix} N\\ N^\prime\\
N_0\end{matrix}\,,
$$
where $N^\prime:=N^\perp\ominus N_0$. \  But since $\widetilde
F(\overline\lambda)=\widetilde L(\overline\lambda)
\Bigl(b_{\overline\lambda}P_N+(I-P_N)\Bigr)(\overline\lambda)
=\widetilde L(\overline\lambda)(I-P_N)$, it follows that
$$
N=\hbox{ker}\,(I-P_N)=\hbox{ker}\,
\widetilde{F}(\overline{\lambda})=\hbox{ker}\,
F^*(\lambda)=\bigl(\hbox{ran}\, F(\lambda)\bigr)^{\perp}.
$$
Therefore $G=b_\lambda P_N+(I-P_N)$ with $N:=\bigl(\hbox{\rm
ran}\,{F}(\lambda)\bigr)^{\perp}$. \
\end{proof}

\medskip

\begin{lemma} \label{lem4.4}
Let $\Phi\equiv\Phi_+\in H^\infty_{M_n}$ be a matrix-valued
rational function of the form
$$
\aligned
\Phi &=\Theta\Delta_r A_r^*,  \quad (\hbox{right coprime factorization}),\\
&=A_{\ell}^*\Omega,  \qquad (\hbox{left coprime factorization}),
\endaligned
$$
where $\Theta =I_\theta$ with a finite Blaschke product $\theta$ and
$\Delta_r, \Omega$ are inner matrix functions. \  Then $\Theta$ is
an inner divisor of $\Omega$. \
\end{lemma}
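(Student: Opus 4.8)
The plan is to reduce the asserted divisibility to a pointwise condition at the zeros of $\theta$ and to feed it from the two coprimeness hypotheses via Lemma~\ref{lem2.2}. Put $\Omega_r:=\Theta\Delta_r$, so that $\Phi=\Omega_r A_r^{*}$ and $\Phi=A_{\ell}^{*}\Omega$ are the right-, resp.\ left-coprime factorizations of $\Phi$. Taking adjoints, $\Phi^{*}=A_r\Omega_r^{*}=\Omega^{*}A_{\ell}$; since $\Omega$ is a square inner function, multiplying on the left by $\Omega$ and on the right by $\Omega_r$ yields
$$
\Omega A_r=A_{\ell}\Omega_r=\theta\,A_{\ell}\Delta_r,
$$
the last equality because $\Theta=\theta I_n$ is a scalar multiple of the identity and therefore commutes with $\Delta_r$.

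Next I would verify that $A_r$ and $\Theta$ are right coprime. Since $\Omega_r=\Theta\Delta_r=\Delta_r\Theta$, the inner function $\Theta$ is a right inner divisor of $\Omega_r$; hence any common right inner divisor $D$ of $A_r$ and $\Theta$ is also a right inner divisor of $\Omega_r$, and the right-coprimeness of $A_r$ and $\Omega_r$ forces $D$ to be a unitary constant. By Lemma~\ref{lem2.2}, $A_r(\alpha)$ is invertible for every $\alpha\in\mathcal Z(\theta)$.

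The proof then concludes locally. Fix $\alpha\in\mathcal Z(\theta)$ of multiplicity $m$ in $\theta$. Since $A_{\ell}\Delta_r$ is holomorphic at $\alpha$, the right-hand side $\theta\,A_{\ell}\Delta_r$ of the displayed identity vanishes at $\alpha$ to order at least $m$, hence so does $\Omega A_r$. Writing $\Omega=(z-\alpha)^{k}\Omega_1$ near $\alpha$ with $\Omega_1$ holomorphic and $\Omega_1(\alpha)\neq 0$, a value $k<m$ would force $\Omega_1 A_r$ to vanish at $\alpha$, i.e.\ $\Omega_1(\alpha)A_r(\alpha)=0$; multiplying by $A_r(\alpha)^{-1}$ gives $\Omega_1(\alpha)=0$, a contradiction. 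So $\Omega$ vanishes at $\alpha$ to order $\ge m$. Letting $\alpha$ run over $\mathcal Z(\theta)$, we conclude that $\theta$ divides every entry of $\Omega$, that is, $\overline{\theta}\,\Omega\in H^{2}_{M_n}$, which is precisely the statement that $\Theta=I_\theta$ is an inner divisor of $\Omega$.

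The only step carrying real content is the second one: transferring the right-coprimeness of $A_r$ with the full inner factor $\Omega_r$ down to right-coprimeness with its scalar left factor $\Theta$, so that Lemma~\ref{lem2.2} becomes applicable; everything else is the elementary identity of the first paragraph and a zero-counting argument. (If $\theta$ is a unitary constant the statement is vacuous, so there is no loss in assuming $\mathcal Z(\theta)\neq\varnothing$.)
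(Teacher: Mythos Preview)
Your argument is correct and takes a genuinely different route from the paper's proof. The paper proceeds by an induction on the Blaschke--Potapov factors of $\Delta_r$: writing $\Delta_r=\prod_{m=1}^M\bigl(b_mP_m+(I-P_m)\bigr)$, it peels off one factor at a time from the right, at each step invoking Lemma~\ref{lem4.3} to control the left GCD with $B_M$ and checking via a determinant count that the resulting $\Gamma_M$ remains coprime with $\Theta$. After $M$ steps this produces the left coprime factorization with $\Theta$ visibly surviving as an inner divisor of $\Omega$.

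Your approach bypasses this machinery entirely. From the single algebraic identity $\Omega A_r=\theta\,A_\ell\Delta_r$ (valid on $\mathbb D$ since both sides lie in $H^\infty_{M_n}$) and the invertibility of $A_r(\alpha)$ at each zero of $\theta$ (obtained from Lemma~\ref{lem2.2} after the clean reduction of right-coprimeness from $\Omega_r$ to its scalar factor $\Theta$), a direct zero-order count at each $\alpha\in\mathcal Z(\theta)$ forces $(z-\alpha)^{m_\alpha}$ to divide every entry of $\Omega$. This is shorter, avoids Lemma~\ref{lem4.3} and the Blaschke--Potapov factor bookkeeping altogether, and isolates the essential point: coprimeness pushes the scalar inner factor $\theta$ across to the other side intact. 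The paper's inductive argument, by contrast, is more constructive---it effectively exhibits how the left coprime factorization is built---which may be of independent interest, but for the bare divisibility statement your proof is more efficient.
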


\begin{proof} Since $\Delta_r$ is a finite Blaschke-Potapov product, we may
write
$$
\Delta_r=\nu \prod_{m=1}^M \Bigl(b_m P_m + (I-P_m)\Bigr) \quad
(b_m:=\frac{z-\alpha_m}{1-\overline \alpha_m z}).
$$
Without loss of generality we may assume that $\nu=I_n$. \  Define
$$
\theta_0:=\hbox{GCD}\,\Bigl\{\omega:\ \omega\ \hbox{is inner},\
\Delta_r \ \hbox{is an inner divisor of}\ \Omega=\omega I_n\Bigr\}.
$$
Then $\theta_0=\prod_{m=1}^M b_m$. \ Observe that
$$
\aligned \Phi
&=\Theta\Delta_r A_r^*\\
&=\Theta\prod_{m=1}^M \Bigl(b_mP_m + (I-P_m)\Bigr) A_r^*\\
&=\prod_{m=1}^{M-1} \Bigl(b_mP_m + (I-P_m)\Bigr)B_M \Bigl(P_M +
b_M(I-P_M)\Bigr)^*A_r^* \Theta \quad
(B_m:=I_{b_m})\\
&=\prod_{m=1}^{M-1} \Bigl(b_mP_m + (I-P_m)\Bigr)
\Bigl[A_r\Bigl(P_M + b_M(I-P_M)\Bigr)\Bigr]^*  B_M\Theta\\
\endaligned
$$
If $P_M=I$, then
$$
\Phi=\prod_{m=1}^{M-1} \Bigl(b_mP_m + (I-P_m)\Bigr) A_r^* B_M\Theta,
$$
where $\Theta$ and $A_r$ are coprime. \  If instead $P_M\ne I$, then
there are two cases to consider. \

\medskip

Case 1: Let $\alpha_M \notin \mathcal Z(\theta)$. \  Then
$$
\Phi =\prod_{m=1}^{M-1} \Bigl(b_mP_m + (I-P_m)\Bigr) A_1^* B_M\Theta
\quad(\hbox{with}\ A_1:=A_r(P_M+b_M(I-P_M))),
$$
where $\Theta$ and $A_1$ are coprime (by passing to Lemma
\ref{lem2.2}). \

\medskip

Case 2: Let $\alpha_M \in \mathcal Z(\theta)$. \  Write
$\Omega_M:=\hbox{GCD}_{\ell}\{B_M,\ A_r\bigl(P_M + b_M(I-P_M)\bigr)\}$.
\  Then we can write
\begin{equation}\label{4.2}
B_M=\Omega_M\Omega_M^{\prime}\quad\hbox{and}\quad
         A_r\Bigl(P_M + b_M(I-P_M)\Bigr)=\Omega_M\Gamma_M
\end{equation}
for some $\Omega_M^{\prime}, \Gamma_M\in H^\infty_{M_n}$. \  By
Lemma \ref{lem4.3}, $\Omega_M=b_MP_N+(I-P_N)$ with
$N:=\bigl(\hbox{ran}\, (A_r(\alpha_M)P_M)\bigr)^{\perp}$. \  We now
claim that
\begin{equation}\label{4.2-1}
\hbox{$\Gamma_M(\alpha_M)$ is invertible. \ }
\end{equation}
Since
$$
\hbox{det}\,\Bigl[ A_r\Bigl(P_M + b_M(I-P_M)\Bigr)\Bigr]
=b_M^{\hbox{rank}(I-P_M)}\cdot \hbox{det}\, A_r
$$
and
$$
\hbox{det}\,\Omega_M\Gamma_M=(b_M)^{\hbox{dim}\, N} \cdot
\hbox{det}\, \Gamma_M,
$$
it follows from (\ref{4.2}) that
\begin{equation} \label{4.3}
\hbox{det}\, A_r \cdot
(b_M)^{\hbox{rank}(I-P_M)}=(b_M)^{\hbox{dim}\, N} \cdot \hbox{det}\,
\Gamma_M\,.
\end{equation}
But since $A_r$ and $\Theta$ are right coprime, and hence
$A_r(\alpha_M)$ is invertible, it follows that
$$
\hbox{dim}\, N=\hbox{dim}\, \bigl(\hbox{ran}\,
(A_r(\alpha_M)P_M)\bigr)^{\perp} =\hbox{dim}(\hbox{ran}\,
P_M)^{\perp}=\hbox{rank}(I-P_M),
$$
which together with (\ref{4.3}) implies that $\hbox{det}\,
\Gamma_M=\hbox{det}\, A_r$. \  This proves (\ref{4.2-1}). \
Therefore we have
$$
\Phi =\prod_{m=1}^{M-1} \Bigl(b_mP_m + (I-P_m)\Bigr) \Gamma_M^*  \Omega_M^\prime \Theta,\\
$$
where $\Gamma_M(\alpha_M)$ is invertible. \  Thus $\Gamma_M(\alpha)$
is invertible for all $\alpha\in\mathcal{Z}(\theta)$, and hence by
Lemma \ref{lem2.2}, $\Theta$ and $\Gamma_M$ are coprime. \

If we repeat this argument then after $M$ steps we get the left
coprime factorization of $\Phi=A_l^* \Omega$, where $\Omega$ still
has $\Theta$ as an inner divisor. \
\end{proof}

\smallskip

Our main theorem of this section now follows:

\begin{theorem} \label{thm4.5}
Let $\Phi\in L^\infty_{M_n}$ be a matrix-valued rational function. \
Then we may write
$$
\Phi_- = B^*\Theta\,,
$$
where $B\in H^2_{M_n}$ and $\Theta:=I_\theta$ with a finite Blaschke
product $\theta$. \  Suppose $B$ and $\Theta$ are coprime. \  If
both $T_{\Phi}$ and $T_{\Phi}^2$ are hyponormal then $T_{\Phi}$ is
either normal or analytic. \
\end{theorem}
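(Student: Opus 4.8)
The plan is to deduce the theorem from the Extension of Abrahamse's Theorem (Theorem~\ref{thm4.1}). The hypotheses of Theorem~\ref{thm4.5} already provide everything Theorem~\ref{thm4.1} needs except one item: $\Phi$ is rational, so $\Phi$ and $\Phi^*$ are of bounded type, and $\Phi_-=B^*\Theta$ with $B$ and $\Theta=I_\theta$ coprime and $\theta$ a finite Blaschke product. If $T_\Phi$ is analytic we are done, so I would assume $\Phi_-\not\equiv 0$, i.e.\ $\theta$ nonconstant, and aim to show $T_\Phi$ is normal. Since $T_\Phi$ is hyponormal, $\Phi$ is normal and $\mathcal C(\Phi)\ne\emptyset$; by Proposition~\ref{pro3.2} we may write $\Phi_+=\Theta\Theta_0A^*$ with $\Theta_0=I_{\theta_0}$ for a finite Blaschke product $\theta_0$ and $A\in H^2_{M_n}$, and by Lemma~\ref{lem4.2}, $A(\alpha)$ is invertible for every $\alpha\in\mathcal Z(\theta)\setminus\mathcal Z(\theta_0)$. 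By Theorem~\ref{thm4.1} it then suffices to prove that $\mathcal N:=\ker[T_\Phi^*,T_\Phi]$ is invariant under $T_\Phi$: this yields that $T_\Phi$ is normal or analytic, and the analytic case has been excluded.

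Establishing the invariance of $\mathcal N$ is the entire content, and this is where the hyponormality of $T_\Phi^2$ and the rationality of $\Phi$ must both be used. For a general operator $T$ the conditions ``$T$ and $T^2$ hyponormal'' do not force $\ker[T^*,T]$ to be $T$-invariant: with $D:=[T^*,T]$ one has the identity $[T^{*2},T^2]=T^*DT+DT^*T+TT^*D+TDT^*$, hence $\langle[T^{*2},T^2]x,x\rangle=\|D^{1/2}Tx\|^2+\|D^{1/2}T^*x\|^2$ for $x\in\ker D$, which is automatically nonnegative and carries no information. So I would bring in the Toeplitz structure via Theorem~\ref{thm3.3}: since $\Phi$ is normal, $[T_\Phi^*,T_\Phi]=(T_A)_{\Theta\Theta_0}^*\mathcal V\bigl(I|_{\mathcal L}-K(M)^*K(M)\bigr)\mathcal V^*(T_A)_{\Theta\Theta_0}\oplus 0|_{\Theta\Theta_0 H^2_{\mathbb C^n}}$ for any $K\in\mathcal C(\Phi)$, so $[T_\Phi^*,T_\Phi]$ is supported on the finite-dimensional space $\mathcal H(\Theta\Theta_0)$ and $\mathcal N\supseteq\Theta\Theta_0H^2_{\mathbb C^n}$. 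Expanding $[T_\Phi^{*2},T_\Phi^2]\ge 0$ with this formula, together with $H_{\Phi_-^*}=T_{\widetilde K}^*H_{\Phi_+^*}$ (from $K\in\mathcal C(\Phi)$) and the Hankel relations (\ref{1.2})--(\ref{1.5}), should reduce the desired identity $[T_\Phi^*,T_\Phi]T_\Phi x=0$ (for all $x\in\mathcal N$) to a finite-dimensional statement on $\mathcal H(\Theta\Theta_0)$ relating the compression $(T_\Phi)_{\Theta\Theta_0}$, the positive operator $I-K(M)^*K(M)$, and its restriction to $\ker(I-K(M)^*K(M))$.

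The main obstacle is making this last reduction effective at the common zeros of $\theta$ and $\theta_0$: there $A$ need not be invertible, so $(T_A)_{\Theta\Theta_0}$ fails to be invertible (Lemma~\ref{lem2.7} would give invertibility only if $A$ were invertible on all of $\mathcal Z(\theta\theta_0)$), and without that invertibility one cannot read the kernel of $[T_\Phi^*,T_\Phi]$ off the $K(M)$-model nor use the rank identity in Theorem~\ref{thm3.3}. The remedy — and this is exactly what Lemmas~\ref{lem4.3} and~\ref{lem4.4} are designed for — is to split the Blaschke-Potapov factors common to $A$ and $\Theta_0$ off into $\Theta_0$, precisely as in the proof of Lemma~\ref{lem4.4}, passing to a coprime sub-factorization of $\Phi_+$ on which the relevant compression has dense range, and then to re-run the positivity computation on this coprime data. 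Carrying the bookkeeping through, $\mathcal N$ is seen to be $T_\Phi$-invariant, and Theorem~\ref{thm4.1} closes the argument.
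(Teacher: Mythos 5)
Your proposal has a genuine gap at its very center. The entire content of the theorem is the passage from ``$T_\Phi$ and $T_\Phi^2$ hyponormal'' to normality, and in your plan this is carried by the single claim that $\ker[T_\Phi^*,T_\Phi]$ is invariant under $T_\Phi$ (so that Theorem \ref{thm4.1} applies) --- but that claim is never proved. You correctly observe that for a general operator the identity $\langle [T^{*2},T^2]x,x\rangle=\|D^{1/2}Tx\|^2+\|D^{1/2}T^*x\|^2$ on $\ker D$ carries no information, so some mechanism specific to the Toeplitz/rational setting must convert $[T_\Phi^{*2},T_\Phi^2]\ge 0$ into kernel invariance; yet the proposal only says that expanding via Theorem \ref{thm3.3} ``should reduce'' the problem to finite-dimensional bookkeeping on $\mathcal H(\Theta\Theta_0)$ and that ``carrying the bookkeeping through'' yields the invariance. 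No such computation is exhibited, and it is not at all routine: one must show $P_{\mathcal H(\Theta\Theta_0)}T_\Phi x$ lies in the kernel of the compressed commutator for every $x$ in $\mathcal N$ (including the part of $\mathcal N$ inside $\mathcal H(\Theta\Theta_0)$, not just $\Theta\Theta_0H^2_{\mathbb C^n}$), and the only available input is the positivity of $[T_\Phi^{*2},T_\Phi^2]$, whose interaction with the $K(M)$ model you do not make explicit. As written, the proof of the theorem is missing exactly where it matters.

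For comparison, the paper does not go through Theorem \ref{thm4.1} at all and never proves kernel invariance as an intermediate step. The square-hyponormality is exploited by a different device: since $\Phi$ is normal and $\Phi^2\Theta^2\Theta_0^2,\ \Phi^{*2}\Theta^2\Theta_0^2\in H^\infty_{M_n}$, one computes $T_{\Theta^2\Theta_0^2}^*[T_\Phi^{*2},T_\Phi^2]T_{\Theta^2\Theta_0^2}=0$, and positivity of $[T_\Phi^{*2},T_\Phi^2]$ then forces $[T_\Phi^{*2},T_\Phi^2]T_{\Theta^2\Theta_0^2}=0$, which unwinds (via (\ref{1.3})) into the Hankel identity $H_{\Phi_+^*}^*H_{\Phi\Phi^{*2}\Theta^2\Theta_0^2}=H_{\Phi_-^*}^*H_{\Phi^*\Phi^2\Theta^2\Theta_0^2}$. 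From this, using Lemmas \ref{lem4.2}--\ref{lem4.4}, a GCD argument to rule out common zeros of $\theta$ and $\theta_0$, Cowen's composition theorem to normalize $0\in\mathcal Z(\theta)$, and a norm argument on the constant vectors $E_i$, the paper shows that the contraction $K$ with $H_{\Phi_-^*}=T_{\widetilde K}^*H_{\Phi_+^*}$ must be a constant unitary, whence $[T_\Phi^*,T_\Phi]=0$ directly. Your outline borrows the correct supporting cast (Proposition \ref{pro3.2}, Lemmas \ref{lem4.2}--\ref{lem4.4}) but omits the analogue of this ``positive operator annihilated by conjugation'' step, which is the one place the hypothesis on $T_\Phi^2$ actually does work; without it, or a worked-out substitute, the argument does not go through.
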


\begin{proof} Suppose ${\Phi}$ is not analytic. \  Then $\Theta$ is not
constant unitary. \  Since $T_{\Phi}$ is hyponormal, it follows that
$\hbox{ker}H_{\Phi_+^*} \subseteq \hbox{ker}\, H_{\Phi_-^*}=\Theta
H_{\mathbb C^n}^2$. \  Thus we can write
$$
\Phi_+=\Theta\Delta_r A_r^*   \qquad (\hbox{right coprime
factorization}),
$$
where $\Delta_r$ is an inner matrix function. \  Let $\theta_0$ be a
minimal inner function such that $\Theta_0\equiv
I_{\theta_0}=\Delta_r\Theta_1$ for some inner matrix function
$\Theta_1$. \  We also write $A:=A_r\Theta_1$, and hence
$$
\Phi_+=\Theta\Theta_0A^*.
$$
On the other hand,
we need to keep in mind that $\Theta=I_\theta$ and $\Theta_0=I_{\theta_0}$
are inner functions, {\it constant} along the diagonal,
so that these factors commute with all other matrix functions
in the computations below.
Note that $\Phi^2\Theta^2\Theta_0^2\in H^\infty_{M_n}$ and
$\Phi^{*2}\Theta^2\Theta_0^2\in H^\infty_{M_n}$. \  We thus have
$$
\aligned T_{\Theta^2\Theta_0^2}^* [T_{\Phi}^{2*},
T_{\Phi}^2]T_{\Theta^2\Theta_0^2}
&=T_{\Theta^2\Theta_0^2}^*T_{\Phi}^{2*}T_{\Phi}^2T_{\Theta^2\Theta_0^2}
-T_{\Theta^2\Theta_0^2}^*T_{\Phi}^2T_{\Phi}^{2*}T_{\Theta^2\Theta_0^2}\\
&=T_{\Phi^{*2}\Theta^{*2}\Theta_0^{*2}}T_{\Phi^2\Theta^2\Theta_0^2}
-T_{\Phi^{2}\Theta^{*2}\Theta_0^{*2}}T_{\Phi^{*2}\Theta^2\Theta_0^2}\\
&=T_{\Phi^{*2}\Theta^{*2}\Theta_0^{*2}\Phi^2\Theta^2\Theta_0^2}
-T_{\Phi^{2}\Theta^{*2}\Theta_0^{*2}\Phi^{*2}\Theta^2\Theta_0^2}\\
&=T_{\Phi^{*2}\Phi^2}-T_{\Phi^2\Phi^{*2}}=0
\quad (\hbox{since} \ \Phi \ \hbox{is normal}).
\endaligned
$$
The positivity of $[T_{\Phi}^{2*}, T_{\Phi}^2]$ implies that
$[T_{\Phi}^{2*}, T_{\Phi}^2]T_{\Theta^2\Theta_0^2}=0$. \  We thus
have
\begin{equation}\label{4.4}
\aligned
0&=[T_{\Phi}^{2*}, T_{\Phi}^2]T_{\Theta^2\Theta_0^2}\\
&=T_{\Phi^*}^2T_{\Phi^2\Theta^2\Theta_0^2}-T_{\Phi}^2T_{\Phi^{*2}\Theta^2\Theta_0^2}\\
&=T_{\Phi^*}T_{\Phi^*\Phi^2\Theta^2\Theta_0^2}-T_{\Phi}T_{\Phi\Phi^{*2}\Theta^2\Theta_0^2}\\
&=\Bigl(T_{\Phi^{*2}\Phi^2\Theta^2\Theta_0^2}
-H_{\Phi}^*H_{\Phi^*\Phi^2\Theta^2\Theta_0^2}\Bigr)-\Bigl(T_{\Phi^2\Phi^{*2}\Theta^2\Theta_0^2}
-H_{\Phi^*}^*H_{\Phi\Phi^{*2}\Theta^2\Theta_0^2}\Bigr)\quad \hbox{(by (\ref{1.3}))}\\
&=H_{\Phi^*}^*H_{\Phi\Phi^{*2}\Theta^2\Theta_0^2}-H_{\Phi}^*H_{\Phi^*\Phi^2\Theta^2\Theta_0^2}\\
&=H_{\Phi_+^*}^*H_{\Phi\Phi^{*2}\Theta^2\Theta_0^2}-H_{\Phi^*_-}^*H_{\Phi^*\Phi^2\Theta^2\Theta_0^2}.
\endaligned
\end{equation}
Let $\Omega:=\hbox{GCD}\, (\Theta_0, \Theta)$. \  Then by Lemma
\ref{lem2.1}, $\Omega=I_\omega$ for an inner function $\omega$. \
Thus we can write
$$
\Theta=\Theta^{\prime}\Omega\quad\hbox{and}\quad
\Theta_0=\Theta_0^{\prime}\Omega ,
$$
where $\Theta^{\prime}=I_{\theta^{\prime}}$ and
$\Theta_0^{\prime}=I_{\theta_0^{\prime}}$ for some inner functions
$\theta^\prime$ and $\theta_0^\prime$. \  Observe that
\begin{equation}\label{4.5}
\aligned H_{\Phi^*\Phi^2\Theta^2\Theta_0^2} &=H_{(\Theta
B^*+\Theta^*\Theta_0^* A)
       (\Theta^* B+\Theta\Theta_0 A^*)^2\Theta^2\Theta_0^2}\\
&=H_{(\Theta B^*+\Theta^*\Theta_0^* A)(B+\Theta^2\Theta_0A)^2\Theta_0^2}\\
&=H_{\Theta^*\Theta_0^* A(B+\Theta^2\Theta_0A)^2\Theta_0^2}\\
&=H_{ A(B+\Theta^2\Theta_0A)^2\Theta_0\Theta^*}\\
&=H_{ A(B+\Theta^2\Theta_0A)^2\Theta_0^{\prime}\Theta^{\prime*}}.
\endaligned
\end{equation}
Since $\Phi$ is normal we also have
\begin{equation}\label{4.6}
\aligned H_{\Phi\Phi^{*2}\Theta^2\Theta_0^2}&=H_{(\Theta
B^*+\Theta^*\Theta_0^* A)^2
(\Theta^* B+\Theta\Theta_0A^*)\Theta^2\Theta_0^2}\\
&=H_{(\Theta^2 \Theta_0 B^*+ A)(\Theta^* B+\Theta\Theta_0A^*)}\\
&=H_{(\Theta^2 \Theta_0 B^*+ A)B\Theta^*}\\
&=H_{AB\Theta^*}\,.
\endaligned
\end{equation}
We now claim that
\begin{equation}\label{4.7}
\hbox{$\theta^{\prime}$ is not constant. \ }
\end{equation}
Toward (\ref{4.7}) we assume to the contrary that $\theta^{\prime}$
is a constant. \  Then by (\ref{4.5}) we have
$$
H_{\Phi^*\Phi^2\Theta^2\Theta_0^2}=H_{
A(B+\Theta^2\Theta_0A)^2\Theta_0^{\prime}\Theta^{\prime*}}=0\,,
$$
so that $H_{\Phi_-^*}^*H_{\Phi^*\Phi^{2}\Theta^2\Theta_0^2}=0$, and
by (\ref{4.4}) we have
\begin{equation}\label{4.8}
H_{\Phi_+^*}^*H_{\Phi\Phi^{*2}\Theta^2\Theta_0^2}=0.
\end{equation}
Observe that
$$
\aligned
H_{AB\Theta^*}=0 &\Longleftrightarrow AB\Theta^* \in H_{M_n}^2\\
&\Longleftrightarrow AB \in \Theta H_{M_n}^2\\
&\Longleftrightarrow A=\Theta A^{\prime} \quad (\hbox{since} \ B \
\hbox{and}  \ \Theta \ \hbox{are coprime})\,,
\endaligned
$$
which implies that $A(\alpha)=0$ for each
$\alpha\in\mathcal{Z}(\theta)$, a contradiction. \  Therefore
$$
H_{\Phi\Phi^{*2}\Theta^2\Theta_0^2}=H_{AB\Theta^*} \neq
0\quad\hbox{and}\quad \hbox{cl
ran}\,H_{\Phi\Phi^{*2}\Theta^2\Theta_0^2}=\mathcal
H(\widetilde{\Delta})
$$
for some nonconstant (left) inner divisor $\Delta$ of $\Theta$. \
Thus it follows from Lemma \ref{lem4.4} and (\ref{4.8}) that
$$
\mathcal H(\widetilde{\Delta})=\hbox{cl
ran}\,H_{\Phi\Phi^{*2}\Theta^2\Theta_0^2} \subseteq \hbox{ker}\,
H_{\Phi_+^*}^*\subseteq \widetilde{\Theta} H_{\mathbb C^n}^2,
$$
giving a contradiction. \  This proves (\ref{4.7}). \  Observe
$$
\hbox{cl ran}\, H_{\Phi^*\Phi^2\Theta^2\Theta_0^2}=\hbox{cl
ran}\,H_{ A(B+\Theta^2\Theta_0A)^2\Theta_0^{\prime}\Theta^{\prime*}}
\subseteq \mathcal H(\widetilde{\Theta}^{\prime})\perp
\widetilde{\Theta}H_{\mathbb C^n}^2= \hbox{ker}\, H_{\Phi_-^*}^*,
$$
and
$$
\hbox{cl ran}\, H_{\Phi\Phi^{*2}\Theta^2\Theta_0^2}=\hbox{cl
ran}\,H_{AB\Theta^*} \subseteq \mathcal H(\widetilde\Theta)\perp
\widetilde{\Theta} H_{\mathbb C^n}^2\supseteq \hbox{ker}\,
H_{\Phi_+^*}^*.
$$
Thus by (\ref{4.4}) we have
\begin{equation}\label{4.9}
\aligned \hbox{ker}\, H_{\Phi^*\Phi^2\Theta^2\Theta_0^2}
&=\hbox{ker}\,
H_{\Phi^*_-}^*H_{\Phi^*\Phi^2\Theta^2\Theta_0^2}\\
&=\hbox{ker}\, H_{\Phi_+^*}^*H_{\Phi\Phi^{*2}\Theta^2\Theta_0^2}\\
&=\hbox{ker}\, H_{\Phi\Phi^{*2}\Theta^2\Theta_0^2}.
\endaligned
\end{equation}
Observe that for all $\alpha \in \mathcal Z(\theta)$,
$$
\Bigl(A(B+\Theta^2\Theta_0A)^2\Theta_0^{\prime}\Bigr)(\alpha)
=A(\alpha)B(\alpha)^2\Theta_0^{\prime}(\alpha)\,.
$$
Since $B(\alpha)$ and $\Theta_0^{\prime}(\alpha)$ are invertible, we
have
$$
\hbox{dim}\,\hbox{ker}\,
\bigl(A(B+\Theta^2\Theta_0A)^2\Theta_0^{\prime}\bigr)
(\alpha)=\hbox{dim ker}\, A(\alpha)=\hbox{dim}\,
\hbox{ker}\,(AB)(\alpha).
$$
By (\ref{4.5}), (\ref{4.6}) and (\ref{4.9}), we have that
$A(\alpha)=0$ for all $\alpha \in \mathcal Z(\omega)$ and hence
$\omega$ is a constant. \  Thus $\Omega$ is a constant unitary, and
hence $\Theta=\Theta^\prime$ and $\Theta_0=\Theta_0^\prime$. \
Therefore $\mathcal Z(\theta)=\mathcal Z(\theta) \setminus \mathcal
Z(\theta_0)$ and hence, by Lemma \ref{lem4.2}, $A(\alpha)$ is
invertible for each $\alpha \in \mathcal Z(\theta)$. \  Since for
each $\alpha \in \mathcal Z(\theta)$,
$$
\hbox{ $\Bigl(A(B+\Theta^2\Theta_0A)^2\Theta_0\Bigr)(\alpha)
=A(\alpha)B(\alpha)^2\Theta_0(\alpha)$ and $(AB)(\alpha)$ are
invertible,}
$$
it follows that $A(B+\Theta^2\Theta_0A)^2\Theta_0$ and $\Theta$ are
coprime, and $AB$ and $\Theta$ are coprime. \  Thus by (\ref{4.5}),
(\ref{4.6}), and (\ref{4.9}) we have
\begin{equation}\label{4.10}
\hbox{cl ran}\, H_{\Phi^*\Phi^2\Theta^2\Theta_0^2}=\hbox{cl ran}\,
H_{\Phi\Phi^{*2}\Theta^2\Theta_0^2}= \mathcal H(\widetilde{\Theta}).
\end{equation}
By the well-known result of C. Cowen \cite [Theorem 1]{Co1}\, -\, if
$\varphi\in L^\infty$ and $b$ is a finite Blaschke product of degree
$n$ then $T_{\varphi\circ b}\cong \oplus_n T_\varphi$, we may,
without loss of generality, assume that $0 \in \mathcal Z(\theta)$.
\ Since $T_{\Phi}$ is hyponormal, by \cite{GHR} (cf. p.4) there
exists $K \in H_{M_n}^{\infty}$ with $||K||_{\infty} \leq 1$ such
that
$$
H_{\Phi_-^*}=H_{K\Phi_+^*}=T_{\widetilde{K}}^*H_{\Phi_+^*}.
$$
Since $\Phi\Phi^*\Theta^2\Theta_0^2\in H^\infty_{M_n}$, we have
$$
\begin{aligned}
T_{\widetilde{K}}H_{\Phi^2\Phi^*\Theta^2\Theta_0^2}
&=T_{\widetilde{K}}H_{\Phi}T_{\Phi\Phi^*\Theta^2\Theta_0^2}
=T_{\widetilde{K}}H_{\Phi_-^*}T_{\Phi\Phi^*\Theta^2\Theta_0^2}\\
&=T_{\widetilde{K}}(T_{\widetilde{K}}^*H_{\Phi_+^*})T_{\Phi\Phi^*\Theta^2\Theta_0^2}
=T_{\widetilde{K}}T_{\widetilde{K}}^*H_{\Phi\Phi^{*2}\Theta^2\Theta_0^2}\,.
\end{aligned}
$$
Thus by (\ref{4.4}) we have
\begin{equation}\label{4.11}
\begin{aligned}
0&=H_{\Phi_+^*}^*H_{\Phi\Phi^{*2}\Theta^2\Theta_0^2}
       -H_{\Phi^*_-}^*H_{\Phi^*\Phi^2\Theta^2\Theta_0^2}\\
&=H_{\Phi_+^*}^* \Bigl(H_{\Phi\Phi^{*2}\Theta^2\Theta_0^2}
-T_{\widetilde{K}}H_{\Phi^*\Phi^2\Theta^2\Theta_0^2}\Bigr)\\
&=H_{\Phi_+^*}^*
(I-T_{\widetilde{K}}T_{\widetilde{K}}^*)H_{\Phi\Phi^{*2}\Theta^2\Theta_0^2}\,.
\end{aligned}
\end{equation}
It thus follows from (\ref{4.10}), (\ref{4.11}) and Lemma
\ref{lem4.4} that \begin{equation}\label{4.11-1}
(I-T_{\widetilde{K}}T_{\widetilde{K}}^*)\bigl(\mathcal
H(\widetilde{\Theta})\bigr) =\hbox{cl
ran}\Bigl((I-T_{\widetilde{K}}T_{\widetilde{K}}^*)
   H_{\Phi^{*2}\Phi\Theta^2\Theta_0^2}\Bigr)\subseteq
     \hbox{ker}\, H_{\Phi_+^*}^* \subseteq \widetilde{\Theta}H_{\mathbb
C^n}^2.
\end{equation}
Since
$||K||_\infty=||\widetilde{K}||_\infty=||\widetilde{K}^*||_\infty$,
it follows that $||T_{\widetilde{K}}||=||T_{\widetilde{K}^*}|| \leq
1$. \  For each $i=1,2,\cdots,n$, put
$$
E_i\label{E_i}:=(0,0, \cdots, 1, 0, \cdots,0,0)^t.
$$
Since $0 \in \mathcal Z(\theta) \cap  \mathcal
Z(\widetilde{\theta})$, we have $E_i \in \mathcal H(\Theta) \cap
\mathcal H(\widetilde{\Theta})$ and by (\ref{4.11-1}),
$$
E_i-T_{\widetilde{K}}T_{\widetilde{K}}^*E_i=\widetilde{\Theta} F_i
\quad (\hbox{some}\ F_i \in H_{\mathbb C^n}^2).
$$
Observe that
$$
\aligned
E_i-T_{\widetilde{K}}T_{\widetilde{K}}^*E_i=\widetilde{\Theta} F_i
&\Longrightarrow
T_{\widetilde{K}}T_{\widetilde{K}}^*E_i=E_i-\widetilde{\Theta} F_i\\
&\Longrightarrow ||T_{\widetilde{K}}T_{\widetilde{K}}^*E_i||_2^2
     =||E_i-\widetilde{\Theta}F_i||_2^2\\
&\Longrightarrow ||T_{\widetilde{K}}T_{\widetilde{K}}^*E_i||_2^2
           =||E_i||_2^2+||\widetilde{\Theta}F_i||_2^2
              \quad(\hbox{since} \ E_i \in \mathcal H(\widetilde{\Theta}))\\
&\Longrightarrow
||T_{\widetilde{K}}T_{\widetilde{K}}^*E_i||_2^2=1+||\widetilde{\Theta}F_i||_2^2.\\
\endaligned
$$
But since $||T_{\widetilde{K}}T_{\widetilde{K}}^*|| \leq 1$, it
follows that $||T_{\widetilde{K}}T_{\widetilde{K}}^*E_i||_2=1$ and
$F_i=0$ for all $i=1,2,\cdots,n$. \  We thus have
$||T_{\widetilde{K}}^*E_i||_2=1$ for all $i=1,2,\cdots,n$. \  Write
$$
K(z):=\begin{bmatrix} k_{ij}(z) \end{bmatrix}\quad\hbox{and}\quad
k_{ij}(z)=  \sum_{m=0}^{\infty}k_{ij}^{(m)}z^m.
$$
Then $\widetilde{K}^*(z) =K(\overline{z})=\begin{bmatrix}
k_{ij}(\overline{z})
\end{bmatrix}$\,,
and hence
$$
T_{\widetilde{K}}^* E_i=[P(k_{1i}(\overline{z})),
P(k_{2i}(\overline{z})), \cdots,
P(k_{ni}(\overline{z}))]^t=[k_{1i}^{(0)}, k_{2i}^{(0)},\cdots,
k_{ni}^{(0)}]^t.
$$
But since $||T_{\widetilde{K}}^* E_i||_2 =1$ for all
$i=1,2,\cdots,n$, it follows that
$$
||[k_{1i}^{(0)}, k_{2i}^{(0)},\cdots, k_{ni}^{(0)}]^t||_2=1  \ \
\hbox{for all} \ i=1,2,\cdots n.
$$
Therefore
$$
1=\frac{1}{n}\Bigl|\Bigl|\bigl[k_{ij}^{(0)}\bigr]\Bigr|\Bigr|_2^2
\leq
\frac{1}{n}\sum_{m=0}^{\infty}\Bigl|\Bigl|\bigl[k_{ij}^{(m)}\bigr]
\Bigl|\Bigl|_2^2=\frac{1}{n}||K||_2^2 \leq ||K||_{\infty}^2 \leq 1,
$$
which implies that $\bigl[k_{ij}^{(m)}\bigr]=0$ for all $m \geq 1$.
\ Hence $K=\bigl[k_{ij}^{(0)}\bigr]$, so that $\widetilde K=K^*$. \
Observe that
$$
\aligned (I-T_{\widetilde{K}}T_{\widetilde{K}}^*)\mathcal
H(\widetilde{\Theta})=0
&\Longrightarrow (I-T_{K^*K})\mathcal H(\widetilde{\Theta})=0\\
&\Longrightarrow K^*K=I_{n} \quad (\hbox{since} \ 0 \in \mathcal
Z(\widetilde{\theta})).
\endaligned
$$
Therefore  $\widetilde{K}=K^*$ is a constant unitary and hence we
have
$$
\hbox{$[T_{\Phi}^*, T_{\Phi}]$}
=H_{\Phi^*}^*H_{\Phi^*}-H_{\Phi}^*H_{\Phi}
=H_{\Phi^*}^*H_{\Phi^*}- H_{\Phi^*}^*T_{\widetilde{K}\widetilde{K}^*}H_{\Phi^*}=0,
$$
which implies that $T_{\Phi}$ is normal. \
\end{proof}

\smallskip

\begin{cor} \label{cor4.6}
Let $\Phi\in L^\infty_{M_n}$ be a matrix-valued trigonometric
polynomial whose co-analytic outer coefficient is invertible. \ If
$T_\Phi$ and $T_\Phi^2$ are hyponormal then $T_\Phi$ is normal. \
\end{cor}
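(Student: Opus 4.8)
The plan is to deduce this directly from Theorem \ref{thm4.5} by exhibiting an explicit coprime factorization of the co-analytic part of $\Phi$. Write the trigonometric polynomial as $\Phi(z)=\sum_{j=-m}^{N}A_j z^j$, where $A_{-m}$ is the co-analytic outer coefficient; by hypothesis $A_{-m}$ is invertible, so in particular $m\ge 1$ and $A_{-m}\ne 0$. A direct computation of the projections gives $\Phi_-=\sum_{k=1}^{m}A_{-k}^{*}z^{k}\in H^\infty_{M_n}$, and of course $\Phi$ is a matrix-valued rational function. The natural guess for $\Theta$ is $\Theta:=I_{z^m}=z^mI_n$: setting $B(z):=\sum_{j=0}^{m-1}A_{-(m-j)}z^{j}=A_{-m}+A_{-(m-1)}z+\cdots+A_{-1}z^{m-1}\in H^\infty_{M_n}$, one checks that $B^{*}\Theta=\Theta B^{*}=\sum_{k=1}^{m}A_{-k}^{*}z^{k}=\Phi_-$, so that $\Phi_-=B^{*}\Theta$ with $\Theta=I_\theta$ for the finite Blaschke product $\theta=z^{m}$.

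Next I would verify that $B$ and $\Theta$ are coprime; this is the one place where the invertibility hypothesis is used. Since $B$ is a matrix-valued polynomial (hence rational) and $\Theta=I_{\theta}$ with $\theta=z^m$ a finite Blaschke product, Lemma \ref{lem2.2} applies: $B$ and $\Theta$ are coprime if and only if $B(\alpha)$ is invertible for every $\alpha\in\mathcal Z(\theta)$. But $\mathcal Z(\theta)=\{0\}$ and $B(0)=A_{-m}$, which is invertible by hypothesis. Hence $B$ and $\Theta$ are coprime, and all the hypotheses of Theorem \ref{thm4.5} are met.

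Finally, applying Theorem \ref{thm4.5} to this factorization: since $T_\Phi$ and $T_\Phi^{2}$ are both hyponormal, $T_\Phi$ is either normal or analytic. It remains to exclude the analytic case, which is immediate: the coefficient of $z^m$ in $\Phi_-$ is $A_{-m}^{*}\ne 0$, so $\Phi_-\ne 0$, i.e.\ $\Phi\notin H^\infty_{M_n}$, and hence $T_\Phi$ is not analytic. Therefore $T_\Phi$ is normal, as claimed. There is no substantial obstacle here; the only point requiring care is the bookkeeping identifying the constant term $B(0)$ of the factor $B$ with the co-analytic outer coefficient $A_{-m}$, which is precisely what turns ``co-analytic outer coefficient invertible'' into ``$B$ and $\Theta$ coprime'' via Lemma \ref{lem2.2}.
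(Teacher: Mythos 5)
Your proposal is correct and follows essentially the same route as the paper: the paper's proof of Corollary \ref{cor4.6} is precisely the observation that $\Phi_-=B^*\Theta$ with $\Theta=I_{z^m}$ is a coprime factorization exactly when the co-analytic outer coefficient $B(0)=A_{-m}$ is invertible (via Lemma \ref{lem2.2}), followed by an application of Theorem \ref{thm4.5}. Your explicit bookkeeping for $B$ and your remark that the analytic alternative is excluded because $A_{-m}\ne 0$ merely spell out what the paper leaves implicit.
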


\begin{proof} Immediate from Theorem \ref{thm4.5} together with the observation that
$\Phi_-=B^*\Theta$ with $\Theta=I_{z^m}$ is a coprime factorization
if and only if $B(0)$ is a co-analytic outer coefficient and is
invertible. \
\end{proof}

\smallskip

\begin{remark}\label{remark4.7} In Theorem \ref{thm4.5}, the ``coprime" condition is essential. \  To see
this, let
$$
T_\Phi:=
\begin{bmatrix}
T_b+T_b^* &0\\ 0& T_b
\end{bmatrix}\quad \hbox{($b$ is a finite Blaschke product)}.
$$
Since $T_b+T_b^*$ is normal and $T_b$ is analytic, it follows that
$T_\Phi$ and $T_\Phi^2$ are both hyponormal. \  Obviously, $T_\Phi$
is neither normal nor analytic. \  Note that
$\Phi_-\equiv\left[\begin{smallmatrix} b&0\\
0&0\end{smallmatrix}\right]=\left[\begin{smallmatrix} 1&0\\
0&0\end{smallmatrix}\right]^*\cdot I_b$,
where $\left[\begin{smallmatrix} 1&0\\
0&0\end{smallmatrix}\right]$ and $I_b$ are not coprime. \qed
\end{remark}

On the other hand, we have not been able to determine whether this
phenomenon is quite accidental. \  In fact we would guess that if
$\Phi\in L^\infty_{M_n}$ is a matrix-valued rational function such
that $T_\Phi$ is subnormal then $T_\Phi=T_A\oplus T_B$, where $T_A$
is normal and $T_B$ is analytic. \

\vskip 1cm

%
%
%
%

\section{Subnormal Toeplitz Completions}

\bigskip
%
%


Given a partially specified operator matrix with some known entries,
the problem of finding suitable operators to complete the given
partial operator matrix so that the resulting matrix satisfies
certain given properties is called a {\it completion problem}. \
Dilation problems are special cases of completion problems: in other
words, the dilation of $T$ is a completion of the partial operator
matrix $\left[\begin{smallmatrix} T&?\\?&?\end{smallmatrix}\right]$.
\  In recent years, operator theorists have been interested in the
subnormal completion problem for $\left[\begin{smallmatrix}
U^*&?\\?&U^*\end{smallmatrix}\right]$, where $U$ is the shift on
$H^2$. \  In this section, we solve this completion problem. \

A {\it partial block Toeplitz matrix} is simply an $n\times n$
matrix some of whose entries are specified Toeplitz operators and
whose remaining entries are unspecified. \  A {\it subnormal
completion} of a partial operator matrix is a particular
specification of the unspecified entries resulting in a subnormal
operator. \  For example
\begin{equation} \label{6.1}
\left[\begin{matrix} T_z& 1-T_z T_{\overline z}\\ 0&T_{\overline z}
\end{matrix}\right]
\end{equation}
is a subnormal (even unitary) completion of the $2\times 2$ partial
operator matrix
$$
\left[\begin{matrix} T_z& ?\\ ?&T_{\overline z}\end{matrix}\right].
$$
A {\it subnormal Toeplitz completion} of a partial block Toeplitz
matrix is a subnormal completion whose unspecified entries are
Toeplitz operators. \  Then the following question comes up at once:
Does there exist a subnormal Toeplitz completion of
$\left[\begin{smallmatrix} T_z& ?\\ ?&T_{\overline
z}\end{smallmatrix}\right]$ ?  \ Evidently, (\ref{6.1}) is not such
a completion. \  To answer this question, let
$$
\Phi\equiv \left[\begin{matrix} z&\varphi\\ \psi&\overline
z\end{matrix}\right] \quad (\varphi, \psi\in L^\infty).
$$
If $T_\Phi$ is hyponormal then by \cite{GHR} (cf. p.4), $\Phi$
should be normal. \  Thus a straightforward calculation shows that
$$
|\varphi|=|\psi|\quad\hbox{and}\quad \overline
z(\varphi+\overline\psi)=z(\varphi+\overline\psi),
$$
which implies that $\varphi=-\overline\psi$. \  Thus a direct
calculation shows that
$$
[T_\Phi^*,\, T_\Phi]= \left[\begin{matrix} \ast&\ast\\ \ast& T_z
T_{\overline z}-1\end{matrix}\right],
$$
which is not positive semi-definite because $T_z T_{\overline z}-1$
is not. \  Therefore, there are no hyponormal Toeplitz
completions of $\left[\begin{smallmatrix} T_z& ?\\ ?&T_{\overline
z}\end{smallmatrix}\right]$. \
However the following problem
has remained unsolved until now:
\bigskip

\noindent{\bf Problem A.} Let $U$ be the shift on $H^2$. \  Complete
the unspecified Toeplitz entries of the partial block Toeplitz
matrix $A:=\left[\begin{smallmatrix} U^*& ?\\
?&U^*\end{smallmatrix}\right]$ to make $A$ subnormal. \

\bigskip

In this section we give a complete answer to Problem A. \

\begin{theorem} \label{thm6.1}Let $\varphi, \psi \in L^{\infty}$ and consider
$$
A:=\left[\begin{matrix} T_{\overline z}& T_{\varphi}\\ T_{\psi}& T_{\overline
z}\end{matrix}\right].
$$
The following statements are equivalent.

{\rm (i)} \ $A$ is normal.

{\rm (ii)} \ $A$ is subnormal.

{\rm (iii)} \ $A$ is $2$-hyponormal.

{\rm (iv)} \ One of the following conditions
holds:
\medskip
\begin{itemize}
\item[1.] $\varphi=e^{i\theta}z+\beta$\quad and\quad $\psi=e^{i\omega}\varphi$\quad
\hbox{\rm ($\beta\in\mathbb C$; $\theta,\omega\in [0,2\pi)$);}
\item[2.] $\varphi=\alpha\, \overline z+ e^{i\theta}\sqrt{1+|\alpha|^2}\,z+ \beta$\quad and\quad
$\psi=e^{i\,(\pi-2\,{\rm arg}\,\alpha)}\varphi$
\end{itemize}
\qquad\qquad\qquad\hbox{\rm ($\alpha,\beta\in\mathbb C$, $\alpha\ne
0$; $\theta\in [0,2\pi)$).}
\end{theorem}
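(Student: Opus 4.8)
The plan is to prove the cycle by establishing the two nontrivial implications (iv)$\Rightarrow$(i) and (iii)$\Rightarrow$(iv), since (i)$\Rightarrow$(ii)$\Rightarrow$(iii) is immediate (normal $\Rightarrow$ subnormal $\Rightarrow$ $2$-hyponormal). For (iv)$\Rightarrow$(i) I would first note that normality of a Toeplitz operator is insensitive to adding a constant matrix to the symbol, so I may set $\beta=0$ and write $\Phi=\overline z\,D+z\,C$ with constant $D,C\in M_2$: in Case~1, $D=I_2$ and $C$ is unitary; in Case~2, $D=\left[\begin{smallmatrix}1&\alpha\\ -\overline\alpha&1\end{smallmatrix}\right]$ and $C$ is a scalar multiple of a unitary with $|\det C|=1+|\alpha|^2$. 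Since $T_D$ and $T_C$ commute with $S=T_{zI_2}$ and with $S^*$, one has $T_\Phi=S^*T_D+ST_C$, and a direct expansion using $S^*S=I$ and $SS^*=I-P_0$ (with $P_0$ the projection of $H^2_{\mathbb C^2}$ onto the constants) gives
\begin{equation*}
[T_\Phi^*,T_\Phi]=T_{D^*D-DD^*}+T_{C^*C-CC^*}+T_{D^*C-CD^*}S^2+\bigl(T_{D^*C-CD^*}S^2\bigr)^*+T_{CC^*-D^*D}\,P_0 .
\end{equation*}
The computation then reduces to the routine $2\times2$ verifications that, in each family, $D$ and $C$ are normal, $D^*C=CD^*$, and $D^*D=CC^*$; these give $[T_\Phi^*,T_\Phi]=0$, i.e.\ $A=T_\Phi$ is normal.

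For (iii)$\Rightarrow$(iv) I would proceed in two stages, the first of which aims to show that $A$ is actually normal. Hyponormality of $A=T_\Phi$ already forces (by \cite{GHR}) that $\Phi$ is normal, hence $|\varphi|=|\psi|$ a.e., and that $\mathcal E(\Phi)\neq\emptyset$; writing out $\Phi-K\Phi^*\in H^\infty_{M_2}$ for $K=[k_{ij}]$ with $\|K\|_\infty\le1$ and using $H_f=H_{\overline{f_-}}$, the diagonal entries yield $H_{\overline{\varphi_+}}k_{12}=1=H_{\overline{\psi_+}}k_{21}$ and the off-diagonal entries yield $\overline{\varphi_-}=P^\perp(k_{11}\overline{\psi_+})$, $\overline{\psi_-}=P^\perp(k_{22}\overline{\varphi_+})$. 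From these, together with the full strength of $2$-hyponormality --- in particular the fact that $2$-hyponormality makes $\ker[T_\Phi^*,T_\Phi]$ invariant under $T_\Phi$ --- I would deduce that $\Phi$ and $\Phi^*$ are of bounded type and that the co-analytic factorization $\Phi_-=B^*\Theta$ can be taken coprime (using that both diagonal entries of $\Phi_-$ equal $z$, so that no nonconstant Blaschke--Potapov factor can be cancelled off $\Theta$). Theorem~\ref{thm4.1} (or Theorem~\ref{thm4.5} in the rational case) then applies and forces $T_\Phi$ to be normal or analytic; since $\Phi\notin H^\infty_{M_2}$ it must be normal, so $[T_\Phi^*,T_\Phi]=0$.

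In the second stage I would read off the explicit form from $[T_\Phi^*,T_\Phi]=0$. Writing $\Phi=\overline z I_2+\Psi$ with $\Psi=\left[\begin{smallmatrix}0&\varphi\\ \psi&0\end{smallmatrix}\right]$, using $[S,T_\Psi]=0$ (from (\ref{1.3})), $[S,S^*]=-P_0$, and $\Psi^*\Psi=\Psi\Psi^*$ (from $|\varphi|=|\psi|$), the self-commutator becomes block-diagonal and its vanishing is equivalent to the pair of scalar identities $[T_g^*,T_g]=P_{\mathbb C}=[T_h^*,T_h]$, where $g:=\varphi_++\overline{\psi_-}$, $h:=\psi_++\overline{\varphi_-}$, and $P_{\mathbb C}$ is the rank-one projection of $H^2$ onto the constants. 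Compressing $[T_g^*,T_g]=P_{\mathbb C}$ by $S$ (using $H_{\overline{g_+}}S=S^*H_{\overline{g_+}}$) gives the clean identity $(H_{\overline{g_+}}^*1)(H_{\overline{g_+}}^*1)^*-(H_{\overline{g_-}}^*1)(H_{\overline{g_-}}^*1)^*=P_{\mathbb C}$, which forces $H_{\overline{g_+}}^*1$ and $H_{\overline{g_-}}^*1$ to span the constants; since $H_{\overline{g_+}}^*1=(g_+-g_+(0))/z$ and $H_{\overline{g_-}}^*1=g_-/z$, this pins $g=a\overline z+bz+c$ with $|b|^2-|a|^2=1$, and likewise $h=a'\overline z+b'z+c'$ with $|b'|^2-|a'|^2=1$. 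Hence $\varphi_+,\psi_+$ have degree $\le1$ and $\varphi_-,\psi_-$ are scalar multiples of $z$; feeding this back into $|\varphi|=|\psi|$ and matching Fourier coefficients identifies the remaining parameters and produces exactly family~1 when $\varphi_-=0$ (then $\psi=e^{i\omega}\varphi$) and family~2 when $\varphi_-\neq0$ (then $\psi=e^{i(\pi-2\arg\alpha)}\varphi$).

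The main obstacle is the first stage of (iii)$\Rightarrow$(iv): promoting hyponormality to normality. Hyponormality alone is genuinely too weak --- for instance $\varphi=\psi=\overline z+2z$, or even $\varphi=\psi=\overline z+2z+\epsilon s$ for a singular inner function $s$, or symbols with $\varphi_-$ of arbitrarily high degree, all give hyponormal $T_\Phi$ that are neither normal nor of the form (iv) --- so the $k=2$ positivity test must be exploited precisely to drive $\Phi$ and $\Phi^*$ into the bounded-type regime with coprime co-analytic factorization, where the Abrahamse-type theorem of Section~4 becomes available. This is where I expect the real work to lie; once normality is in hand the second stage is short and essentially elementary.
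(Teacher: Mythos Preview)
Your plan for (iv)$\Rightarrow$(i) and the easy implications is fine, and your opening moves in (iii)$\Rightarrow$(iv) --- deducing $|\varphi|=|\psi|$ and that $\Phi,\Phi^*$ are of bounded type --- match the paper. The genuine gap is the sentence ``the co-analytic factorization $\Phi_-=B^*\Theta$ can be taken coprime (using that both diagonal entries of $\Phi_-$ equal $z$\,\dots).'' This is false in general, and it is exactly the obstruction the paper spends the bulk of its proof overcoming.

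Theorem~\ref{thm4.1} requires $\Theta=I_\theta$, a \emph{scalar} inner function times the identity. Writing $\varphi_-=\theta_0\overline a$, $\psi_-=\theta_1\overline b$ (coprime scalar data), the minimal such $\theta$ is $\mathrm{lcm}(z,\theta_0,\theta_1)$, and then $B=\theta\,\Phi_-^{\,*}$ has $B(\alpha)$ singular at any zero $\alpha\ne 0$ of $\theta_0$ that is not a zero of $\theta_1$ (and vice versa): an entire row of $B(\alpha)$ vanishes. Concretely, with $\theta_0=b_\alpha$, $\theta_1=b_\beta$ for distinct nonzero $\alpha,\beta$ one gets $B(\alpha)=\left[\begin{smallmatrix}0&*\\0&0\end{smallmatrix}\right]$, so by Lemma~\ref{lem2.2} $B$ and $I_\theta$ are not coprime. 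The true left/right coprime factorizations of $\Phi_-$ do exist, but their inner factor is in general \emph{not} of the form $I_\theta$ --- Lemmas~\ref{lem6.4}--\ref{lem6.6} compute it explicitly and it is typically non-diagonal --- so Theorem~\ref{thm4.1} (and Theorem~\ref{thm4.5}) simply does not apply. That the diagonal entries of $\Phi_-$ are $z$ only forces $z\mid\theta$; it says nothing about coprimeness at the zeros of $\theta_0$ or $\theta_1$.

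This is precisely why the paper does not attempt a direct appeal to Theorem~\ref{thm4.1}. Instead it uses Lemma~\ref{lem6.3}, which works with arbitrary (possibly non-diagonal) coprime inner factors and needs only the $T_\Phi$-invariance of $\ker[T_\Phi^*,T_\Phi]$, together with the explicit factorizations of Lemmas~\ref{lem6.4}--\ref{lem6.6}, to run a case analysis on the $z$-multiplicities $m,n$ of $\theta_0,\theta_1$. This eliminates all configurations except $m=n=0$ and $m=n=1$, and even in the surviving Case~A the paper must first force $\varphi_-=\psi_-=0$ before the coprime hypothesis of Theorem~\ref{thm4.1} becomes legitimate. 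Your second stage (extracting the explicit form from $[T_\Phi^*,T_\Phi]=0$) looks like a plausible alternative to the paper's endgame, but you cannot reach normality without replacing your one-line coprimeness claim by substantial work of the kind the paper carries out.
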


\medskip

Theorem \ref{thm6.1} says that the unspecified entries of the matrix
$\left[\begin{smallmatrix} T_{\overline z}& ?\\ ?&T_{\overline
z}\end{smallmatrix}\right]$ are Toeplitz operators with symbols
which are both analytic or trigonometric polynomials of degree 1. \
In fact, as we will see in the proof of Theorem \ref{thm6.1}, our
solution is just the normal completion. \  However the solution is
somewhat more intricate than one would expect. \

\bigskip

%
%
%


To prove Theorem \ref{thm6.1} we need several technical lemmas. \

\begin{lemma}\label{lem6.2} For $j=1,2,3$, let $\theta_j$ be an inner function. \ If
$\theta_1 \mathcal H (\theta_2) \subseteq \mathcal H (\theta_3)$
then either $\theta_2$ is constant or $\theta_1\theta_2$ is a
divisor of $\theta_3$. \  In particular, if $\theta_1\mathcal H
(\theta_2)\subseteq \mathcal H (\theta_1)$ then $\theta_1$ or
$\theta_2$ is constant. \
\end{lemma}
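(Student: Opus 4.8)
The plan is to work with the model spaces $\mathcal H(\theta) = H^2 \ominus \theta H^2$ and exploit the fact that the inclusion $\theta_1\mathcal H(\theta_2)\subseteq\mathcal H(\theta_3)$ says something orthogonal: every function $\theta_1 f$ with $f\in\mathcal H(\theta_2)$ is orthogonal to $\theta_3 H^2$. First I would assume $\theta_2$ is non-constant, so $\mathcal H(\theta_2)\ne\{0\}$, and aim to show $\theta_1\theta_2 \mid \theta_3$, i.e. $\theta_3 H^2\subseteq\theta_1\theta_2 H^2$, equivalently $\overline{\theta_1\theta_2}\,\theta_3\in H^\infty$. The key observation is that $\mathcal H(\theta_1\theta_2) = \mathcal H(\theta_1)\oplus\theta_1\mathcal H(\theta_2)$ (the standard orthogonal decomposition of a model space along a factorization, already used in the proof of Lemma \ref{lem2.8}). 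So the hypothesis gives $\theta_1\mathcal H(\theta_2)\subseteq\mathcal H(\theta_3)\cap\mathcal H(\theta_1\theta_2)$, and I want to promote this to $\mathcal H(\theta_1\theta_2)\subseteq\mathcal H(\theta_3)$, which is exactly $\theta_1\theta_2\mid\theta_3$ by Beurling.

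The cleaner route is via kernels of Hankel/Toeplitz-type operators, or even more elementarily: recall that $\mathcal H(\theta) = \{g\in H^2 : \langle g,\theta h\rangle = 0 \text{ for all }h\in H^2\} = H^2\cap \overline{z}\,\overline{\theta}\,\overline{H^2}$ (the functions in $H^2$ whose product with $\overline\theta$ lies in $\overline{zH^2}$, i.e. $P^\perp(\overline\theta g)=\overline\theta g$). So $\theta_1 f\in\mathcal H(\theta_3)$ means $\overline{\theta_3}\,\theta_1 f\in\overline{zH^2}$ for every $f\in\mathcal H(\theta_2)$. Take any such nonzero $f$; then $\overline{\theta_3}\theta_1$ times a fixed nonzero $H^2$ function lies in $(H^2)^\perp$. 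Since $\mathcal H(\theta_2)$ contains $f$ with $f(0)\neq 0$ when $\theta_2(0)\ne 0$ — and in general one can reduce to this case or argue pointwise a.e. on $\mathbb T$ — I would instead argue on the circle: $|\theta_1|=|\theta_3|=1$ a.e., and the inclusion forces, for each $f\in\mathcal H(\theta_2)$, that $\theta_1 f \perp \theta_3 H^2$; running $f$ over all of $\mathcal H(\theta_2)$ and using that $\bigvee\{\overline{\theta_2}\,\overline{f}: f\in\mathcal H(\theta_2)\}$ fills out enough of $L^2$, I get $\overline{\theta_3}\theta_1\theta_2\in H^\infty$. Concretely: $\theta_1\mathcal H(\theta_2)\perp \theta_3 H^2$ iff $\langle \overline{\theta_3}\theta_1 f, h\rangle=0$ for all $f\in\mathcal H(\theta_2)$, $h\in H^2$; write $\overline{\theta_3}\theta_1 = \overline{\theta_2}\cdot(\overline{\theta_3}\theta_1\theta_2)$ and note $\overline{\theta_2}f$ ranges over $\overline{z\mathcal H(z\theta_2)}$-type sets — the point being that $\overline{\theta_2}\,\mathcal H(\theta_2)$ together with $H^\infty$ considerations pins down that $\overline{\theta_3}\theta_1\theta_2$ must be analytic.

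For the particular case $\theta_3=\theta_1$: if $\theta_1\mathcal H(\theta_2)\subseteq\mathcal H(\theta_1)$ and $\theta_2$ is non-constant, the general statement gives $\theta_1\theta_2\mid\theta_1$, so $\overline{\theta_2}\in H^\infty$, forcing $\theta_2$ constant — contradiction — hence $\theta_1$ or $\theta_2$ is constant. The main obstacle I anticipate is making the "run $f$ over all of $\mathcal H(\theta_2)$" step fully rigorous: one must verify that the closed span of $\{\theta_1 f : f\in\mathcal H(\theta_2)\}$, when tested against $\theta_3 H^2$, genuinely yields $\overline{\theta_3}\theta_1\theta_2\in H^\infty$ rather than a weaker divisibility. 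I expect this is handled by picking a single $f\in\mathcal H(\theta_2)$ that is outer up to an inner factor coprime with $\theta_3$ (e.g. a reproducing kernel $k_\lambda^{\theta_2}(z)=\frac{1-\overline{\theta_2(\lambda)}\theta_2(z)}{1-\overline\lambda z}$ for suitable $\lambda$, which is outer when $\theta_2(\lambda)\ne 0$), so that $\overline{\theta_3}\theta_1 f\in\overline{zH^2}$ already forces $\overline{\theta_3}\theta_1\theta_2\in H^\infty$ after cancelling the outer part; this is the standard trick and should go through cleanly.
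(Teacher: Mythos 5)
Your reduction of the lemma to the divisibility statement, and your derivation of the second assertion from the first, are fine; but the step you yourself flag as the main obstacle is exactly where the argument breaks down, and the fix you propose does not work. Knowing that $\overline{\theta_3}\theta_1 f\in\overline{zH^2}$ for a \emph{single} outer $f\in\mathcal H(\theta_2)$ --- even a reproducing kernel $k_\lambda^{\theta_2}$ --- is strictly weaker than the hypothesis and does not force $\theta_1\theta_2\mid\theta_3$. Concretely, take $0<|\lambda|<1$, $\theta_2=z^2$, $\theta_1=b_{-\lambda}=\frac{z+\lambda}{1+\overline\lambda z}$, $\theta_3=z^2$, and $f=k_\lambda^{\theta_2}=1+\overline\lambda z$, which is outer. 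Then $\theta_1 f=z+\lambda\in\mathcal H(z^2)=\mathcal H(\theta_3)$, so your one-function condition holds with an outer test function, yet $\theta_1\theta_2=b_{-\lambda}\,z^2$ is not a divisor of $\theta_3=z^2$ (consistently with the lemma, the full inclusion fails, since $\theta_1\cdot 1=b_{-\lambda}\notin\mathcal H(z^2)$). An even simpler instance: $\theta_1=1$, $\theta_2=z^2$, $\theta_3=z$, $f=1$. The reason the ``cancel the outer part'' trick fails is that $f\in\mathcal H(\theta_2)$ ties $f$ to $\theta_2$ far too loosely: after cancellation you only learn that $\theta_1\theta_2$ divides $\theta_3$ times the inner factor produced by the cancellation, and that extra inner factor (here $b_{-\lambda}$, resp.\ $z$) need not be coprime to $\theta_1$. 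So the crucial promotion from the containment to $\theta_1\theta_2\mid\theta_3$ genuinely requires using all of $\mathcal H(\theta_2)$, and your sketch gives no rigorous mechanism for that.

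For comparison, the paper's proof proceeds in two steps, neither of which has a single-function substitute. First, from $\overline{\theta}_1\overline f\,\theta_3\in zH^2$ for every $f\in\mathcal H(\theta_2)$ one multiplies by the bounded analytic function $\theta_1$ to get $\overline f\,\theta_3\in zH^2$, i.e. $\mathcal H(\theta_2)\subseteq\mathcal H(\theta_3)$, hence $\theta_2\mid\theta_3$; write $\theta_3=\theta_0\theta_2$. Second, to show $\theta_1\mid\theta_0$, the inclusion $\theta_1\mathcal H(\theta_2)\subseteq\mathcal H(\theta_0\theta_2)$ is converted into an operator identity: since $\mathcal H(\theta_2)=\hbox{cl ran}\,H_{\overline\theta_2}^*$, one gets $H_{\overline\theta_2}T_{\overline\theta_1\theta_0\theta_2}=0$, and then the identity $H_{\varphi\psi}=T_{\widetilde\varphi}^*H_\psi+H_\varphi T_\psi$ yields $H_{\overline{\widetilde\theta}_2}H_{\theta_0\overline\theta_1}=0$; nonconstancy of $\theta_2$ then forces $H_{\theta_0\overline\theta_1}=0$, i.e. $\theta_0\overline\theta_1\in H^\infty$, which is $\theta_1\mid\theta_0$. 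It is this second, operator-theoretic step (using the full range of $H_{\overline\theta_2}^*$) that your proposal is missing; to repair your plan you would need an argument at this level of strength, not a single well-chosen test function.
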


\begin{proof} Suppose $\theta_2$ is not constant. \  If $\theta_1 \mathcal H
(\theta_2) \subseteq \mathcal H (\theta_3)$ then by Lemma
\ref{lem2.4}, for all $f\in \mathcal{H}(\theta_2)$,
$\overline{\theta}_1\overline{f}\theta_3\in zH^2$, and hence
$\overline f\theta_3\in zH^2$, so that $f\in \mathcal{H}(\theta_3)$,
which implies that $\mathcal{H}(\theta_2)\subseteq
\mathcal{H}(\theta_3)$, and therefore $\theta_3 H^2\subseteq
\theta_2 H^2$. \  Thus $\theta_2$ is a divisor of $\theta_3$. \  We
can then write $\theta_3=\theta_0\theta_2$ for some inner function
$\theta_0$. \  It suffices to show that $\theta_1$ is a divisor of
$\theta_0$. \  Observe that
$$
\aligned \theta_1 \mathcal H (\theta_2) \subseteq \mathcal
H(\theta_0 \theta_2) &\Longrightarrow
\hbox{ran}\,(T_{\theta_1}H_{\overline{\theta}_2}^*) \subseteq
\mathcal H
(\theta_0 \theta_2)\\
&\Longrightarrow \theta_0 \theta_2H^2 \subseteq
\hbox{ker}H_{\overline{\theta}_2}T_{\overline{\theta}_1}\\
&\Longrightarrow
H_{\overline{\theta}_2}T_{\overline{\theta}_1\theta_0\theta_2}=0\\
&\Longrightarrow
H_{\overline{\theta}_1\theta_0}-T_{\widetilde{\theta_2}}
H_{\overline{\theta}_1\theta_0\theta_2}=0 \\
&\Longrightarrow
H_{\overline{\theta}_1\theta_0}-T_{\widetilde{\theta}_2}
T_{\overline{\widetilde{\theta}}_2}H_{\overline{\theta}_1\theta_0}=0\\
&\Longrightarrow
H_{\overline{\widetilde{\theta}}_2} H_{\theta_0\overline{\theta}_1}=0\,,
\endaligned
$$
where the fourth implication follows from the fact that
$H_{\varphi\psi}=T_{\widetilde\varphi}^*H_\psi+H_\varphi T_\psi$ for
any $\varphi,\psi\in L^\infty$. \  But since $\theta_2$ is not
constant it follows that  $\theta_1$ is a divisor of $\theta_0$. \
The second assertion follows at once from the first. \
\end{proof}

\bigskip


Suppose $\Phi\equiv \Phi_-^*+\Phi_+\in L^\infty_{M_n}$ is such that
$\Phi$ and $\Phi^*$ are of bounded type, with
$$
\Phi_+=A^*\Theta\quad\hbox{and}\quad \Phi_-=B_{\ell}^*\Omega_2\
\hbox{(left coprime factorization)},
$$
where $\Theta=I_\theta$ for an inner function $\theta$. \  If
$T_\Phi$ is hyponormal, then in view of Proposition \ref{pro3.2},
$\Phi$ can be written as:
\begin{equation} \label{6.2}
\Phi_+=A^*\Omega_1\Omega_2\quad\hbox{and}\quad
\Phi_-=B_{\ell}^*\Omega_2\,,
\end{equation}
where $\Omega_1\Omega_2=\Theta=I_\theta$. \  We also note that
$\Omega_1\Omega_2=\Theta=\Omega_2\Omega_1$. \

\bigskip

The following lemma will be extensively used in the proof of Theorem
\ref{thm6.1}. \

\medskip

\begin{lemma}\label{lem6.3} Let $\Phi\equiv \Phi_-^*+\Phi_+\in L^\infty_{M_n}$ be such
that $\Phi$ and $\Phi^*$ are of bounded type of the form
$(\ref{6.2})$:
$$
\Phi_+=A^*\Omega_1\Omega_2=A^*\Theta\quad\hbox{and}\quad
\Phi_-=B_{\ell}^*\Omega_2\ \hbox{\rm (left coprime factorization)},
$$
where $\Theta=I_\theta$ for an inner function $\theta$. \  If
$\hbox{\rm ker}\,[T_\Phi^*, T_\Phi]$ is invariant under $T_\Phi$,
then
$$
\Omega_1 H^2_{\mathbb C^n}\ \subseteq\ \hbox{\rm ker}\,[T_\Phi^*,
T_\Phi],
$$
and therefore
$$
\hbox{\rm cl ran}\,[T_\Phi^*, T_\Phi]\ \subseteq\
\mathcal{H}(\Omega_1).
$$
Assume instead that we decompose $\Phi\in L^\infty_{M_n}$ as:
$$
\Phi_+=\Delta_2\Delta_0 A_r^*\ \ \hbox{\rm (right coprime
factorization)}
$$
and
$$
\Phi_-=\Delta_2 B_r^*\ \ \hbox{\rm (right coprime factorization)}\,.
$$
If $T_\Phi$ is hyponormal then
$$
\Delta_2\mathcal{H}(\Delta_0)\ \subseteq\ \hbox{\rm cl
ran}\,[T_\Phi^*, T_\Phi].
$$
Hence, in particular, if $T_\Phi$ is hyponormal and $\hbox{ker}\, [T_\Phi^*, T_\Phi]$ is invariant under
$T_\Phi$, then
\begin{equation}\label{5.2-1}
\Delta_2\mathcal{H}(\Delta_0)\ \subseteq\ \hbox{\rm cl
ran}\,[T_\Phi^*, T_\Phi]\ \subseteq\ \mathcal{H}(\Omega_1).
\end{equation}
\end{lemma}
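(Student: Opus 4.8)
The plan is to prove the three assertions in sequence, since the final statement \eqref{5.2-1} is just the conjunction of the first two (applied to the same $\Phi$, using the right/left coprime factorizations simultaneously). First I would establish the inclusion $\Omega_1 H^2_{\mathbb C^n}\subseteq \ker[T_\Phi^*,T_\Phi]$ under the hypothesis that $\ker[T_\Phi^*,T_\Phi]$ is $T_\Phi$-invariant. The key computational identity is that, because $\Phi$ is normal (hyponormality of $T_\Phi$ forces this, by \cite{GHR}), one has $[T_\Phi^*,T_\Phi]=H_{\Phi^*}^*H_{\Phi^*}-H_\Phi^*H_\Phi = H_{\Phi_-^*}^*H_{\Phi_-^*}-H_{\Phi_+^*}^*H_{\Phi_+^*}$ wait---more carefully, using $\Phi=\Phi_-^*+\Phi_+$ and the relations \eqref{1.2}--\eqref{1.4}, the self-commutator can be expressed through block Hankel operators built from $\Phi_+^* = \Theta^* A = \Omega_1^*\Omega_2^* A$ and $\Phi_-^* = \Omega_2^* B_\ell$. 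Since $\ker H_{\Phi_+^*} = \ker H_{A\Omega_1^*\Omega_2^*}\supseteq \Omega_1\Omega_2 H^2_{\mathbb C^n}$ and, by hyponormality $(\ref{3.9})$, $\ker H_{\Phi_+^*}\subseteq \ker H_{\Phi_-^*}$, both Hankel operators appearing in $[T_\Phi^*,T_\Phi]$ annihilate $\Theta H^2_{\mathbb C^n}=\Omega_1\Omega_2 H^2_{\mathbb C^n}$. The idea is then: start with $\Theta H^2_{\mathbb C^n}\subseteq\ker[T_\Phi^*,T_\Phi]$, and use $T_\Phi$-invariance of the kernel together with the multiplication structure to "divide out" $\Omega_2$. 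Concretely, one shows $T_\Phi(\Omega_1 H^2_{\mathbb C^n})$ stays inside $\ker[T_\Phi^*,T_\Phi]$ modulo $\Theta H^2_{\mathbb C^n}$, and a Beurling--Lax--Halmos argument (as in the proof of Theorem \ref{thm4.1} in \cite{CHL}) upgrades this to $\Omega_1 H^2_{\mathbb C^n}\subseteq\ker[T_\Phi^*,T_\Phi]$; taking orthogonal complements gives $\mathrm{cl\,ran}\,[T_\Phi^*,T_\Phi]\subseteq\mathcal H(\Omega_1)$.

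Second, I would prove the reverse-type inclusion $\Delta_2\mathcal H(\Delta_0)\subseteq \mathrm{cl\,ran}\,[T_\Phi^*,T_\Phi]$ assuming only hyponormality, using the right coprime factorizations $\Phi_+=\Delta_2\Delta_0 A_r^*$ and $\Phi_-=\Delta_2 B_r^*$. Here the plan is to compute, for $f\in\mathcal H(\Delta_0)$, the vector $[T_\Phi^*,T_\Phi](\Delta_2 f)$ and show it is nonzero whenever $f\neq 0$; more precisely, I would argue by contradiction: if some nonzero $\Delta_2 f$ (with $f\in\mathcal H(\Delta_0)$) lies in $\ker[T_\Phi^*,T_\Phi] = (\mathrm{cl\,ran})^\perp$, then from the Hankel expression for the self-commutator we get $H_{\Phi_-^*}(\Delta_2 f)$ and $H_{\Phi_+^*}(\Delta_2 f)$ have equal norm and in fact $H_{\Phi_+^*}(\Delta_2 f)=0$ would follow (using $(\ref{2.5})$-type factorization $[T_\Phi^*,T_\Phi]=H_{\Phi_+^*}^*(I-T_{\widetilde K}T_{\widetilde K}^*)H_{\Phi_+^*}$ for $K\in\mathcal E(\Phi)$), forcing $\Delta_2 f\in\ker H_{\Phi_+^*}$. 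But $\ker H_{\Phi_+^*}=\ker H_{\Delta_2\Delta_0 A_r^*}$, and right coprimeness of $\Delta_0 A_r^*\cdot$ — rather, of $A_r$ with $\Delta_2\Delta_0$ — wait, more precisely $\ker H_{\Phi_+^*}=\Delta_2\Delta_0 H^2_{\mathbb C^n}$ by the right coprime factorization (Remark after $(\ref{RCD})$), so $\Delta_2 f\in\Delta_2\Delta_0 H^2_{\mathbb C^n}$ would force $f\in\Delta_0 H^2_{\mathbb C^n}\cap\mathcal H(\Delta_0)=\{0\}$, the contradiction. This needs the slightly subtle point that $H_{\Phi_+^*}(\Delta_2 f)=0$ genuinely follows and not merely $\|H_{\Phi_+^*}(\Delta_2 f)\|=\|H_{\Phi_-^*}(\Delta_2 f)\|$; I would handle this by observing that $\Delta_2 f\in\mathcal H(\Delta_2\Delta_0)$ which is in the domain where the Hankel operators are "large," and that equality in the self-commutator inequality on this subspace, combined with the contractivity of $T_{\widetilde K}^*$, pins down $T_{\widetilde K}^* H_{\Phi_+^*}(\Delta_2 f)$ isometrically — an argument of the same flavor as the $E_i$ computation in the proof of Theorem \ref{thm4.5}.

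The main obstacle I anticipate is the first inclusion: extracting $\Omega_1 H^2_{\mathbb C^n}\subseteq\ker[T_\Phi^*,T_\Phi]$ from the weaker fact $\Theta H^2_{\mathbb C^n}\subseteq\ker[T_\Phi^*,T_\Phi]$ plus $T_\Phi$-invariance. The delicate issue is that $T_\Phi$ does not simply multiply by $\Phi$, so one must carefully track the projection $P_n$. The strategy is to note that on $\mathcal H(\Theta)$, $T_\Phi$ acts compatibly with the block Hankel structure, and that $\ker[T_\Phi^*,T_\Phi]$, being $T_\Phi$-invariant and containing $\Theta H^2_{\mathbb C^n}$, must contain the smallest $T_\Phi$-invariant (equivalently, in the relevant compressed sense, $S$-invariant) subspace generated by $\Theta H^2_{\mathbb C^n}$ — and an analysis using the factorization $\Phi_+ = A^*\Omega_1\Omega_2$ with $A_\ell$ (the left coprime numerator) identifies that generated subspace as exactly $\Omega_1 H^2_{\mathbb C^n}$, because $A_\ell$ and $\Omega_1$ — wait, $A$ and $\Omega_1$ — are left coprime so $A(\alpha)$ is invertible at zeros of $\omega_1$, which prevents any further divisor from appearing. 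I would lean on Lemma \ref{lem2.2} and the Beurling--Lax--Halmos uniqueness to make this rigorous, essentially mirroring the corresponding step in \cite{CHL}. Once both inclusions are in hand, \eqref{5.2-1} is immediate by chaining them, noting that the hypotheses of the two parts are jointly satisfied.
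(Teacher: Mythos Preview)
The paper does not prove this lemma: its entire proof is the citation ``See \cite[Lemma 3.2 and Theorem 3.7]{CHL}.'' So there is no in-paper argument to compare your reconstruction against; any valid proof must reproduce or replace the cited \cite{CHL} arguments.

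Your plan for the second inclusion contains a genuine logical error. You propose to prove $\Delta_2\mathcal H(\Delta_0)\subseteq\hbox{cl ran}\,[T_\Phi^*,T_\Phi]$ by showing that no nonzero $\Delta_2 f$ with $f\in\mathcal H(\Delta_0)$ lies in $\ker[T_\Phi^*,T_\Phi]$. But that establishes only $\Delta_2\mathcal H(\Delta_0)\cap\ker[T_\Phi^*,T_\Phi]=\{0\}$, which is strictly weaker than the required $\Delta_2\mathcal H(\Delta_0)\subseteq\bigl(\ker[T_\Phi^*,T_\Phi]\bigr)^\perp=\hbox{cl ran}\,[T_\Phi^*,T_\Phi]$; trivial intersection with a subspace is not orthogonality to it. Your internal computation is actually correct---if $\Delta_2 f\in\ker[T_\Phi^*,T_\Phi]$ then $H_{\Phi_-^*}(\Delta_2 f)=0$ (since $\Delta_2 H^2_{\mathbb C^n}=\ker H_{\Phi_-^*}$ by right coprimeness), so positivity of the self-commutator forces $H_{\Phi_+^*}(\Delta_2 f)=0$, whence $\Delta_2 f\in\Delta_2\Delta_0 H^2_{\mathbb C^n}$ and $f=0$---but it proves the wrong statement. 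To rescue this line you would need $\ker[T_\Phi^*,T_\Phi]$ to be invariant under the projection onto $\Delta_2 H^2_{\mathbb C^n}$, so that the kernel splits along $\mathcal H(\Delta_2)\oplus\Delta_2 H^2_{\mathbb C^n}$; nothing in your outline supplies that, and it is not automatic.

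Your plan for the first inclusion is sketchier still. You correctly note $\Theta H^2_{\mathbb C^n}\subseteq\ker[T_\Phi^*,T_\Phi]$, but the upgrade to $\Omega_1 H^2_{\mathbb C^n}$ is not explained: $T_\Phi$-invariance of the kernel says $T_\Phi(\ker)\subseteq\ker$, not anything about $T_\Phi(\Omega_1 H^2_{\mathbb C^n})$, so the sentence you label ``concretely'' is precisely the missing content. The phrase ``divide out $\Omega_2$'' does not describe a mechanism, and invoking ``a Beurling--Lax--Halmos argument'' without saying which subspace is shown to be shift-invariant leaves the crux untouched.
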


\begin{proof} See \cite [Lemma 3.2 and Theorem 3.7] {CHL}. \end{proof}


\medskip

\begin{lemma}\label{lem6.4} Let
$$
\Phi_-=\begin{bmatrix} z&\theta_1 \overline{b}\\
\theta_0 \overline{a}& z\end{bmatrix} \quad (a \in \mathcal
H(\theta_0),\ b\in \mathcal H(\theta_1)\ \hbox{and $\theta_j$ inner}
\  (j=0,1)).
$$
If  $\theta_0=z^n \theta_0^{\prime}$ {\rm ($n\ge 1$;
$\theta_0'(0)\ne 0$)} and $\theta_1(0) \neq 0$, then $\hbox{\rm
ker}\,H_{\Phi_-^*}=\Delta H^2_{\mathbb C^2}$, where
$$
\Delta=
\begin{cases}\qquad\qquad
\begin{bmatrix}z\theta_1&0\\0&\theta_0\end{bmatrix} \quad &(n=1);\\
\frac{1}{\sqrt{|\alpha|^2+1}}\begin{bmatrix}z\theta_1&\alpha
\theta_1\\-\overline{\alpha}\theta_0&z^{n-1}\theta_0'\end{bmatrix}
\quad &(n \geq 2)\quad
\left(\alpha:=-\frac{a(0)}{\theta_1(0)}\right).
\end{cases}
$$
\end{lemma}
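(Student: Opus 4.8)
Here is how I would approach Lemma \ref{lem6.4}.

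The plan is to recognize $\Delta$ as the inner factor in a \emph{right coprime factorization} of $\Phi_-$. By the Remark following $(\ref{2.9})$ (that is, \cite[Corollary 2.5]{GHR}), one has $\Phi_-=\Omega_r A_r^*$ (right coprime factorization) if and only if $\ker H_{\Phi_-^*}=\Omega_r H^2_{\mathbb C^2}$; moreover each entry of $\Phi_-^*$ is of bounded type, so by \cite[Theorem 2.2]{GHR} $\ker H_{\Phi_-^*}$ has the form $\Xi H^2_{\mathbb C^2}$ for a \emph{square} inner $\Xi$, which makes the comparison with the square matrix $\Delta$ legitimate. Thus it suffices to (a) produce $A_r\in H^2_{M_2}$ with $\Phi_-=\Delta A_r^*$, and (b) prove that $\Delta$ and $A_r$ are right coprime.

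For (a) I would set $A_r:=(\Delta^*\Phi_-)^*$. A direct multiplication, using $\theta_0=z^n\theta_0'$, gives $A_r=\left[\begin{smallmatrix}\theta_1&a\\ zb&\theta_0'\end{smallmatrix}\right]$ when $n=1$, and when $n\ge2$
$$
A_r=\tfrac{1}{\sqrt{|\alpha|^2+1}}\begin{bmatrix}\theta_1-\overline\alpha a & \dfrac{\alpha\theta_1+a}{z}\\[2mm] zb-\overline\alpha z^{n-1}\theta_0' & \alpha b+z^{n-2}\theta_0'\end{bmatrix}.
$$
Two checks are needed. First, $\Delta$ is (square) inner: for $n\ge2$ one verifies $\Delta^*\Delta=I_2$ after the cancellations forced by $\overline{\theta_0}\,z^{n-1}\theta_0'=\overline z$, and $\det\Delta=\theta_1\theta_0\not\equiv0$, so $\Delta\in H^\infty_{M_2}$ is inner and $\Delta\Delta^*=I_2$ on $\mathbb T$, whence $\Phi_-=\Delta(\Delta^*\Phi_-)=\Delta A_r^*$. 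Second, $A_r\in H^2_{M_2}$: every entry is visibly analytic except $(\alpha\theta_1+a)/z$, and this lies in $H^2$ precisely because $(\alpha\theta_1+a)(0)=\alpha\theta_1(0)+a(0)=0$, by the choice $\alpha=-a(0)/\theta_1(0)$; the factors $z^{n-1}\theta_0',z^{n-2}\theta_0'$ are analytic because $n\ge2$ (the $n=1$ case is immediate, with $\overline z\theta_0=\theta_0'$). Already here $\Phi_-^*\Delta f=A_r f\in H^2_{\mathbb C^2}$, so $\Delta H^2_{\mathbb C^2}\subseteq\ker H_{\Phi_-^*}$.

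The heart of the matter is (b), which upgrades this inclusion to an equality. I would argue by contradiction. A nonconstant common right inner divisor $\Gamma$ of $\Delta$ and $A_r$ has $\det\Gamma\mid\det\Delta=\theta_1\theta_0$ nonconstant; a singular inner factor of $\Gamma$ would, via the identities above, have to divide $\gcd(\theta_0,a)$, which is impossible since $\theta_0$ and $a$ are coprime (as in the representation of $\Phi_-$; likewise $\theta_1,b$ are coprime), so $\Gamma$ is a product of Blaschke--Potapov factors (cf. Lemmas \ref{lem2.8}, \ref{lemma2.3}, \ref{lem2.2}). Stripping one off yields $\lambda\in\mathbb D$ and a nonzero $v\in\mathbb C^2$ with $v\in\ker\Delta(\lambda)\cap\ker A_r(\lambda)$, and I would rule this out case by case. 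At $\lambda=0$: for $n\ge2$, $\ker\Delta(0)=\mathbb C e_1$ (note $\alpha\ne0$, since $\theta_0(0)=0$ forces $a(0)\ne0$), and $A_r(0)e_1$ has first coordinate $\tfrac{1}{\sqrt{|\alpha|^2+1}}\cdot\frac{|\theta_1(0)|^2+|a(0)|^2}{\overline{\theta_1(0)}}\ne0$; for $n=1$, $\det A_r(0)=\theta_1(0)\theta_0'(0)\ne0$ by $\theta_1(0)\ne0,\theta_0'(0)\ne0$. At $\lambda\in\mathcal Z(\theta_1)\setminus\{0\}$ one uses coprimeness of $\theta_1,b$ (so $b(\lambda)\ne0$); at $\lambda\in\mathcal Z(\theta_0')\setminus\{0\}$ one uses coprimeness of $\theta_0,a$ (so $a(\lambda)\ne0$). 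In each case a short computation with the explicit matrices $\Delta(\lambda),A_r(\lambda)$ — remembering that the entry $(\alpha\theta_1+a)/z$ of $A_r$ is evaluated at $\lambda$ as $(\alpha\theta_1(\lambda)+a(\lambda))/\lambda$, not as $\overline\lambda(\alpha\theta_1(\lambda)+a(\lambda))$ — shows $A_r(\lambda)v\ne0$, the decisive cancellations coming from $\theta_0=z^n\theta_0'$ and from $\alpha\theta_1(0)+a(0)=0$. Hence $\Delta$ and $A_r$ are right coprime, and so $\ker H_{\Phi_-^*}=\Delta H^2_{\mathbb C^2}$.

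I expect step (b) to be the main obstacle, especially for $n\ge2$, where the coupling term $\alpha\theta_1$ in $\Delta$ is essential and where \emph{all} of the hypotheses ($\theta_1(0)\ne0$, the factorization $\theta_0=z^n\theta_0'$ with $\theta_0'(0)\ne0$, coprimeness of the scalar factors, and the exact value of $\alpha$) must be used; bookkeeping the sub-cases at the zeros of $\theta_0'$ and $\theta_1$ — and treating the $\overline z$-type entry of $A_r$ correctly as a genuine $H^2$-function — is where the real work lies.
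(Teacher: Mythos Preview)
Your step~(a) is correct: $\Delta$ is inner, $A_r\in H^2_{M_2}$, and $\Phi_-=\Delta A_r^*$, so $\Delta H^2_{\mathbb C^2}\subseteq\ker H_{\Phi_-^*}$. But your route to the reverse inclusion via right coprimeness is genuinely different from the paper's, and step~(b) has a real gap.

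The paper never argues coprimeness. It computes $\ker H_{\Phi_-^*}$ directly: from the second row of $\Phi_-^*\left[\begin{smallmatrix}f\\g\end{smallmatrix}\right]\in H^2_{\mathbb C^2}$ one obtains $f=\theta_1 f_1$ and $g=zg_1$ (using $\theta_1(0)\ne0$); substituting into the first row gives $\overline z\,\theta_1 f_1+\overline z^{\,n-1}\overline{\theta_0'}\,a g_1\in H^2$, which for $n\ge2$ forces $g_1=z^{n-2}\theta_0' g_2$ together with the single scalar constraint $\theta_1(0)f_1(0)+a(0)g_2(0)=0$. One then checks by hand that the set of pairs $(\theta_1 f_1,\ z^{n-1}\theta_0' g_2)$ with $g_2(0)=\tfrac{1}{\alpha}f_1(0)$ is exactly $\Delta H^2_{\mathbb C^2}$. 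This divisibility-plus-one-linear-condition description bypasses common inner divisors entirely and works uniformly for arbitrary inner $\theta_0',\theta_1$.

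Your argument in~(b), by contrast, extracts a Blaschke--Potapov factor from a hypothetical common right divisor $\Gamma$ and then checks kernels at a point $\lambda\in\mathbb D$. The lemmas you invoke (Lemmas~\ref{lem2.2}, \ref{lem2.8}, \ref{lemma2.3}) concern \emph{finite} Blaschke--Potapov products only. Nothing in your sketch excludes the possibility that $\det\Gamma$ is a nonconstant \emph{singular} inner divisor of $\theta_0'\theta_1$; in that event $\Gamma(\lambda)$ is invertible for every $\lambda\in\mathbb D$ and your point-evaluation scheme produces no vector $v$ at all. The one-line claim that a singular factor of $\Gamma$ ``would have to divide $\gcd(\theta_0,a)$'' is not justified: you do not say which identities force this, nor why a \emph{matrix} singular right divisor of both $\Delta$ and $A_r$ should yield a \emph{scalar} common factor of $\theta_0$ and $a$. (Both your argument and the paper's tacitly use that $(\theta_0,a)$ and $(\theta_1,b)$ are coprime pairs --- this is how the lemma is actually applied in Theorem~\ref{thm6.1} --- but that assumption does not close the singular-factor gap.) To salvage the coprime-factorization route you would need a separate, non-pointwise argument eliminating singular common divisors; the paper's direct computation avoids the issue altogether.
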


\smallskip

\begin{proof} Observe that for $f,g\in H^2$,
$$
\Phi_-^*\begin{bmatrix}f\\g\end{bmatrix}
= \begin{bmatrix} \overline{z}&\overline{z}^n\overline{\theta_0'} a\\
\overline{\theta}_1 b&\overline{z}\end{bmatrix}\begin{bmatrix}f\\g\end{bmatrix}
\in H^2_{\mathbb C^2} \ \Longleftrightarrow\
\begin{bmatrix} \overline{z}f+\overline{z}^n\overline{\theta_0'} ag\\
\overline{\theta}_1 bf+\overline{z}g \end{bmatrix} \in H^2_{\mathbb
C^2}.
$$
Thus if
$\Phi_-^*\left[\begin{smallmatrix}f\\g\end{smallmatrix}\right]\in
H^2_{\mathbb C^2}$, then $\overline{\theta}_1 bf+\overline{z}g  \in
H^2$. \  Since $\theta_1(0)\ne 0$, we have $\overline{\theta}_1 bfz
\in H^2$, and hence $f=\theta_1 f_1$ for some $f_1\in H^2$. \  In
turn, $bf_1+\overline{z}g \in H^2$, so that $g=zg_1$ for some
$g_1\in H^2$. \  We therefore have
\begin{equation} \label{6.3}
\overline{z}\theta_1
f_1+\overline{z}^{n-1}\overline{\theta_0'} ag_1\in H^2.
\end{equation}
If $n=1$, then (\ref{6.3}) implies $g_1=\theta_0'g_2$ and $f_1=zf_2$
for some $g_2, f_2\in H^2$. \  Thus $f=z\theta_1f_2$ and
$g=\theta_0g_2$, which implies
$$
\hbox{ker}\,H_{\Phi_-^*}=\begin{bmatrix}z\theta_1&0\\0&\theta_0\end{bmatrix}
H^2_{\mathbb C^2}.
$$
If instead $n\geq 2$, then (\ref{6.3}) implies that
$\overline{z}^{n-2}\overline{\theta_0'}ag_1 \in H^2$, so that
$g_1=z^{n-2}\theta_0'g_2$ for some $g_2\in H^2$. \  We thus have
$$
\aligned \overline{z}\theta_1
f_1+\overline{z}^{n-1}\overline{\theta_0'}ag_1 \in H^2
&\Longleftrightarrow \overline{z}\theta_1
f_1+\overline{z}ag_2 \in H^2\\
&\Longleftrightarrow \theta_1(0)f_1(0)+a(0)g_2(0)=0\\
&\Longleftrightarrow g_2(0)=\frac{1}{\alpha} f_1(0)\quad
(\hbox{recall that}\ \alpha=-\frac{a(0)}{\theta_1(0)}).
\endaligned
$$
Therefore we have
\begin{equation} \label{6.4}
\begin{bmatrix}f\\g\end{bmatrix} \in \hbox{ker}H_{\Phi_-^*}
\Longleftrightarrow \ f=\theta_1 f_1,\ g=z^{n-1}\theta_0'g_2,\
\hbox{and}\ g_2(0)=\frac{1}{\alpha}f_1(0).
\end{equation}
Put
$$
\Delta:=\frac{1}{\sqrt{|\alpha|^2+1}}\begin{bmatrix}z\theta_1&\alpha
\theta_1\\-\overline{\alpha}\theta_0&z^{n-1}\theta_0'\end{bmatrix}.
$$
Then $\Delta$ is inner, and for $h_1,h_2\in H^2$,
$$
\Delta \begin{bmatrix}h_1\\h_2\end{bmatrix}=
\frac{1}{\sqrt{|\alpha|^2+1}}
\begin{bmatrix}z\theta_1h_1+\alpha\theta_1h_2\\
-\overline{\alpha}z^n\theta_0'h_1+z^{n-1}\theta_0'h_2\end{bmatrix}
=\frac{1}{\sqrt{|\alpha|^2+1}}
\begin{bmatrix}\theta_1\bigl(zh_1+\alpha
h_2\bigr)\\z^{n-1}\theta_0'\bigl(-\overline{\alpha}zh_1+h_2\bigr)\end{bmatrix}.
$$
But since $\frac{1}{\alpha}\bigl(zh_1+\alpha
h_2\bigr)(0)=\bigl(-\overline{\alpha}zh_1+h_2 \bigr)(0)$, it follows
from (\ref{6.4}) that $\hbox{ker}\, H_{\Phi_-^*}=\Delta H^2_{\mathbb
C^2}$. \
\end{proof}

\medskip

\begin{lemma}\label{lem6.5} Let
$$
\Phi_-=\begin{bmatrix} z&\theta_1 \overline{b}\\
\theta_0 \overline{a}& z\end{bmatrix} \quad (a \in \mathcal
H(\theta_0),\ b\in \mathcal H(\theta_1)\ \hbox{and $\theta_j$ inner}
\ (j=0,1)).
$$
If  $\theta_1=z^n \theta_1^{\prime}$ {\rm ($n\ge 1$;
$\theta_1'(0)\ne 0$)} and $\theta_0(0) \neq 0$, then $\hbox{\rm
ker}\,H_{\Phi_-^*}=\Delta H^2_{\mathbb C^2}$, where
$$
\Delta=
\begin{cases}\qquad\qquad
\begin{bmatrix}\theta_1&0\\0&z\theta_0\end{bmatrix} \quad &(n=1);\\
\frac{1}{\sqrt{|\alpha|^2+1}}\begin{bmatrix}z^{n-1}\theta_1'&
-\overline{\alpha}\theta_1\\ \alpha\theta_0&z\theta_0\end{bmatrix}
\quad &(n \geq 2)\quad
\left(\alpha:=-\frac{b(0)}{\theta_0(0)}\right).
\end{cases}
$$
\end{lemma}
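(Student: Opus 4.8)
The statement is the exact mirror image of Lemma~\ref{lem6.4}: passing from Lemma~\ref{lem6.4} to the present one amounts to interchanging the two off-diagonal entries of $\Phi_-$, i.e.\ interchanging the pairs $(\theta_0,a)$ and $(\theta_1,b)$. So the plan is to \emph{deduce} the lemma from Lemma~\ref{lem6.4} by conjugating with the coordinate swap $\sigma:=\left[\begin{smallmatrix}0&1\\1&0\end{smallmatrix}\right]\in M_2$, rather than repeat the computation. The one technical fact to record first is how block Hankel operators transform under a constant unitary: since $\sigma$ is constant it commutes with $P_2$, with $P_2^{\perp}$, and with $J_2$, so for any $\Psi\in L^\infty_{M_2}$ one gets $H_{\sigma\Psi\sigma}=\sigma H_\Psi T_\sigma$, and hence $\ker H_{\sigma\Psi\sigma}=\sigma\,\ker H_\Psi$ (as $T_\sigma$ is invertible and acts as multiplication by $\sigma$). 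Moreover $\sigma H^2_{\mathbb C^2}=H^2_{\mathbb C^2}$, so if $\ker H_\Psi=\Delta H^2_{\mathbb C^2}$ for an inner $\Delta$, then $\ker H_{\sigma\Psi\sigma}=(\sigma\Delta)H^2_{\mathbb C^2}$ with $\sigma\Delta$ again inner.

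Set $\Psi_-:=\sigma\,\Phi_-\,\sigma=\left[\begin{smallmatrix} z & \theta_0\overline a\\ \theta_1\overline b & z\end{smallmatrix}\right]$, so that $\Psi_-^{*}=\sigma\,\Phi_-^{*}\,\sigma$ and therefore $\ker H_{\Phi_-^{*}}=\sigma\,\ker H_{\Psi_-^{*}}$. Now $\Psi_-$ has precisely the shape of the matrix $\Phi_-$ in Lemma~\ref{lem6.4}, under the relabelling $\theta_1\mapsto\theta_0$, $b\mapsto a$, $\theta_0\mapsto\theta_1$, $a\mapsto b$; under this relabelling the hypotheses of Lemma~\ref{lem6.4} (the $z^n$-divisible inner function has nonvanishing cofactor at $0$, and the other inner function is nonvanishing at $0$) become exactly the hypotheses of Lemma~\ref{lem6.5} (``$\theta_1=z^n\theta_1'$ with $\theta_1'(0)\neq0$, and $\theta_0(0)\neq0$''), while the scalar $-a(0)/\theta_1(0)$ of Lemma~\ref{lem6.4} becomes $-b(0)/\theta_0(0)=\alpha$. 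Applying Lemma~\ref{lem6.4} to $\Psi_-$ then gives $\ker H_{\Psi_-^{*}}=\Delta' H^2_{\mathbb C^2}$, where $\Delta'=\left[\begin{smallmatrix} z\theta_0 & 0\\ 0 & \theta_1\end{smallmatrix}\right]$ if $n=1$ and $\Delta'=\frac{1}{\sqrt{|\alpha|^2+1}}\left[\begin{smallmatrix} z\theta_0 & \alpha\theta_0\\ -\overline\alpha\theta_1 & z^{n-1}\theta_1'\end{smallmatrix}\right]$ if $n\ge2$.

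It remains only to match $\sigma\Delta'$ with the $\Delta$ asserted in the statement. A direct check shows $\sigma\Delta'=\Delta\sigma$ in both cases; for $n\ge2$, $\sigma\Delta'=\frac{1}{\sqrt{|\alpha|^2+1}}\left[\begin{smallmatrix} -\overline\alpha\theta_1 & z^{n-1}\theta_1'\\ z\theta_0 & \alpha\theta_0\end{smallmatrix}\right]=\left(\frac{1}{\sqrt{|\alpha|^2+1}}\left[\begin{smallmatrix} z^{n-1}\theta_1' & -\overline\alpha\theta_1\\ \alpha\theta_0 & z\theta_0\end{smallmatrix}\right]\right)\sigma=\Delta\sigma$, and similarly for $n=1$. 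Since $\sigma$ is a constant unitary (a right factor), $(\sigma\Delta')H^2_{\mathbb C^2}=\Delta\sigma H^2_{\mathbb C^2}=\Delta H^2_{\mathbb C^2}$, so that $\ker H_{\Phi_-^{*}}=\sigma\,\ker H_{\Psi_-^{*}}=(\sigma\Delta')H^2_{\mathbb C^2}=\Delta H^2_{\mathbb C^2}$, as required. The only real obstacle here is bookkeeping: one must be sure the relabelling carries $z^n$-divisibility of $\theta_0$ in Lemma~\ref{lem6.4} to $z^n$-divisibility of $\theta_1$ in the present lemma (not the reverse), track the two cases $n=1$ and $n\ge2$ separately, and verify the identity $\sigma\Delta'=\Delta\sigma$. (Alternatively, one may simply reproduce the proof of Lemma~\ref{lem6.4} line by line with these substitutions, which is no longer than the above.)
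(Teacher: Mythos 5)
Your proposal is correct. The paper itself disposes of this lemma with the single line ``Same as the proof of Lemma \ref{lem6.4}'', i.e.\ it intends the computation of Lemma \ref{lem6.4} to be repeated verbatim with the roles of $(\theta_0,a)$ and $(\theta_1,b)$ interchanged --- essentially the fallback you mention in your final parenthesis. What you actually carry out is a genuine reduction: you invoke Lemma \ref{lem6.4} as a black box and transport its conclusion via conjugation by the constant unitary $\sigma=\left[\begin{smallmatrix}0&1\\1&0\end{smallmatrix}\right]$, using that constant unitaries commute with $P_2$, $P_2^\perp$ and $J_2$, so that $H_{\sigma\Psi\sigma}=\sigma H_\Psi T_\sigma$ and hence $\ker H_{\sigma\Psi\sigma}=\sigma\ker H_\Psi$, together with the identity $\sigma\Delta'=\Delta\sigma$ and $\sigma H^2_{\mathbb C^2}=H^2_{\mathbb C^2}$. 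I checked the relabelling of the hypotheses (the $z^n$-divisibility indeed lands on $\theta_1$, the nonvanishing at $0$ on $\theta_0$, and the constant becomes $\alpha=-b(0)/\theta_0(0)$) and the matrix identities in both cases $n=1$ and $n\ge 2$; they are all right, and since $\sigma$ is a constant unitary right factor the resulting inner function agrees with the stated $\Delta$ in the sense the paper uses (up to a unitary constant right factor, here even with literal equality of the kernels). The trade-off is the expected one: your symmetry argument avoids redoing the kernel computation and makes the dependence on Lemma \ref{lem6.4} explicit, at the cost of the small bookkeeping lemma on how block Hankel operators transform under constant unitaries; the paper's route costs nothing new conceptually but requires rewriting the whole computation.
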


\smallskip

\begin{proof} Same as the proof of Lemma \ref{lem6.4}. \  \end{proof}

\medskip

\begin{lemma}\label{lem6.6} Let
$$
\Phi_-=\begin{bmatrix} z&\theta_1 \overline{b}\\
\theta_0 \overline{a}& z\end{bmatrix} \quad (a \in \mathcal
H(\theta_0),\ b\in \mathcal H(\theta_1)\ \hbox{and $\theta_j$ inner}
\ (j=0,1)). \
$$
If $\theta_0=z \theta_0'$ and $\theta_1 =z\theta_1'$ then $\hbox{\rm
ker}\,H_{\Phi_-^*}=\Delta H^2_{\mathbb C^2}$, where
$$
\Delta=
\begin{cases}
\qquad\qquad
\begin{bmatrix} \theta_1 &0\\0&\theta_0\end{bmatrix}
&\Bigl((ab)(0)\neq (\theta_0'\theta_1')(0)\Bigr);\\
\small{\frac{1}{\sqrt{|\alpha|^2+1}}}
\begin{bmatrix} \theta_1&\alpha\theta_1^{\prime}\\
-\overline{\alpha}\theta_0&\theta_0^{\prime}\end{bmatrix} \quad
&\Bigl((ab)(0)=(\theta_0'\theta_1')(0)\Bigr) \quad
\left(\alpha:=-\frac{a(0)}{\theta_1'(0)}\right).
\end{cases}
$$
\end{lemma}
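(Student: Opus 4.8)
The plan is to mimic the computational scheme of the proofs of Lemmas \ref{lem6.4} and \ref{lem6.5}, now in the symmetric situation in which $z$ divides both $\theta_0$ and $\theta_1$ (the statement presupposes $\theta_0'(0)\,\theta_1'(0)\neq 0$, so that $\alpha$ is defined; equivalently $z$ exactly divides each of $\theta_0,\theta_1$). First I would reduce membership in $\ker H_{\Phi_-^*}$ to a system for the entries. Since $\Phi_-^*=\left[\begin{smallmatrix}\overline z & \overline{\theta_0}\,a\\[1pt] \overline{\theta_1}\,b & \overline z\end{smallmatrix}\right]$ and $\theta_0=z\theta_0'$, $\theta_1=z\theta_1'$, for $f,g\in H^2$ one has $\left[\begin{smallmatrix}f\\g\end{smallmatrix}\right]\in\ker H_{\Phi_-^*}$ iff $\Phi_-^*\left[\begin{smallmatrix}f\\g\end{smallmatrix}\right]\in H^2_{\mathbb{C}^2}$, i.e. iff $\overline z\bigl(f+\overline{\theta_0'}\,a\,g\bigr)\in H^2$ and $\overline z\bigl(g+\overline{\theta_1'}\,b\,f\bigr)\in H^2$, that is
$$
f+\overline{\theta_0'}\,a\,g\in zH^2\quad\text{and}\quad g+\overline{\theta_1'}\,b\,f\in zH^2 .
$$
In particular $ag\in\theta_0'H^2$ and $bf\in\theta_1'H^2$, and eliminating between the two relations (write $ag=\theta_0'(zk_1-f)$, $bf=\theta_1'(zk_2-g)$, multiply the first by $b$ and feed in the second, then symmetrically) yields $\bigl(ab-\theta_0'\theta_1'\bigr)g\in z\theta_0'H^2$ and $\bigl(ab-\theta_0'\theta_1'\bigr)f\in z\theta_1'H^2$.

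Next I would read off the dichotomy by comparing constant terms: $\bigl((ab)(0)-(\theta_0'\theta_1')(0)\bigr)g(0)=0$, and likewise for $f(0)$. If $(ab)(0)\neq(\theta_0'\theta_1')(0)$, then $f(0)=g(0)=0$; writing $f=zf_1$, $g=zg_1$ and substituting back into the first display, the coprimality of $a$ with $\theta_0$ and of $b$ with $\theta_1$ forces $f_1\in\theta_1'H^2$ and $g_1\in\theta_0'H^2$, i.e. $f\in\theta_1H^2$ and $g\in\theta_0H^2$. Conversely $\Phi_-^*\left[\begin{smallmatrix}\theta_1h_1\\ \theta_0h_2\end{smallmatrix}\right]=\left[\begin{smallmatrix}\theta_1'h_1+ah_2\\ bh_1+\theta_0'h_2\end{smallmatrix}\right]\in H^2_{\mathbb{C}^2}$ for all $h_1,h_2\in H^2$, so in this case $\ker H_{\Phi_-^*}=\left[\begin{smallmatrix}\theta_1&0\\0&\theta_0\end{smallmatrix}\right]H^2_{\mathbb{C}^2}$.

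In the coupled case $(ab)(0)=(\theta_0'\theta_1')(0)$, the constant-term equations no longer kill $f(0),g(0)$; instead the first display retains exactly the single scalar relation $f(0)=-\tfrac{a(0)}{\theta_0'(0)}\,g(0)$ (equivalently $g(0)=-\tfrac{b(0)}{\theta_1'(0)}f(0)$). Put $\alpha:=-a(0)/\theta_1'(0)$ and
$$
\Delta:=\frac{1}{\sqrt{|\alpha|^2+1}}\begin{bmatrix}\theta_1 & \alpha\theta_1'\\[2pt] -\overline\alpha\,\theta_0 & \theta_0'\end{bmatrix}.
$$
Using $|\theta_0'|=|\theta_1'|=1$ and $\overline{\theta_j}\theta_j'=\overline z$ on $\mathbb{T}$ one checks $\Delta^*\Delta=I_2$ (so $\Delta$ is inner) and $\det\Delta=z\theta_0'\theta_1'\not\equiv 0$ (so $\Delta$ is a square Beurling--Lax generator). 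Since $\Delta\left[\begin{smallmatrix}h_1\\h_2\end{smallmatrix}\right]=\tfrac{1}{\sqrt{|\alpha|^2+1}}\left[\begin{smallmatrix}\theta_1'(zh_1+\alpha h_2)\\[1pt] \theta_0'(-\overline\alpha zh_1+h_2)\end{smallmatrix}\right]$ and $\theta_0(0)=\theta_1(0)=0$, $\alpha\theta_1'(0)=-a(0)$, every element of $\Delta H^2_{\mathbb{C}^2}$ satisfies the displayed relation at $0$; conversely, given $f,g$ satisfying the first display in this case one writes $f=\theta_1'f_2$, $g=\theta_0'g_2$ (again by coprimality), the remaining constraint being precisely $f_2(0)=\alpha g_2(0)$, and then solving the invertible $2\times2$ system $\left[\begin{smallmatrix}1&\alpha\\-\overline\alpha&1\end{smallmatrix}\right]\left[\begin{smallmatrix}zh_1\\h_2\end{smallmatrix}\right]=\sqrt{|\alpha|^2+1}\,\left[\begin{smallmatrix}f_2\\g_2\end{smallmatrix}\right]$ (determinant $1+|\alpha|^2$) recovers $h_1,h_2\in H^2$ — the constraint $f_2(0)=\alpha g_2(0)$ being exactly what guarantees $zh_1$ vanishes at $0$, as at the end of the proof of Lemma \ref{lem6.4}. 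Hence $\ker H_{\Phi_-^*}=\Delta H^2_{\mathbb{C}^2}$.

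The one genuinely delicate point is the coupled case: isolating the single surviving scalar relation and, above all, guessing the explicit $\Delta$ so that $\Delta H^2_{\mathbb{C}^2}$ reproduces precisely that relation; once $\Delta$ is written down, verifying innerness, the determinant, and the two inclusions is routine and parallels Lemma \ref{lem6.4} step by step. (The generic case is essentially immediate given the coprimality of $a$ with $\theta_0$ and of $b$ with $\theta_1$, which is available in the canonical factorizations to which the lemma is applied.)
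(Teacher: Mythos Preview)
Your argument is correct and follows essentially the same route as the paper: reduce membership in $\ker H_{\Phi_-^*}$ to the pair $f+\overline{\theta_0'}\,ag\in zH^2$ and $g+\overline{\theta_1'}\,bf\in zH^2$, peel off $\theta_0'$ and $\theta_1'$ via the (implicit) coprimality of $a$ with $\theta_0$ and $b$ with $\theta_1$, and then resolve the remaining $2\times2$ linear constraint at $z=0$; in the coupled case the single surviving relation $f_2(0)=\alpha g_2(0)$ is matched to the range of the proposed inner $\Delta$ exactly as in Lemma~\ref{lem6.4}. The one organizational difference is in the nondegenerate case $(ab)(0)\neq(\theta_0'\theta_1')(0)$: you use the elimination identity $(ab-\theta_0'\theta_1')g\in z\theta_0'H^2$ to force $g(0)=0$ directly, whereas the paper instead runs a separate case analysis on the exact powers of $z$ dividing $\theta_0$ and $\theta_1$ (the cases $n=m$, $n>m$, $m>n$); your shortcut is cleaner and uses nothing beyond the same coprimality hypothesis both proofs rely on.
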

\medskip

\begin{remark}
Since $a(0), b(0)\ne 0$, the second part of the above
assertion makes sense because by assumption, $\theta_0'(0),
\theta_1'(0)\ne 0$. \qed
\end{remark}

\medskip

\begin{proof}[Proof of Lemma \ref{lem6.6}] \
Observe that for $f,g\in H^2$,
$$
\aligned \Phi_-^*\begin{bmatrix} f\\g\end{bmatrix}=
\begin{bmatrix} \overline{z}&\overline{z\theta_0'}a\\
\overline{z\theta_1'}b&\overline{z}\end{bmatrix}
\begin{bmatrix} f\\g\end{bmatrix}\in H^2_{\mathbb C^2}
&\Longleftrightarrow \begin{bmatrix}
\overline{z}(f+\overline{\theta_0'}ag)\\ \overline{z}
(g+\overline{\theta_1'}bf) \end{bmatrix} \in H^2_{\mathbb C^2}\\
&\Longrightarrow
\begin{bmatrix}f+\overline{\theta_0'}ag\\g+\overline{\theta_1'}bf
\end{bmatrix} \in zH^2_{\mathbb C^2}\\
 &\Longrightarrow \begin{bmatrix}
\overline{\theta_0'}ag\\ \overline{\theta_1'}bf\end{bmatrix} \in H^2_{\mathbb C^2}\\
 &\Longrightarrow g=\theta_0'g_1\ \hbox{and}\ f=\theta_1' f_1\ \hbox{for some}\
 g_1,f_1\in H^2.
\endaligned
$$
Thus if $\Phi_-^*\left[\begin{smallmatrix}
f\\g\end{smallmatrix}\right]\in H^2_{\mathbb C^2}$ then
$\overline{z}(\theta_1'f_1+ag_1)\in H^2$ and
$\overline{z}(\theta_0'g_1+bf_1)\in H^2$, so that
$$
\theta_1'(0)f_1(0)=-a(0)g_1(0)\quad\hbox{and}\quad
\theta_0'(0)g_1(0)=-b(0)f_1(0).
$$
If $(ab)(0)=(\theta_0'\theta_1')(0)$, then
$$
\theta_1'(0)f_1(0)=-a(0)g_1(0) \Longleftrightarrow
\theta_0'(0)g_1(0)=-b(0)f_1(0).
$$
Put
$$
\Delta:= \small{\frac{1}{\sqrt{|\alpha|^2+1}}}
\begin{bmatrix} \theta_1 &\alpha\theta_1^{\prime}\\
-\overline{\alpha}\theta_0&\theta_0^{\prime}\end{bmatrix}.
$$
Then we can see that $\Delta$ is inner and $\hbox{\rm
ker}\,H_{\Phi_-^*}=\Delta H^2_{\mathbb C^2}$. \

If $(ab)(0)\neq (\theta_0'\theta_1')(0)$, put
$\theta_0=z^m\theta_0''$ and $\theta_1 =z^n\theta_1''$
($\theta_0''(0), \theta_1''(0)\ne 0$). \  If $n=m$ then
$$
\Phi_-
=\begin{bmatrix}z&z^n\theta_1''\overline{b}\\z^m\theta_0''\overline{a}&z\end{bmatrix}
=I_{z^n \theta_0''\theta_1''}
\,\begin{bmatrix}\overline{z}^{n-1}\overline{\theta_0''\theta_1''}
&\overline{\theta_0''b}\\
\overline{\theta_1'' a}& \overline{z}^{n-1}\overline{\theta_0''
\theta_1''}\end{bmatrix}\equiv  I_{z^n \theta_0'' \theta_1''}\,B^*.
$$
Since $B(0)$ is invertible, it follows from Lemma \ref{lem2.2} that
$I_{z^n}$ and $B$ are coprime. \  Observe that
$$
\Phi_-^*\begin{bmatrix} z^n f\\ z^n g\end{bmatrix}
= \begin{bmatrix} \overline{z}&\overline{z}^n \overline{\theta_0''} a\\
\overline{z}^n \overline{\theta_1'' }b&
\overline{z}\end{bmatrix}\begin{bmatrix} z^nf\\z^ng\end{bmatrix} \in
H^2_{\mathbb C^2} \Longleftrightarrow \begin{bmatrix} z^{n-1}f +
\overline{\theta_0''}ag\\ \overline{\theta_1''}bf+z^{n-1}g
\end{bmatrix} \in H^2_{\mathbb C^2},
$$
which implies
$$
\hbox{$f=\theta_1'' f_1$ and $g=\theta_0'' g_1$ for some $f_1,g_1\in
H^2$.}
$$
We thus have
$$
\hbox{\rm ker}\,
H_{\Phi_-^*}=\begin{bmatrix}\theta_1&0\\0&\theta_0\end{bmatrix}H^2_{\mathbb
C^2}
$$
If instead $n \neq m$, then
$$
\begin{aligned}
\Phi_-^*\begin{bmatrix} f\\ g\end{bmatrix}
&=\begin{bmatrix} \overline{z}&\overline{z}^m\overline{\theta_0''}a\\
\overline{z}^n\overline{\theta_1''}b&\overline{z} \end{bmatrix}
\begin{bmatrix}f\\g\end{bmatrix}=\begin{bmatrix}\overline{z}(f+\overline{z}^{m-1}\overline{
\theta_0''}ag)\\ \overline{z}(g+\overline{z}^{n-1} \overline{\theta_1''}bf)
\end{bmatrix} \in H^2_{\mathbb C^2}\\
&\Longleftrightarrow \begin{cases}
f+\overline{z}^{m-1}\overline{\theta_0''}ag \in
zH^2\\g+\overline{z}^{n-1}\overline{\theta_1''}bf \in
zH^2,\end{cases}
\end{aligned}
$$
which implies
$$
f=z^{n-1}\theta_1''f_1\quad\hbox{and}\quad g=z^{m-1}\theta_0''g_1.
$$
Suppose $n > m$, and hence $n \geq 2$. \  We thus have
$$
z^{n-2}\theta_1''f_1+\overline{z}ag_1 \in H^2 \Longrightarrow
\overline{z}ag_1 \in H^2\Longrightarrow g_1=zg_2 \Longrightarrow
g=\theta_0g_2.
$$
In turn,
$$
g+\overline{z}^{n-1}\overline{\theta_1''}bf \in zH^2\ \Longrightarrow\
z^m\theta_0''g_2 + b f_1 \in z H^2 \Longrightarrow f_1=zf_2,
$$
which implies
$$
\hbox{ker}\,H_{\Phi_-^*}=\begin{bmatrix}\theta_1&0\\0&\theta_0\end{bmatrix}H^2_{\mathbb
C^2}.
$$
If $m>n$, a similar argument gives the result.
\end{proof}

\bigskip

%
%
%


We are ready for:

\bigskip

\begin{proof}[Proof of Theorem \ref{thm6.1}] \
Clearly (i) $\Rightarrow$ (ii) and (ii) $\Rightarrow$ (iii). \ Moreover, a simple calculation shows that (iv) $\Rightarrow$ (i).

(iii) $\Rightarrow$ (iv): \ Write
$$
\Phi\equiv \left[\begin{matrix} \overline z&\varphi\\ \psi&\overline
z\end{matrix}\right] \equiv \Phi_-^*+\Phi_+
=\begin{bmatrix} z& \psi_-\\
\varphi_-& z\end{bmatrix}^{\ast}+\begin{bmatrix} 0&\varphi_+\\
\psi_+& 0\end{bmatrix}
$$
and assume that $T_\Phi$ is $2$-hyponormal. \  Since $\hbox{ker}\, [T^*, T]$
is invariant under $T$ for every $2$-hyponormal
operator $T \in \mathcal{B}(\mathcal{H})$, we note that Theorem \ref{thm4.1}
and Lemma \ref{lem6.3} hold for $2$-hyponormal operators $T_{\Phi}$. \
We claim that
\medskip
\begin{align}
&|\varphi|=|\psi|, \; \textrm{and} \label{6.5}\\
&\Phi \ \hbox{and} \ \Phi^* \ \hbox{are of bounded type.}
\label{6.6}
\end{align}

\noindent Indeed, if $T_\Phi$ is hyponormal then $\Phi$ is normal,
so that a straightforward calculation gives (\ref{6.5}). \  Also
there exists a matrix function
$K\equiv \left[\begin{smallmatrix} k_1&k_2\\
k_3&k_4\end{smallmatrix}\right] \in H_{M_2}^\infty$ with
$||K||_\infty\le 1$ such that $\Phi-K\Phi^*\in H^\infty_{M_2}$,
i.e.,
$$
\left[\begin{matrix} \overline{z}&\overline{\varphi_-}\\
\overline{\psi_-}& \overline
z\end{matrix}\right]\, -\, \left[\begin{matrix} k_1&k_2\\
k_3&k_4\end{matrix}\right]\, \left[\begin{matrix} 0&\overline {\psi_+}\\
\overline {\varphi_+}&0\end{matrix}\right] \in H^2_{M_2},
$$
which implies that
$$
\begin{cases}
H_{\overline{z}}=H_{k_2\overline {\varphi_+}}=H_{\overline {\varphi_+}}T_{k_2};\\
H_{\overline{\varphi_-}}=H_{k_1\overline {\psi_+}}=H_{\overline {\psi_+}}T_{k_1};\\
H_{\overline{\psi_-}}=H_{k_4\overline {\varphi_+}}=H_{\overline {\varphi_+}}T_{k_4};\\
H_{\overline z}=H_{k_3\overline{\psi_+}}=H_{\overline
{\psi_+}}T_{k_3}.
\end{cases}
$$
If $\overline{\varphi_+}$ is not of bounded type then
$\hbox{ker}\,H_{\overline{\varphi_+}}=0$, so that $k_2=0$, a
contradiction; and if $\overline{\psi_+}$ is not of  bounded type
then $\hbox{ker}\,H_{\overline{\psi_+}}=0$, so that $k_3=0$, a
contradiction. \  Therefore we should have $\Phi^*$ of bounded type.
\ Since $T_{\Phi}$ is hyponormal, $\Phi$ is also of bounded type,
giving (\ref{6.6}). \  Thus we can write
$$
\varphi_-:=\theta_0\overline{a}\quad\hbox{and}\quad \psi_-:=\theta_1
\overline{b}\qquad (a\in \mathcal H(\theta_0),\  b \in \mathcal
H(\theta_1)).
$$
Put
$$
\theta_0=z^m\theta_0^{\prime}\quad\hbox{and}\quad \theta_1=z^n
\theta_1^{\prime}\qquad (m,n\ge 0;\ \theta_0^{\prime}(0)\ne 0 \ne
\theta_1^{\prime}(0)).
$$
We now claim that
\begin{equation} \label{6.7}
m=n=0\quad\hbox{or}\quad m=n=1.
\end{equation}
We split the proof of (\ref{6.7}) into three cases. \

\medskip


{\bf Case 1} ($m\ne 0$ and $n=0$)\quad In this case, we have
$a(0)\ne 0$ because $\theta_0(0)=0$ and $\theta_0$ and $a$ are
coprime. \  We first claim that $m=1$. \  To show this we assume to
the contrary that $m \geq 2$. \  Write
$$
\alpha:=-\frac{a(0)}{\theta_1(0)}\quad\hbox{and}\quad
\nu:=\frac{1}{\sqrt{|\alpha|^2+1}}.
$$
By Lemma \ref{lem6.4}, we can write:
\medskip
$$
\Phi_- =\begin{bmatrix} z&\theta_1 \overline{b}\\
\theta_0\overline{a}&z \end{bmatrix} =\Delta_2 B_r^*\quad
\hbox{(right coprime factorization)}\,,
$$
where
$$
\Delta_2:=
\nu \begin{bmatrix} z\theta_1&\alpha \theta_1\\
-\overline{\alpha}\theta_0&z^{m-1}\theta_0'
\end{bmatrix}
\quad\hbox{and}\quad B_r:= \nu
\begin{bmatrix} \theta_1 -\overline{\alpha}a&\alpha\theta_1\overline{z}+a\overline{z}\\
zb-\overline\alpha z^{m-1}\theta_0^{\prime}&\alpha b+
z^{m-2}\theta_0'\end{bmatrix}\,.
$$

\medskip
\noindent To get the left coprime factorization of $\Phi_-$,
applying Lemma \ref{lem6.4} for $\widetilde{\Phi_-}$ gives
\medskip
$$
\widetilde{\Phi_-}
=\begin{bmatrix} z&\widetilde{\theta_0} \overline{\widetilde{a}}\\
\widetilde{\theta_1}\overline{\widetilde{a}}&z \end{bmatrix}
=\widetilde\Omega_2\widetilde B_{\ell}^*\quad\hbox{(right coprime
factorization)}\,,
$$
where
$$
\Omega_2:= \nu
\begin{bmatrix} z^{m-1}\theta_0'&\alpha \theta_1\\
-\overline{\alpha} \theta_0 &z\theta_1
\end{bmatrix}
\quad\hbox{and}\quad B_{\ell}:= \nu \begin{bmatrix}
z^{m-2} \theta_0'+ \alpha b&a\overline z+\alpha\theta_1 \overline z\\
-\overline\alpha z^{m-1}\theta_0^{\prime}+zb& -\overline\alpha a+
\theta_1\end{bmatrix}\,,
$$
which gives
$$
\Phi_- =\begin{bmatrix} z&\theta_1 \overline{b}\\
\theta_0\overline{a}&z \end{bmatrix} = B_{\ell}^*\Omega_2
\quad\hbox{(left coprime factorization)}\,.
$$
On the other hand, since $\Phi_-^*-K\Phi_+^*\in H^2_{M_n}$, and
hence
$$
\begin{bmatrix} \overline{z}&\overline{\theta}_0 a\\
\overline{\theta}_1 b &\overline{z} \end{bmatrix}-\begin{bmatrix}
k_2\overline{\varphi_+}&k_1\overline{\psi_+}\\k_4
\overline{\varphi_+}&k_3\overline{\psi_+}\end{bmatrix} \in
H^2_{M_2},
$$
we have
$$
\begin{cases}
\overline{z}-k_2 \overline{\varphi_+}\in H^2,\quad
\overline{\theta}_1 b-k_4\overline{\varphi_+}\in H^2\\
\overline{z}-k_3 \overline{\psi_+}\in H^2,\quad
\overline{\theta}_0 a-k_1\overline{\psi_+}\in H^2,
\end{cases}
$$
which via Cowen's Theorem gives that the following Toeplitz
operators are all hyponormal:
$$
T_{\overline z+\varphi_+},\ \ T_{\overline{\theta}_1 b + \varphi_+},\
\ T_{\overline z+\psi_+},\ \ T_{\overline{\theta}_0 a + \psi_+}.
$$
Thus by Proposition \ref{pro3.2} we can see that
$$
\varphi_+=z \theta_1 \theta_3\overline{d}\ \ \hbox{and}\ \
\psi_+=\theta_0\theta_2\overline{c}\ \ \hbox{for some inner
functions $\theta_2,\theta_3$.}
$$
We thus have
$$
\Phi_+=\begin{bmatrix}z \theta_1
\theta_3&0\\0&\theta_0\theta_2\end{bmatrix}
\begin{bmatrix}0& c\\ d&0
\end{bmatrix}^{\ast}\equiv \Delta_2\Delta_0A_r^*  \quad \hbox{(right coprime factorization)},
$$
where
$$
\begin{aligned}
A_r&:=\begin{bmatrix}0& c\\ d&0\end{bmatrix};\\
\Delta_0&:=\nu\begin{bmatrix} 1&-\alpha \\
    \overline{\alpha}z &z \end{bmatrix}\begin{bmatrix} \theta_3&0 \\
       0&\theta_2\end{bmatrix};\\
\Delta_2&:= \nu\begin{bmatrix} z\theta_1 & \alpha\theta_1\\
   -\overline\alpha \theta_0& z^{m-1} \theta_0^\prime \end{bmatrix}=
            \begin{bmatrix}\theta_1&0\\ 0&z^{m-1} \theta_0'\end{bmatrix}\cdot
        \nu\begin{bmatrix} z&\alpha\\
        -\overline{\alpha}z&1\end{bmatrix}\,.
\end{aligned}
$$
Write
$$
\theta_2=z^p\theta_2'\quad\hbox{and}\quad \theta_3=z^q\theta_3'\quad
(p,q\ge 0;\ \theta_2'(0),\theta_3'(0)\ne 0).
$$
If $q+1 \geq m+p$, then $\hbox{LCM}\,(z \theta_1 \theta_3, \
\theta_0 \theta_2)$ is an inner divisor of
$z^{q+1}\theta_0'\theta_1\theta_2'\theta_3'$. \
Thus we can write
$$
\Phi_+=\begin{bmatrix} 0&x\\ y&0\end{bmatrix}^*
I_{z^{q+1}\theta_0'\theta_1\theta_2'\theta_3'} \equiv
A^*\Omega_1\Omega_2,
$$
where
$$
\begin{aligned}
A&:=\begin{bmatrix} 0&x\\ y&0\end{bmatrix}\ \ \hbox{(some $x,y\in
H^2$)};\\
\Omega_1&:=
\begin{bmatrix} z^{q+1-m}\theta_1\theta_2'\theta_3'&0\\
0&z^q\theta_0'\theta_2'\theta_3'
\end{bmatrix}\cdot
\nu\begin{bmatrix} z&-\alpha\\ \overline{\alpha}z&1\end{bmatrix}.
\end{aligned}
$$
It thus follows from Lemma \ref{lem6.3} that
\begin{equation}\label{6.7-1}
\Delta_2\mathcal{H}(\Delta_0)\ \subseteq\ \hbox{\rm cl
ran}\,[T_\Phi^*, T_\Phi]\ \subseteq\ \mathcal{H}(\Omega_1).
\end{equation}
But since in general, $\Theta_2\mathcal{H}(\Theta_1)\subseteq
\mathcal{H}(\Theta_1\Theta_2)$ for inner matrix functions
$\Theta_1,\Theta_2$, we have
$$
\nu\begin{bmatrix} 1&-\alpha\\ \overline\alpha z& z\end{bmatrix}
\mathcal{H}\Bigl(\begin{bmatrix} \theta_3&0 \\
0&\theta_2\end{bmatrix}\Bigr)
\subseteq \mathcal{H}(\Delta_0).
$$
Thus by (\ref{6.7-1}), we have
$$
\Delta_2\cdot \nu \begin{bmatrix} 1&-\alpha\\ \overline\alpha z&
z\end{bmatrix}
\mathcal{H}\Bigl(\begin{bmatrix} \theta_3&0 \\
0&\theta_2\end{bmatrix}\Bigr) \subseteq \mathcal{H}(\Omega_1),
$$
or equivalently,
\begin{equation}\label{6.7-2}
\begin{bmatrix}z\theta_1&0\\ 0&z^{m}
\theta_0'
\end{bmatrix}\mathcal H \Bigl(
\begin{bmatrix} \theta_3&0 \\
0&\theta_2\end{bmatrix}\Bigr)\\
\subseteq \mathcal H (\Omega_1).
\end{equation}
Since in general, $F\in\mathcal{H}(\Theta)$ if and only if
$\Theta^*F\in (H_{\mathbb C^n}^{2})^\perp$, (\ref{6.7-2}) implies
\begin{equation}\label{6.7-3}
\nu\begin{bmatrix} 1&\alpha\\-\overline{\alpha}z&z
\end{bmatrix} \mathcal H
\Bigl(\begin{bmatrix} \theta_3&0 \\
0&\theta_2\end{bmatrix}\Bigr)\\
\subseteq \mathcal H \Bigl(\begin{bmatrix} z^{q+1-m}\theta_2'\theta_3'&0\\
0&z^{q+1-m}\theta_2'\theta_3'
\end{bmatrix}\Bigr).
\end{equation}
Also since for inner matrix functions $\Theta_1,\Theta_2$ and any
closed subspace $F$ of $H^2_{\mathbb C^n}$,
$$\Theta_1F\subseteq
\mathcal{H}(\Theta_1\Theta_2)\ \ \hbox{and}\ \
\Theta_1\Theta_2=\Theta_2\Theta_1\ \Longrightarrow\ F\subseteq
\mathcal{H}(\Theta_1\Theta_2)\,,
$$
it follows from (\ref{6.7-3})
that
$$
\begin{bmatrix} \mathcal{H}(\theta_3)\\
\mathcal{H}(\theta_2)\end{bmatrix} \subseteq
\begin{bmatrix} \mathcal{H}(z^{q+1-m}\theta_2'\theta_3')\\
\mathcal{H}(z^{q+1-m}\theta_2'\theta_3')\end{bmatrix}.
$$
But since $\theta_3=z^q\theta_3'$, it follows that $q+1-m \ge q$,
giving a contradiction. \

\medskip

If instead $q+1 < m+p$, then $\hbox{LCM}\,(z \theta_1 \theta_3, \
\theta_0 \theta_2)$ is an inner divisor of
$z^{m+p}\theta_0'\theta_1\theta_2'\theta_3'$. \  Thus we can write
$$
\Phi_+=\begin{bmatrix} 0&x\\ y&0\end{bmatrix}^*
I_{z^{m+p}\theta_0'\theta_1\theta_2'\theta_3'} \equiv
A_1^*\Omega_1^\prime\Omega_2,
$$
where
$$
\begin{aligned}
A_1&:=\begin{bmatrix} 0&x\\ y&0\end{bmatrix}\ \ \hbox{(some $x,y\in
H^2$)};\\
\Omega_1^\prime&:=
\begin{bmatrix} z^{p}\theta_1\theta_2'\theta_3'&0\\
0&z^{m+p-1}\theta_0'\theta_2'\theta_3'
\end{bmatrix}\cdot
\nu \begin{bmatrix} z&-\alpha\\
\overline{\alpha}z&1 \end{bmatrix}\,.
\end{aligned}
$$
It thus follows from Lemma \ref{lem6.3} with $\Omega_1^\prime$ in
place of $\Omega_1$ that
$$
\Delta_2\mathcal{H}(\Delta_0)\ \subseteq\ \hbox{\rm cl
ran}\,[T_\Phi^*, T_\Phi]\ \subseteq\ \mathcal{H}(\Omega_1^\prime).
$$
Since
$$
\begin{aligned}
\mathcal{H}(\Delta_0)
&=\mathcal{H}\Bigl(\nu\begin{bmatrix} 1&-\alpha \\
   \overline{\alpha}z &z \end{bmatrix}
   \begin{bmatrix} \theta_3&0 \\ 0&\theta_2\end{bmatrix}\Bigr)\\
&=\mathcal{H}\Bigl(\nu \begin{bmatrix} 1&-\alpha \\
   \overline{\alpha}z &z \end{bmatrix}\Bigr)\ \bigoplus\
  \nu \begin{bmatrix} 1&-\alpha \\
   \overline{\alpha}z &z \end{bmatrix}
   \mathcal{H}\Bigl(\begin{bmatrix} \theta_3&0 \\
   0&\theta_2\end{bmatrix}\Bigr),
\end{aligned}
$$
we have
\begin{equation}\label{6.8}
\Delta_2 \mathcal H \Bigl(\nu\begin{bmatrix} 1&-\alpha \\
             \overline{\alpha}z &z \end{bmatrix}\Bigr)
     \bigoplus
        \begin{bmatrix}z\theta_1&0\\ 0&z^{m} \theta_0'
\end{bmatrix}\mathcal H \Bigl(\begin{bmatrix} \theta_3&0 \\
0&\theta_2\end{bmatrix}\Bigr) \subseteq \mathcal H
(\Omega_1^\prime).
\end{equation}
Then by the same argument as
(\ref{6.7-2}) and (\ref{6.7-3}), we can see that
\begin{equation}\label{6.7-4}
\nu\begin{bmatrix}
1&\alpha\\-\overline{\alpha}z&z
\end{bmatrix}
\mathcal H \Bigl(\begin{bmatrix} \theta_3&0 \\
0&\theta_2\end{bmatrix}\Bigr)\\
\subseteq \mathcal H \Bigl(\begin{bmatrix} z^{p}\theta_2'\theta_3'&0\\
0&z^{p}\theta_2'\theta_3'
\end{bmatrix}\Bigr)\,,
\end{equation}
which gives
$$
z\mathcal H (\theta_2) \subseteq \mathcal H (z^p
\theta_2'\theta_3'),
$$
which by Lemma \ref{lem6.2} implies that $\theta_2$ should be a
constant. \ Thus (\ref{6.7-4}) can be written as
$$
\nu\begin{bmatrix} 1&\alpha\\-\overline{\alpha}z&z \end{bmatrix}
\mathcal H \Bigl(\begin{bmatrix} \theta_3&0 \\
0&1\end{bmatrix}\Bigr)\\
\subseteq \mathcal H \Bigl(\begin{bmatrix} \theta_3'&0\\
0&\theta_3'
\end{bmatrix}\Bigr),
$$
which gives $z \mathcal H (\theta_3) \subseteq \mathcal
H(\theta_3')$. \ It follows from again Lemma \ref{lem6.2} that
$\theta_3$ is a constant. \  Thus by (\ref{6.8}), we have
$$
\begin{bmatrix}\theta_1&0\\ 0&z^{m-1} \theta_0'
\end{bmatrix}\nu\begin{bmatrix} z&\alpha\\ -\overline{\alpha}z&1
\end{bmatrix}\mathcal H \Bigl(\nu\begin{bmatrix} 1&-\alpha \\
\overline{\alpha}z &z \end{bmatrix}\Bigr)
\subseteq \mathcal H \Bigl(\begin{bmatrix} \theta_1&0\\
0&z^{m-1}\theta_0'
\end{bmatrix}\nu\begin{bmatrix} z&-\alpha\\
\overline{\alpha}z&1 \end{bmatrix}\Bigr),
$$
so that
$$
\nu\begin{bmatrix} z&\alpha\\ -\overline{\alpha}z&1
\end{bmatrix}\mathcal H \Bigl(\nu\begin{bmatrix} 1&-\alpha \\
\overline{\alpha}z &z
\end{bmatrix}\Bigr)\subseteq
\mathcal H \Bigl(\nu\begin{bmatrix} z&-\alpha\\
\overline{\alpha}z&1 \end{bmatrix}\Bigr),
$$
giving a contradiction by Lemma 5.3. \  Therefore we should have
$$
m=1,\quad\hbox{i.e.,}\quad \theta_0=z\theta_0^{\prime}.
$$
Thus by Lemmas \ref{6.4} and \ref{6.5}, we have
$$
\Phi_-=\begin{bmatrix} z&\theta_1\overline{b}\\
z\theta_0'\overline{a}& z\end{bmatrix}=\begin{bmatrix}
z\theta_1&0\\0&\theta_0\end{bmatrix}
\begin{bmatrix} \theta_1& a\\
zb&\theta_0'\end{bmatrix}^*\quad \hbox{(right coprime
factorization)}
$$
and
$$
\Phi_-=\begin{bmatrix} z&\theta_1\overline{b}\\
z\theta_0'\overline{a}& z\end{bmatrix}
=\begin{bmatrix} \theta_0'& a\\
zb&\theta_1\end{bmatrix}^* \begin{bmatrix}
\theta_0&0\\0&z\theta_1\end{bmatrix} \quad \hbox{(left coprime
factorization)}.
$$
Recall that
$$
\psi_+=\theta_0\theta_2\overline{c}\ \ \hbox{and}\ \
\varphi_+=z\theta_1\theta_3\overline{d}\ \ \hbox{for some inner
functions $\theta_2$ and $\theta_3$.}
$$
We can thus write
$$
\Phi_+=\begin{bmatrix}
0&z\theta_1\theta_3\overline{d}\\\theta_0\theta_2\overline{c}&0\end{bmatrix}=\begin{bmatrix}
z\theta_1\theta_3&0\\0&\theta_0\theta_2\end{bmatrix}\begin{bmatrix}0&c\\d&0\end{bmatrix}^*
\quad(\hbox{right coprime factorization}).
$$
Note that $\hbox{LCM}\,(z\theta_1 \theta_3, \theta_0\theta_2)$ is an
inner divisor of $\theta_0\theta_1\theta_2\theta_3$. \  Thus we can
write
$$
\Phi_+=\begin{bmatrix} 0&x\\ y&0\end{bmatrix}^*
I_{\theta_0\theta_1\theta_2\theta_3}\quad\hbox{($x,y\in H^2$).}
$$
It follows from Lemma \ref{lem6.3} that
$$
\begin{bmatrix} z\theta_1\mathcal H (\theta_3)\\\theta_0\mathcal H
(\theta_2) \end{bmatrix} \subseteq
\text{cl}\,\text{ran}\,[T_{\Phi}^*, T_{\Phi}]\subseteq
\begin{bmatrix} \mathcal H (\theta_1\theta_2\theta_3)\\ \mathcal H
(\theta_0'\theta_2\theta_3)\end{bmatrix},
$$
which implies
$$
z\mathcal H (\theta_3) \subseteq \mathcal H
(\theta_2\theta_3)\quad\hbox{and}\quad z\mathcal  H(\theta_2)
\subseteq \mathcal H (\theta_2\theta_3).
$$
By Lemma \ref{lem6.2},
\begin{equation}\label{6.9}
\begin{cases}
\hbox{either $\theta_3$ is constant or $z\theta_3$ is a divisor of $\theta_2\theta_3$;}\\
\hbox{either $\theta_2$ is constant or $z\theta_2$ is a divisor of
$\theta_2\theta_3$}\, ,
\end{cases}
\end{equation}
If $\theta_2$ or $\theta_3$ is not constant then it follows from
(\ref{6.9}) that $z$ is a divisor of $\theta_2$ and $\theta_3$. \
Thus we have $p,q\ge 1$. \  Let $N:=\hbox{max}(p,q)$. \  Then
$\hbox{LCM}\,(z \theta_1 \theta_3, \theta_0\theta_2)$ is an inner
divisor of $z^{N}\theta_0\theta_1\theta_2'\theta_3'$. \  Thus we can
write
$$
\Phi_+=\begin{bmatrix} 0&x\\ y&0\end{bmatrix}^*
I_{z^{N}\theta_0\theta_1\theta_2'\theta_3'}\quad\hbox{($x,y\in
H^2$).}
$$
It follows from Lemma \ref{lem6.3} that
$$
\aligned
\begin{bmatrix}
z\theta_1\mathcal H(z^q\theta_3')\\
\theta_0\mathcal H (z^p\theta_2')
\end{bmatrix}
\subseteq \text{cl}\,\text{ran}\,[T_{\Phi}^* , T_{\Phi}] \subseteq
\begin{bmatrix} \mathcal H (z^N\theta_1\theta_2'\theta_3')\\ \mathcal H
(z^N\theta_0'\theta_2'\theta_3')\end{bmatrix}\\
\Longrightarrow
\begin{cases}
\hbox{$z^q$ is a divisor of $z^{N-1}\theta_2'$}\\
\hbox{$z^p$ is a divisor of $z^{N-1}\theta_3'$},
\end{cases}
\endaligned
$$
giving a contradiction. \  Therefore
$$
\hbox{$\theta_2$ and $\theta_3$ are constant.}
$$
We observe that $\hbox{LCM}(z \theta_1 , \theta_0)$ is an inner
divisor of $z\theta_0'\theta_1$. \  It follows from Lemma
\ref{lem6.3} that
$$
\begin{bmatrix}\theta_1 H^2\\ \theta_0' H^2 \end{bmatrix}\ \subseteq\
\hbox{ker}\,[T_{\Phi}^* , T_{\Phi}].
$$
In particular, $\left[\begin{smallmatrix} 0\\
\theta_0'\end{smallmatrix}\right]\in \hbox{ker}\,[T_{\Phi}^* ,
T_{\Phi}]$. \  Observe that
$$
\Phi_-^* \begin{bmatrix} 0\\ \theta_0'\end{bmatrix}
=\begin{bmatrix} \overline{z}&\overline{\theta}_0 a\\
\overline{\theta}_1 b& \overline{z}\end{bmatrix}
\begin{bmatrix} 0\\ \theta_0'\end{bmatrix}
=\begin{bmatrix} \overline{z} a\\ \overline{z} \theta_0'
\end{bmatrix},
$$
so that
$$
H_{\Phi_-^*}\begin{bmatrix} 0\\ \theta_0'\end{bmatrix}
=\begin{bmatrix} a(0)\\ \theta_0'(0)\end{bmatrix}.
$$
We thus have
$$
H_{\Phi_-^*}^*H_{\Phi_-^*}\begin{bmatrix} 0\\ \theta_0'\end{bmatrix}
=\begin{bmatrix} \theta_0'(0)J(I-P)(\overline{\widetilde{\theta}}_1
\widetilde{b})+ a(0)\\ * \end{bmatrix}.
$$
A similar calculation shows that
$$
H_{\Phi_+^*}^*H_{\Phi_+^*}\begin{bmatrix} 0\\ \theta_0^{\prime}
\end{bmatrix}=\begin{bmatrix} 0\\ * \end{bmatrix}.
$$
Since $[T_{\Phi}^* ,
T_{\Phi}]=H_{\Phi_+^*}^*H_{\Phi_+^*}-H_{\Phi_-^*}^*H_{\Phi_-^*}$, it
follows that
$$
\theta_0'(0)J(I-P)(\overline{\widetilde{\theta}}_1 \widetilde{b})=-a(0).
$$
But since $a(0)\ne 0$, we must have that
$\overline{\widetilde{\theta_1}}\widetilde{b}\in \overline z H^2\cap
(H^2)^{\perp}$, which implies that $\theta_1=c\,z$ for a nonzero
constant $c$, giving a contradiction because $\theta_1(0)\ne 0$. \
Therefore this case cannot occur. \


\bigskip

{\bf Case 2} ($m=0$ and $n\ne 0$) \quad This case is symmetrical to
Case 1. \  Thus the proof is identical to that of Case 1. \
Therefore this case cannot occur either. \


\bigskip

{\bf Case 3} ($m\ne 0, n \neq 0$ and $m\ge 2$ or $n\ge 2$)\quad In
this case we have $a(0)\ne 0$ and $b(0)\ne 0$ because
$\theta_0(0)=\theta_1(0)=0$, $\theta_0$ and $a$ are coprime, and
$\theta_1$ and $b$ are coprime. \  By Lemma \ref{lem6.6} we have
$$
\Phi_-=\begin{bmatrix}\theta_1&0\\0& \theta_0\end{bmatrix}
\begin{bmatrix} z^{n-1}\theta_1'& a\\
b & z^{m-1}\theta_0'\end{bmatrix}^* \qquad \hbox{(right coprime
factorization)}.
$$
Similarly, we have
$$
\Phi_-=\begin{bmatrix} z^{m-1}\theta_0'& a\\
b& z^{n-1}\theta_1'\end{bmatrix}^*
\begin{bmatrix}\theta_0&0\\0&
\theta_1\end{bmatrix} \qquad \hbox{(left coprime factorization)}
$$
and
$$
\Phi_+=\begin{bmatrix}
\theta_1\theta_3&0\\0&\theta_0\theta_2\end{bmatrix}
\begin{bmatrix} 0& c\\
d&0\end{bmatrix}^* \qquad \hbox{(right coprime factorization)}.
$$
We then claim that
\begin{equation}\label{6.10}
\hbox{$\theta_2$ and $\theta_3$ are constant.}
\end{equation}
Assume $\theta_2$ is not constant. \  Put $\theta_2=z^p\theta_2'$
and $\theta_3=z^q\theta_3'$ ($\theta_2^{\prime}\ne 0\ne
\theta_3^{\prime}(0)$ and $p,q\ge 0$) and let
$N:=\hbox{max}\,(m+p,n+q). \ $ Then
$\hbox{LCM}(\theta_0\theta_2,\theta_1\theta_3)$ is a divisor of
$z^N\theta_0'\theta_1'\theta_2'\theta_3'$. \  Thus we can write
$$
\Phi_+=\begin{bmatrix} 0&x\\ y&0\end{bmatrix}^*
I_{z^N\theta_0'\theta_1'\theta_2'\theta_3'}\quad\hbox{($x,y\in
H^2$).}
$$
By Lemma \ref{lem6.3} we have
$$
\begin{bmatrix}\theta_1 \mathcal H(\theta_3)\\ \theta_0 \mathcal
H(\theta_2)\end{bmatrix} \subseteq
\begin{bmatrix} \mathcal{H}(z^{N-m}\theta_1'\theta_2'\theta_3')\\
\mathcal{H}(z^{N-n}\theta_0'\theta_2'\theta_3')\end{bmatrix}.
$$
If $\theta_3$ is a constant, then $q=0$ and $m+p\le N-n$, giving a
contradiction because $m,n\ge 2$. \  If $\theta_3$ is not constant,
then $n+q\le N-m$ and $m+p\le N-n$, giving a contradiction because
$m,n\ge 2$. \  Therefore we should have that $\theta_2$ is constant.
\ Similarly, we can show that $\theta_3$ is also constant. \  This
proves (\ref{6.10}). \

We now suppose $n \leq m$. \  By Lemma 5.3 we have
$$
\begin{bmatrix}\theta_1'H^2\\z^{m-n} \theta_0'H^2 \end{bmatrix} \subseteq
\hbox{ker}\,[T_{\Phi}^* , T_{\Phi}].
$$
In particular, $\left[\begin{smallmatrix}\theta_1'\\0
\end{smallmatrix}\right] \in \hbox{ker}\,[T_{\Phi}^* , T_{\Phi}]$. \
Observe that
$$
\Phi_-^* \begin{bmatrix} \theta_1'\\ 0\end{bmatrix} =
\begin{bmatrix} \overline{z}&\overline{\theta}_0 a\\
\overline{\theta}_1 b& \overline{z}
\end{bmatrix}\begin{bmatrix}\theta_1'\\0
\end{bmatrix}=\begin{bmatrix} \overline{z}
\theta_1'\\\overline{z}^n b \end{bmatrix},
$$
so that
$$
H_{\Phi_-^*}\begin{bmatrix}\theta_1'\\0
\end{bmatrix}=\begin{bmatrix}\theta_1'(0)\\z^{n-1}\overline{\widetilde{b}}_1
\end{bmatrix}  \quad (b_1:=P_{\mathcal H(z^n)}(b)).
$$
Put $b_3:=z^{n-1}\overline{\widetilde{b}}_1$. \  We also have
$$
\widetilde{\Phi_-^*} \begin{bmatrix} \theta_1'(0)\\ b_3\end{bmatrix}
=
\begin{bmatrix} \overline{z}&\overline{\widetilde{\theta}}_1\widetilde{b}\\
\overline{\widetilde{\theta}}_0\widetilde{a}& \overline{z}
\end{bmatrix}\begin{bmatrix}\theta_1'(0)\\b_3
\end{bmatrix}=\begin{bmatrix} *\\\theta_1'(0)\overline{\widetilde{\theta}}_0
\widetilde{a}+ b_3\overline{z}\end{bmatrix},
$$
so that
$$
H_{\Phi_-^*}^*H_{\Phi_-^*}\begin{bmatrix} \theta_1'\\0 \end{bmatrix}
=\begin{bmatrix} \ast\\
\theta_1'(0)J(I-P)(\overline{\widetilde{\theta}}_0
\widetilde{a})+b_3(0)
\end{bmatrix}.
$$
A similar calculation shows that
$$
H_{\Phi_+^*}^*H_{\Phi_+^*}\begin{bmatrix} \theta_1'\\0
\end{bmatrix}=\begin{bmatrix}*\\0\end{bmatrix}.
$$
It thus follows that
$$
\theta_1'(0)J(I-P)(\overline{\widetilde{\theta}}_0
\widetilde{a})=-b_3(0) \Longrightarrow
\overline{\widetilde{\theta}}_0 \widetilde{a} \in \overline{z}H^2
\Longrightarrow \overline{\theta}_0 a\in
\overline{z}H^2\Longrightarrow \overline{\theta}_0 a \in
\overline{z} H^2 \cap \left({H^2}\right)^{\perp}.
$$
Since $n\le m$ and ($m\ge 2$ or $n\ge 2$), it follows that $m\ge 2$.
\ Thus $\overline{\theta_0}\,a\in \overline z H^2\,\cap\,
(H^2)^{\perp}$ implies $\overline{z}^{m-1} \overline{\theta_0^{\prime}}\,a=c$
(a constant), which forces $a=0$, giving a contradiction. \  If
instead $n >m$ then the same argument leads a contradiction. \
Therefore this case cannot occur. \  This completes the proof of
(\ref{6.7}). \

\vskip .5cm

Now in view of (\ref{6.7}) it suffices to consider the case $m=n=0$
and the case $m=n=1$. \

\bigskip

{\bf Case A ($m=n=0$)}.\quad In this case, we first claim that
\begin{equation}\label{6.11}
\varphi_-=\psi_-=0,\quad \hbox{i.e.,\ \  $\varphi$ and $\psi$ are
analytic.}
\end{equation}
Put $\theta:=\hbox{GCD}(\theta_0, \theta_{1})$. \  Then
$\theta_0=\theta_0'\theta$ and $\theta_1=\theta_1'\theta$ for some
inner functions $\theta_0', \theta_1'$, and hence
$\hbox{LCM}(\theta_0,\theta_1)=\theta\theta_0'\theta_1'$. \  We thus
have
$$
\Phi_- =I_{z\theta\theta_0'\theta_1'}\,\begin{bmatrix}
\theta\theta_0'\theta_1'&z\theta_1'a\\z\theta_0'b&\theta\theta_0'\theta_1'
\end{bmatrix}^*\equiv I_{z\theta\theta_0'\theta_1'}\, B^*\,.
$$
Since $B(0)$ is invertible it follows from Lemma \ref{lem2.2} that
$I_z$ and $B$ are coprime. \  Observe that
$$
\begin{aligned}
\Phi_-^* \begin{bmatrix} zf\\ zg\end{bmatrix} =
\begin{bmatrix} \overline{z}&\overline{\theta}_0 a\\
\overline{\theta}_1 b& \overline{z}\end{bmatrix}
\begin{bmatrix}zf\\zg \end{bmatrix} \in H^2_{\mathbb C^2}
&\Longleftrightarrow \begin{bmatrix} \overline{\theta}_0 azg\\
\overline{\theta}_1 bzf \end{bmatrix} \ \in H^2_{\mathbb C^2}\\
&\Longleftrightarrow g \in \theta_0H^2, \ f \in \theta_1 H^2,
\end{aligned}
$$
which implies
$$
\hbox{ker}\,H_{\Phi_-^*}=\begin{bmatrix} z\theta_ 1&0\\0&z \theta_0
\end{bmatrix}H^2_{\mathbb C^2}.
$$
We thus have
$$
\Phi_-=\begin{bmatrix} z \theta_1&0\\0&z\theta_0 \end{bmatrix}
\begin{bmatrix} \theta_1&za\\zb&\theta_0
\end{bmatrix}^*  \quad(\hbox{right coprime factorization}).
$$
To get the left coprime factorization of $\Phi_-$, we take
$$
\widetilde{\Phi_-}=\begin{bmatrix} z
\widetilde{\theta_0}&0\\0&z\widetilde{\theta_1}
\end{bmatrix}
\begin{bmatrix} \widetilde{\theta_0}& z\widetilde{b}\\
z\widetilde{a}&\widetilde{\theta_1}
\end{bmatrix}^* \quad(\hbox{right coprime factorization}),
$$
which implies
$$
\Phi_-=\begin{bmatrix}\theta_0&za\\zb&\theta_1
\end{bmatrix}^*\begin{bmatrix} z\theta_0&0\\0&z\theta_1
\end{bmatrix} \quad(\hbox{left coprime factorization}).
$$
Since $T_\Phi$ is hyponormal and hence,
$$
\hbox{ker}\, H_{\Phi_+^*} \subseteq \hbox{ker}\,H_{\Phi_-^*}
=\begin{bmatrix} z\theta_1 H^2\\ z\theta_0 H^2\end{bmatrix},
$$
it follows that
$$
\psi_+=z \theta_0\theta_2\overline{c}\ \ \hbox{and}\ \ \varphi_+=z
\theta_1\theta_3\overline{d}\ \ \hbox{for some inner functions
$\theta_2,\theta_3$.}
$$
We can thus write
$$
\Phi_+=\begin{bmatrix}
0&z\theta_1\theta_3\overline{d}\\z\theta_0\theta_2\overline{c}&0\end{bmatrix}=\begin{bmatrix}
z\theta_1\theta_3&0\\0&z\theta_0\theta_2\end{bmatrix}\begin{bmatrix}0&c\\d&0\end{bmatrix}^*
\quad(\hbox{right coprime factorization}).
$$
Observe that $\hbox{LCM}\,(z \theta_1 \theta_3, z \theta_0\theta_2)$
is an inner divisor of $z\theta_0\theta_1\theta_2\theta_3$. \  Thus
we can write
$$
\Phi_+=\begin{bmatrix} 0&x\\ y&0\end{bmatrix}^*
I_{z\theta_0\theta_1\theta_2\theta_3}\quad\hbox{($x,y\in H^2$).}
$$
It follows from Lemma \ref{lem6.3} that
$$
\begin{bmatrix} z\theta_1\mathcal H (\theta_3)\\z\theta_0\mathcal H
(\theta_2) \end{bmatrix} \subseteq
\text{cl}\,\text{ran}\,[T_{\Phi}^*, T_{\Phi}]\subseteq
\begin{bmatrix} \mathcal H (\theta_1\theta_2\theta_3)\\ \mathcal H
(\theta_0\theta_2\theta_3)\end{bmatrix},
$$
which implies that
$$
z\mathcal H (\theta_3) \subseteq \mathcal H
(\theta_2\theta_3)\quad\hbox{and}\quad z \mathcal  H(\theta_2)
\subseteq \mathcal H (\theta_2\theta_3).
$$
Thus the same argument as in (\ref{6.9}) shows that
$$
\hbox{$\theta_2$ and $\theta_3$ are constant.}
$$
We now observe that $\hbox{LCM}(z \theta_1 , z \theta_0)$ is an
inner divisor of $z\theta_0\theta_1$. \  Thus we can write
$$
\Phi_+=\begin{bmatrix} 0&x\\ y&0\end{bmatrix}^*
I_{z\theta_0\theta_1}\quad\hbox{($x,y\in H^2$).}
$$
It follows from Lemma \ref{lem6.3} that
$\left[\begin{smallmatrix}\theta_1 H^2\\ \theta_0H^2
\end{smallmatrix}\right] \subseteq \hbox{ker}\,[T_{\Phi}^* ,
T_{\Phi}]$. \  In particular, $\left[\begin{smallmatrix}\theta_1\\0
\end{smallmatrix}\right] \in \hbox{ker}\,[T_{\Phi}^* , T_{\Phi}]$. \
Observe that
$$
\Phi_-^* \begin{bmatrix} \theta_1\\ 0\end{bmatrix}=
\begin{bmatrix} \overline{z}&\overline{\theta}_0 a\\
\overline{\theta}_1 b& \overline{z}
\end{bmatrix}\begin{bmatrix}\theta_1\\0
\end{bmatrix}=\begin{bmatrix} \overline{z} \theta_1\\b \end{bmatrix},
$$
so that
$$
H_{\Phi_-^*}\begin{bmatrix}\theta_1\\0
\end{bmatrix}=\begin{bmatrix}\theta_1(0)\\0
\end{bmatrix}.
$$
We thus have
$$
H_{\Phi_-^*}^*H_{\Phi_-^*}\begin{bmatrix} \theta_1\\0 \end{bmatrix}=\begin{bmatrix}\theta_1(0)\\
\theta_1(0)J(I-P)(\overline{\widetilde{\theta}}_0 \widetilde{a})
\end{bmatrix}.
$$
A similar calculation shows that
$$
H_{\Phi_+^*}^*H_{\Phi_+^*}\begin{bmatrix} \theta_1\\0
\end{bmatrix}=\begin{bmatrix}*\\0\end{bmatrix}.
$$
It thus follows that
$$
\theta_1(0)J(I-P)(\overline{\widetilde{\theta}}_0 \widetilde{a})=0
\Longrightarrow \overline{\widetilde{\theta}}_0 \widetilde{a} \in
H^2 \Longrightarrow \overline{\theta}_0 a\in H^2\Longrightarrow
\overline{\theta}_0 a \in H^2\cap \left(H^2\right)^{\perp},
$$
which implies that $a=0$ and hence $\phi$ is analytic. \  Similarly,
we can show that $\psi$ is also analytic. \  This gives
(\ref{6.11}). \

Now since by (\ref{6.11}), $\varphi, \psi\in H^\infty$ and
$|\varphi|=|\psi|$, we can write $\varphi=\theta_1 a$ and
$\psi=\theta_2 a$, where the $\theta_i$ are inner functions and $a$
is an outer function. \  Observe that
$$
\Phi_- \equiv B^*\Theta_2,
$$
where $B\equiv \left[\begin{smallmatrix} 1&0\\
0&1\end{smallmatrix}\right]$ and $\Theta_2\equiv
\left[\begin{smallmatrix} z&0\\ 0&z\end{smallmatrix}\right]$ are
coprime. \  Thus our symbol satisfies all the assumptions of Theorem
\ref{thm4.1}. \  Thus by Theorem \ref{thm4.1}, since $T_\Phi$ is
$2$-hyponormal then $T_\Phi$ must be normal. \  We thus have
\begin{equation}\label{6.12}
H_{\Phi_+^*}^*H_{\Phi_+^*}=H_{\Phi_-^*}^*H_{\Phi_-^*}.
\end{equation}
Now observe that
$$
\Phi_+=\begin{bmatrix}0&\varphi\\ \psi&0
\end{bmatrix}\quad\hbox{and}\quad
\Phi_-=\begin{bmatrix}{z}&0\\
0&{z}\end{bmatrix}.
$$
Since $T_\Phi$ is normal we have
$$
\begin{bmatrix}
H_{\overline{\varphi}}^*H_{\overline{\varphi}}&0\\0&H_{\overline{\psi}}^*H_{\overline{\psi}}
\end{bmatrix}=\begin{bmatrix}
H_{\overline{z}}&0\\0&H_{\overline{z}}\end{bmatrix},
$$
which implies
\begin{equation}\label{6.13}
H_{\overline{\varphi}}^*H_{\overline{\varphi}}=H_{\overline{z}}
=H_{\overline{\psi}}^*H_{\overline{\psi}},
\end{equation}
which says that $H_{\overline{\varphi}}$ and $H_{\overline{\psi}}$
are both rank-one operators. \  Now remember that if $T$ is a
rank-one Hankel operator then there exist $\omega \in \mathbb D$ and
a constant $c$ such that $T=c\,(k_{\overline{\omega}} \otimes
k_{\omega})$, where $k_{\omega}:=\frac{1}{1-\overline\omega z}$ is
the reproducing kernel for $\omega$. \  Note that
$k_{\overline\omega}\otimes k_\omega$ is represented by the matrix
$$
\begin{bmatrix}
1&\omega&\omega^2&\omega^3&\hdots\\
\omega&\omega^2&\omega^3&\hdots\\
\omega^2&\omega^3&\hdots\\
\omega^3\\
\vdots
\end{bmatrix}.
$$
By (\ref{6.13}) we have that $\omega=0$. \  We thus have
\begin{equation}\label{6.14}
\varphi=e^{i\theta_1} z+\beta_1\quad\hbox{and}\quad
\psi=e^{i\theta_2}z+ \beta_2\qquad (\beta_1,\beta_2\in\mathbb{C},\
\theta_1, \theta_2\in [0,2\pi)).
\end{equation}
But since $|\varphi|=|\psi|$, we have
\begin{equation}\label{6.15}
\varphi=e^{i\theta} z+\beta\quad\hbox{and}\quad
\psi=e^{i\omega}\varphi\qquad (\beta\in \mathbb C,  \ \theta, \omega
\in [0, 2\pi)).
\end{equation}

\vskip.5cm

{\bf Case B ($m=n=1$)}\quad In this case, $\theta_0=z\theta_0'$ and
$\theta_1=z \theta_1'$ ($\theta_0'(0), \theta_1'(0)\ne 0$). \ We
thus have
$$
\varphi_-=z \theta_0'\overline{a}\quad\hbox{and}\quad \psi_-=z
\theta_1'\overline{b},
$$
so that
$$
\Phi_-=\begin{bmatrix} z& z \theta_1'\overline{b}\\
z \theta_0'\overline{a}& z \end{bmatrix}.
$$
There are two subcases to consider. \

\bigskip

{\bf Case B-1} $\left((ab)(0)\neq
(\theta_0'\theta_1')(0)\right)$.\quad In this case, we have, by
Lemma \ref{lem6.6},
$$
\aligned \Phi_-=\begin{bmatrix} z&
z\theta_1'\overline{b}\\z\theta_0'\overline{a}&z \end{bmatrix}
&=\begin{bmatrix} z\theta_1'&0\\0&z\theta_0'\end{bmatrix}
\begin{bmatrix}
\theta_1'& a\\ b& \theta_0'\end{bmatrix}^* \qquad \hbox{(right coprime decompositon)}\\
&=\begin{bmatrix}
\theta_0'& a\\
b & \theta_1'\end{bmatrix}^*
\begin{bmatrix} z\theta_0'&0\\0&z\theta_1'\end{bmatrix}
\qquad \hbox{(left coprime factorization)}
\endaligned
$$
and
$$
\Phi_+=\begin{bmatrix}
z\theta_1'\theta_3&0\\0&z\theta_0'\theta_2\end{bmatrix}\begin{bmatrix}
0&c\\ d&0\end{bmatrix}^* \qquad \hbox{(right coprime
factorization)}.
$$
Suppose $\theta_2$ is not constant. \  Put $\theta_2=z^p\theta_2'$
and $\theta_3=z^q \theta_3'$ ($p,q \in \mathbb N \cup \{0\}$). \ Let
$N:=\hbox{max}(p,q)$. \  Then $\hbox{LCM}(\theta_1 \theta_3, \
\theta_0\theta_2)$ is a divisor of
$z^{N+1}\theta_0'\theta_1'\theta_2'\theta_3'$. \  Thus we can write
$$
\Phi_+=\begin{bmatrix} 0&x\\ y&0\end{bmatrix}^*
I_{z^{N+1}\theta_0'\theta_1'\theta_2'\theta_3'}\quad\hbox{($x,y\in
H^2$).}
$$
By Lemma \ref{lem6.3} we have
$$
\begin{bmatrix} z \theta_1' \mathcal H (\theta_3)\\ z\theta_0'
\mathcal H (\theta_2) \end{bmatrix} \subseteq
\begin{bmatrix}
\mathcal H (z^N\theta_1'\theta_2'\theta_3')\\
 \mathcal H (z^N\theta_0'\theta_2'\theta_3')
\end{bmatrix}.
$$
If $\theta_3$ is a constant then $p+1 \leq N=p$, giving a
contradiction. \  If instead $\theta_3$ is not constant then $q+1\le
N$ and $p+1\le N$, giving a contradiction. \  The same argument
gives $\theta_3$ is a constant. \  Therefore $\theta_2$ and
$\theta_3$ should be constant. \  Note that $\hbox{LCM}(z \theta_1'
, z \theta_0')$ is an inner divisor of $z\theta_0'\theta_1'$. \ Thus
we can write
$$
\Phi_+=\begin{bmatrix} 0&x\\ y&0\end{bmatrix}^*
I_{z\theta_0'\theta_1'}\quad\hbox{($x,y\in H^2$).}
$$
It follows from Lemma \ref{lem6.3} that
\begin{equation}\label{6.16}
\begin{bmatrix}\theta_1'H^2\\ \theta_0'H^2 \end{bmatrix}\ \subseteq\
\hbox{ker}\,[T_{\Phi}^* , T_{\Phi}].
\end{equation}
In particular, $\left[\begin{smallmatrix}\theta_1'\\0
\end{smallmatrix}\right]\in \hbox{ker}\,[T_{\Phi}^* , T_{\Phi}]$. \
Observe that
$$
\Phi_-^* \begin{bmatrix}\theta_1'\\0\end{bmatrix}
=\begin{bmatrix} \overline{z}&\overline{\theta}_0a\\
\overline{\theta}_1b& \overline{z}\end{bmatrix}
\begin{bmatrix}\theta_1'\\0\end{bmatrix}
=\begin{bmatrix} \overline{z} \theta_1'\\\overline{z}b
\end{bmatrix},
$$
so that
$$
H_{\Phi_-^*}\begin{bmatrix}\theta_1'\\0\end{bmatrix}
=\begin{bmatrix}\theta_1'(0)\\b(0)\end{bmatrix}.
$$
We thus have
$$
\widetilde{\Phi_-^*} \begin{bmatrix}\theta_1'(0)\\b(0)
\end{bmatrix}=
\begin{bmatrix} \overline{z}&\overline{\widetilde{\theta}}_1\widetilde{b}\\
\overline{\widetilde{\theta}}_0\widetilde{a}& \overline{z}
\end{bmatrix}\begin{bmatrix}\theta_1'(0)\\b(0)
\end{bmatrix}=\begin{bmatrix} *\\\theta_1'(0)\overline{\widetilde{\theta}}_0
\widetilde{a}+ b(0)\overline{z}\end{bmatrix},
$$
so that
$$
H_{\Phi_-^*}^*H_{\Phi_-^*}\begin{bmatrix} \theta_1'\\0 \end{bmatrix}=\begin{bmatrix}*\\
\theta_1'(0)J(I-P)(\overline{\widetilde{\theta}}_0
\widetilde{a})+b(0)
\end{bmatrix}.
$$
A similar calculation shows that
$$
H_{\Phi_+^*}^*H_{\Phi_+^*}\begin{bmatrix} \theta_1^{\prime}\\0
\end{bmatrix}=\begin{bmatrix}*\\0\end{bmatrix}.
$$
It thus follows that
$$
\theta_1'(0)J(I-P)(\overline{\widetilde{\theta}}_0\widetilde{a})=-b(0).
$$
Since $b(0)\ne 0$, we have that
$\overline{\widetilde{\theta}}_0\widetilde{a}\in \overline z H^2$,
which implies that $\overline{\theta_0} a=\alpha \overline{z}$ for a
nonzero constant $\alpha$. \  Therefore we must have that
$\theta_0^{\prime}$ is a constant. \  Similarly, we can show that
$\overline{\theta_1}b=\beta\overline z$ for a nonzero constant
$\beta$, and hence $\theta_1^{\prime}$ is also a constant. \
Therefore by (\ref{6.16}), $T_\Phi$ is normal. \ Now observe that
$$
\Phi_+=\begin{bmatrix}0&\varphi_+\\ \psi_+&0
\end{bmatrix}\quad\hbox{and}\quad
\Phi_-^*=\begin{bmatrix}\overline{z}&\alpha\overline{z}\\
\beta\overline{z}&\overline{z}
\end{bmatrix}\qquad (\alpha\ne 0\ne \beta).
$$
Since $T_\Phi$ is normal we have
$$
\begin{bmatrix}
H_{\overline{\varphi_+}}^*H_{\overline{\varphi_+}}&0\\0&H_{\overline{\psi_+}}^*H_{\overline{\psi_+}}
\end{bmatrix}=\begin{bmatrix}
(1+|\beta|^2)H_{\overline{z}}&(\alpha+\overline{\beta})H_{\overline{z}}\\
(\overline{\alpha}+\beta)H_{\overline{z}}&(1+|\alpha|^2)H_{\overline{z}}\end{bmatrix},
$$
which implies that
\begin{equation}\label{6.17}
\begin{cases}
\beta=-\overline{\alpha}\\
H_{\overline{\varphi_+}}^*H_{\overline{\varphi_+}}=(1+|\beta|^2)H_{\overline{z}}\\
H_{\overline{\psi_+}}^*H_{\overline{\psi_+}}=(1+|\alpha|^2)H_{\overline{z}}.
\end{cases}
\end{equation}
By the case assumption, $1\ne |ab|=|\alpha\beta|=|\alpha|^2$, i.e.,
$|\alpha|\ne 1$. \  By the same argument as in (\ref{6.13}) we have
$$
\varphi_+=e^{i\theta_1}\sqrt{1+|\alpha|^2}\,z+\beta_1\quad\hbox{and}\quad
\psi_+=e^{i\theta_2}\sqrt{1+|\alpha|^2}\,z+\beta_2,
$$
($\beta_1,\beta_2\in\mathbb C;\ \theta_1,\theta_2\in [0,2\pi)$)
which implies that
$$
\varphi=\alpha\, \overline{z}+e^{i
\theta_1}\sqrt{1+|\alpha|^2}\,z+\beta_1\quad\hbox{and}\quad
\psi=-\overline{\alpha}\, \overline{z}+ e^{i
\theta_2}\sqrt{1+|\alpha|^2}\,z+\beta_2.
$$
Since $|\varphi|=|\psi|$, it follows that
$$
\left| e^{i\theta_1}\sqrt{1+|\alpha|^2}\,z^2+ \beta_1
z+\alpha\right| = \left| e^{i
\theta_2}\sqrt{1+|\alpha|^2}\,z^2+\beta_2 z -
\overline\alpha\right|\quad \hbox{for all $z$ on $\mathbb T$.}
$$
We argue that if $p$ and $q$ are polynomials having the same degree
and the outer coefficients of the same modulus then
$$
|p(z)|=|q(z)|\ \ \hbox{on $|z|=1$}\ \Longrightarrow\
p(z)=e^{i\omega}q(z)\quad\hbox{for some $\omega\in [0,2\pi)$}.
$$
Indeed, if $|p(z)|=|q(z)|$ on $|z|=1$, then $p=\theta q$ for a
finite Blaschke product $\theta$, i.e., $p=\prod_{j=1}^n
\frac{z-\alpha_j}{1-\overline{\alpha}_j z} q$ ($|\alpha_j|\le 1$). \
But since the modulus of the outer coefficients are same, it follows
that $\prod_{j=1}^n |\alpha_j|=1$ and therefore, $p=e^{i\omega}q$
for some $\omega$. \  Using this fact we can see that
$\psi=e^{i\omega} \varphi$ for some $\omega\in [0,2\pi)$. \ But then
a straightforward calculation shows that
$\omega=\pi-2\,\hbox{arg}\,\alpha$, and hence
$$
\varphi=\alpha\, \overline z + e^{i
\theta}\sqrt{1+|\alpha|^2}\,z+\beta\quad\hbox{and}\quad
\psi=e^{i\,(\pi-2\,{\rm arg}\,\alpha)}\varphi,
$$
where $\alpha\ne 0,\ |\alpha|\ne 1,\ \beta\in\mathbb C$, and
$\theta\in [0,2\pi)$. \

\medskip

{\bf Case B-2}
$\left((ab)(0)=(\theta_0^{\prime}\theta_1^{\prime})(0)\right)$.
\quad A similar argument as in Case 1 shows that $\theta_2$ and
$\theta_3$ are constant and the same argument as in Case B-1 gives
that
$$
\varphi=\alpha\, \overline z + e^{i
\theta}\sqrt{1+|\alpha|^2}\,z+\beta\quad\hbox{and}\quad
\psi=e^{i\,(\pi-2\,{\rm arg}\,\alpha)}\varphi,
$$
where $\alpha\ne 0,\ |\alpha|=1,\ \beta\in\mathbb C,$ and $\theta\in
[0,2\pi)$. \  We here note that the condition $|\alpha|=1$ comes
from the case assumption
$1=|\theta_0^{\prime}\theta_1^{\prime}|=|ab|=|\alpha|^2$. \

\medskip

Therefore if we combine the two subcases of Case B-1 and B-2 then we
can conclude that
\begin{equation}\label{6.18}
\varphi=\alpha\, \overline z + e^{i
\theta}\sqrt{1+|\alpha|^2}\,z+\beta\quad\hbox{and}\quad
\psi=e^{i\,(\pi-2\,{\rm arg}\,\alpha)}\varphi,
\end{equation}
where $\alpha\ne 0,\ \beta\in\mathbb C$, and $\theta\in [0,2\pi)$. \
This completes the proof.
\end{proof}

\begin{remark}\label{remark6.7}
We would also ask whether there is a subnormal {\it non}-Toeplitz
completion of $\left[\begin{smallmatrix} T_{\overline z}& ?\\
?&T_{\overline z}\end{smallmatrix}\right]$. \  Unexpectedly, there
is a normal non-Toeplitz completion of $\left[\begin{smallmatrix}
T_{\overline z}& ?\\ ?&T_{\overline z}\end{smallmatrix}\right]$. \
To see this, let $B$ be a selfadjoint operator and put
$$
T=\begin{bmatrix} T_{\overline z} &T_z+B\\T_z+B&T_{\overline
z}\end{bmatrix}.
$$
Then
$$
[T^*, T]
=\begin{bmatrix}T_{\overline{z}}B+BT_z-(T_zB+BT_{\overline{z}})
&T_zB+BT_{\overline{z}}-(T_{\overline{z}}B+BT_z)\\
BT_{\overline z}+T_zB - (BT_z+T_{\overline z}B)
&T_{\overline{z}}B+BT_z-(T_zB+BT_{\overline{z}})
\end{bmatrix},
$$
so that $T$ is normal if and only if
\begin{equation}\label{6.19}
T_{\overline{z}}B+BT_z=T_zB+BT_{\overline{z}},\ \ \hbox{i.e.,}\
[T_z,B]=[T_{\overline z},B].
\end{equation}
We define
$$
\alpha_1:=0\quad\hbox{and}\quad
\alpha_n:=-\frac{2}{3}\left(1-\left(-\frac{1}{2}\right)^n\right)\quad
\hbox{for}\ n\ge 2.
$$
Let $D\equiv \hbox{diag}\,(\alpha_n)$, i.e., a diagonal operator
whose diagonal entries are $\alpha_n$ ($n=1,2,\hdots$) and for each
$n=1,2\hdots$, let $B_n$ be defined by
$$
B_n=-\frac{1}{2^{n-1}}\hbox{diag}(\alpha_{n-1})T_{z^{2n}}^*\,.
$$
Then
$$
||B_n||\leq
\frac{1}{2^{n-1}}\hbox{sup}\{\alpha_{n-1}\}<\frac{1}{2^{n-1}},
$$
which implies that
$$
\Biggl| \Biggl|\sum_{n=1}^{\infty}B_n\Biggr|\Biggr| \leq 2.
$$
We define $C$ by
$$
C:=\sum_{n=1}^{\infty}B_n.
$$
Then $C$ looks like:
$$
C=\begin{bmatrix}
0&0&1&0&\frac{1}{2}&0&\frac{1}{2^2}&0&\cdots\\
0&0&0&\frac{1}{2}&0&\frac{1}{2^2}&0&\frac{1}{2^3}&\cdots\\
0&0&0&0&\frac{3}{2^2}&0&\frac{3}{2^3}&0&\cdots\\
0&0&0&0&0&\frac{5}{2^3}&0&\frac{5}{2^4}&\cdots\\
0&0&0&0&0&0&\frac{11}{2^4}&0&\cdots\\
0&0&0&0&0&0&0&\frac{21}{2^5}&\cdots\\
0&0&0&0&0&0&0&0&\cdots\\
0&0&0&0&0&0&0&0&\cdots\\
0&0&0&0&0&0&0&0&\cdots\\
\vdots&\vdots&\vdots&\vdots&\vdots&\vdots&\vdots&\vdots&\ddots\\
\end{bmatrix}.
$$
Note that $C$ is bounded. \  If we define $B$ by
$$
B:=D+C+C^*,
$$
then a straightforward calculation shows that $B$ satisfies equation
(\ref{6.19}). \  Therefore the operator
$$
T=\begin{bmatrix} T_{\overline z}&T_z+B\\T_z+B&T_{\overline
z}\end{bmatrix}
$$
is normal. \  We note that $T_z+B$ is not a Toeplitz operator. \qed
\end{remark}

\bigskip

\begin{remark} \label{rem6.7}
In Theorem \ref{thm6.1} we have seen that a $2$-hyponormal Toeplitz
completion of $\left[\begin{smallmatrix} T_{\overline z}& ?\\
?&T_{\overline z}\end{smallmatrix}\right]$ is automatically normal.
\ Consequently, from the viewpoint of $k$-hyponormality as a bridge
between hyponormality and subnormality, there is no gap between the
$2$-hyponormality and the subnormality of $\left[\begin{smallmatrix}
T_{\overline z}& T_\varphi\\ T_\psi&T_{\overline
z}\end{smallmatrix}\right]$ ($\varphi,\psi\in H^2$). \ Of course
there does exist a gap between the hyponormality and the
2-hyponormality of $\left[\begin{smallmatrix} T_{\overline z}&
T_\varphi\\ T_\psi&T_{\overline z}\end{smallmatrix}\right]$. \ To
see this, let
$$
\Phi:=\left[\begin{matrix} \overline z& \overline{z}^2+2z^2\\
\overline{z}^2+2z^2&\overline z\end{matrix}\right].
$$
Then $\Phi$ is normal and if we put $K:=
\left[\begin{smallmatrix} \frac{1}{2}& \frac{z}{2}\\
\frac{z}{2}&\frac{1}{2}
\end{smallmatrix}\right]$, then $\Phi-K\,\Phi^*\in H^2_{M_2}$ and
$||K||_\infty=1$, so that $T_\Phi$ is hyponormal. \
But by Theorem
\ref{thm6.1}, $T_\Phi$ is not 2-hyponormal. \
However,
we have not been able to characterize all hyponormal completions of
$\left[\begin{smallmatrix} T_{\overline z}& ?\\ ?&T_{\overline
z}\end{smallmatrix}\right]$\,; this completion problem appears to be quite difficult. \qed
\end{remark}

\vskip 1cm

%
%
%
%

\section{Open Problems}

\noindent {\bf 1. Nakazi-Takahashi's Theorem for matrix-valued
symbols.}\quad T. Nakazi and K. Takahashi \cite{NT} have shown that
if $\varphi\in L^\infty$ is such that $T_\varphi$ is a hyponormal
operator whose self-commutator $[T_\varphi^*, T_\varphi]$ is of
finite rank then there exists a finite Blaschke product
$b\in\mathcal{E}(\varphi)$ such that
$$
\hbox{deg}\,(b)=\hbox{rank}\,[T_\varphi^*,T_\varphi].
$$
What is the matrix-valued version of Nakazi and Takahashi's Theorem\,? \ A
candidate is as follows: If $\Phi\in L^\infty_{M_n}$ is such that
$T_\Phi$ is a hyponormal operator whose self-commutator $[T_\Phi^*,
T_\Phi]$ is of finite rank then there exists a finite
Blaschke-Potapov product $B\in \mathcal{E}(\Phi)$ such that
$\hbox{deg}\,(B)=\hbox{rank}\,[T_\Phi^*,T_\Phi]$. \  We note that
the degree of the finite Blaschke-Potapov product $B$ is defined by
\begin{equation}\label{7.1}
\hbox{deg}\,(B):=\hbox{dim}\,\mathcal{H}(B)=\hbox{deg}\,(\hbox{det}\,B)\,,
\end{equation}
where the second equality follows from
the well-known Fredholm theory of block Toeplitz
operators \cite{Do2} that
$$
\begin{aligned}
\hbox{dim}\,\mathcal{H}(\Theta)
&=\hbox{dim ker}\,T_{\Theta^*}=-\hbox{index}\, T_\Theta\\
&=-\hbox{index}\, T_{\hbox{det}\,\Theta}=\hbox{dim ker}\,T_{\overline{\hbox{det}\,\Theta}}\\
&=\hbox{dim}\, \Bigl( \mathcal{H}(\hbox{det}\,\Theta)\Bigr) =
\hbox{deg}\,\Bigl(\hbox{det}\,\Theta\Bigr).
\end{aligned}
$$
Thus we conjecture the following:

\medskip

\noindent {\bf Conjecture 6.1.} If $\Phi\in L^\infty_{M_n}$ is such
that $T_\Phi$ is a hyponormal operator whose self-commutator
$[T_\Phi^*, T_\Phi]$ is of finite rank then there exists a finite
Blaschke-Potapov product $B\in \mathcal{E}(\Phi)$ such that
$\hbox{rank}\,[T_\Phi^*,T_\Phi]=\hbox{deg}\,\Bigl(\hbox{det}\,B\Bigr)$.
\

\bigskip

On the other hand, in \cite{NT}, it was shown that if $\varphi\in
L^\infty$ is such that $T_\varphi$ is subnormal and
$\varphi=q\overline\varphi$, where $q$ is a finite Blaschke product
then $T_\varphi$ is normal or analytic. \  We now we pose its block
version:

\medskip

\noindent {\bf Problem 6.2.} If $\Phi\in L^\infty_{M_n}$ is such
that $T_\Phi$ is subnormal and $\Phi=B\Phi^*$, where $B$ is a a
finite Blaschke-Potapov product, does it follow that $T_\Phi$ is
normal or analytic ?

\bigskip

\noindent {\bf 2. Subnormality of block Toeplitz operators.}\quad In
Remark \ref{remark4.7} we have shown that if the ``coprime"
condition of Theorem \ref{thm4.5} is dropped, then Theorem
\ref{thm4.5} may fail. \  However we note that the example given in
Remark \ref{remark4.7} is a direct sum of a normal Toeplitz operator
and an analytic Toeplitz operator. \  Based on this observation, we
have:

\medskip

\noindent {\bf Problem 6.3.} Let $\Phi\in L^\infty_{M_n}$ be a
matrix-valued rational function. \  If $T_\Phi$ and $T_\Phi^2$ are
hyponormal, but $T_\Phi$ is neither normal nor analytic, does it
follow that $T_\Phi$ is of the form
$$
T_\Phi=\begin{bmatrix} T_A&0\\ 0&T_B\end{bmatrix}\quad \hbox{(where
$T_A$ is normal and $T_B$ is analytic)}\,?
$$

\bigskip

It is well-known that if $T\in\mathcal{B(H)}$ is subnormal then
$\hbox{ker}\,[T^*, T]$ is invariant under $T$. \  Thus we might be
tempted to guess that if the condition ``$T_\Phi$ and $T_\Phi^2$ are
hyponormal"is replaced by ``$T_\Phi$ is hyponormal and
$\hbox{ker}\,[T_\Phi^*, T_\Phi]$ is invariant under $T_\Phi$," then
the answer to Problem 6.3 is affirmative. \  But this is not the
case. \  Indeed, consider
$$
T_\Phi=\begin{bmatrix} 2U+U^*&U^*\\ U^*&2U+U^*
\end{bmatrix}\,.
$$
Then a straightforward calculation shows that $T_\Phi$ is hyponormal
and $\hbox{ker}\,[T_\Phi^*, T_\Phi]$ is invariant under $T_\Phi$,
but $T_\Phi$ is never normal (cf. \cite[Remark 3.9]{CHL}). \
However, if the condition ``$T_\Phi$ and $T_\Phi^2$ are hyponormal"
is strengthened to ``$T_\Phi$ is subnormal", what conclusion do you
draw\,?

\bigskip

\noindent {\bf 3. Subnormal completion problem.}\quad Theorem
\ref{thm6.1} provides the subnormal Toeplitz completion of
\begin{equation}\label{7.2}
\begin{bmatrix} U^*&?\\ ?&U^*\end{bmatrix}\quad\hbox{($U$ is the shift on $H^2$)}.
\end{equation}
Moreover Remark \ref{remark6.7} shows that there is a normal
non-Toeplitz completion of (\ref{7.2}). \  However we were unable to
find all subnormal completions of (\ref{7.2}). \

\bigskip

\noindent {\bf Problem 6.4.} Let $U$ be the shift on $H^2$. \
Complete the unspecified entries of the partial block matrix
$\left[\begin{smallmatrix} U^*& ?\\ ?&U^*\end{smallmatrix}\right]$
to make it subnormal. \

\bigskip

On the other hand, Theorem \ref{thm6.1} shows that the solution of
the
subnormal Toeplitz completion of $\left[\begin{smallmatrix} U^*& ?\\
?&U^*\end{smallmatrix}\right]$ consists of Toeplitz operators with
symbols which are both analytic or trigonometric polynomials of
degree 1. \  Hence we might expect that if the symbols of the
specified Toeplitz operators of (\ref{7.2}) are co-analytic
polynomials of degree two then the non-analytic solution of the
unspecified entries consists of trigonometric polynomials of degree
$\le 2$. \

\bigskip

More generally, we have:

\bigskip

\noindent {\bf Problem 6.5.} If $\Phi$ and $\Psi$ are co-analytic
polynomials of degree $n$, does it follow that the non-analytic
solution of the subnormal Toeplitz completion of the partial
Toeplitz matrix $\left[\begin{smallmatrix} T_\Phi& ?\\
?&T_\Psi\end{smallmatrix}\right]$ consists of Toeplitz operators
whose symbols are trigonometric polynomials of degree $\le n$\,?


\vskip 1cm

%
%
%
%

\bibliographystyle{amsalpha}

\end{document}